\documentclass[11pt,a4paper]{article}
\usepackage[onehalfspacing]{setspace}
 
\usepackage[utf8]{inputenc}
\usepackage[T1]{fontenc}
\usepackage{fix-cm}
\usepackage{xpatch}

\makeatletter
\xpatchcmd{\titlepage}{\@restonecolfalse\newpage}{\@restonecolfalse}{}{}
\xpatchcmd{\endtitlepage}{\if@restonecol\twocolumn \else \newpage \fi}{\if@restonecol\twocolumn \else  \fi}{\typeout{success}}{\typeout{fail}}
\makeatother

\usepackage{hhline}
\usepackage{graphicx}
\usepackage[left = 2.5cm, right=2.5cm, top = 2.5cm, bottom = 2.5cm]{geometry}
\usepackage{braket}
\usepackage{mathptmx}  
\usepackage[citestyle=numeric-comp, bibstyle=numeric, backend=biber, firstinits]{biblatex}

\usepackage[bbgreekl]{mathbbol}

\usepackage{pict2e,picture}

\makeatletter
\DeclareRobustCommand{\bbDelta}{{\mathpalette\bb@Delta\relax}}
\newcommand{\bb@Delta}[2]{%
  \begingroup
  \sbox\z@{$\m@th#1\Delta$}%
  \dimendef\Dht=6 \dimendef\Dwd=8
  \setlength{\Dwd}{\wd\z@}%
  \setlength{\Dht}{\ht\z@}%
  \begin{picture}(\Dwd,\Dht)
  \put(0,0){$\m@th#1\Delta$}
  \put(.42\Dwd,.7\Dht){\line(10,-26){.25\Dht}}
  \end{picture}%
  \endgroup
}
\makeatother
\usepackage{tikz-cd}
\usepackage{tikz}
\usepackage{tikz-3dplot}
\usepackage[symbol]{footmisc}
\usepackage{xcolor}
\usepackage{extarrows}
\usepackage{stmaryrd}
\usepackage{color}
\usepackage{multirow}
\usepackage{adjustbox}
\usepackage{amssymb}
\usepackage{chngcntr}
\usepackage{amsmath,bm}
\usepackage{physics}
\usepackage[amsthm, thmmarks, amsmath]{ntheorem}
\usepackage[english]{babel}
\usepackage{amscd}
\definecolor{darkgreen}{RGB}{33,134,115}
\definecolor{darkred}{RGB}{163.2,8.4,46.8}
\usepackage[colorlinks,linktocpage=true,
urlcolor = blue,
linkcolor = darkred,
citecolor = darkgreen
]{hyperref}
\hypersetup{
colorlinks = true
}
\usepackage{caption}
\usepackage{newtxmath}
\usepackage{subcaption}
\usepackage{mwe}
\usepackage{wasysym, stackengine, makebox, scalerel}
\usepackage{float}
\usepackage{newtxmath}
\usepackage{xparse,aliascnt,bookmark}
\usepackage{mathdots}
\usepackage{cleveref}
\crefrangeformat{equation}{(#3#1#4) to~(#5#2#6)}
\usepackage{comment}
\usepackage{bold-extra}
\DeclareFieldFormat[article,periodical]{volume}{\mkbibbold{#1}}

\stackMath
\newcommand\reallywidehat[1]{%
\savestack{\tmpbox}{\stretchto{%
  \scaleto{%
    \scalerel*[\widthof{\ensuremath{#1}}]{\kern-.6pt\bigwedge\kern-.6pt}%
    {\rule[-\textheight/2]{1ex}{\textheight}}
  }{\textheight}%
}{0.5ex}}%
\stackon[1pt]{#1}{\tmpbox}%
}
\renewcommand{\l}{\left}
\renewcommand{\r}{\right}

\makeatletter
\newcommand{\pushright}[1]{\ifmeasuring@#1\else\omit\hfill$\displaystyle#1$\fi\ignorespaces}
\newcommand{\pushleft}[1]{\ifmeasuring@#1\else\omit$\displaystyle#1$\hfill\fi\ignorespaces}
\makeatother

\counterwithin{figure}{section}
\counterwithin{table}{section}

\NewDocumentCommand{\xnewtheorem}{m o m}
 {%
  \IfNoValueTF{#2}
   {\newtheorem{#1}{#3}}
   {%
    \newaliascnt{#1}{#2}%
    \newtheorem{#1}[#1]{#3}%
    \aliascntresetthe{#1}%
    \expandafter\newcommand\csname #1autorefname\endcsname{#3}%
   }%
 }

\numberwithin{equation}{section}

\newcommand{\W}{\mathcal W}
\newcommand{\E}{\mathbb E}
\newcommand{\ini}{\mathrm{in}}
\newcommand{\R}{\mathbb{R}}
\newcommand{\Z}{\mathbb{Z}}

\newcommand{\res}{\mathrm{Res}}
\newcommand{\rmi}{\mathrm{i}}

\newcommand{\C}{\mathbb{C}}
\newcommand{\Q}{\mathcal{H}}

\newcommand{\p}{\partial}
\newcommand{\G}{\mathcal{G}}
\newcommand{\D}{\mathcal{D}}

\newcommand{\rmd}{\mathrm{d}}

\newcommand{\Id}{\mathrm{Id}}

\newcommand{\stem}{\mathcal S}
\newcommand{\node}{\mathscr n}
\newcommand{\Node}{\mathcal N}
\newcommand{\leaf}{\mathscr l}
\newcommand{\Leaf}{\mathcal L}
\newcommand{\N}{\mathbb N}
\newcommand{\J}{\mathcal J}
\newcommand{\flower}{\mathscr f}
\newcommand{\T}{\mathcal T}
\newcommand{\M}{\mathcal M}

\renewcommand{\root}{\mathscr r}
\newcommand{\lege}{\mathrel{\mathpalette\gele@{\le\ge}}}

\theoremstyle{plain}
\xnewtheorem{theorem}{Theorem}[section]
\xnewtheorem{corollary}[theorem]{Corollary}
\xnewtheorem{lemma}[theorem]{Lemma}
\xnewtheorem{proposition}[theorem]{Proposition}

\newtheorem*{notation}{Notation}

\theoremstyle{plain}
\newtheorem{assumption}{Assumption}

\theoremstyle{plain}

\theoremstyle{definition}
\xnewtheorem{definition}[theorem]{Definition}
\xnewtheorem{example}[theorem]{Example}

\theoremstyle{remark}
\xnewtheorem{remark}[theorem]{Remark}

\renewbibmacro{in:}{}
\DeclareFieldFormat[article]{citetitle}{#1}
\DeclareFieldFormat[article]{title}{#1}
\DeclareFieldFormat{pages}{#1}

\makeatletter
\newcounter{savesection}
\newcounter{apdxsection}
\renewcommand\appendix{\par
  \setcounter{savesection}{\value{section}}%
  \setcounter{section}{\value{apdxsection}}%
  \setcounter{subsection}{0}%
  \gdef\thesection{\@Alph\c@section}}
\newcommand\unappendix{\par
  \setcounter{apdxsection}{\value{section}}%
  \setcounter{section}{\value{savesection}}%
  \setcounter{subsection}{0}%
  \gdef\thesection{\@arabic\c@section}}
\makeatother
\begin{document}

\begin{center}
    \LARGE{\textsc{\textbf{Discrete wave turbulence for a coupled system of quintic Schrödinger equations}}}\\~\\

    \Large \textsc{Shayan Zahedi}
    
\end{center}

\begin{abstract} 

We derive rigorously the non-linear macroscopic system associated to a microscopic system of coupled quintic Schrödinger equations in the framework of discrete wave turbulence under a particular scaling law that describes the limiting process. Our system evolves from a pair of well-prepared random initial data. More precisely, in dimensions $d\geq2$, we set up our microscopic system on a large box of size $L$ with weak non-linearity of strength $\epsilon$. In the limit $L\to\infty$ and $\epsilon\to0$, under the scaling law $\epsilon L^{\frac{1}{\beta}}=1$ with $\beta\in(1,\infty)$, we prove that the long-time behaviour of our microscopic system is statistically described up to times $\delta\epsilon^{-1}$ by a non-linear resonant system whose dynamics are driven by exact resonances, where $\delta$ is independent of $L$ and $\epsilon$. Our system does not display generic symmetries, in particular not mass conservation. In such systems with fewer invariances, exact resonances contribute significantly compared to quasi-resonances and are essentially responsible for the effective dynamics in the large-box limit. We justify the emergence of discrete wave turbulence for our microscopic model.  
 
\end{abstract}

{\let\clearpage\relax \tableofcontents}



\renewcommand*{\thefootnote}{\arabic{footnote}}

\section{Introduction}

\subsection{Background and motivation}
Wave turbulence theory describes the nonequilibrium statistical behaviour of a system with interacting waves in the thermodynamic limit where the size of the box $L$ tends to infinity. It is the wave analogue of the classical kinetic theory in which the number of particles $N$ tends to infinity. The degrees of freedom $L$ and $N$ are analogs of each other. Wave turbulence theory made its appearance first in the physics literature \cite{Peierls1929,BROUT1956621,Vedenov1967,Zaslavskii,Hasselmann_1962,Hasselmann_1963,benney_saffman_1966}. Mathematically, one is interested under which conditions one could derive rigorously a macroscopic system from a microscopic system of non-linear partial differential equations with well-prepared random initial data at an appropriate time scale. General well-posedness of the macroscopic system is also of interest. 

The physics literature foresaw that an expansion in Feynman diagrams is the correct approach to derive the macroscopic system. However, the main obstacle is to make this expansion rigorous and prove convergence, while having the correct scaling law between $L$ and the degree of non-linearity $\epsilon$ in mind. Lukkarinen and Spohn \cite{Lukkarinen_2010} were the first to succeed in proving convergence of the diagrammatic expansion in the context of a cubic Schrödinger equation on a lattice. They studied the time correlations of the invariant Gibbs measure in the thermodynamic limit. 
Ever since, many works on the wave turbulence of the cubic Schrödinger equation, in particular on the Fourier spectrum of its solutions \cite{Collotdietert}, and the rigorous derivation of its kinetic wave equation \cite{Collot2025,COLLOT2026111179,Deng_2021,Dymov2,Dymov3} have emerged. The derivation of the wave kinetic equation for the quintic Schrödinger equation was studied in \cite{ASDS}. In \cite{staffilani2024waveturbulencetheorystochastic}, a 3-wave kinetic equation is derived from a KdV-type equation on a hypercubic lattice. In \cite{Vassilev_2025} the wave kinetic equation for a one-dimensional MMT model is derived. In \cite{https://doi.org/10.1002/cpa.22224,deng2025waveturbulencetheory2d} the wave turbulence of irrotational gravity water waves in 2$D$ is studied.  
Deng and Hani proved the derivation of the wave kinetic equation for the cubic Schrödinger equation up to the kinetic time scale \cite{Deng2021FullDO,deng2023derivationwavekineticequation}. The ideas in the last two references were then adapted to the long-time derivation of the Boltzmann equation \cite{deng2025longtimederivationboltzmann} and other rigorous derivations of fundamental partial differential equations of fluid mechanics \cite{deng2025hilbertssixthproblemderivation}. 
To summarize, the kinetic description requires the non-linearity to be weak. However, in the complementary regime of very weak nonlinearity, the exact resonances of the underlying microscopic system dominate over quasi-resonances. In this regime, different effective equations arise \cite{desuzzoni2025waveturbulencesemilinearkleingordon,faou2013weaklynonlinearlargebox,https://doi.org/10.1002/cpa.21749}. This regime is referred to as discrete wave turbulence \cite{Nazarenkodiscrete,PhysRevLett.98.214502,nazarenko2011wave}. Those regimes are characterized by the absence of symmetries that could allow for a kinetic description of the underlying microscopic model. The discrete wave turbulence of a coupled system of quadratic Klein-Gordon equations was studied in \cite{desuzzoni2025waveturbulencesemilinearkleingordon} and its associated macroscopic system was derived rigorously. In this paper , we are interested in the discrete wave turbulence of a coupled system of quintic Schrödinger equations.  

We consider a system of quintic Schrödinger equations with weak nonlinearity $\epsilon Q^\eta\l(f^\eta,\overline{f^\eta},\overline{f^\eta},f^{\overline\eta},f^{\overline\eta}\r)$ on a box of size $L$, where $\eta\in\{0,1\}$ and $\overline\eta=1-\eta$. We specifically consider those $Q^\eta$ that break certain invariances, in particular mass conservation. The absence of these symmetries implies that exact resonances will drive the dynamics of the non-linear macroscopic system \eqref{dee} in the large-box limit $L\to\infty$, rendering it effectively of discrete type under the scaling law $\epsilon L^{\frac{1}{\beta}}=1$ for $\beta\in(1,\infty)$. We assume the initial data to be random and well-prepared. We expect the dynamics of the correlations of the microscopic system \eqref{system} to be governed by \eqref{dee}. 

\subsection{Statement of the main result}
\subsubsection{The microscopic system \eqref{system}}

We consider a coupled system of quintic Schrödinger equations in space dimension $d\geq2$ 
\begin{equation}\label{system}\tag{qNLS}
    \begin{cases}
        (\rmi\p_t+\Delta)f^0 = \epsilon Q^0\l(f^0,\overline{f^0,}\overline{f^0},f^1,f^1\r)&\text{ in }\R_\geq\times \mathbb T_L^d,\\
        (\rmi\p_t+\Delta)f^1 = \epsilon Q^1\l(f^1,\overline{f^1},\overline{f^1},f^0,f^0\r)&\text{ in }\R_\geq\times \mathbb T_L^d,\\
        \l(f^0(0),f^1(0)\r) = \l(f^0_{\ini},f^1_{\ini}\r)&\text{ on } \mathbb T_L^d,
    \end{cases}
\end{equation}
where
\begin{equation}
    \hat f(t,k)=\frac{1}{L^{\frac{d}{2}}}\int_{\mathbb T_L^d}f(t,x)e^{\rmi k\cdot x}\rmd x,\quad f(t,x)=\frac{1}{L^{\frac{d}{2}}}\sum_{k\in\Z_L^d}\hat f(t,k)e^{\rmi k\cdot x}
\end{equation}
such that the Fourier transform of a convolution takes the form ($n>1$)
\begin{equation}
    \widehat{f_1\cdots f_n}(k) = \frac{1}{L^{d(n-1)/2}}\sum_{k_1+\cdots+k_n = k}\prod_{i=1}^n\widehat{f_i}(k_i)
\end{equation}
and for $\eta\in\{0,1\}$,
\begin{equation}
    \reallywidehat{Q^\eta\l(f,g,h,u,w\r)}(k)\coloneqq\frac{1}{L^{2d}}\sum_{\sum_{i=1}^5k_i=k} Q^\eta(k_1,k_2,k_3,k_4,k_5)\hat{f}(k_1)\hat g(k_2)\hat h(k_3)\hat u(k_4)\hat w(k_5).
\end{equation}
We assume $Q^\eta,\abs{Q^\eta}\in\l(W^{1,1}\cap W^{1,\infty}\r)\l(\l(\R^d\r)^5,\C\r)$ for both $\eta\in\{0,1\}$.

\begin{remark}\label{breaking mass conservation}
    If $f\coloneqq f^0=f^1$ and $Q^\eta(k_1,k_2,k_3,k_4,k_5)=1$ for both $\eta\in\{0,1\}$, then we would have the quintic Schrödinger equation \begin{equation}
    \begin{cases}
        (\rmi\p_t+\Delta)f = \epsilon\abs{f}^4f&\text{ in }\R_\geq\times \mathbb T_L^d,\\
        f(0) = f_\ini&\text{ on }\mathbb T_L^d
    \end{cases}
    \end{equation}
    with well-prepared random initial data $f_\ini=f_\ini^0=f_\ini^1$ and periodic boundary conditions in the space variables. In this case, we will not observe non-linear effective dynamics in the discrete wave turbulence regime as can be seen by \eqref{dee}. We also have time conservation of mass:
    \begin{equation}
        \p_t\norm{f(t)}_{L^2(T_L^d)}^2 = \int_{T_L^d}\l(\p_t f(t)\overline f(t)+f(t)\p_t\overline f(t)\r)\rmd x = \rmi\int_{T_L^d}\l(\overline f(t)\Delta f(t)-f(t)\Delta\overline f(t)\r)\rmd x = 0.
    \end{equation}
    The system \eqref{system} is devised in a way to break this invariance by introducing $Q^\eta$. We generalize to a coupled system, which will lead to the introduction of colour in the diagrammatic part of the analysis, as was done in \cite{desuzzoni2025waveturbulencesemilinearkleingordon} for a system of coupled quadratic Klein-Gordon equations. 
\end{remark}

Let $F^\eta\coloneqq e^{-\rmi t\Delta}f^\eta$ and $\overline\eta\coloneqq 1-\eta$ so that we arrive at the following equation for the Fourier modes: 

\begin{equation}\label{reformulation in momentum space}
    \widehat{F^\eta}(t,k) = \mu^\eta_k-\frac{\rmi\epsilon}{L^{2d}}\int_0^t\sum_{\sum_{i=1}^5k_i=k}Q^\eta\l(k_1,\ldots,k_5\r)e^{\rmi t\Omega}\widehat{F^\eta}(\tau,k_1)\widehat{\overline{F^\eta}}(\tau,k_2)\widehat{\overline{F^\eta}}(\tau,k_3)\widehat{F^{\overline\eta}}(\tau,k_4)\widehat{F^{\overline\eta}}(\tau,k_5)\rmd\tau, 
\end{equation}
where 
\begin{equation}
    \Omega=\Omega\l(k,k_1,k_2,k_3,k_4,k_5\r)\coloneqq\abs{k}^2-\abs{k_1}^2+\abs{k_2}^2+\abs{k_3}^2-\abs{k_4}^2-\abs{k_5}^2
\end{equation}
denotes the \textit{resonance factor}. 

In the large-box limit $L\to\infty$, we are seeking the dynamics of the three correlations $\E\l(\abs{\widehat{f^0}(t,k)}^2\r)$, $\E\l(\abs{\widehat{f^1}(t,k)}^2\r)$ and $\E\l({\widehat{f^0}(t,k)}\overline{\widehat{f^1}(t,k)}\r)$ and the averaging happens over the random distribution of the initial data. For a complex number $z\in\C$, we denote $z^+=z$ and $z^-=\overline z$ and assume the initial data $\l(\widehat{f^0}(k),\widehat{f^1}(k)\r)_{k\in\Z_L^d}$ to be a family of independent Gaussian variables in $\C^2$ such that 
\begin{equation}\label{random initial data}
    \E\l(\mu_k^\eta\r)=0\text{ and }\E\l(\mu_k^{\eta,\iota}{\mu_{k'}^{\eta',\iota'}}\r)=\delta_{\iota+\iota'}\delta_{k-k'}M^{\eta,\eta'}(k)^\iota
\end{equation}
for all $k,k'\in\Z_L^d$, $\eta,\eta'\in\{0,1\}$ and where $M^{\eta,\eta'}\in\l(W^{1,\infty}\cap W^{1,1}\r)\l(\R^d\r)$. The assumption \eqref{random initial data} implies necessarily $\overline{M^{\eta,\eta'}}=M^{\eta',\eta}$. We also assume that $M^{\eta,\eta'}$ are supported in a ball of fixed radius $R>0$ around the origin.

\begin{assumption}\label{the assumption i have to use}
    One shall assume for all $n\leq\abs{\log\epsilon}$ and $i\in\llbracket1,4\rrbracket$ that 
    \begin{equation}
        \sum_{\substack{k_1,\ldots,k_i\in B_{nR}^{\Z_L^d}(0)}}\sum_{\sigma\in\mathcal S_5}\abs{Q_\iota^\eta\l(k_{\sigma(1)},\ldots,k_{\sigma(5)}\r)}\lesssim L^{id}
    \end{equation}
    for all $k_{i+1},\ldots,k_5\in\R^d$, where the implicit constant is independent of $n$ and $L$ and $\mathcal S_5$ denotes the symmetric group of degree $5$. 
\end{assumption}

The time scale at which we exhibit the macroscopic system to display the non-linear effective dynamics is $\delta T$, where 

\begin{equation}
    T \coloneqq \epsilon^{-1},
\end{equation}
$\epsilon=L^{-\frac{1}{\beta}}$, $\beta\in(1,\infty)$, and $\delta$ is a constant that is independent of $L$ and $\epsilon$.

\subsubsection{The macroscopic system \eqref{dee}}
Under the assumption of independence on the initial data $\widehat{f^\eta_{\ini}}$ the non-linear macroscopic system is given by
\begin{equation}\tag{MS}\label{dee}
    \begin{cases}
        \p_t\rho^\eta(t,\xi) 
    = 2\Bigg[\rho^\eta(t,\xi)\int_{\R^d}\int_{\R^d}\Im\l(Q^\eta(\xi,\xi_1,\xi_2,-\xi_1,-\xi_2)\overline{\rho^{\times,\eta}(t,-\xi_1)}\overline{\rho^{\times,\eta}(t,-\xi_2)}\r) \rmd\xi_1\rmd\xi_2\\+ \rho^\eta(t,\xi)\int_{\R^d}\int_{\R^d}\Im\l(Q^\eta(\xi,\xi_1,\xi_2,-\xi_2,-\xi_1)\overline{\rho^{\times,\eta}(t,-\xi_1)}\overline{\rho^{\times,\eta}(t,-\xi_2)}\r)\rmd\xi_1\rmd\xi_2\\+\int_{\R^d}\int_{\R^d}\Im\l(\overline{\rho^\times(t,\xi)}{Q^\eta(\xi_1,-\xi_1,\xi_2,\xi,-\xi_2)}\rho^\eta(t,\xi_1)\overline{\rho^{\times,\eta}(t,-\xi_2)}\r)\rmd\xi_1\rmd\xi_2\\+\int_{\R^d}\int_{\R^d}\Im\l(\overline{\rho^\times(t,\xi)}{Q^\eta(\xi_1,\xi_2,-\xi_1,\xi,-\xi_2)}\rho^\eta(t,\xi_1)\overline{\rho^{\times,\eta}(t,-\xi_2)}\r)\rmd\xi_1\rmd\xi_2\\+\int_{\R^d}\int_{\R^d}\Im\l(\overline{\rho^\times(t,\xi)}Q^\eta(\xi_1,-\xi_1,\xi_2,-\xi_2,\xi)\rho^\eta(t,\xi_1)\overline{\rho^{\times,\eta}(t,-\xi_2)}\r)\rmd\xi_1\rmd\xi_2\\+\int_{\R^d}\int_{\R^d}\Im\l(\overline{\rho^\times(t,\xi)}Q^\eta(\xi_1,\xi_2,-\xi_1,-\xi_2,\xi)\rho^\eta(t,\xi_1)\overline{\rho^{\times,\eta}(t,-\xi_2)}\r)\rmd\xi_1\rmd\xi_2\Bigg]\\
    \p_t\rho^\times(t,\xi)=
        -\rmi\Bigg[\rho^\times(t,\xi)\int_{\R^d}\int_{\R^d}\l(Q^0(\xi,\xi_1,\xi_2,-\xi_1,-\xi_2)-\overline{Q^1(\xi,\xi_1,\xi_2,-\xi_1,-\xi_2)}\r)\overline{\rho^{\times}(t,-\xi_1)}\overline{\rho^{\times}(t,-\xi_2)}\rmd\xi_1\rmd\xi_2\\+\rho^\times(t,\xi)\int_{\R^d}\int_{\R^d}\l(Q^0(\xi,\xi_1,\xi_2,-\xi_2,-\xi_1)-\overline{Q^1(\xi,\xi_1,\xi_2,-\xi_2,-\xi_1)}\r)\overline{\rho^{\times}(t,-\xi_1)}\overline{\rho^{\times}(t,-\xi_2)}\rmd\xi_1\rmd\xi_2\\+\int_{\R^d}\int_{\R^d}\l(\rho^1(t,\xi)Q^0(\xi_1,-\xi_1,\xi_2,\xi,-\xi_2)\rho^0(t,\xi_1)-\rho^0(t,\xi)\overline{Q^1(\xi_1,-\xi_1,\xi_2,\xi,-\xi_2)}\rho^1(t,\xi_1)\r)\overline{\rho^{\times}(t,-\xi_2)}\rmd\xi_1\rmd\xi_2\\+\int_{\R^d}\int_{\R^d}\l(\rho^1(t,\xi)Q^0(\xi_1,\xi_2,-\xi_1,\xi,-\xi_2)\rho^0(t,\xi_1)-\rho^0(t,\xi)\overline{Q^1(\xi_1,\xi_2,-\xi_1,\xi,-\xi_2)}\rho^1(t,\xi_1)\r)\overline{\rho^{\times}(t,-\xi_2)}\rmd\xi_1\rmd\xi_2\\+\int_{\R^d}\int_{\R^d}\l(\rho^1(t,\xi)Q^0(\xi_1,-\xi_1,\xi_2,-\xi_2,\xi)\rho^0(t,\xi_1)-\rho^0(t,\xi)\overline{Q^1(\xi_1,-\xi_1,\xi_2,-\xi_2,\xi)}\rho^1(t,\xi_1)\r)\overline{\rho^{\times}(t,-\xi_2)}\rmd\xi_1\rmd\xi_2\\+\int_{\R^d}\int_{\R^d}\l(\rho^1(t,\xi)Q^0(\xi_1,\xi_2,-\xi_1,-\xi_2,\xi)\rho^0(t,\xi_1)-\rho^0(t,\xi)\overline{Q^1(\xi_1,\xi_2,-\xi_1,-\xi_2,\xi)}\rho^1(t,\xi_1)\r)\overline{\rho^{\times}(t,-\xi_2)}\rmd\xi_1\rmd\xi_2\Bigg],\\
        \l(\rho^0(0,\xi),\rho^1(0,\xi),\rho^\times(0,\xi)\r) = \l(M^{0,0}(\xi),M^{1,1}(\xi),M^{0,1}(\xi)\r).
    \end{cases}
\end{equation}
We will prove in \cref{where local well-posedness is proven} that there exists a small enough $\delta>0$, depending on $M^{\eta,\eta'}$, such that there exists a unique local solution $\l(\rho^\eta,\rho^\times\r)\in\mathcal C\l([0,\delta],\l(W^{1,\infty}\cap W^{1,1}\r)\l(\R^d\r)\r)^2$ of \eqref{dee} on the time interval $[0,\delta]$.

\subsubsection{The main result}
The main result of this manuscript is the rigorous derivation of \eqref{dee} over the existence interval $[0,\delta]$ as the limit of the averaged \eqref{system} dynamics under the scaling law $\epsilon=L^{-\frac{1}{\beta}}$ for $\frac{1}{\beta}\in(0,1)$.

\begin{theorem}\label{this is the main theorem}
    Let $d\geq2$, $s>\frac{d}{2}$ and $\beta\in(1,\infty)$. There exist $\delta,L_0,A_0>0$ such that for all $L\geq L_0$ and $A\geq A_0$:
    \begin{itemize}
        \item[(i)] There exists a set $\mathcal E_{L,A}$ of probability greater or equal to $1-L^{-A}$ such that if the initial data $f_{\ini}^\eta$ is taken from $\mathcal E_{L,A}$, \eqref{system} has a unique solution $\l(f^0,f^1\r)\in\mathcal C\l(\l[0,\delta L^{\frac{1}{\beta}}\r],H^s\l(\mathbb T_L^d\r)\r)^2$.
        \item[(ii)] There exists a unique solution $\l(\rho^0,\rho^1,\rho^\times\r)\in\mathcal C\l([0,\delta],\l(W^{1,\infty}\cap W^{1,1}\r)\l(\R^d\r)\r)^3$ of \eqref{dee}.
        \item[(iii)] We have for all $\eta\in\{0,1\}$,
        \begin{align}\label{first convergence}            \lim_{L\to\infty}\sup_{t\in[0,\delta]}\sup_{k\in\Z_L^d}\abs{\E\l(\vmathbb{1}_{\mathcal E_{L,A}}\abs{\widehat{f^\eta}\l(L^{\frac{1}{\beta}}t,k\r)}^2\r)-\rho^\eta(t,k)}&=0,\\           \lim_{L\to\infty}\sup_{t\in[0,\delta]}\sup_{k\in\Z_L^d}\abs{\E\l(\vmathbb1_{\mathcal E_{L,A}}\widehat{f^0}\l(L^{\frac{1}{\beta}}t,k\r)\overline{\widehat{f^1}\l(L^{\frac{1}{\beta}}t,k\r)}\r)-\rho^\times(t,k)}&=0.\label{second convergence}
        \end{align}
    \end{itemize}
\end{theorem}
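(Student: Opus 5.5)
The plan follows the diagrammatic strategy of \cite{desuzzoni2025waveturbulencesemilinearkleingordon}, adapted to the quintic coupled structure. We expand the solution of \eqref{reformulation in momentum space} as a truncated Picard (Duhamel) series
\[
\widehat{F^\eta}(t,k)=\sum_{n=0}^{N}\epsilon^n F^\eta_n(t,k)+R^\eta_N(t,k), \qquad N\sim\abs{\log\epsilon}.
\]
Each iterate $F^\eta_n$ is a sum over coloured quintic trees with $n$ nodes. The colour $\eta$ or $\overline\eta$ is inherited along the five branches according to the pattern $(\eta,\overline\eta,\overline\eta,\overline\eta,\overline\eta)$ with signs $(+,-,-,+,+)$. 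The leaves carry the Gaussian data $\mu_k^{\eta,\iota}$.

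\textbf{Step 1: control of the iterates (part (i)).} Using Gaussian hypercontractivity (Wiener chaos estimates) and a union bound over $k$ in the ball $B_{NR}$, we show that outside a set of probability at most $L^{-A}$,
\[
\sup_{t\le\delta T}\abs{\epsilon^n F_n^\eta(t,k)}\lesssim (C\delta)^n L^{\theta},
\]
with $\theta$ small. The counting input is \cref{the assumption i have to use}. The time integrals contribute at most $t^n/n!$ in the worst, fully resonant case, and this is compensated by $\epsilon^n$ at $t\le\delta\epsilon^{-1}$. This defines the set $\mathcal E_{L,A}$.

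We then treat the remainder $R_N$ by a contraction argument. The linearised operator around the tree expansion is bounded on $\ell^2$- or $H^s$-type spaces with norm $\lesssim\delta$ on $\mathcal E_{L,A}$. Since $s>d/2$, $H^s$ is an algebra, which allows the quintic term to be closed. The remainder is then of size $(C\delta)^N\le L^{-A'}$, and local well-posedness in $H^s$ extends up to time $\delta L^{1/\beta}$.

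\textbf{Step 2: the macroscopic system (part (ii)).} Local well-posedness of \eqref{dee} on $[0,\delta]$ is \cref{where local well-posedness is proven}. It follows from a Banach fixed point argument in $\mathcal C([0,\delta],W^{1,\infty}\cap W^{1,1})^3$. The right-hand side is a trilinear integral operator, bounded on this space because $Q^\eta\in W^{1,1}\cap W^{1,\infty}$.

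\textbf{Step 3: convergence of correlations (part (iii)).} We expand
\[
\E\big(\widehat{f^\eta}\,\overline{\widehat{f^{\eta'}}}\big)=\sum_{n,m\le N}\epsilon^{n+m}\,\E\big(F_n\overline{F_m}\big)+\text{error},
\]
and compute each term by Wick's theorem as a sum over pairings of leaves, i.e.\ coloured Feynman diagrams. The pairing enforces $k_i=-k_j$ together with opposite signs.

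The key dichotomy is between exactly resonant and non-resonant contributions. On the set where $\Omega=0$ identically, which here means the pairings inside a single quintic vertex of the form $k_2=-k_4$, $k_3=-k_5$ and the variants appearing in \eqref{dee}, the phase $e^{\rmi t\Omega}$ equals $1$. The sums $L^{-2d}\sum$ then converge to Riemann integrals, with error $O(L^{-1})$ thanks to the $W^{1,\cdot}$ regularity. These contributions give exactly the iterated Duhamel expansion of \eqref{dee}, evaluated at time $\epsilon t$.

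All other pairings have $\Omega\neq0$ generically. For these we integrate the oscillatory phase by parts in time, which gains a factor $\abs{\Omega}^{-1}$. We then need a counting estimate showing that the set $\{\abs{\Omega}\le L^{-\gamma}\}$ has relative measure $\lesssim L^{-c}$. The scaling $\epsilon=L^{-1/\beta}$ with $\beta>1$ ensures that the loss $T=L^{1/\beta}$ is smaller than the $L^{-1}$ lattice gain coming from $\Omega\in\frac{2\pi}{L^2}\Z$-type quantisation, or from equidistribution. Hence quasi-resonances vanish in the limit. Summing over diagrams with a factorial count bounded by $(C\delta)^{n+m}$ gives the convergence. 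The final identification uses the fact that the Duhamel series of \eqref{dee} converges to $\rho$ for $\delta$ small.

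\textbf{Main obstacle.} We expect the hardest step to be the uniform, combinatorially summable bound on the non-resonant diagrams. This requires showing that every pairing not of the exact-resonant type gains a negative power of $L$ that beats both $T=\epsilon^{-1}$ and the number of diagrams, which grows like $n!$. We would first try to reduce each diagram to an iterated vertex-by-vertex estimate: integrate out one degenerate-free quintic vertex at a time, use \cref{the assumption i have to use} to sum the free momenta, and extract the small-divisor gain at the single vertex where $\Omega\not\equiv0$.
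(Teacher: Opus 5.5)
\textbf{There is a genuine gap.} It sits exactly where the paper locates its main obstruction: summing over the factorially many Wick pairings.

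\textbf{Step~1 and the non-resonant sum in Step~3.} In Step~1 you claim $\mathbb E\lvert\epsilon^nF_n^\eta\rvert^2\lesssim(C\delta)^{2n}L^{\theta}$ from the counting assumption plus a $t^n/n!$ bound on the time integrals. This is not enough.
\begin{itemize}
\item The second moment is a sum over couples $C=(T,T',\sigma)$, with roughly $(4n+1)!$ admissible pairings $\sigma$.
\item The time domain of a couple has volume $\lvert\mathcal M(\mathcal N(C))\rvert\,t^{n(C)}/n(C)!$. For bushy trees this is only exponentially small in $n$; there is no $1/n!$.
\item With your ingredients each couple is bounded by $\Lambda^n\delta^{2n}$. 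The best resulting bound is of order $(4n+1)!\,\Lambda^n\delta^{2n}\approx(C\delta\log^2L)^{2n}$ at $n\sim N\sim\log L$, which is superpolynomial in $L$.
\end{itemize}
Both the probability estimate defining $\mathcal E_{L,A}$ and the claimed smallness $(C\delta)^N$ of the remainder are then lost. Your ``main obstacle'' remedy, a small-divisor gain at a single non-resonant vertex, does not close this either: one factor $L^{-c}$ cannot beat $n!\approx L^{\log\log L}$.

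The missing idea is a gain for \emph{every} non-resonant node, combined with a count of couples by their number of resonant nodes. In \cref{key proposition} this works as follows.
\begin{itemize}
\item The resolvent identity turns the time integrals into a $\xi$-integral with denominators $n\delta^{-1}-\mathrm i(\xi+\epsilon^{-1}\omega)$.
\item A Lukkarinen--Spohn change of variables assigns a degree to each node.
\item After a careful accounting of degenerate degree-one nodes, each non-resonant node yields $\epsilon^{\alpha}$ via \cref{main technical necessary theorem} and the sum-to-integral comparison. This comparison, with gradient error $\epsilon^{\beta-1}$, is where $\beta>1$ enters. It does not come from $L^{-2}$-quantisation of $\Omega$, which is useless since $T=L^{1/\beta}\ll L^2$.
\item The result is $\lvert\mathcal J_C\rvert\le\Lambda(\Lambda\delta)^n\epsilon^{\alpha(n-n_{\mathrm{res}})}L^{\mathcal K}/n$. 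Together with $c_{n,q}\le\Lambda^n(2(n-q)+2)!$ and $n\le\lvert\ln\epsilon\rvert$, the terms $(2m+2)!\,\epsilon^{\alpha m}$ become summable.
\end{itemize}
The single-gain estimate is used in the convergence step only when $n$ or $n-n_{\mathrm{res}}$ is bounded by a fixed $\mathcal W$.

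\textbf{The linearised operator.} You also claim, via the algebra property of $H^s$, that the linearised operator has norm $\lesssim\delta$ on $\mathcal E_{L,A}$. This is unsupported. A deterministic bound produces $\delta\,\lVert F_{\le N}\rVert^4$ in a norm that grows with $L$: with the paper's normalisation $\lVert f_{\mathrm{in}}\rVert_{H^s(\mathbb T^d_L)}\sim L^{d/2}$, and even for the model nonlinearity $\lvert f\rvert^4f$ a pointwise bound only gives $\delta\log^2L$. So no fixed $\delta$ makes the Neumann series converge uniformly in $L$.

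The paper never needs $\lVert\mathcal L\rVert<1$. It proceeds as follows.
\begin{itemize}
\item It expands $\mathcal L^m$ over flower trees and estimates the kernels through flower couples, with the same per-node gains.
\item This gives $\lVert\mathcal L^m\rVert\le\Lambda(\Lambda\sqrt\delta)^mN^{2(s+d)}L^{\mathcal K/2+1/\beta+d/2+A}$ (\cref{norm estimates on operator}).
\item It then inverts $(\mathrm{Id}-\mathcal L)^{-1}=(\mathrm{Id}-\mathcal L^{N})^{-1}\sum_{m<N}\mathcal L^m$ with $N=\lfloor\log L\rfloor$, taking $\delta$ small enough that $(\Lambda\sqrt\delta)^{N}$ absorbs the polynomial loss.
\end{itemize}

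Your Step~2 is consistent with the paper. So is your identification of fully resonant diagrams with the Duhamel expansion of \eqref{dee}, which the paper realises through a bijection between resonant couples and ternary trees.
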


\begin{remark}
    The convergence of \cref{first convergence,second convergence} is quantitative in the sence that there exists $C,\nu>0$ independent of $L$ such that 
    \begin{equation}
    \begin{gathered}
        \sup_{t\in[0,\delta]}\sup_{k\in\Z_L^d}\abs{\E\l(\vmathbb1_{\mathcal E_{L,A}}\widehat{f^0}\l(L^{\frac{1}{\beta}}t,k\r)\overline{\widehat{f^1}\l(L^{\frac{1}{\beta}}t,k\r)}\r)-\rho^\times(t,k)}\\+\sup_{t\in[0,\delta]}\sup_{k\in\Z_L^d}\abs{\E\l(\vmathbb{1}_{\mathcal E_{L,A}}\abs{\widehat{f^\eta}\l(L^{\frac{1}{\beta}}t,k\r)}^2\r)-\rho^\eta(t,k)}\leq CL^{-\nu}.
    \end{gathered}
    \end{equation}  
\end{remark}

\subsubsection{Heuristic derivation of \eqref{dee}}\label{the heuristics}

We define $\widehat{G^\eta}(t,k)\coloneqq\widehat{F^\eta}\l(\epsilon^{-1}t,k\r)$ and 
\begin{align}
    \rho^\eta(t,k)&\coloneqq\E\l(\abs{\widehat{G^\eta}(t,k)}^2\r)=\E\l(\abs{\widehat{f^\eta}\l(\epsilon^{-1}t,k\r)}^2\r),\label{the first limit}\\
    \rho^{\times,\eta}(t,k)&\coloneqq\E\l(\widehat{G^\eta}(t,k)\overline{\widehat{G^{\overline\eta}}(t,k)}\r)=\E\l(\widehat{f^\eta}\l(\epsilon^{-1}t,k\r)\overline{\widehat{f^{\overline\eta}}\l(\epsilon^{-1}t,k\r)}\r).\label{the second limit}
\end{align}

Note, $\rho^{\times,\overline\eta}=\overline{\rho^{\times,\eta}}$.

By definition 
\begin{equation}
    \widehat{G^\eta}(t,k) = \widehat{F^\eta}_\ini(k)-\frac{\rmi}{L^{2d}}\int_0^t\sum_{\sum_{i=1}^5=k}Q^\eta(k_1,\ldots,k_5)e^{\rmi\epsilon^{-1}\tau\Omega}\widehat{G^\eta}(\tau,k_1)\widehat{\overline{G^\eta}}(\tau,k_2)\widehat{\overline{G^\eta}}(\tau,k_3)\widehat{G^{\overline\eta}}(\tau,k_4)\widehat{G^{\overline\eta}}(\tau,k_5)\rmd\tau.
\end{equation}
Since we are interested in the large-box limit $L\to\infty$ of \cref{the first limit,the second limit}, the dynamics of $\widehat{G^\eta}$ is led by the exact resonances $\{\Omega=0\}$. We may assume heuristically 
\begin{equation}
    \p_t\widehat{G^\eta}(t,k) = -\frac{\rmi}{L^{2d}}\sum_{\substack{\sum_{i=1}^5=k\\\Omega=0}}Q^\eta(k_1,\ldots,k_5)\widehat{G^\eta}(t,k_1)\widehat{\overline{G^\eta}}(t,k_2)\widehat{\overline{G^\eta}}(t,k_3)\widehat{G^{\overline\eta}}(t,k_4)\widehat{G^{\overline\eta}}(t,k_5).
\end{equation}
Trivial resonances are (these six sets correspond to the six drawings in \cref{all possible resonant roots}) 
$\{k_2+k_4=k_3+k_5=0\text{ and }k_1=k\}\cup\{k_2+k_5=k_3+k_4=0\text{ and }k_1=k\}\cup\{k_1+k_2=k_3+k_5=0\text{ and }k_4=k\}\cup\{k_1+k_3=k_2+k_5=0\text{ and }k_4=k\}\cup\{k_1+k_2=k_3+k_4=0\text{ and }k_5=k\}\cup\{k_1+k_3=k_2+k_4=0\text{ and }k_5=k\}$ so that 
\begin{equation}
\begin{gathered}
    \p_t\widehat{G^\eta}(t,k) = -\frac{\rmi}{L^{2d}}\Bigg[\widehat{G^{\eta}}(t,k)\sum_{k_1,k_2\in\Z_L^d}Q^\eta(k,k_1,k_2,-k_1,-k_2)\widehat{\overline{G^\eta}}(t,k_1)\widehat{\overline{G^\eta}}(t,k_2)\widehat{G^{\overline\eta}}(t,-k_1)\widehat{G^{\overline\eta}}(t,-k_2)\\+\widehat{G^{\eta}}(t,k)\sum_{k_1,k_2\in\Z_L^d}Q^\eta(k,k_1,k_2,-k_2,-k_1)\widehat{\overline{G^\eta}}(t,k_1)\widehat{\overline{G^\eta}}(t,k_2)\widehat{G^{\overline\eta}}(t,-k_2)\widehat{G^{\overline\eta}}(t,-k_1)\\+\widehat{G^{\overline\eta}}(t,k)\sum_{k_1,k_2\in\Z_L^d}Q^\eta(k_1,-k_1,k_2,k,-k_2)\widehat{{G^\eta}}(t,k_1)\widehat{\overline{G^\eta}}(t,-k_1)\widehat{\overline{G^{\eta}}}(t,k_2)\widehat{G^{\overline\eta}}(t,-k_2)\\+\widehat{G^{\overline\eta}}(t,k)\sum_{k_1,k_2\in\Z_L^d}Q^\eta(k_1,k_2,-k_1,k,-k_2)\widehat{{G^\eta}}(t,k_1)\widehat{\overline{G^\eta}}(t,k_2)\widehat{\overline{G^{\eta}}}(t,-k_1)\widehat{G^{\overline\eta}}(t,-k_2)\\+\widehat{G^{\overline\eta}}(t,k)\sum_{k_1,k_2\in\Z_L^d}Q^\eta(k_1,-k_1,k_2,-k_2,k)\widehat{{G^\eta}}(t,k_1)\widehat{\overline{G^\eta}}(t,-k_1)\widehat{\overline{G^{\eta}}}(t,k_2)\widehat{G^{\overline\eta}}(t,-k_2)\\+\widehat{G^{\overline\eta}}(t,k)\sum_{k_1,k_2\in\Z_L^d}Q^\eta(k_1,k_2,-k_1,-k_2,k)\widehat{{G^\eta}}(t,k_1)\widehat{\overline{G^\eta}}(t,k_2)\widehat{\overline{G^{\eta}}}(t,-k_1)\widehat{G^{\overline\eta}}(t,-k_2)\Bigg]
\end{gathered}
\end{equation}
Now using Isserlis' theorem (see \cref{Isserlis}) and the fact that in these heuristic calculations we have the large-box limit $L\to\infty$ in mind, we find 
\begin{equation}
\begin{aligned}\label{pure effective}
    \p_t\rho^\eta(t,k) &= 2\Re\E\l(\p_t\widehat{G^\eta}(t,k)\overline{\widehat{G^\eta}(t,k)}\r),\\
    &= \frac{2}{L^{2d}}\Bigg[\rho^\eta(t,k)\sum_{k_1,k_2\in\Z_L^d}\Im\l(Q^\eta(k,k_1,k_2,-k_1,-k_2)\overline{\rho^{\times,\eta}(t,-k_1)}\overline{\rho^{\times,\eta}(t,-k_2)}\r) \\&+ \rho^\eta(t,k)\sum_{k_1,k_2\in\Z_L^d}\Im\l(Q^\eta(k,k_1,k_2,-k_2,-k_1)\overline{\rho^{\times,\eta}(t,-k_1)}\overline{\rho^{\times,\eta}(t,-k_2)}\r)\\&+\sum_{k_1,k_2\in\Z_L^d}\Im\l(\overline{\rho^{\times,\eta}(t,k)}{Q^\eta(k_1,-k_1,k_2,k,-k_2)}\rho^\eta(t,k_1)\overline{\rho^{\times,\eta}(t,-k_2)}\r)\\&+\sum_{k_1,k_2\in\Z_L^d}\Im\l(\overline{\rho^{\times,\eta}(t,k)}{Q^\eta(k_1,k_2,-k_1,k,-k_2)}\rho^\eta(t,k_1)\overline{\rho^{\times,\eta}(t,-k_2)}\r)\\&+\sum_{k_1,k_2\in\Z_L^d}\Im\l(\overline{\rho^{\times,\eta}(t,k)}Q^\eta(k_1,-k_1,k_2,-k_2,k)\rho^\eta(t,k_1)\overline{\rho^{\times,\eta}(t,-k_2)}\r)\\&+\sum_{k_1,k_2\in\Z_L^d}\Im\l(\overline{\rho^{\times,\eta}(t,k)}Q^\eta(k_1,k_2,-k_1,-k_2,k)\rho^\eta(t,k_1)\overline{\rho^{\times,\eta}(t,-k_2)}\r)\Bigg].
\end{aligned}
\end{equation}
Similarly

\begin{equation}\label{mixed effective}
    \begin{gathered}
        \p_t\rho^\times(t,k)=\E\l(\p_t\widehat{G^0}(t,k)\overline{\widehat{G^1}(t,k)}\r)+\E\l(\widehat{G^0}(t,k)\p_t\overline{\widehat{G^1}(t,k)}\r)\\
        -\frac{\rmi}{L^{2d}}\Bigg[\rho^\times(t,k)\sum_{k_1,k_2\in\Z_L^d}\l(Q^0(k,k_1,k_2,-k_1,-k_2)-\overline{Q^1(k,k_1,k_2,-k_1,-k_2)}\r)\overline{\rho^{\times}(t,-k_1)}\overline{\rho^{\times}(t,-k_2)}\\+\rho^\times(t,k)\sum_{k_1,k_2\in\Z_L^d}\l(Q^0(k,k_1,k_2,-k_2,-k_1)-\overline{Q^1(k,k_1,k_2,-k_2,-k_1)}\r)\overline{\rho^{\times}(t,-k_1)}\overline{\rho^{\times}(t,-k_2)}\\+\sum_{k_1,k_2\in\Z_L^d}\l(\rho^1(t,k)Q^0(k_1,-k_1,k_2,k,-k_2)\rho^0(t,k_1)-\rho^0(t,k)\overline{Q^1(k_1,-k_1,k_2,k,-k_2)}\rho^1(t,k_1)\r)\overline{\rho^{\times}(t,-k_2)}\\\+\sum_{k_1,k_2\in\Z_L^d}\l(\rho^1(t,k)Q^0(k_1,k_2,-k_1,k,-k_2)\rho^0(t,k_1)-\rho^0(t,k)\overline{Q^1(k_1,k_2,-k_1,k,-k_2)}\rho^1(t,k_1)\r)\overline{\rho^{\times}(t,-k_2)}\\+\sum_{k_1,k_2\in\Z_L^d}\l(\rho^1(t,k)Q^0(k_1,-k_1,k_2,-k_2,k)\rho^0(t,k_1)-\rho^0(t,k)\overline{Q^1(k_1,-k_1,k_2,-k_2,k)}\rho^1(t,k_1)\r)\overline{\rho^{\times}(t,-k_2)}\\+\sum_{k_1,k_2\in\Z_L^d}\l(\rho^1(t,k)Q^0(k_1,k_2,-k_1,-k_2,k)\rho^0(t,k_1)-\rho^0(t,k)\overline{Q^1(k_1,k_2,-k_1,-k_2,k)}\rho^1(t,k_1)\r)\overline{\rho^{\times}(t,-k_2)}\Bigg]
    \end{gathered}
\end{equation}

We take the limit
\begin{equation}
    \lim_{L\to\infty}\frac{1}{L^{2d}}\sum_{k_1,k_2\in\Z_L^d} = \int_{\R^d}\int_{\R^d}\rmd\xi_1\rmd\xi_2
\end{equation}
in \cref{pure effective,mixed effective} and justify its validity in \cref{the large box limit section}. 

\begin{equation}\label{the expected heuristic equation}
    \begin{gathered}
        \p_t\rho^\eta(t,\xi) 
    = 2\Bigg[\rho^\eta(t,\xi)\int_{\R^d}\int_{\R^d}\Im\l(Q^\eta(\xi,\xi_1,\xi_2,-\xi_1,-\xi_2)\overline{\rho^{\times,\eta}(t,-\xi_1)}\overline{\rho^{\times,\eta}(t,-\xi_2)}\r) \rmd\xi_1\rmd\xi_2\\+ \rho^\eta(t,\xi)\int_{\R^d}\int_{\R^d}\Im\l(Q^\eta(\xi,\xi_1,\xi_2,-\xi_2,-\xi_1)\overline{\rho^{\times,\eta}(t,-\xi_1)}\overline{\rho^{\times,\eta}(t,-\xi_2)}\r)\rmd\xi_1\rmd\xi_2\\+\int_{\R^d}\int_{\R^d}\Im\l(\overline{\rho^\times(t,\xi)}{Q^\eta(\xi_1,-\xi_1,\xi_2,\xi,-\xi_2)}\overline{\rho^{\times,\eta}(t,-\xi_2)}\r)\rho^\eta(t,\xi_1)\rmd\xi_1\rmd\xi_2\\+\int_{\R^d}\int_{\R^d}\Im\l(\overline{\rho^\times(t,\xi)}{Q^\eta(\xi_1,\xi_2,-\xi_1,\xi,-\xi_2)}\overline{\rho^{\times,\eta}(t,-\xi_2)}\r)\rho^\eta(t,\xi_1)\rmd\xi_1\rmd\xi_2\\+\int_{\R^d}\int_{\R^d}\Im\l(\overline{\rho^\times(t,\xi)}Q^\eta(\xi_1,-\xi_1,\xi_2,-\xi_2,\xi)\overline{\rho^{\times,\eta}(t,-\xi_2)}\r)\rho^\eta(t,\xi_1)\rmd\xi_1\rmd\xi_2\\+\int_{\R^d}\int_{\R^d}\Im\l(\overline{\rho^\times(t,\xi)}Q^\eta(\xi_1,\xi_2,-\xi_1,-\xi_2,\xi)\overline{\rho^{\times,\eta}(t,-\xi_2)}\r)\rho^\eta(t,\xi_1)\rmd\xi_1\rmd\xi_2\Bigg],\\
    \p_t\rho^\times(t,\xi)=
        -\rmi\Bigg[\rho^\times(t,\xi)\int_{\R^d}\int_{\R^d}\l(Q^0(\xi,\xi_1,\xi_2,-\xi_1,-\xi_2)-\overline{Q^1(\xi,\xi_1,\xi_2,-\xi_1,-\xi_2)}\r)\overline{\rho^{\times}(t,-\xi_1)}\overline{\rho^{\times}(t,-\xi_2)}\rmd\xi_1\rmd\xi_2\\+\rho^\times(t,\xi)\int_{\R^d}\int_{\R^d}\l(Q^0(\xi,\xi_1,\xi_2,-\xi_2,-\xi_1)-\overline{Q^1(\xi,\xi_1,\xi_2,-\xi_2,-\xi_1)}\r)\overline{\rho^{\times}(t,-\xi_1)}\overline{\rho^{\times}(t,-\xi_2)}\rmd\xi_1\rmd\xi_2\\+\int_{\R^d}\int_{\R^d}\l(\rho^1(t,\xi)Q^0(\xi_1,-\xi_1,\xi_2,\xi,-\xi_2)\rho^0(t,\xi_1)-\rho^0(t,\xi)\overline{Q^1(\xi_1,-\xi_1,\xi_2,\xi,-\xi_2)}\rho^1(t,\xi_1)\r)\overline{\rho^{\times}(t,-\xi_2)}\rmd\xi_1\rmd\xi_2\\+\int_{\R^d}\int_{\R^d}\l(\rho^1(t,\xi)Q^0(\xi_1,\xi_2,-\xi_1,\xi,-\xi_2)\rho^0(t,\xi_1)-\rho^0(t,\xi)\overline{Q^1(\xi_1,\xi_2,-\xi_1,\xi,-\xi_2)}\rho^1(t,\xi_1)\r)\overline{\rho^{\times}(t,-\xi_2)}\rmd\xi_1\rmd\xi_2\\+\int_{\R^d}\int_{\R^d}\l(\rho^1(t,\xi)Q^0(\xi_1,-\xi_1,\xi_2,-\xi_2,\xi)\rho^0(t,\xi_1)-\rho^0(t,\xi)\overline{Q^1(\xi_1,-\xi_1,\xi_2,-\xi_2,\xi)}\rho^1(t,\xi_1)\r)\overline{\rho^{\times}(t,-\xi_2)}\rmd\xi_1\rmd\xi_2\\+\int_{\R^d}\int_{\R^d}\l(\rho^1(t,\xi)Q^0(\xi_1,\xi_2,-\xi_1,-\xi_2,\xi)\rho^0(t,\xi_1)-\rho^0(t,\xi)\overline{Q^1(\xi_1,\xi_2,-\xi_1,-\xi_2,\xi)}\rho^1(t,\xi_1)\r)\overline{\rho^{\times}(t,-\xi_2)}\rmd\xi_1\rmd\xi_2\Bigg]
    \end{gathered}
\end{equation}

\subsection{Ingredients of the proof}

Our proof of \cref{this is the main theorem} combines the strategies in \cite{desuzzoni2025waveturbulencesemilinearkleingordon} and \cite{Collot2025} and is essentially structured in one preparatory and three crucial steps. The idea is to take the ansatz for a solution to \eqref{system} as a Dyson series expansion. More precisely, 
\begin{equation}\label{ansatz}
    f^\eta = f_0^\eta+\cdots f_{N(L)}^\eta+v^\eta
\end{equation}
and let $N(L)=\lfloor\log(L)\rfloor$ diverge as $L\to\infty$. The main issue is to prove the existence and uniqueness of the remainder term $v^\eta$ for sufficiently large $L$. Each element $f_n$ of the Dyson series is a finite sum over Feynman diagrams. This is shown in \cref{second section}. The correlations of the dyson iterates are then reformulated as finite sums over the set of couples whose cardinality scales factorially in the number of branching nodes. This factorial dependence is the main obstruction in proving the existence and uniqueness of a solution to \eqref{system} on the time interval $\l[0,\delta L^{\frac{1}{\beta}}\r]$ and to prove the convergence of the correlations $\E\l(\vmathbb1_{\mathcal E_{L,A}}\abs{\widehat{f^\eta}}^2\r)$ and $\E\l(\vmathbb1_{\mathcal E_{L,A}}{\widehat{f^\eta}}\overline{\widehat{f^{\overline\eta}}}\r)$ to the solution $\l(\rho^0,\rho^1,\rho^\times\r)$ of the resonant system \eqref{dee} as $L\to\infty$.

In \cref{third section} we make the ansatz \eqref{ansatz} rigorous and prove that for a large enough $L$, the iterates of the Dyson series $f_n^\eta$ up to $N(L)$ and the remainder $v^\eta$ satisfy bounds so that $v^\eta$ may be obtained as a fixed point of a contraction map in a closed ball inside the Banach space $\mathcal C\l([0,\delta],H^s\l(\mathbb T_L^d\r)\r)^2$. This proves the existence and uniqueness of a solution to \eqref{system} on $[0,\delta L^{\frac{1}{\beta}}]$ for any $\beta\in(1,\infty)$. The main difficulty of this section is to obtain smallness in the form of positive powers of $\epsilon$. The idea is to adapt a key result of Lukkarinen and Spohn \cite{Lukkarinen_2010} to $5$-ary trees. This adaptation allows us to formulate a coordinate transformation with which we may restate a sum over all $k$-decorations of a product over all nodes as a product of sums that are structured according to a particular order relation. The sums are estimated by integrals over bounded domains. These integrals enable us to access positive powers of $\epsilon$ and gain smallness.

Motivated by the heuristic derivation of the macroscopic system in \cref{the heuristics}, the local time well-posedness of \eqref{the expected heuristic equation} is almost immediately given and delivers in \cref{fourth section} a $\delta>0$ so that a solution exists uniquely on $[0,\delta]$. We then construct a sequence of functions $\rho_n$ and prove that the series $\l(\sum_{n\leq m}\rho_n\r)_{m\in\N}$ converges to the solution $\rho=\l(\rho^0,\rho^1,\rho^\times\r)\in\mathcal C\l([0,\delta],\l(W^{1,\infty}\cap W^{1,1}\r)\l(\R^d\r)\r)^3$. 

\textit{Resonant nodes} are nodes at which the resonance factor vanishes identically.
In \cref{fifth section}, we prove that in the large-box limit $L\to\infty$, the only contributions to the correlations of the microscopic system \eqref{system} come from \textit{resonant couples}, which are couples whose nodes are all resonant, and we identify the resonant structure through ternary trees. This allows us to make the convergence of the correlations of \eqref{system} to the solution of the macroscopic system \eqref{dee} systematic, that is, iterate by iterate. 

\begin{remark}
    A comment on different non-linearities is in order. If the non-linearity was of degree $2k+1$ and one could harmonize the sign rule of trees with what it means to be a {resonant node}, the right description to capture the recursive structure of the resonant system would be via $(k+1)$-ary trees (see \cref{for general non-linearity and resonant structure}). This statement foreshadows the content of \cref{recursive structure for subcouples} for $k=2$.

    The following analysis works for any coupled Schrödinger equation with odd non-linearity by adding more cases to \cref{main technical necessary theorem}. In the case of a cubic non-linearity, only the cases $i\in\{1,2\}$ are relevant. 
\end{remark}

\section{Feynman diagrams and couples}\label{second section}

We iteratively define a series $\l(F_n^\eta\r)_{n\in\N}$ by  
\begin{equation}
    \begin{aligned}\label{Dyson series}
        \widehat{F_0^\eta}&\coloneqq\mu^\eta_k,\\
        \widehat{F_{n+1}^\eta}(t,k)&\coloneqq-\frac{\rmi\epsilon}{L^{2d}}\int_0^t\sum_{\substack{\sum_{i=1}^5n_i=n\\\sum_{i=1}^5k_i=k}}e^{\rmi\tau\Omega}Q^\eta(k_1,\ldots,k_5)\widehat{F_{n_1}^\eta}(\tau,k_1)\widehat{\overline{F_{n_2}^\eta}}(\tau,k_2)\widehat{\overline{F_{n_2}^\eta}}(\tau,k_3)\widehat{F_{n_4}^{\overline\eta}}(\tau,k_4)\widehat{F_{n_5}^{\overline\eta}}(\tau,k_5)\rmd\tau
    \end{aligned}
\end{equation}
for $n>0$.

\begin{remark}
    One can show that \begin{equation}
        \sum_{n\geq0}F_n^\eta
    \end{equation}formally solves \eqref{reformulation in momentum space}.  
\end{remark}

More rigorously, we make the Ansatz
    \begin{equation}
        F^\eta=F^\eta_{\leq N}+v^\eta,
    \end{equation}
    where $F^\eta_{\leq N}\coloneqq\sum_{n\leq N}F_n^\eta$ and the existence and uniqueness of $v^\eta$ will be dealt with a fixed point argument later in \cref{the fixed point argument}.

\begin{definition}
    We define for $\iota\in\{\pm\}$ the operator
    \begin{equation}
        \reallywidehat{C^{\iota}(t,f_1,\ldots,f_5)}(k)\coloneqq-\frac{\rmi\iota\epsilon}{L^{2d}}\sum_{\sum_{i=1}^5k_i=k}e^{\rmi\iota t\Omega}Q^\eta_\iota(k_1,\ldots,k_5)\prod_{i=1}^5\hat f_i(k_i).
    \end{equation}
\end{definition}

\begin{remark} We make the following observations. 
    \begin{itemize}
        \item First,
        \begin{equation}\label{useless property}
            C^{-}(t,f_1,\ldots,f_5) = \overline{C^{+}\l(t,\overline f_1,\ldots,\overline{f_5}\r)}.
        \end{equation}
        \item Second, the fixed point equation \eqref{reformulation in momentum space} that we would like to solve rewrites as 
        \begin{equation}
            F^\eta(t) = F^\eta_\ini + \int_0^t C^{+}\l(\tau,F^\eta(\tau),\overline{F^\eta}(\tau),\overline{F^\eta}(\tau),F^{\overline\eta}(\tau),F^{\overline\eta}(\tau)\r)\rmd\tau.
        \end{equation}
        
        The elements $F_{n+1}^\eta$ of the Dyson series \eqref{Dyson series} for $n\geq0$ can be rewritten as 
        \begin{equation}
            F_{n+1}^\eta(t) = \int_0^t\sum_{\substack{\sum_{i=1}^5n_i = n}}C^{+}\l(\tau,F_{n_1}^\eta(\tau),\overline{F_{n_2}^\eta}(\tau),\overline{F_{n_3}^\eta}(\tau),F_{n_4}^{\overline\eta}(\tau),F_{n_5}^{\overline\eta}(\tau)\r)\rmd\tau.
        \end{equation}
    \end{itemize}
\end{remark}

\subsection{Reformulation in terms of signed and coloured trees}
\begin{definition}
    We define \textbf{recursively rooted 5-ary trees} $T$ with \textbf{sign} $\iota$ and \textbf{colour} $\eta$ and denote by $(\perp,\iota,\eta)$ the trivial tree if the tree only consists of its root. For this trivial tree, the notions of root and leaf coincide. We define iteratively for all $n\in\N$,
    \begin{equation}
    \begin{aligned}
        \mathcal T_0^{\iota,\eta}&\coloneqq\{(\perp,\iota,\eta)\},\\
        \mathcal T_{n+1}^{\iota,\eta}&\coloneqq\l\{\bullet(T_1,T_2,T_3,T_4,T_5)\mid\l(T_i\r)_{i\in\llbracket1,5\rrbracket}\in\mathcal T_{n_1}^{\iota,\eta}\times \mathcal T_{n_2}^{-\iota,\eta}\times \mathcal T_{n_3}^{-\iota,\eta}\times \mathcal T_{n_4}^{\iota,\overline\eta}\times \mathcal T_{n_5}^{\iota,\overline\eta}\text{ and }\sum_{i=1}^5n_i=n\r\},
    \end{aligned}
    \end{equation}    
    where $\bullet(T_1,T_2,T_3,T_4,T_5)$ defines the operation where the individual roots of the five rooted trees $T_i$ are connected to a common root $\bullet$. For $T\in\mathcal T_n^{\iota,\eta}$, we denote $\mathcal N(T)$ as the set of (branching) nodes of $T$ and $\mathcal L(T)$ as the set of leaves of $T$. 
\end{definition}

\begin{figure}[H]
    \centering
    \includegraphics[width=0.5\linewidth]{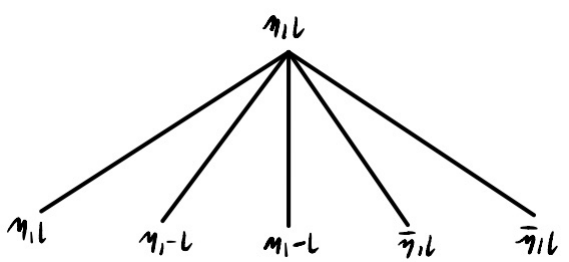}
    \caption{Convention for the nonlinearity in \eqref{system}}
    \label{fig:placeholder}
\end{figure}

\begin{remark}
    The rules for signs and colours of a branching node $\node$ that has sign $\iota$ and colour $\eta$ is depicted in figure \ref{fig:placeholder}.
    If $T\in\mathcal \bigsqcup_{\substack{\iota\in\{\pm\}\\\eta\in\{0,1\}}}T_n^{\iota,\eta}$, then one proves inductively $\abs{\mathcal L(T)} = 4n+1$. We have $5n+1$ vertices in total, so that
    \begin{equation}
        \abs{\bigsqcup_{\substack{\iota\in\{\pm\}\\\eta\in\{0,1\}}}T_n^{\iota,\eta}}\leq \binom{5n+1}{n}\leq(10e)^n.
    \end{equation}
    That is, the cardinality of $\T_n^{\iota,\eta}$ grows as $\Lambda^n$ for some $\Lambda>0$.
\end{remark}

\begin{definition}
    For $T\in\mathcal T_n^{\iota,\eta}$ and $\mathscr n\in\mathcal N(T)$, we define $C(\mathscr n)$ to be the set of the children of $\mathscr n$. 
    For $\mathscr n\in(\mathcal N(T)\cup\mathcal L(T))\setminus\{r_T\}$, where $r_T$ denotes the root of $T$, we define $P(\mathscr n)$ to be the parent of $\mathscr n$. We denote the siblings set as $S(\mathscr n)\coloneqq C(P(\mathscr n))$.
\end{definition}

\begin{definition}\label{parentality order}
    Let $T\in\mathcal T_n^{\iota,\eta}$. For $\mathscr n,\mathscr n'\in\mathcal N(T)$ we define the \textbf{parentality partial order} $\mathscr n<\mathscr n'$ if and only if there exists a finite sequence of nodes $(\mathscr n_k)_{k=1}^m$ with $\mathscr n_1 = \mathscr n$, $\mathscr n_m = \mathscr n'$ and $P(\node_{k-1}) =\node_k$ for all $k\in\llbracket2,m\rrbracket$. If $\mathscr l\in\mathcal L(T)$, we say $\leaf<\node$ if and only if $P(\leaf)\leq\node$. 
\end{definition}

\begin{definition}\label{representation in terms of trees}
    For each $T\in \mathcal T_n^{\iota,\eta}$, we iteratively define the function 
    \begin{equation}
        \J_{T}(t)\coloneqq\begin{cases}
            f_\ini^{\eta,\iota}&\text{ for }n=0,\\
            \int_0^t C^{\iota}\l(\tau,\J_{T_1}(\tau),\J_{T_2}(\tau),\J_{T_3}(\tau),\J_{T_4}(\tau),\J_{T_5}(\tau)\r)\rmd\tau&\text{ for }n\geq1,
        \end{cases}
    \end{equation}
    where $T_1\in\mathcal T_{n_1}^{\iota,\eta}$, $T_2\in\mathcal T_{n_2}^{-\iota,\eta}$, $T_3\in\mathcal T_{n_3}^{-\iota,\eta}$, $T_4\in\mathcal T_{n_4}^{\iota,\overline\eta}$ and $T_5\in\mathcal T_{n_5}^{\iota,\overline\eta}$, $n_1+\cdots+n_5=n-1$ and $\bullet(T_1,T_2,T_3,T_4,T_5)=T$ in the case $n>0$.
\end{definition}

\begin{lemma}
    Denoting $T\in \mathcal T_n^{\iota,\eta}$ more explicitly $(T,\iota,\eta)$, where $\iota$ and $\eta$ denote the sign and colour of the root of $T$, then for all $\iota'\in\{\pm\}$, we have $\J_{(T,\iota,\eta)}^{\iota'} = \J_{\l(T',\iota'\iota,\eta\r)}$.
\end{lemma}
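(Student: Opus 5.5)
Here $T'$ denotes the tree with the same shape as $T$ but with every sign multiplied by $\iota'$ (colours unchanged), and $z^{+}=z$, $z^{-}=\overline z$ as in the paper. With that reading, the lemma follows by induction on the number $n$ of branching nodes, using the conjugation identity \eqref{useless property} together with the recursive definition of $\J_T$ in \cref{representation in terms of trees}. For $\iota'=+$ there is nothing to prove, since $T'=T$. It therefore suffices to treat $\iota'=-$, i.e. to show $\overline{\J_{(T,\iota,\eta)}}=\J_{(T',-\iota,\eta)}$, where $T'$ is $T$ with all signs flipped.

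The first step is to check that sign-flipping is well defined on the tree classes, namely $T\in\mathcal T_n^{\iota,\eta}\Rightarrow T'\in\mathcal T_n^{-\iota,\eta}$. This also goes by induction. The rule in the definition of $\mathcal T_{n+1}^{\iota,\eta}$ assigns the children the signs $(\iota,-\iota,-\iota,\iota,\iota)$ relative to the parent sign. This rule is equivariant under $\iota\mapsto-\iota$, and colours are untouched, so $\bullet(T_1',\dots,T_5')$ is exactly the flipped tree and lies in $\mathcal T_{n}^{-\iota,\eta}$.

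The base case $n=0$ reads $\overline{f_\ini^{\eta,\iota}}=f_\ini^{\eta,-\iota}$, which is immediate from the convention $z^{\pm}$. For the induction step, write $T=\bullet(T_1,\dots,T_5)$. Since conjugation commutes with the time integral,
\[
\overline{\J_T(t)}=\int_0^t\overline{C^{\iota}\bigl(\tau,\J_{T_1},\dots,\J_{T_5}\bigr)}\,\rmd\tau .
\]
If $\iota=+$, \eqref{useless property} gives $\overline{C^{+}(\tau,g_1,\dots,g_5)}=C^{-}(\tau,\overline{g_1},\dots,\overline{g_5})$. If $\iota=-$, the same identity applied with $g_i=\overline{\J_{T_i}}$, then conjugated, gives the symmetric statement. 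The induction hypothesis yields $\overline{\J_{T_i}}=\J_{T_i'}$, so $\overline{\J_T}=\int_0^t C^{-\iota}(\tau,\J_{T_1'},\dots,\J_{T_5'})\,\rmd\tau=\J_{T'}$ by definition.

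The main obstacle is making \eqref{useless property} precise at the level of the Fourier-side definition of $C^\iota$. Two points need care. First, conjugation acts on functions in physical space, so $\widehat{\overline g}(k)=\overline{\hat g(-k)}$. One must check that this sign reversal of all frequencies is compatible with the constraint $\sum k_i=k$ and leaves the resonance factor $\Omega$ invariant; it does, since $\Omega$ is even. This is consistent with the paper's convention that $\widehat{F^{\eta}}$ and $\widehat{\overline{F^\eta}}$ appear as separate objects. Second, the symbol $Q^\eta_\iota$ must be interpreted as $Q^\eta_{-}(k_1,\dots,k_5)=\overline{Q^\eta(-k_1,\dots,-k_5)}$ (or its analogue under the paper's convention), so that the prefactor $-\rmi\iota\epsilon$ and the phase $e^{\rmi\iota t\Omega}$ conjugate correctly. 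I would first verify this identity directly from the definition of $C^\iota$ by substituting $k_i\mapsto -k_i$. If the paper's implicit convention differs, I would fix it so that \eqref{useless property} holds and then proceed as above.
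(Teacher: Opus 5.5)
Your proof is correct and follows the paper's argument. The paper also inducts on the scale $n$, treats $\iota'=+$ as trivial, and for $\iota'=-$ moves the conjugation inside the time integral, uses \eqref{useless property} to rewrite $\overline{C^{\iota}(\tau,\J_{T_1},\dots,\J_{T_5})}$ as $C^{-\iota}(\tau,\overline{\J_{T_1}},\dots,\overline{\J_{T_5}})$, and then applies the induction hypothesis to the five subtrees. Your extra checks go further than the paper, which takes them for granted: that sign-flipping preserves the tree classes, the separate handling of the case $\iota=-$, and the Fourier-side verification of \eqref{useless property}, including what convention for $Q^\eta_{-}$ it requires.
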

\begin{proof}
    The statement can be proven quiet quickly via induction over the scale $n$ of $T$. If $n=0$ the statement is clear by definition \cref{representation in terms of trees}. Now suppose the statement holds for some $n\geq0$ and take $(T,\iota,\eta)\in\mathcal T_{n+1}^{\iota,\eta}$. If $\iota'=+$ there is nothing to prove so assume $\iota'=-$. Using the property \eqref{useless property}, we find
    \begin{equation}
    \begin{aligned}
        \overline{\J_{(T,\iota,\eta)}(t)}&=\int_0^t \overline{C^{\iota}\l(\tau,\J_{(T_1,\iota,\eta)}(\tau),\J_{(T_2,-\iota,\eta)}(\tau),\J_{(T_3,-\iota,\eta)}(\tau),\J_{(T_4,\iota,\overline\eta)}(\tau),\J_{(T_5,\iota,\overline\eta)}(\tau)\r)}\rmd\tau \\
        &=\int_0^t C^{-\iota}\l(\tau,\overline{\J_{(T_1,\iota,\eta)}(\tau)},\overline{\J_{(T_2,-\iota,\eta)}(\tau)},\overline{\J_{(T_3,-\iota,\eta)}(\tau)},\overline{\J_{(T_4,\iota,\overline\eta)}(\tau)},\overline{\J_{(T_5,\iota,\overline\eta)}(\tau)}\r)\rmd\tau\\&=\int_0^t C^{-\iota}\l(\tau,\J_{(T_1,-\iota,\eta)}(\tau),\J_{(T_2,\iota,\eta)}(\tau),\J_{(T_3,\iota,\eta)}(\tau),\J_{(T_4,-\iota,\overline\eta)}(\tau),\J_{(T_5,-\iota,\overline\eta)}(\tau)\r)\rmd\tau \\&= \J_{(T,-\iota,\eta)}(t),
    \end{aligned}
    \end{equation}
    where we used the induction step in the third equality sign. 
\end{proof}
The elements $F_n^{\eta,\iota}$ of the Dyson series can be written as fintie sums over signed and coloured trees in the following way.
\begin{lemma}
    The elements of the Dyson series $F_n^{\eta,\iota}$ can be represented by a finite sum over signed and coloured trees:
    \begin{equation}
        F_n^{\eta,\iota} = \sum_{T\in\mathcal T_n^{\eta,\iota}}\J_{T}.
    \end{equation}
\end{lemma}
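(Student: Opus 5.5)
We argue by strong induction on the scale $n$. Here $F_n^{\eta,\iota}$ denotes $F_n^\eta$ for $\iota=+$ and $\overline{F_n^\eta}$ for $\iota=-$, and the sum runs over $\mathcal T_n^{\iota,\eta}$, i.e. sign $\iota$ and colour $\eta$ at the root. We first treat $\iota=+$ and then obtain $\iota=-$ by conjugation, using the preceding lemma $\J_{(T,\iota,\eta)}^{\iota'}=\J_{(T,\iota'\iota,\eta)}$. Since the trees of $\mathcal T_n^{-,\eta}$ are exactly the trees of $\mathcal T_n^{+,\eta}$ with every sign flipped, this gives
\begin{equation}
    \overline{\sum_{T\in\mathcal T_n^{+,\eta}}\J_T}=\sum_{T\in\mathcal T_n^{-,\eta}}\J_T.
\end{equation}

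\textbf{Base case $n=0$.} We have $\mathcal T_0^{\iota,\eta}=\{(\perp,\iota,\eta)\}$ and $\J_{(\perp,\iota,\eta)}=f_\ini^{\eta,\iota}$, which corresponds to $\widehat{F_0^{\eta,\iota}}=\mu_k^{\eta,\iota}$.

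\textbf{Inductive step.} Assume the identity holds for all scales $\le n$ and all signs and colours. Start from the rewriting of the Dyson series through $C^+$:
\begin{equation}
    F_{n+1}^\eta(t)=\int_0^t\sum_{n_1+\cdots+n_5=n}C^+\l(\tau,F^\eta_{n_1},\overline{F^\eta_{n_2}},\overline{F^\eta_{n_3}},F^{\overline\eta}_{n_4},F^{\overline\eta}_{n_5}\r)\rmd\tau .
\end{equation}
Substitute the induction hypothesis into each slot: $F^{\eta,+}_{n_1}$ is a sum over $\mathcal T_{n_1}^{+,\eta}$, $F^{\eta,-}_{n_2}$ and $F^{\eta,-}_{n_3}$ are sums over $\mathcal T_{n_2}^{-,\eta}$ and $\mathcal T_{n_3}^{-,\eta}$, and $F^{\overline\eta,+}_{n_4}$, $F^{\overline\eta,+}_{n_5}$ are sums over $\mathcal T_{n_4}^{+,\overline\eta}$ and $\mathcal T_{n_5}^{+,\overline\eta}$. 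Because $C^+$ is multilinear in its five function arguments (a finite sum in $k_i$ for fixed $k$) and the integral is linear, the expression becomes
\begin{equation}
    \sum_{\sum n_i=n}\;\sum_{(T_i)}\int_0^t C^+(\tau,\J_{T_1},\ldots,\J_{T_5})\rmd\tau .
\end{equation}
By the definition of $\J$, each summand equals $\J_{\bullet(T_1,\ldots,T_5)}$.

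It remains to check that the map $(n_1,\ldots,n_5,T_1,\ldots,T_5)\mapsto\bullet(T_1,\ldots,T_5)$ is a bijection onto $\mathcal T_{n+1}^{+,\eta}$. Surjectivity holds by the recursive definition of $\mathcal T_{n+1}^{+,\eta}$. Injectivity holds because a 5-ary tree with ordered children determines its five subtrees, and the number of branching nodes determines each scale $n_i$. Hence no tree is counted twice, and $F_{n+1}^{\eta,+}=\sum_{T\in\mathcal T_{n+1}^{+,\eta}}\J_T$.

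\textbf{Finiteness and the main point of care.} All sums are finite, since $\abs{\mathcal T_n^{\iota,\eta}}\le(10e)^n$, so the interchanges of sums with the integral are trivial. There is no analytic obstacle here, only bookkeeping. The step requiring the most care is this bijection, together with the consistent matching of the sign and colour rules at each child with the arguments of $C^+$ (the conjugates in slots $2,3$ and the colour $\overline\eta$ in slots $4,5$). We also read the typo $\overline{F^\eta_{n_2}}$ in the third slot of \eqref{Dyson series} as $\overline{F^\eta_{n_3}}$.
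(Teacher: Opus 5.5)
Your proof is correct and follows the paper's argument: induction on $n$, multilinearity of $C$, and the reindexing identity $\sum_{\sum n_i=n}\sum_{(T_i)}G(\bullet(T_1,\ldots,T_5))=\sum_{T\in\mathcal T_{n+1}}G(T)$. The only difference is in how the sign $\iota=-$ is handled. The paper runs the recursion with $C^\iota$ for both signs at once, implicitly using $C^-=\overline{C^+(\overline{\cdot})}$. You instead treat $\iota=+$ and recover $\iota=-$ from the preceding conjugation lemma together with the sign-flip bijection $\mathcal T_n^{+,\eta}\to\mathcal T_n^{-,\eta}$, which rests on the same identity.
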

\begin{proof}
    We prove the assertion by induction over the scale $n$ of signed and coloured trees. If $n=0$ the statement is obvious. To abbreviate notation, we set $\T_1=\T^{\eta,\iota}$, $\T_2=\T^{\eta,-\iota}$, $\T_3=\T^{\eta,-\iota}$, $\T_4=\T^{\overline\eta,\iota}$ and $\T_5=\T^{\overline\eta,\iota}$ and assume the claim holds for some $n\geq0$. Using the induction hypothesis, 
    \begin{equation}
    \begin{aligned}
        F_{n+1}^{\eta,\iota}(t)&=\int_0^t\sum_{\substack{\sum_{i=1}n_i=n}}C^{\iota}\l(\tau,F_{n_1}^{\eta,\iota}(\tau),F_{n_2}^{\eta,-\iota}(\tau),F_{n_3}^{\eta,-\iota}(\tau),F_{n_4}^{\overline\eta,\iota}(\tau),F_{n_5}^{\overline\eta,\iota}(\tau)\r)\rmd\tau
        \\&=\sum_{\sum_{i=1}n_i=n}\sum_{\substack{\l(T_i\r)_{i=1}^5\in\bigtimes_{i=1}^5\l(\T_i\r)_{n_i}}}\int_0^tC^{\iota}\l(\tau,\J_{T_1}(\tau),\J_{T_2}(\tau),\J_{T_3}(\tau),\J_{T_4}(\tau),\J_{T_5}(\tau)\r)\rmd\tau\\
        &=\sum_{\sum_{i=1}n_i=n}\sum_{\substack{\l(T_i\r)_{i=1}^5\in\bigtimes_{i=1}^5\l(\T_i\r)_{n_i}}}\J_{\bullet\l(T_1,T_2,T_3,T_4,T_5\r)}(t) \\&= \sum_{T\in\mathcal T_{n+1}^{\eta,\iota}}\J_{T}(t),
    \end{aligned}
    \end{equation}
    where in the last equality we used the fact that if $G$ is any function on $\mathcal T_{n+1}^{\eta,\iota}$, we have
    \begin{equation}
        \sum_{\sum_{i=1}^5n_i=n}\sum_{\substack{\l(T_i\r)_{i=1}^5\in\bigtimes_{i=1}^5\l(\T_i\r)_{n_i}}} G\l(\bullet\l(T_1,T_2,T_3,T_4,T_5\r)\r) = \sum_{T\in\mathcal T_{n+1}^{\eta,\iota}}G(T).
    \end{equation}
\end{proof}

\begin{proposition}\label{Fourier transform of J}
    For all $\iota\in\{\pm\}$, $\eta\in\{0,1\}$, $T\in\T_n^{\eta,\iota}$, $t\in\R$ and $k\in\Z_L^d$, the Fourier transform of $\J_{T}$ reads
    \begin{equation}
    \begin{gathered}
        \widehat{\J_{T}}(t,k) = \l(-\frac{\rmi\epsilon}{L^{2d}}\r)^n\prod_{\node\in\Node(T)}\iota_\node\sum_{\kappa\in\mathcal D_k(T)}\prod_{\node\in\Node(T)}Q_\node^T(\kappa)\int_{I_T(t)}\prod_{\node\in\Node(T)}e^{\rmi\iota_\node\Omega^T_\node(\kappa)t_\node}\rmd t_\node\prod_{\leaf\in\Leaf(T)}\mu^{\eta_\leaf,\iota_\leaf}_{\kappa(\leaf)},
    \end{gathered}
    \end{equation}
    where \begin{equation}
        \begin{aligned}
            I_T(t)&\coloneqq\l\{\l(t_\node\r)_{\node\in\Node(T)}\in[0,t]^n\mid t_\node\leq t_{\node'}\text{ if }\node\leq\node'\r\},\\
            Q_\node^T(\kappa)&\coloneqq Q_{\iota_\node}^{\eta_\node}\l(K_C\l(\node_1\r)(\kappa),\ldots,K_C\l(\node_5\r)(\kappa)\r)
        \end{aligned}
    \end{equation}
    for $C(\node)=\{\node_1,\ldots,\node_5\}$ (ordered children from left to right).
\end{proposition}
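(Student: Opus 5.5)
We argue by induction on the scale $n$ of $T$, unfolding Definition~\ref{representation in terms of trees} one layer at a time. Implicitly, $\mathcal D_k(T)$ is the set of decorations $\kappa$ assigning a momentum $\kappa(\node)\in\Z_L^d$ to every vertex, with $\kappa(r_T)=k$ and $\kappa(\node)=\sum_{\node'\in C(\node)}\kappa(\node')$ at each branching node. Here $K_C(\node_i)(\kappa)=\kappa(\node_i)$, and
\[
\Omega^T_\node(\kappa)=\Omega\l(\kappa(\node),\kappa(\node_1),\ldots,\kappa(\node_5)\r),
\]
with the signs adapted to $\iota_\node$, so that $\iota_\node\Omega^T_\node$ is exactly the phase appearing in $C^{\iota_\node}$. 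For $n=0$ the set $\Node(T)$ is empty, $I_T(t)$ is a point, $\mathcal D_k(T)$ consists of the single decoration $\kappa(r_T)=k$, and the formula reduces to $\widehat{\J_T}(t,k)=\mu^{\eta,\iota}_k$. This is the definition, using $\widehat{f^{\iota}}(k)=\mu^\iota_{\pm k}$ according to the sign convention fixed for $\mu^{\eta,\iota}$.

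For the inductive step, write $T=\bullet(T_1,\ldots,T_5)$ with $T_i$ of scales $n_i$, $\sum n_i=n-1$, and signs and colours prescribed by the tree rules. By definition and the Fourier formula for $C^\iota$,
\[
\widehat{\J_T}(t,k)=\int_0^t-\frac{\rmi\iota\epsilon}{L^{2d}}\sum_{k_1+\cdots+k_5=k}e^{\rmi\iota t_{r}\Omega}Q^\eta_\iota(k_1,\ldots,k_5)\prod_{i=1}^5\widehat{\J_{T_i}}(t_r,k_i)\,\rmd t_r .
\]
Insert the induction hypothesis for each $\widehat{\J_{T_i}}(t_r,k_i)$ and collect the pieces as follows.
\begin{itemize}
\item The prefactors multiply to $\l(-\frac{\rmi\epsilon}{L^{2d}}\r)^{1+\sum n_i}\iota_r\prod_i\prod_{\node\in\Node(T_i)}\iota_\node$, because $\Node(T)=\{r\}\sqcup\bigsqcup_i\Node(T_i)$.
\item A choice of $(k_1,\ldots,k_5)$ with $\sum k_i=k$ together with decorations $\kappa_i\in\mathcal D_{k_i}(T_i)$ corresponds bijectively to a decoration $\kappa\in\mathcal D_k(T)$, so the sums combine.
\item The $Q$ factors and the leaf variables $\mu$ combine into the products over $\Node(T)$ and $\Leaf(T)=\bigsqcup_i\Leaf(T_i)$.
\item The time integrals combine into a single integral: each $\int_{I_{T_i}(t_r)}$ runs over times $t_\node\le t_r$ ordered along $T_i$. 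Integrating afterwards over $t_r\in[0,t]$ gives exactly $I_T(t)$, since $\node\le r$ for every node of $T$ and the parentality order on $T$ restricted to $T_i$ is the order on $T_i$.
\end{itemize}
Fubini is justified because every sum is finite: $\mu$ has compact support, and in any case the sums over $\Z_L^d$ converge absolutely thanks to the $W^{1,1}$ assumptions.

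The main obstacle is bookkeeping of signs rather than analysis. First, the momentum conventions for negative-sign leaves, $\widehat{\overline f}(k)=\overline{\hat f(-k)}$, must match the way $\Omega$ and $Q^\eta_\iota$ are evaluated at child momenta. Second, the phase produced by $C^{\iota}$ at a node with sign $\iota_\node$ must be $\iota_\node\Omega^T_\node$, consistent with the conjugation identity \eqref{useless property} and the preceding lemma $\J^{\iota'}_{(T,\iota,\eta)}=\J_{(T,\iota'\iota,\eta)}$. I would fix these conventions by defining $\Omega^T_\node$ and $K_C$ with the signs built in, checking them on a single-node tree, and then letting the induction carry them through unchanged.
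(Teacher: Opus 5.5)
Your proof is correct and follows the paper's argument essentially step for step. Both proofs induct on the scale, unfold $C^{\iota}$ at the root, and use the canonical bijection $\mathcal D_k(T)\cong\{(k_1,\ldots,k_5,\kappa_1,\ldots,\kappa_5)\mid \sum_i k_i=k,\ \kappa_i\in\mathcal D_{k_i}(T_i)\}$, the splittings $\Node(T)=\{r_T\}\sqcup\bigsqcup_i\Node(T_i)$ and $\Leaf(T)=\bigsqcup_i\Leaf(T_i)$, and the description of $I_T(t)$ as $t_{r_T}\in[0,t]$ with the remaining times in the product of the $I_{T_i}(t_{r_T})$. Your extra remarks address sign conventions that the paper's proof passes over silently: the base case needs $\mu^{\eta,-}_k:=\widehat{\overline{f^\eta_{\ini}}}(k)$, and $\Omega^T_\node$ must be read as the unsigned resonance factor so that $\iota_\node\Omega^T_\node$ is exactly the phase in $C^{\iota_\node}$.
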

\begin{proof}
    The proof goes by induction over the scale $n$. For $n=0$, there is nothing to prove. We assume the statement to be true for $n-1\in\N$ and assume $T\in\T_n^{\eta,\iota}$ so that $T=\bullet\l(T_1,\ldots,T_5\r)$, $T_1\in\T_{n_1}^{\iota,\eta}$, $T_2\in\T_{n_2}^{-\iota,\eta}$, $T_3\in\T_{n_3}^{-\iota,\eta}$, $T_4\in\T_{n_4}^{\iota,\overline\eta}$ and $T_5\in\T_{n_5}^{\iota,\overline\eta}$ with $\sum_{i=1}^5n_i=n-1$. We get 
    \begin{equation}
    \begin{aligned}
        \widehat{\J_{T}}(t,k)&=-\frac{\rmi\iota\epsilon}{L^{2d}}\int_0^t\sum_{\sum_{i=1}^5k_i=k}e^{\rmi\iota\tau\Omega}Q^\eta_\iota(k_1,\ldots,k_5)\prod_{i=1}^5\widehat{\J_{T_i}}(\tau,k_i)\rmd\tau\\
        &=\l(-\frac{\rmi\epsilon}{L^{2d}}\r)^n\prod_{\node\in\Node(T)}\iota_\node\sum_{\substack{\sum_{i=1}^5k_i=k\\\kappa_i\mathcal D_{k_i}\forall i\in\llbracket1,5\rrbracket}}Q^\eta_\iota(k_1,\ldots,k_5)\\&\cdot\int_0^te^{\rmi\iota\tau\Omega}\l[\int_{\bigtimes_{i=1}^5I_{T_i}(\tau)}\prod_{i=1}^5\prod_{\node_i\in\Node(T_i)}e^{\rmi\iota_{\node_i}\Omega^{T_i}_{\node_i}(\kappa_i)t_{\node_i}}\rmd t_{\node_i}\r]\rmd\tau\\&\cdot\prod_{i=1}^5\prod_{\node_i\in\Node(T_i)}Q_{\node_i}^{T_i}(\kappa_i)\cdot\prod_{i=1}^5\prod_{\leaf_i\in\Leaf(T_i)}\mu^{\eta_{\leaf_i},\iota_{\leaf_i}}_{\kappa_i(\leaf_i)}.
    \end{aligned}
    \end{equation}
    Given $\kappa\in\mathcal D_K(T)$, $\left.\kappa\right|_{T_i}$ are $k_i$-decorations on $T_i$ where by $\left.\kappa\right|_{T_i}$ we mean the restriction of $\kappa$ to the subset of leaves of the subtree $T_i$. We use the canonical bijection 
    \begin{equation}
        \D_k(T)\cong\l\{(k_1,\ldots,k_5,\kappa_1,\ldots,\kappa_5)\mid k_1,\ldots,k_5\in\Z_L^d\text{ and }\sum_{i=1}^5k_i=k\text{ and }\kappa_i\in\D_{k_i}(T_i)\forall i\in\llbracket1,5\rrbracket\r\},
    \end{equation}
    the fact that $\Node(T)=\{\root_T\}\sqcup\bigsqcup_{i=1}^5\Node(T_i)$, and $\Leaf(T)=\bigsqcup_{i=1}^5\Leaf(T_i)$ to deduce  
    \begin{equation}
    \begin{aligned}
        \sum_{\substack{\sum_{i=1}^5k_i=k\\\kappa_i\in\mathcal D_{k_i}(T_i)\forall i\in\llbracket1,5\rrbracket}}&=\sum_{\kappa\in\mathcal D_k(T)},\\Q^\eta_\iota(k_1,\ldots,k_5)\prod_{i=1}^5\prod_{\node_i\in\Node(T_i)}Q_{\node_i}^{T_i}(\kappa_i) &= \prod_{\node\in\Node(T)}Q_\node^T(\kappa)\\\prod_{i=1}^5\prod_{\leaf_i\in\Leaf(T_i)} &= \prod_{\leaf\in\Leaf(T)}.
    \end{aligned}
    \end{equation}
    One can show by induction  
    \begin{equation}
        I_T(T) = \l\{\l(t_\node\r)_{\node\in\Node(T)}\in[0,t]^n\biggm\vert t_{\root_T}\in[0,t]\text{ and }\l(\l(t_\node\r)_{\node\in\Node(T_i)}\r)_{i=1}^5\in\bigtimes_{i=1}^5I_{T_i}\l(t_{\root_T}\r)\r\},
    \end{equation}
    so that 
    \begin{equation}
        \int_0^te^{\rmi\iota\Omega^T_{\root_T}(\kappa)t_{\root_T}}\int_{\bigtimes_{i=1}^5I_{T_i}(t_\root)}\prod_{i=1}^5\prod_{\node_i\in\Node(T_i)}e^{\rmi\iota_{\node_i}\Omega_{\node_i}^{T_i}(\kappa_i)t_{\node_i}}\rmd t_{\node_i}\rmd t_\root=\int_{I_T(t)}\prod_{\node\in\Node(T)}e^{\rmi\iota_\node\Omega^T_\node(\kappa)t_\node}\rmd t_\node 
    \end{equation}
    and thus
    \begin{equation}
    \begin{gathered}
        \widehat{\J_{T}}(t,k) = \l(-\frac{\rmi\epsilon}{L^{2d}}\r)^n\prod_{\node\in\Node(T)}\iota_\node\sum_{\kappa\in\mathcal D_k(T)}\prod_{\node\in\Node(T)}Q_\node^T(\kappa)\int_{I_T(t)}\prod_{\node\in\Node(T)}e^{\rmi\iota_\node\Omega^T_\node(\kappa)t_\node}\rmd t_\node\\\cdot\prod_{\leaf\in\Leaf(T)}\mu^{\iota_\leaf,\eta_\leaf}_{\kappa(\leaf)}.
    \end{gathered}
    \end{equation}
\end{proof}

\begin{remark}
    Rescaling in time to cancel the $\epsilon^n$ prefactor leads to
    \begin{equation}
    \begin{gathered}
        \widehat{\J_{T}}\l(\epsilon^{-1}t,k\r) = \l(-\frac{\rmi}{L^{2d}}\r)^n\prod_{\node\in\Node(T)}\iota_\node\sum_{\kappa\in\mathcal D_k(T)}\prod_{\node\in\Node(T)}Q_\node^T(\kappa)\\\cdot\int_{I_T(t)}\prod_{\node\in\Node(T)}e^{\rmi\epsilon^{-1}\iota_\node\Omega^T_\node(\kappa)t_\node}\rmd t_\node\prod_{\leaf\in\Leaf(T)}\mu^{\iota_\leaf,\eta_\leaf}_{\kappa(\leaf)}.
    \end{gathered}
    \end{equation}
\end{remark}

\subsection{Decorations and averaging over rooted trees}\label{some notation is introduced here}
To associate wave numbers to the nodes and leaves of a ternary tree and take subsequent expectations, we must introduce the notion of a coupling. 

\begin{definition}
    For each $T\in\T_n^{\iota,\eta}$, we call any element of $\l(\R^d\r)^{\Leaf(T)}$ a \textbf{decoration} of the tree $T$ and associate wave numbers to each node and leaf in the following way. Let $K_T\colon\Node(T)\cup\Leaf(T)\to L\l(\l(\R^d\r)^{\Leaf(T)},\R^d\r)$ be the map defined by 
    \begin{equation}
        K_T(\node)(\kappa)\coloneqq
            \sum_{\Leaf(T)\ni\leaf\leq\node}\kappa(\leaf).
    \end{equation}
    Given a decoration $\kappa\in\l(\R^d\r)^{\Leaf(T)}$ of the tree $T$, we say that the node or leaf $\node\in\Node(T)\cup\Leaf(T)$ has \textbf{wave number} $K_T(\node)(\kappa)$.
    For a fixed $k\in\Z_L^d$, we call a decoration $\kappa\in\l(\R^d\r)^{\Leaf(T)}$ a \textbf{$k$-decoration} if $\kappa(\Leaf(T))\subseteq\Z_L^d$ and $K(\root_T)(\kappa)=k$ where $\root_T$ denotes the root of $T$. We denote the subset of $k$-decorations by $\mathcal D_k(T)$.
    For each tree $T\in\T_n^{\eta,\iota}$ and node $\node\in\Node(T)$ we associate the resonance factor $\Omega^T_\node\colon\l(\R^d\r)^{\Leaf(T)}\to\R$ defined by
    \begin{equation}
        \Omega_\node^T(\kappa)\coloneqq \iota_\node\abs{K_T(\node)(\kappa)}^2-\sum_{\node'\in C(\node)}\iota_{\node'}\abs{K_T(\node')(\kappa)}^2.
    \end{equation}
    We also set \begin{align}
        Q_\node^T(\kappa)&\coloneqq Q^{\eta_\node}_{\iota_\node}\l(K_T(\node_1)(\kappa),\ldots,K_T(\node_5)(\kappa)\r),
    \end{align}
    where $\node_1,\ldots,\node_5$ denote the children of $\node$ from left to right. 
\end{definition}

\begin{definition}
    For any $T\in\T_n^{\eta,\iota}$ and $T'\in\T_{n'}^{\eta',\iota'}$, if \begin{equation}
        \abs{\{\ell\in\Leaf(T)\mid\iota_\ell=-\}\cup\{\ell\in\Leaf(T')\mid\iota'_\ell=-\}}=\abs{\{\ell\in\Leaf(T)\mid\iota_\ell=+\}\cup\{\ell\in\Leaf(T')\mid\iota'_\ell=+\}}\label{postive and negative leaves}
    \end{equation} and if there exists an involution $\sigma\colon\Leaf(T)\cup\Leaf(T')\to\Leaf(T)\cup\Leaf(T')$, called \textbf{coupling map}, such that $\sigma$ restricts on $\{\ell\in\Leaf(T)\mid\iota_\ell=-\}\cup\{\ell\in\Leaf(T')\mid\iota_\ell=-\}$ to an involution onto $\{\ell\in\Leaf(T)\mid\iota_\ell=+\}\cup\{\ell\in\Leaf(T')\mid\iota_\ell=+\}$, we call the triple $C\coloneqq(T,T',\sigma)$ a \textbf{couple}. We denote the set of the right- respectively left-hand side in \eqref{postive and negative leaves} by $\Leaf(C)_\pm$ and further denote $\Node(C)\coloneqq\Node(T)\cup\Node(T')$ and $\Leaf(C)\coloneqq\Leaf(T)\cup\Leaf(T')$. Also,
    \begin{align}
        \mathcal C_{n,n'}^{\eta,\eta',\iota,\iota'}&\coloneqq\l\{(T,T',\sigma)\text{ couple }\mid T\in\T_n^{\eta,\iota},T'\in\T_{n'}^{\eta',\iota'}\r\},\\
        \mathcal C_{n,n'}&\coloneqq\bigsqcup_{\substack{\eta,\eta'\{0,1\}\\\iota,\iota'\in\{\pm\}}}\mathcal C_{n,n'}^{\eta,\eta',\iota,\iota'}.
    \end{align}
\end{definition}

\begin{definition}\label{Definition of decoration of coupling}
Let $C\in\mathcal C_{n,n'}^{\eta,\eta',\iota,\iota'}$ be a couple and $n(C)\coloneqq\abs{\Node(C)}$. Any element of $\l(\R^d\r)^{\Leaf(C)_+}$ is called a \textbf{decoration} of the couple $C$. We define $K_C\colon\Node(C)\cup\Leaf(C)\to L\l(\l(\R^d\r)^{\Leaf(C)_+},\R^d\r)$ to be the evaluation map at positive leaves and everywhere else defined by
\begin{equation}
    K_C(\node)\coloneqq\begin{cases}
        -K_C(\sigma(\node))&\text{ if }\node\in\Leaf(C)_-,\\
        \sum\limits_{\Leaf(C)\ni\leaf<\node}K_C(\leaf)&\text{ if }\node\in\Node(C).
    \end{cases}
\end{equation}
For a particular decoration $\kappa$ of the couple $C$, the map $K_C$ associates wave numbers to each node and leaf of the couple. We define the vector subspace $V(C)\coloneqq\mathrm{span}\l\{K(\leaf)\mid\leaf\in\Leaf(C)_+\r\}$. It should be noted that $K_C(\leaf)_{\leaf\in\Leaf(C)_+}$ defines a basis for $V(C)$.
For any fixed $k\in\Z_L^d$, we say that $\kappa\in\l(\R^d\r)^{\Leaf(C)_+}$ is a \textbf{$k$-decoration} of $C$ if $\kappa\l(\Leaf(C)_+\r)\subseteq\Z_L^d$ and $K_C(\root_C)(\kappa)=k$, where $\root_C$ denotes the root of the left tree of $C$. We denote the subset of $k$-decorations of the couple $C$ by $\mathcal D_k(C)$. We call a subset of leaves $\mathcal F\subseteq\Leaf(C)$ self-coupled if $\sigma(\mathcal F)=\mathcal F$.
\end{definition}

\begin{remark}
    \Cref{Definition of decoration of coupling} implies in particular 
    \begin{equation}
        K_C(\leaf)+K_C(\sigma(\leaf)) = 0\text{ for all }\leaf\in\Leaf(C).
    \end{equation}
    If $\root_C$ and $\root'_C$ denote the roots of the left respectively right tree in $C$, then 
    \begin{equation}
        K_C\l(\root_C\r)+K_C\l(\root_C'\r)=\sum_{\leaf\in\Leaf(C)}K_C(\leaf) = \sum_{\leaf\in\Leaf(C)_+}K_C(\leaf)-\sum_{\leaf\in\Leaf(C)_-}K_C(\sigma(\leaf)) = 0
    \end{equation}
    where in the last equality sign we used the fact that $\sigma$ restricts to an involution from $\Leaf(C)_-$ onto $\Leaf(C)_+$.
Let $\mathcal F\subseteq\Leaf(C)$. Then 
        \begin{equation}
            \sum_{\leaf\in\mathcal F}K_C(\leaf)=0\Leftrightarrow\sigma\l(\mathcal F\r)=\mathcal F.
        \end{equation}
        Thence, $K_C$ can be used to measure self-coupledness of a subset of leaves of a couple $C$. 
\end{remark}

\begin{proposition}
    For any $\eta_1,\eta_2\in\{0,1\}$, $\iota_1,\iota_2\in\{\pm\}$, $n_1,n_2\in\N$ and $k\in\Z_L^d$ we have 
    \begin{equation}
        \E\l(\widehat{\J^{\eta_1,\iota_1}_{n_1}}\l(\epsilon^{-1}t,k\r)\widehat{\J^{\eta_2,\iota_2}_{n_2}}\l(\epsilon^{-1}t,k\r)\r) = \sum_{C\in\mathcal C_{n_1,n_2}^{\eta_1,\eta_2,\iota_1,\iota_2}}{\J_C}\l(\epsilon^{-1}t,k\r),
    \end{equation}
    where the function $\J_C$ is defined by
    \begin{equation}\label{crucial object}
    \begin{gathered}
        {\J_C}\l(\epsilon^{-1}t,k\r)\coloneqq\l(\frac{-\rmi}{L^{2d}}\r)^{n(C)}\prod_{n\in\Node(C)}\iota_\node\sum_{\kappa\in\D_k(C)}\prod_{\node\in\Node(C)} Q_\node\\\cdot\int_{I_C(t)}\prod_{\node\in\Node(C)}e^{\rmi\iota_\node\epsilon^{-1}\Omega_\node t_\node}\rmd t_\node\prod_{\leaf\in\Leaf(C)_+}M^{\eta_\leaf,\eta_{\sigma(\leaf)}}(\kappa(\leaf))^{\iota_\leaf},
    \end{gathered}
    \end{equation}
    where $Q_\node\coloneqq Q_\node^{T_i}(\kappa)$ and $\Omega_\node\coloneqq\Omega_n^{T_i}(\kappa)$ for $\node\in\Node(T_i)$ and $I_C(t)\coloneqq I_{T_1}(t)\times I_{T_2}(t)$. If $\Node(C)=\emptyset$, we have $I_C(t)=\emptyset$ and in that case the integral over $I_C(t)$ is conventionally understood to equal to $1$.   
\end{proposition}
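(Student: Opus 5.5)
The plan is to expand both factors with the tree representation and apply Isserlis' theorem. Since $\widehat{\J^{\eta_i,\iota_i}_{n_i}}=\sum_{T\in\T_{n_i}^{\eta_i,\iota_i}}\widehat{\J_T}$ by the tree lemma, bilinearity of expectation reduces the claim to computing $\E\bigl(\widehat{\J_{T_1}}(\epsilon^{-1}t,k)\widehat{\J_{T_2}}(\epsilon^{-1}t,k)\bigr)$ for fixed $T_1\in\T_{n_1}^{\eta_1,\iota_1}$ and $T_2\in\T_{n_2}^{\eta_2,\iota_2}$. For each factor I insert the rescaled formula of \cref{Fourier transform of J}. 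The product of the two deterministic parts is already of the required shape:
\begin{itemize}
\item the prefactor is $(-\rmi/L^{2d})^{n_1+n_2}\prod_{\node}\iota_\node$;
\item the $Q$-product and the phase integrand combine over $\Node(T_1)\sqcup\Node(T_2)$;
\item the time integral runs over $I_{T_1}(t)\times I_{T_2}(t)=I_C(t)$.
\end{itemize}
The only random object left is $\prod_{\leaf\in\Leaf(T_1)\sqcup\Leaf(T_2)}\mu^{\eta_\leaf,\iota_\leaf}_{\kappa(\leaf)}$, summed over pairs $(\kappa_1,\kappa_2)\in\D_k(T_1)\times\D_k(T_2)$. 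The sum is finite because $M$ has compact support, or at least it converges absolutely, so expectation and sum can be interchanged.

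Next I apply Isserlis' theorem to this product of centred jointly Gaussian variables. Its expectation equals the sum over all perfect pairings $\sigma$ of $\Leaf(T_1)\sqcup\Leaf(T_2)$ of $\prod_{\{\leaf,\sigma(\leaf)\}}\E\bigl(\mu^{\eta_\leaf,\iota_\leaf}_{\kappa(\leaf)}\mu^{\eta_{\sigma(\leaf)},\iota_{\sigma(\leaf)}}_{\kappa(\sigma(\leaf))}\bigr)$. By \eqref{random initial data}, each pair factor vanishes unless $\iota_{\sigma(\leaf)}=-\iota_\leaf$ and $\kappa(\sigma(\leaf))=\kappa(\leaf)$. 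Note the sign convention here: the variables are $\mu^{\iota}_{\kappa}$, so the delta $\delta_{k-k'}$ acts on the labels, and the decoration convention $K_C(\sigma(\leaf))=-K_C(\leaf)$ should be read as the signed wave number $\iota_\leaf\kappa(\leaf)$. When the pair factor does not vanish it equals $M^{\eta_\leaf,\eta_{\sigma(\leaf)}}(\kappa(\leaf))^{\iota_\leaf}$ for $\leaf\in\Leaf_+$. The surviving pairings are therefore exactly the involutions mapping negative leaves onto positive ones, which are the coupling maps. So the nonzero terms are indexed by couples $C=(T_1,T_2,\sigma)\in\mathcal C_{n_1,n_2}^{\eta_1,\eta_2,\iota_1,\iota_2}$. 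If the counts of positive and negative leaves differ, no couple exists and the expectation is $0$, consistent with an empty sum.

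The main step, where the care lies, is the reindexing of the decorations. For a fixed coupling $\sigma$, the pairs $(\kappa_1,\kappa_2)$ satisfying all the Kronecker constraints must be put in bijection with $\D_k(C)$, which is parametrised by the values at $\Leaf(C)_+$. The argument runs as follows.
\begin{itemize}
\item The value at each negative leaf is determined by $\sigma$.
\item The root constraint of $T_1$ gives $K_C(\root_C)=k$.
\item The root constraint of $T_2$ then becomes the second-root condition. This is automatic by the remark that $K_C(\root_C)+K_C(\root'_C)=0$, up to the sign/orientation convention of the right tree, which I would check matches the formulation with $\iota_2$ (reading the argument of the second factor with the appropriate sign).
\end{itemize}
Under this bijection the node wave numbers $K_{T_i}(\node)$ coincide with $K_C(\node)$. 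Hence $Q_\node$ and $\Omega_\node$ agree, and summing over $\sigma$ and over $(T_1,T_2)$ gives $\sum_{C}\J_C$.

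Finally, the degenerate case $\Node(C)=\emptyset$ ($n_1=n_2=0$) is a direct check from \eqref{random initial data}, with the empty integral taken to be $1$.
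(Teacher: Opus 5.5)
Your proposal follows the paper's proof essentially step for step: expand both factors over trees via \cref{Fourier transform of J}, apply Isserlis' theorem, use $\delta_{\iota_\leaf+\iota_{\sigma(\leaf)}}$ to restrict the pairings to coupling maps, and identify the Kronecker-constrained pairs $(\kappa_1,\kappa_2)$ with $\D_k(C)$. The two sign issues you flag are genuine and are handled in the paper's proof as you anticipate. It sums over $\kappa_2\in\D_{-k}(T_2)$, so the second factor is effectively evaluated at $-k$, and this is what makes $K_C(\root'_C)=-k$ automatic. It also imposes $\kappa(\leaf)+\kappa(\sigma(\leaf))=0$ on the convolution variables, which is equivalent to your reading via equal labels and signed wave numbers.
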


\begin{proof}
    According to \cref{Fourier transform of J}, we have 
    \begin{equation}\label{initial calculation}
        \begin{gathered}           \E\l(\widehat{F^{\eta_1,\iota_1}_{n_1}}\l(\epsilon^{-1}t,k\r)\widehat{F^{\eta_2,\iota_2}_{n_2}}\l(\epsilon^{-1}t,k\r)\r)=\sum_{\substack{T_1\in\T_{n_1}^{\eta_1,\iota_1}\\T_2\in\T_{n_2}^{\eta_2,\iota_2}}}\E\l(\widehat{\J_{T_1}}\l(\epsilon^{-1}t,k\r)\widehat{\J_{T_2}}\l(\epsilon^{-1}t,k\r)\r)\\
    =\l(\frac{-\rmi}{L^{2d}}\r)^{n(C)}\prod_{\node\in\Node(C)}\iota_\node\sum_{\substack{T_1\in\T_{n_1}^{\eta_1,\iota_1}\\T_2\in\T_{n_2}^{\eta_2,\iota_2}}}\sum_{\substack{\kappa_1\in\mathcal D_k(T_1)\\\kappa_2\in\mathcal D_{-k}(T_2)}}\prod_{\node\in\Node(C)} Q_\node\\\cdot\int_{I_C(t)}\prod_{\node\in\Node(C)}e^{\rmi\iota_\node\Omega_\node t_\node}\rmd t_\node\E\l(\prod_{\leaf\in\Leaf(C)}\mu^{\eta_\leaf,\iota_\leaf}_{\kappa(\leaf)}\r),
    \end{gathered}
    \end{equation}
    where we have implicitly defined $\left.\kappa\right|_{\Leaf(T_i)}\coloneqq\kappa_i$.
    With Isserlis' theorem (see \cref{Isserlis}), we find 
    \begin{equation}
    \begin{aligned}
        \E\l(\prod_{\leaf\in\Leaf(C)}\mu^{\eta_\leaf,\iota_\leaf}_{\kappa(\leaf)}\r) &= \sum_{\text{Pairings }\sigma\text{ of }\Leaf(C)}\prod_{\leaf\in \mathcal S_\sigma}\E\l(\mu^{\eta_\leaf,\iota_\leaf}_{\kappa(\leaf)}\mu^{\eta_{\sigma(\leaf)}\iota_{\sigma(\leaf)}}_{\kappa(\sigma(\leaf))}\r)\\&=\sum_{\text{Pairings }\sigma\text{ of }\Leaf(C)}\prod_{\leaf\in \mathcal S_\sigma}\delta_{\iota_\leaf+\iota_{\sigma(\leaf)}}\delta_{\kappa(\leaf)+\kappa(\sigma(\leaf))}M^{\eta_\leaf,\eta_{\sigma(\leaf)}}(\kappa(\leaf))^{\iota_\leaf}
    \end{aligned}
    \end{equation}
    For each paring $\sigma$ of $\Leaf(C)$ there is a subset of leaves $\mathcal S_\sigma\in\Leaf(C)$ such that $\mathcal S_\sigma\sqcup\sigma(S_\sigma)=\Leaf(C)$. Now, in order to get a non-trivial contribution, the pairings must satisfy $\iota_\leaf+\iota_{\sigma(\leaf)}=0$ and $\kappa(\leaf)+\kappa(\sigma(\leaf))=0$ for all $\leaf\in S_\sigma$. The first requirement implies $S_\sigma=\Leaf(C)_+$ and the second requirement gives
    \begin{equation}
        \sum_{\substack{T_1\in\T_{n_1}^{\eta_1,\iota_1}\\T_2\in\T_{n_2}^{\eta_2,\iota_2}}}\sum_{\substack{\kappa_1\in\mathcal D_k(T_1)\\\kappa_2\in\mathcal D_{-k}(T_2)}}\sum_{\text{Pairings }\sigma\text{ of }\Leaf(C)}\prod_{\leaf\in\Leaf(C)_+}\delta_{\kappa(\leaf)+\kappa(\sigma(\leaf))} = \sum_{C=(T_1,T_2,\sigma)\in\mathcal C_{n_1,n_2}^{\eta_1,\eta_2,\iota_1,\iota_2}}\sum_{\kappa\in\mathcal D_k(C)}.
    \end{equation}
    Plugging these observations into \eqref{initial calculation}, we find exactly 
    \begin{equation} 
    \begin{gathered}
\E\l(\widehat{\J_{T_1}}\l(\epsilon^{-1}t,k\r)\widehat{\J_{T_2}}\l(\epsilon^{-1}t,k\r)\r) = \l(\frac{-\rmi}{L^{2d}}\r)^{n(C)}\sum_{C\in\mathcal C_{n_1,n_2}^{\eta_1,\eta_2,\iota_1,\iota_2}}\sum_{\kappa\in\mathcal D_k(C)}\prod_{\node\in\Node(C)}\iota_\node\prod_{\node\in\Node(C)} Q_\node\\\cdot\int_{I_C(t)}\prod_{\node\in\Node(C)}e^{\rmi\iota_\node\epsilon^{-1}\Omega_\node t_\node}\rmd t_\node\prod_{\leaf\in\Leaf(C)_+}M^{\eta_\leaf,\eta_{\sigma(\leaf)}}(\kappa(\leaf))^{\iota_\leaf}.  
    \end{gathered}
    \end{equation}
\end{proof}

\section{First part of the proof of \cref{this is the main theorem}}\label{third section}
In this section, we prove the existence and uniqueness of a solution to the microscopic system \eqref{system} on the time interval $\l[0,\delta L^{\frac{1}{\beta}}\r]$.

\subsection{Initial estimates and preparational tools}

\begin{definition}
    Let $<$ and $<'$ be two order relations on a given set. We call $<'$ \textbf{compatible} with $<$ if $a<b$ implies $a<'b$ for all elmeents $a$ and $b$ of the underlying set. For a totally ordered finite set $\mathcal S =\{s_1<'\cdots s_{\abs{\mathcal S}}\}$ it is clear that its total order relation $<'$ can be identified with a bijection $f_{<'}\colon\llbracket1,\abs{\mathcal S}\rrbracket\to S$ via $f_{<'}^{-1}(s_i)<f_{<'}^{-1}(s_j)$ if and only if $s_i<'s_j$. The underlying bijection $f_{<'}$ enumerates the elements of $\mathcal S$. Given a partial order relation $<_{\mathcal S}$, we denote the set of all total order relations on $\mathcal S$ that are compatible with $<_{\mathcal S}$ by $\mathcal M(\mathcal S,<_{\mathcal S})$. If $S$ is a subset of leaves and nodes, we will always consider the parentality order $<$ as the partial order relation and simply denote $\mathcal M(S)=\mathcal M(S,<)$. 
\end{definition}

\begin{lemma}\label{trivial bound}

    There exists $\Lambda(d,R)>0$ such that 
    \begin{equation}
        \abs{{\J_C}\l(\epsilon^{-1}t,k\r)}\leq\Lambda\abs{\M(\Node(C))}\frac{(\Lambda t)^n}{n!}
    \end{equation}
    for all $t\in[0,\delta]$ and $k\in\Z_L^d$.
    \begin{proof}
        We simply bound the integral in \eqref{crucial object} by
        \begin{equation}
            \abs{\int_{I_C(t)}\prod_{\node\in\Node(C)}e^{\rmi\iota_\node\epsilon^{-1}\Omega_\node t_\node\rmd t_\node}}\leq\int_{I_C(t)}\prod_{\node\in\Node(C)}\rmd t_\node = \abs{\M(\Node(C))}\frac{t^n}{n!}
        \end{equation} 
        and also exploit the fact that the functions $Q_\node$ and $M^{\eta,\eta'}$ are bounded and in addition that $M^{\eta,\eta'}$ are and compactly supported in $B_R(0)$. Recall $\abs{\Leaf(C)_+}=2n+1$. In this regard,
        \begin{equation}
        \begin{aligned}
        \abs{\widehat{\J_C}\l(\epsilon^{-1}t,k\r)}&\leq\Lambda^{2n+1} L^{-2nd}\abs{\M(\Node(C))}\frac{t^n}{n!}\sum_{\substack{\kappa\in\mathcal D_k(C)\\\kappa(\Leaf(C)_+)\subseteq B_R(0)}}1\\&=\Lambda^{2n+1}L^{-2nd}\abs{\M(\Node(C))}\frac{t^n}{n!}\abs{\l\{\kappa\in\l(\Z_L^d\cap B_0(R)\r)^{\Leaf(C)_+}\Bigm\vert K_C(\root)(\kappa)=k\r\}}\\&\leq\Lambda^{2n+1} L^{-2nd}\abs{\M(\Node(C))}\frac{t^n}{n!}\l(L^dR^d\r)^{2n}\\&\leq\Lambda\abs{\M(\Node(C))}\frac{(\Lambda t)^n}{n!}
        \end{aligned}
        \end{equation}
        by increasing $\Lambda$.
    \end{proof}
\end{lemma}

The following technical tool is an elementary but essential step in obtaining positive powers of $\epsilon$. 
\begin{lemma}[Resolvent identity]\label{resolvent identity}
Let $n\in\N$, $\alpha_1,\ldots,\alpha_n\in\R$ and $\nu>0$. Then 
    \begin{equation}
        \int_{0\leq t_1<\cdots<t_n\leq t}\prod_{i=1}^ne^{\rmi\alpha_it_i}\rmd t_i = \frac{e^{\nu t}}{2\pi}\int_\R\frac{e^{\rmi\xi t}}{\nu-\rmi\xi}\prod_{i=1}^n\frac{1}{\nu-\rmi\l(\xi+\sum_{k=i}^n\alpha_k\r)}\rmd\xi.
    \end{equation}    
\end{lemma}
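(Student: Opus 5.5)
The plan is to compute the Laplace transform of the left-hand side in the variable $t$, identify it with the product of resolvent factors, and then recover the identity by Bromwich (Fourier) inversion along the vertical line $\Re z=\nu$. Write
\begin{equation*}
I_n(t)\coloneqq\int_{0\leq t_1<\cdots<t_n\leq t}\prod_{i=1}^ne^{\rmi\alpha_it_i}\rmd t_i,\qquad I_0\equiv1 .
\end{equation*}
Isolating the largest time $t_n$ gives the recursion
\begin{equation*}
I_n(t)=\int_0^te^{\rmi\alpha_ns}\,I_{n-1}(s)\,\rmd s ,
\end{equation*}
where $I_{n-1}$ is built from $\alpha_1,\ldots,\alpha_{n-1}$. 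Trivially $\abs{I_n(t)}\leq t^n/n!$, so every $I_n$ is continuous and of at most polynomial growth, and its Laplace transform $\mathcal L[I_n](z)=\int_0^\infty e^{-zt}I_n(t)\,\rmd t$ is holomorphic on $\Re z>0$.

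First I compute $\mathcal L[I_n]$. Two elementary rules suffice: integration from $0$ divides by $z$, and multiplication by $e^{\rmi\alpha s}$ shifts $z\mapsto z-\rmi\alpha$. Applied to the recursion they give
\begin{equation*}
\mathcal L[I_n](z)=\frac1z\,\mathcal L[I_{n-1}](z-\rmi\alpha_n).
\end{equation*}
Iterating down to $I_0$, where $\mathcal L[I_0](z)=1/z$, I expect
\begin{equation*}
\mathcal L[I_n](z)=\frac1z\prod_{i=1}^n\frac{1}{z-\rmi\sum_{k=i}^n\alpha_k}.
\end{equation*}
I will verify the indexing by induction on $n$. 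The shift $z\mapsto z-\rmi\alpha_n$ is applied to all factors coming from $I_{n-1}$, so each partial sum $\sum_{k=i}^{n-1}\alpha_k$ acquires $\alpha_n$. In addition, the innermost factor $1/z$ of $\mathcal L[I_{n-1}]$ becomes $1/(z-\rmi\alpha_n)$, which is the $i=n$ factor. The new prefactor $1/z$ supplies the leading term.

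Second I invert. Set $z=\nu+\rmi\xi$. The Bromwich formula reads
\begin{equation*}
I_n(t)=\frac{e^{\nu t}}{2\pi}\int_\R e^{\rmi\xi t}\,\mathcal L[I_n](\nu+\rmi\xi)\,\rmd\xi .
\end{equation*}
Changing variables $\xi\mapsto-\xi$ turns the factors into the form $\nu-\rmi(\xi+\sum_{k=i}^n\alpha_k)$ appearing in the statement. The main care point is that this change of variables also flips the phase to $e^{-\rmi\xi t}$. I will therefore track the Fourier sign convention carefully and match the stated right-hand side up to this reflection $\xi\mapsto-\xi$. A residue check at $n=0$ fixes which convention is consistent with $t\geq0$: closing the contour on the correct side must pick up the single pole and return $1$.

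The step I expect to need justification is the pointwise validity of the inversion formula. For $n\geq1$ the integrand decays like $\abs{\xi}^{-n-1}$, hence is absolutely integrable. Moreover $e^{-\nu t}I_n(t)\vmathbb 1_{t\geq0}$ is continuous at every $t>0$ and lies in $L^1\cap L^2$. Fourier inversion in $L^1$ therefore gives the identity for every $t>0$, and continuity of both sides extends it to $t=0$. For $n=0$ the integral is only conditionally convergent, and I would interpret it as a principal value, or simply note that the $n=0$ case is never used. Alternatively, one could avoid the Laplace machinery altogether and prove the identity directly by induction on $n$. In that route one inserts the formula for $I_{n-1}(s)$ into the recursion, uses Fubini (legitimate by absolute convergence), and performs the elementary $s$-integral $\int_0^te^{(\nu+\rmi\alpha_n+\rmi\xi)s}\,\rmd s$. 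The boundary term at $s=0$ then vanishes by a contour-closing argument, since all poles lie in a single half-plane.
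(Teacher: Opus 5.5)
Your computation is right, and it produces exactly the formula that the paper's own proof arrives at (and that the subsequent corollary uses):
\[
\int_{0\leq t_1<\cdots<t_n\leq t}\prod_{i=1}^ne^{\rmi\alpha_it_i}\rmd t_i=\frac{e^{\nu t}}{2\pi}\int_\R\frac{e^{-\rmi\xi t}}{\nu-\rmi\xi}\prod_{i=1}^n\frac{1}{\nu-\rmi\bigl(\xi+\sum_{k=i}^n\alpha_k\bigr)}\rmd\xi .
\]
Note the phase $e^{-\rmi\xi t}$. You should drop the plan to ``match the stated right-hand side up to the reflection $\xi\mapsto-\xi$''. A reflection flips the phase and all the denominators at once, so it cannot turn one expression into the other. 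The $e^{+\rmi\xi t}$ in the statement is a misprint, and the identity as printed is false. All poles, $\xi=-\rmi\nu$ and $\xi=-\rmi\nu-\sum_{k=i}^n\alpha_k$, lie in the lower half-plane, and the integrand is $O(\abs{\xi}^{-n-1})$. So for $n\geq1$ and $t\geq0$, closing the contour in the upper half-plane shows that the printed right-hand side vanishes. Yet the left-hand side equals $t^n/n!$ when all $\alpha_i=0$. Your $n=0$ residue check singles out $e^{-\rmi\xi t}$ for exactly this reason. State the corrected identity explicitly and prove that.

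On the route: the paper does not go through the Laplace transform. It passes to gap variables $s_0,\ldots,s_n\geq0$ and encodes the constraint $\sum_i s_i=t$ through $\delta(x)=\frac{1}{2\pi}\int_\R e^{\rmi\xi x}\rmd\xi$. It then inserts the factor $e^{\nu(t-\sum_i s_i)}$, which equals $1$ on the support. The $n+1$ gap integrals then factor into $\int_0^\infty e^{(-\nu+\rmi(\xi+\omega_k))s}\rmd s=(\nu-\rmi(\xi+\omega_k))^{-1}$, with $\omega_0=0$ and $\omega_k=\sum_{i=k}^n\alpha_i$.

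This is your computation done by hand. The factorization is the convolution theorem behind your formula for $\mathcal L[I_n]$, and the damped $\delta$-representation is Bromwich inversion on $\Re z=\nu$. The paper's version is shorter and fully explicit. However, its exchange of the $\xi$-integral with the gap integrals is only formal, since the Fourier representation of $\delta$ is not absolutely convergent. Your appeal to $L^1$ Fourier inversion for $e^{-\nu t}I_n(t)$ (extended by zero to $t<0$), using the $\abs{\xi}^{-n-1}$ decay together with continuity, is what makes the pointwise identity rigorous for $n\geq1$.
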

\begin{proof}

For integrable functions $\l(g_i\r)_{i=1}^n$ in one variable, one can prove inductively quite quickly, 
\begin{equation}\label{general formula}
\begin{gathered}
    \int_0^tg_n(s_n)\int_{s_n}^tg_{n-1}(s_{n-1})\cdots\int_{s_{2}}^tg_{1}(s_1)\rmd s_1\cdots\rmd s_n \\= \int_0^tg_n(s_n)\int_0^{t-s_n}g_2(s_n+s_{n-1})\cdots\int_0^{t-\sum_{i=2}^{n}s_i}g_1\l(\sum_{i=1}^ns_i\r)\rmd s_1\cdots\rmd s_n\\=\int_{\R_+^{n+1}}g_n(s_n)g_{n-1}(s_n+s_{n-1})\cdots g_1\l(\sum_{i=1}^ns_i\r)\delta\l(\sum_{i=0}^{n}s_i-t\r)\rmd s_{0}\cdots\rmd s_{n}.
\end{gathered}
\end{equation}
We apply \cref{general formula} by setting $g_i(t_i)\coloneqq e^{\rmi\alpha_i t_i}$ and thus obtain 
\begin{equation}\label{coordinate transformed}
    \int_{0\leq t_1<\cdots<t_n\leq t}\prod_{i=1}^ne^{\rmi\alpha_it_i}\rmd t_i=\int_{\R_+^{n+1}}\prod_{k=1}^ne^{\rmi t_k\sum_{i=k}^n\alpha_i}\delta\l(\sum_{i=1}^nt_{i}+t'-t\r)\rmd t'\prod_{i=1}^n\rmd t_i.
\end{equation}
We use the identity 
\begin{equation}
    \delta(x) = \frac{1}{2\pi}\int_\R e^{\rmi\xi x}\rmd\xi
\end{equation}
and add $e^{\eta\l(t-t'-\sum_{i=1}^nt_{f_{<'}(i)}\r)}$ as a factor to the integrand of the integral in \eqref{coordinate transformed} since its value on the support of that integral is $1$. We obtain 
\begin{equation}
\begin{aligned}
    \int_{0\leq t_1<\cdots<t_n\leq t}\prod_{i=1}^ne^{\rmi\alpha_it_i}\rmd t_i&=\frac{e^{\eta t}}{2\pi}\int_\R e^{-\rmi\xi t}\int_{\R_+}e^{(-\eta+\rmi\xi)t'}\rmd t'\int_{\R_+^n}\prod_{k=1}^ne^{\l(-\eta+\rmi(\xi+\epsilon^{-1}\omega_k\r)t_k}\rmd t_k\rmd\xi\\
    &=\frac{e^{\eta t}}{2\pi}\int_\R\frac{e^{-\rmi\xi t}}{\eta-\rmi\xi}\prod_{k=1}^n\frac{1}{\eta-\rmi\l(\xi+\epsilon^{-1}\omega_k\r)}\rmd\xi,
\end{aligned}
\end{equation}
where we denote 
\begin{equation}
    \omega_k\coloneqq \sum_{i=k}^n\alpha_i.
\end{equation}
\end{proof}

\begin{corollary}
    It holds that 
    \begin{equation}
        \int_{I_C(t)}\prod_{\node\in\Node(C)}e^{\rmi\iota_\node\epsilon^{-1}\Omega_\node t_\node}\rmd t_\node = \frac{e^{nt/\delta}}{2\pi}\sum_{<'\in\M(\Node(C))}\int_\R\frac{e^{-\rmi\xi t}}{n\delta^{-1}-\rmi\xi}\prod_{\node\in\Node(C)}\frac{1}{{n\delta^{-1}-\rmi\l(\xi+\epsilon^{-1}\omega_\node(<')\r)}}\rmd\xi,
    \end{equation}
    where 
    \begin{equation}
        \omega_\node(<')\coloneqq\sum_{\node\leq'\node'\in\Node(C)\setminus\Node_{\mathrm{res}}(C)}\iota_{\node'}\Omega_{\node'}.
    \end{equation}
\end{corollary}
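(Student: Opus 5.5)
The plan is to reduce the integral over $I_C(t)$ to a sum of integrals over simplices, one for each total order compatible with the parentality order, and then apply \cref{resolvent identity} to each simplex with $\nu = n\delta^{-1}$.

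First, decompose $I_C(t)$. Recall that $I_C(t)=I_{T_1}(t)\times I_{T_2}(t)$ is the set of times $(t_\node)_{\node\in\Node(C)}\in[0,t]^n$ with $t_\node\le t_{\node'}$ whenever $\node\le\node'$. Up to the null set where two times coincide, $I_C(t)$ is the disjoint union, over $<'\in\M(\Node(C))$, of the simplices
\[
\Delta_{<'}(t)=\{0\le t_{f_{<'}(1)}<\cdots<t_{f_{<'}(n)}\le t\}.
\]
This holds because every configuration of pairwise distinct times determines a unique total order, and that order is compatible with $<$ exactly when the configuration lies in $I_C(t)$. Hence
\[
\int_{I_C(t)}\prod_{\node}e^{\rmi\iota_\node\epsilon^{-1}\Omega_\node t_\node}\rmd t_\node=\sum_{<'\in\M(\Node(C))}\int_{\Delta_{<'}(t)}\prod_{i=1}^n e^{\rmi\alpha_i t_{f_{<'}(i)}}\rmd t,
\qquad \alpha_i=\iota_{f_{<'}(i)}\epsilon^{-1}\Omega_{f_{<'}(i)}.
\]

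Second, apply \cref{resolvent identity} to each simplex, with $\nu=n\delta^{-1}$ and the node times relabelled along $f_{<'}$. The partial sums become $\sum_{k\ge i}\alpha_k=\epsilon^{-1}\sum_{\node\le'\node'}\iota_{\node'}\Omega_{\node'}$ with $\node=f_{<'}(i)$, which matches the stated $\omega_\node(<')$. Summing over $<'$ then gives the claimed formula. The product over $i=1,\dots,n$ turns into a product over $\node\in\Node(C)$ through the bijection $f_{<'}$.

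Third, reconcile two discrepancies with the statement.
\begin{itemize}
\item Resonant nodes. In the definition of $\omega_\node(<')$ they are excluded from the sum. They contribute nothing anyway: $\Omega_{\node'}\equiv 0$ there, so the corresponding $\alpha$'s vanish and dropping them changes no partial sum. I would note this explicitly, since $\Node_{\mathrm{res}}(C)$ is not yet defined and I read it as the set of nodes where $\Omega$ vanishes identically.
\item Sign of the oscillatory factor. The identity is stated with $e^{\rmi\xi t}$, while its proof produces $e^{-\rmi\xi t}$. I would recheck this by directly inserting $\delta(x)=\frac1{2\pi}\int e^{-\rmi\xi x}\rmd\xi$ and computing $\int_0^\infty e^{-(\nu-\rmi\xi)s}\rmd s=(\nu-\rmi\xi)^{-1}$. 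This forces the phase $e^{-\rmi\xi t}$, consistent with the corollary. I would also make sure the $\epsilon^{-1}$ is placed consistently inside $\omega$.
\end{itemize}

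The main obstacle is bookkeeping rather than analysis. The decomposition step has to be stated cleanly, including the measure-zero boundaries, and the direction of the partial sums has to be verified: the lemma sums over $k\ge i$, i.e.\ over nodes later in time, which correspond to ancestors under a compatible order. This is exactly the condition $\node\le'\node'$. Convergence of each $\xi$-integral is guaranteed since $n\ge1$ factors decay like $|\xi|^{-1}$ and at least two are present, so Fubini is justified in the lemma's proof for $\nu>0$.
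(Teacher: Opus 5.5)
Your proposal is correct and matches the paper's proof. The paper likewise splits $I_C(t)$, up to a null set, into the simplices indexed by $<'\in\M(\Node(C))$ and applies the resolvent identity with $\nu=n\delta^{-1}$ on each. Your two reconciliations are also right and fill in details the paper leaves implicit: resonant nodes satisfy $\Omega\equiv0$, so dropping them from $\omega_\node(<')$ changes nothing; and the correct phase is $e^{-\rmi\xi t}$, as in the corollary and in the lemma's proof, so the $e^{\rmi\xi t}$ in the lemma's statement is a typo.
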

\begin{proof}
    This is merely an application of the resolvent identity (\cref{resolvent identity}) with $\nu=n\delta^{-1}$ and by recognizing that 
    \begin{equation}
        \int_{I_C(t)}f\l(\l(t_\node\r)_{\node\in\Node(C)}\r)\prod_{\node\in\Node(C)}\rmd t_\node = \sum_{<'\in\M(\Node(C))}\int_{0\leq t_{f_{<'}(1)}<\cdots<t_{f_{<'}(n)}\leq t}f\l(\l(t_{f_{<'}(i)}\r)_{i=1}^n\r)\prod_{i=1}^n\rmd t_{f_{<'}(i)}
    \end{equation}
    since the complement of 
    \begin{equation}
    \bigsqcup_{<'\in\M(\Node(C))}\l\{\l(t_\node\r)_{\node\in\Node(C)}\in[0,t]^n\mid0\leq t_{f_{<'}(1)}<\cdots t_{f_{<'}(n)}\leq t\r\}
\end{equation}
inside $I_C(t)$ has measure zero.
\end{proof}

We may decompose 

\begin{equation}
    {\J_C}\l(\epsilon^{-1}t,k\r) = \sum_{<'\in\M(\Node(C))}\mathcal{J_C^{<'}}\l(\epsilon^{-1}t,k\r),
\end{equation}
where \begin{equation}
\begin{gathered}
    {\J_C^{<'}}\l(\epsilon^{-1}t,k\r)\coloneqq\l(\frac{-\rmi}{L^{2d}}\r)^n\frac{e^{nt/\delta}}{2\pi}\prod_{\node\in\Node(C)}\iota_\node\sum_{\kappa\in\mathcal D_k(C)}\prod_{\node\in\Node(C)} Q_\node\\\cdot\int_\R\frac{e^{-\rmi\xi t}}{n\delta^{-1}-\rmi\xi}\prod_{\node\in\Node(C)}\frac{1}{{n\delta^{-1}-\rmi\l(\xi+\epsilon^{-1}\omega_\node(<')\r)}}\rmd\xi\prod_{\leaf\in\Leaf(C)_+}M^{\eta_\leaf,\eta_{\sigma(\leaf)}}(\kappa(\leaf))^{\iota_\leaf}.
\end{gathered}
\end{equation}
We apply again the fact that the functions $M^{\eta,\eta'}$ are compactly supported and initially estimate 
\begin{equation}\label{initial estimate}
    \sup_{t\in[0,\delta]}\abs{{\J_C^{<'}}\l(\epsilon^{-1}t,k\r)}\leq\Lambda^{n+1}L^{-{2dn}}\int_\R\frac{\sum\limits_{\substack{\kappa\in\mathcal D_k(C)\\\kappa\l(\Leaf(C)_+\r)\subseteq B_R(0)}}\prod\limits_{\node\in\Node(C)}\frac{\abs{Q_\node}}{\abs{n\delta^{-1}-\rmi\l(\xi+\epsilon^{-1}\omega_\node(<')\r)}}}{\abs{n\delta^{-1}-\rmi\xi}}\rmd\xi.
\end{equation}

\begin{definition}
    Let $C$ be a couple and $\mathcal S\subseteq\Node(C)\sqcup\Leaf(C)$. An element $K\in V(C)$ is called a \textbf{signed combination} of the family $(K_C(\node))_{\node\in\mathcal S}$ if $K$ can be decomposed as \begin{equation}
        K = \sum\limits_{\node\in\mathcal S}s_\node K_C(\node)
    \end{equation} with $s_\node\in\{-1,0,1\}$ for all $\node\in\mathcal S$. This turns $K_C(\node)$ is a signed combination of $(K_C(\leaf))_{\leaf\in\Leaf(C)_+}$ for all $\node\in\Node(C)\sqcup\Leaf(C)$.
\end{definition}

The following result delivers a coordinate transformation with which the sum over $k$-decorations may be restructured.
\begin{proposition}\label{change of variables}    
    Let $C\in\mathcal C_{n_1,n_2}$ and consider any total order relation $<_f$ on all vertices of $C$ that is compatible with parentality, where $f\colon\llbracket1,5n(c)+1\rrbracket\to\Node(C)\sqcup\Leaf(C)$ denotes the corresponding bijection that enumerates $\Node(C)\sqcup\Leaf(C)$. Then there exists a subset of vertices $\Node_{<_f}(C)\subseteq\Node(C)\sqcup\Leaf(C)$ such that 
    \begin{itemize}
        \item its cardinality is $\abs{\Node_{<_f}(C)}=2n(C)+1$,
        \item the family $(K_C(\node))_{\node\in\Node_{<_f}(C)}$ constitutes a basis for $V(C)$, 
        \item for all $\node\in\Node(C)\sqcup\Leaf(C)$, $K_C(\node)$ is a signed combination of $(K_C(\node))_{\node\leq_f\node'\in\Node_{<_f}(C)}$.
    \end{itemize}
\end{proposition}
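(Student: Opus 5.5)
We would prove the statement by a greedy construction along the enumeration $f$. This adapts to couples of $5$-ary trees the spanning-tree argument of Lukkarinen and Spohn \cite{Lukkarinen_2010}. The main tool is a graph: let $G$ be the graph whose vertex set is $\Node(C)\sqcup\Leaf(C)$ together with one auxiliary vertex $\ast$. Its edges are \emph{(a)} the tree edges $\{\node,P(\node)\}$; \emph{(b)} the coupling edges $\{\leaf,\sigma(\leaf)\}$, one for each pair in $\Leaf(C)_+$; and \emph{(c)} the edges $\{\root_C,\ast\}$ and $\{\root'_C,\ast\}$. We orient each edge and attach to it a vector in $V(C)$. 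A tree edge from $\node$ to $P(\node)$ carries $K_C(\node)$, a coupling edge carries $K_C(\leaf)$ with $\leaf\in\Leaf(C)_+$, and the root edges carry $K_C(\root_C)$ and $K_C(\root'_C)$. The definition of $K_C$ and the identity $K_C(\root_C)+K_C(\root'_C)=0$ from the Remark after \cref{Definition of decoration of coupling} show that these vectors form a flow: at every vertex the signed sum of the incident edge vectors is zero.

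\emph{Cycle space.} Counting gives $|\Node(C)\sqcup\Leaf(C)|+1=5n(C)+3$ vertices. The edges are $5n(C)$ tree edges, $2n(C)+1$ coupling edges and $2$ root edges, for a total of $7n(C)+3$. The graph $G$ is connected, so its cycle space has dimension $(7n(C)+3)-(5n(C)+3)+1=2n(C)+1=\dim V(C)$. We will show that the map sending a flow to its edge vectors identifies $V(C)$ with the cycle space. The basis $(K_C(\leaf))_{\leaf\in\Leaf(C)_+}$ corresponds to the fundamental cycles through the coupling edges, relative to the spanning tree formed by the tree edges and the root edges.

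\emph{Choosing $\Node_{<_f}(C)$.} By standard matroid duality, for any spanning tree $S$ of $G$ the vectors carried by the complementary edges $E(G)\setminus S$ form a basis of the flow space. Moreover, the vector on any edge $e\in S$ equals the signed sum, with coefficients in $\{-1,0,1\}$, of the vectors on those complementary edges $e'$ whose fundamental cycle contains $e$. To control the ordering we identify each non-$\ast$-edge with its lower endpoint: a tree edge with the child, a coupling edge with its positive leaf $\leaf$, and the root edges with the roots. We then take $S$ to be a maximum-weight spanning tree, where edges whose associated vertex comes \emph{earlier} in $<_f$ receive higher weight. This is Kruskal's algorithm run from the beginning of $f$. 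We set $\Node_{<_f}(C)$ to be the set of vertices associated with the complementary edges. Its cardinality is then $|E(G)|-|S|=2n(C)+1$, and the associated vectors form a basis of $V(C)$.

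\emph{Triangularity, the main obstacle.} The third property is the delicate step. For an edge $e\in S$ associated with $\node$, the maximum-weight condition says every complementary edge $e'$ whose fundamental cycle contains $e$ has weight no larger than that of $e$, i.e.\ it is associated with a vertex $\node'$ satisfying $\node\leq_f\node'$. Hence $K_C(\node)$ is a signed combination of $(K_C(\node'))_{\node\leq_f\node'\in\Node_{<_f}(C)}$. For vertices in $\Node_{<_f}(C)$ the claim is trivial.

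Two technical points remain. First, the lower endpoint of each edge must be well defined and the association must be injective. Leaves paired with each other by $\sigma$ give two candidate vertices for a single edge, so we would associate each coupling edge with the $<_f$-larger of its two leaves, together with a sign. Second, we must check that vertices not associated with any edge, namely $\ast$ and the negative leaves under the convention $K_C(\leaf)=-K_C(\sigma(\leaf))$, are still covered by the statement. If the weighting convention causes trouble, we would fall back on a direct induction along $f$ in reverse order, as in Lukkarinen and Spohn. At each step we add the next vertex to $\Node_{<_f}(C)$ exactly when its $K_C$ is linearly independent of those already chosen, and use the flow structure to show that every dependent $K_C(\node)$ is a $\{-1,0,1\}$-combination of the later chosen ones.
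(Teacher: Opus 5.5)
Your proposal is correct and takes the same route as the paper, which just defers to the spanning-tree construction of Section~5 of \cite{Lukkarinen_2010}. Your $\ast$ is their root vertex and your coupling edges replace their fusion vertices; the free momenta are the cotree edges of a spanning tree built greedily along $<_f$, and triangularity is the cycle property. (Compatibility with parentality is not needed for this particular statement.)

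The injectivity point you leave open is immediate, but your stated fix (attributing a coupling edge to its $<_f$-larger leaf) does not address it on its own. Every leaf has degree $2$ in $G$, so a spanning tree contains either its tree edge or its coupling edge. Hence a free coupling edge, attributed to its $<_f$-larger endpoint as you propose, never shares that vertex with another free edge. Your second worry is vacuous: each negative leaf is the child endpoint of its own tree edge, which carries its $K_C$-value, and $\ast$ lies outside $\Node(C)\sqcup\Leaf(C)$.
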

\begin{proof}
    This is an adaptation of section 5 in \cite{Lukkarinen_2010} from ternary to $5$-ary trees.
\end{proof}

Fixing any total order relation on the leaves $\Leaf(C)$, we may combine it with any $<'\in\M(\Node(C))$ to obtain an element $<''\in\M\l(\Leaf(C)\sqcup\Node(C)\r)$. More precisely, we define $\l.<''\r|_{\Node(C)}\coloneqq <'$ and $\l.<''\r|_{\Leaf(C)}$ to be the fixed total order relation on the leaves. We apply \cref{change of variables} to $<''$ and obtain $\Node_{<''}(C)\subseteq\Node(C)\sqcup\Leaf(C)$. We denote by $\mathcal R(C)$ the set of roots of both trees in the couple $C$. 
\begin{definition}
Let $C$ be a couple. We define
\begin{equation}
    \mathcal P_i\coloneqq\l\{\node\in\Node(C)\mid\abs{C(\node)\cap\Node_{<''}(C)}=i\r\}.
\end{equation}
    Let $\node\in\Node(C)$ and define the \textbf{degree} of $\node$ (with respect to $<''$) to be 
    \begin{equation}
        \deg_{<''}(\node)\coloneqq\abs{C(\node)\cap\Node_{<''}(C)}.
    \end{equation}
\end{definition}
\begin{remark}
    The decomposition 
    \begin{equation}
        \Node(C)=\bigsqcup_{i=0}^5\mathcal P_i
    \end{equation}
    groups nodes by their respective degree, so that $\mathcal P_i$ contains all $5$-ary nodes of degree $i$, and 
    \begin{equation}
        \sum_{i=0}^5\abs{\mathcal P_i}=n(C).
    \end{equation}
\end{remark}

\begin{lemma}\label{k=0 is included lol}
    There exists exactly one $r\in\mathcal R(C)$ such that $r\in\Node_{<''}(C)$. Furthermore, $\mathcal P_5=\emptyset$ and $\abs{\mathcal P_1}+2\abs{\mathcal P_2}+3\abs{\mathcal P_3}+4\abs{\mathcal P_4} = 2n(C)$.
\end{lemma}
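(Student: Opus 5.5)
We need the structure of $\Node_{<''}(C)$ from \cref{change of variables}, which is the adaptation of Lukkarinen--Spohn. The construction I have in mind goes through the vertices in decreasing order of $<''$ and keeps a vertex whenever its wave number is linearly independent of the wave numbers of the vertices already kept. Compatibility with parentality means every vertex comes before its parent.

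\textbf{Step 1 (the roots).} The relation $K_C(\root_C)+K_C(\root'_C)=0$ from the remark after \cref{Definition of decoration of coupling} shows that the two root wave numbers are linearly dependent. So at most one root can belong to the basis $\Node_{<''}(C)$. At least one root must belong to it, for the following reason. The maximal element of $<''$ is a root, and its wave number is nonzero as a linear form on $V(C)$, since $\kappa$ ranges over all decorations. So the first step of the greedy construction selects it. Hence exactly one $r\in\mathcal R(C)$ lies in $\Node_{<''}(C)$.

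\textbf{Step 2 ($\mathcal P_5=\emptyset$).} For every node, the momentum conservation law $K_C(\node)=\sum_{\node'\in C(\node)}K_C(\node')$ holds. Suppose all five children of $\node$ were in the basis. The node $\node$ itself comes after its children in $<''$, i.e.\ it is processed earlier in the decreasing sweep. There are two cases.
\begin{itemize}
\item If $\node$ was selected, the six vectors $K_C(\node),K_C(\node_1),\dots,K_C(\node_5)$ would all be basis elements while satisfying a linear relation. This is impossible.
\item If $\node$ was not selected, then $K_C(\node)$ lies in the span of the basis vectors that are $<''$-larger than $\node$. The relation would then express $K_C(\node_5)$, say, as a combination of the other children together with larger basis vectors, all of which are already selected when $\node_5$ is examined. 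Hence $\node_5$ would be rejected, which is again a contradiction.
\end{itemize}
In both cases $\mathcal P_5=\emptyset$.

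\textbf{Step 3 (the counting identity).} I double count the basis elements. Every vertex other than the two roots has exactly one parent, so
\[
\sum_{\node\in\Node(C)}\deg_{<''}(\node)=\abs{\Node_{<''}(C)\setminus\mathcal R(C)}=(2n(C)+1)-1=2n(C),
\]
using the cardinality statement in \cref{change of variables} and Step 1. The left-hand side equals $\sum_{i}i\abs{\mathcal P_i}$, which with $\mathcal P_5=\emptyset$ gives $\abs{\mathcal P_1}+2\abs{\mathcal P_2}+3\abs{\mathcal P_3}+4\abs{\mathcal P_4}=2n(C)$.

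\textbf{Main obstacle.} The delicate point is Step 2, and also whether at least one root lies in the basis. Both depend on the precise greedy structure behind \cref{change of variables}, which the paper only cites from \cite{Lukkarinen_2010}. I will state that construction explicitly: $\node\in\Node_{<''}(C)$ if and only if $K_C(\node)\notin\mathrm{span}\{K_C(\node')\mid \node<''\node'\in\Node_{<''}(C)\}$. I will then check that it delivers the three properties listed in that proposition, with the signed-combination property following by induction as in Lukkarinen--Spohn. Once that construction is in place, Steps 1--3 are short linear-algebra arguments.
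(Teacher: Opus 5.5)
Your proof is correct, but Step 1 takes a different route from the paper; Steps 2 and 3 match it. In Step 2 the paper likewise excludes $\node\in\Node_{<''}(C)$ by linear independence. It then uses the third property of \cref{change of variables} together with the conservation law $K_C(\node)=\sum_{\node'\in C(\node)}K_C(\node')$ to produce a nontrivial relation among basis vectors. In Step 3 it counts each non-root element of $\Node_{<''}(C)$ once at its parent to get $\sum_i i\abs{\mathcal P_i}=2n(C)$.

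For the root claim, the paper obtains ``at least one root'' by citing the spanning-tree construction of de Suzzoni et al.\ (the root vertex of the momentum graph lies in the spanning tree). It then runs a path-chasing argument through fusion vertices to show that adding the edge of the second root closes a cycle. You instead get ``at most one'' directly from $K_C(\root_C)+K_C(\root'_C)=0$ and ``at least one'' from the $<''$-maximal vertex. Your route is shorter and purely linear-algebraic, whereas the paper's stays inside the momentum-graph language it borrows from the literature.

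Your route does not even need an explicit greedy construction. The $<''$-maximal vertex has no parent, so it is a root. If it were not in $\Node_{<''}(C)$, the third property of \cref{change of variables} would write its wave number as an empty signed combination, i.e.\ zero.

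Two small repairs are needed:
\begin{enumerate}
\item The nonvanishing of a root's wave number does not follow from ``$\kappa$ ranges over all decorations''. It follows because the leaf set of a single tree has $4n_i+1$ elements, an odd number, so it cannot be self-coupled. By the remark after \cref{Definition of decoration of coupling}, its $K_C$-sum is therefore nonzero.
\item In the second case of Step 2, take $\node_5$ to be the $<''$-smallest child, so that the other four children really are selected before it. Alternatively, read off a nontrivial linear relation among basis vectors directly, as the paper does.
\end{enumerate}
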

\begin{proof}
    For $C=\l(T_1,T_2,\sigma\r)$ we denote $\mathcal R(C)=\{\root,\root'\}$, where $\root$ denotes the root of the left and $\root'$ denotes the root of the right tree. 
    From the proof of \cite[Proposition 3.4]{desuzzoni2025waveturbulencesemilinearkleingordon} and the fact that the root vertex of the momentum graph is part of the spanning tree, we already have $\mathcal R(C)\cap\Node_{<''}(C)\neq\emptyset$.
    From now on, whenever we speak of a path or a walk, we mean a path or walk in the vertex and edge set in some iteration step of the spanning tree construction. We may very well assume that we reached the iteration step at which $v_{\mathrm{root}}$ is added to the vertex set of the spanning tree. Since the number of leaves of $5$-ary trees is odd, there exists a fusion vertex $v_\leaf$ such that there is a path $v_\leaf\to\leaf\to\root\to v_{\mathrm{root}}$ satisfying $\leaf\in\Leaf(T_1)$ and $\sigma(\leaf)\in\Leaf(T_2)$. Now suppose there exists $\node\in\Leaf(T_2)\sqcup\Node(T_2)$ such that there is a path $\sigma(\leaf)\to\node$ and suppose that adding $\{\node,P(\node)\}$ creates a cycle. This implies that there exists $\node'\in S(\node)\setminus\{\node\}$ such that $\{\node',P(\node)\}$ was added before and there exists a path from $\node'$ to some $\leaf'\in\Leaf(T_2)$. We distinguish the cases $\sigma(\leaf')\in\Leaf(T_2)$ and $\sigma(\leaf')\in\Leaf(T_1)$. In the first case, there must exist some $\leaf''\in\Leaf(T_2)$ such that there is a path $\sigma(\leaf')\to\leaf''$ and a path $\leaf''\to\node$ which closes the cycle. In this case, we can proceed to $P(\node)$ and have gained in height by $1$. In the second case, there exists $\leaf'''\in\Leaf(T_1)$ with $\sigma(\leaf''')\in\Leaf(T_2)$ such that there exists a path $\sigma(\leaf')\to\leaf'''$ and a path from $\sigma(\leaf''')$ to some $\tilde\leaf\in\Leaf(T_2)$ and ultimately a path $\tilde\leaf\to P(\node)$, and thus a path $\sigma(\leaf''')\to P(\node)$. In this case, we switch from $v_\leaf$ to $v_{\leaf'}$. This iteration can be repeated until a fusion vertex $v_{\leaf_{\mathrm{final}}}$ is considered, and the algorithm considers the other root $\root'$. We can see that adding the edge $\{\root',v_{\mathrm{root}}\}$ will create a cycle that contains the root vertex and $v_{\leaf_{\mathrm{final}}}$. 

    Suppose there exists $\node\in\mathcal P_5$ which is equivalent to stating $C(\node)\subseteq\Node_{<''}(C)$. Due to the fact that 
    \begin{equation}\label{first}
        K_C(\node) = \sum_{\node'\in C(\node)}K_C(\node'),
    \end{equation}
    we have $\node\notin\Node_{<''}(C)$, otherwise the linear independence of $\l(K_C(\node')\r)_{\node'\in\Node_{<''}(C)}$ would be violated. \Cref{coordinate transformed} implies that 
    \begin{equation}\label{seq}
        K_C(\node) = \sum_{\node<''\node'\in\Node_{<''}(C)}s_{\node'}K_C(\node')
    \end{equation}
    for some $s_{\node'}\in\{-1,0,1\}$. By construction, $<''$ is compatible with the parentality order. But this compatibility implies $C(\node)\cap\{\node'\in\Node_{<''}(C)\mid\node<''\node'\}=\emptyset$ and subtracting \cref{first} from \cref{seq} implies 
    \begin{equation}
        \sum_{n<''\node'\in\Node_{<''}(C)}s_{\node'}K_C(\node')-\sum_{\node'\in C(\node)}K_C(\node')=0
    \end{equation}
    which implies linear dependence of $\l(K_C(\node')\r)_{\node'\in\Node_{<''}(C)}$, a contradiction. It follows $\mathcal P_5=\emptyset$.

    It is a general fact that if $\mathcal S\subseteq\l(\Leaf(C)\sqcup\Node(C)\r)\setminus\mathcal R(C)$ is any subset of leaves and nodes, and if we denote $\mathcal P_i(\mathcal S)\coloneqq\l\{\node\in\Node(C)\mid\abs{C(\node)\cap\mathcal S}=i\r\}$, we obtain $\abs{S}=\sum_{i=1}^4i\abs{\mathcal P_i(\mathcal S)}+1$ and $P(\mathcal S\setminus\{\root\})=\bigsqcup_{i=1}^5\mathcal P_i(\mathcal S)$, so that
    \begin{equation}
        P\l(\Node_{<''}(C)\setminus\{r\}\r) = \bigsqcup_{i=1}^4\mathcal P_i
    \end{equation}
    and since $\abs{\Node_{<''}(C)\setminus\{r\}}=2n(C)$, 
    \begin{equation}\label{general shit}
        \sum_{i=1}^4\abs{\mathcal P_i}i=2n(C).
    \end{equation}
\end{proof}

\subsection{Passing from sums to integrals}

\begin{notation}
    For $x\in\R^d$ and $r>0$, we denote $Q_{x,r}\coloneqq\prod_{i=1}^d[x_i-{r},x_i+{r}]$, $Q_{x,r,L}\coloneqq Q_{x,r}\cap\Z_L^d$ and define $m=m(R,L)$ to be the unique element in $\N$ that satisfies $\frac{m+1}{L}> R\geq\frac{m}{L}$ and finally the special cube 
    \begin{equation}
        Q_{m}\coloneqq\l[-\frac{1}{2L}-\frac{m}{L},\frac{1}{2L}+\frac{m}{L}\r]^d
    \end{equation}
    and its discrete version $Q_{m}^{\Z_L^d}\coloneqq Q_{m}\cap\Z_L^d$.
\end{notation}

\begin{lemma}\label{case distinction}
    If $m/L+1/(2L)\geq R$, then $\l(Q_{m}\setminus Q_{0,R}\r)\cap\Z_L^d=\emptyset$ and if $m/L+1/(2L)<R$, then $\l(Q_{0,R}\setminus Q_{m}\r)\cap\Z_L^d=\emptyset$.
\end{lemma}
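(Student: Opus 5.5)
The plan is to reduce the statement to a one-dimensional counting fact. I will use the convention, implicit in the definition of $m=m(R,L)$, that $\Z_L^d=\frac1L\Z^d$. Both $Q_m$ and $Q_{0,R}=\prod_{i=1}^d[-R,R]$ are products of $d$ symmetric intervals. A point $x=j/L$ with $j\in\Z^d$ therefore lies in either cube if and only if each coordinate $j_i/L$ lies in the corresponding one-dimensional interval. So it suffices to compare the integer sets
\begin{equation}
A\coloneqq\l\{j\in\Z\mid \abs{j}/L\leq m/L+1/(2L)\r\},\qquad B\coloneqq\l\{j\in\Z\mid \abs{j}/L\leq R\r\}.
\end{equation}

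First, since $j$ is an integer, the condition $\abs{j}\leq m+\frac12$ is equivalent to $\abs{j}\leq m$, so $A=\{\abs{j}\leq m\}$. Second, the defining inequalities $\frac mL\leq R<\frac{m+1}{L}$ give $m\leq RL<m+1$. Because $\abs{j}$ is an integer, $\abs{j}\leq RL$ holds if and only if $\abs{j}\leq m$, so $B=\{\abs{j}\leq m\}$ as well. Hence $A=B$, and taking products over the $d$ coordinates yields
\begin{equation}
Q_m\cap\Z_L^d=Q_{0,R}\cap\Z_L^d .
\end{equation}

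Both set differences intersected with $\Z_L^d$ are then empty, which gives both assertions of the lemma simultaneously. The case distinction $m/L+1/(2L)\geq R$ versus $<R$ is only needed to decide which continuous cube contains the other. It plays no role for the lattice points, so I would prove the stronger equality and then read off each case.

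There is no real obstacle here. The only point needing care is the lattice normalisation. If instead $\Z_L^d=\frac{2\pi}{L}\Z^d$, the same argument goes through once $m$ and $Q_m$ are rescaled by the lattice spacing, and I would check that the paper's definition of $m$ matches the spacing $1/L$. The argument also uses the boundary convention that $Q_{0,R}$ is closed, which is what allows the inclusion $\abs{j}/L=R$ when $RL$ is an integer.
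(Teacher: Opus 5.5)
Your proof is correct and rests on the same mechanism as the paper's: pass coordinatewise to $y=Lx\in\Z^d$ and combine integrality of $y$ with $m\le RL<m+1$. The paper runs this as a separate contradiction argument in each of the two cases, while you prove the stronger identity $Q_m\cap\Z_L^d=Q_{0,R}\cap\Z_L^d$ directly, which also shows that the case distinction plays no role for the lattice points.
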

\begin{proof}
    Suppose the first case $m/L+1/(2L)\geq R$ and $x\in\l(Q_{m}\setminus Q_{0,R}\r)\cap\Z_L^d$. This is equivalent to saying that $R<\abs{x_i}\leq m/L+1/(2L)$ and by setting $y\coloneqq Lx\in\Z^d$, this implies 
    \begin{equation}
        m<\abs{y_i}\leq m+\frac{1}{2}
    \end{equation}
    which is a contradiction. Now suppose we are in the second case when $m/L+1/(2L)<R$ and assume there exists $x\in\l(Q_{0,R}\setminus Q_{m}\r)\cap\Z_L^d$. Similarly, by setting $y\coloneqq Lx$, we find 
    \begin{equation}
        m+\frac{1}{2}<\abs{y_i}< m+1
    \end{equation}
    which also contradicts $y\in\Z^d$.
\end{proof}

\begin{lemma}\label{sums into integrals}

    Suppose $g_i\colon\l(\R^d\r)^i\to\R_>$ are positive $\mathcal C^1$-functions for every $i\in\{1,2,3,4,5\}$ and $R>0$. Then there exists $\Lambda_i=\Lambda_i(d)>0$ such that 
    \begin{align}
        \sum_{x_1,\ldots,x_i\in Q_{0,R}^{\Z_L^d}}g_i\l(x_1,\ldots,x_i\r)\leq\Lambda_i\l(\frac{1}{L}\sum_{x_1,\ldots,x_i\in Q_{0,2R}^{\Z_L^d}}\sup_{\bigtimes_{j=1}^iQ_{x_j,\frac{1}{2L}}}\abs{\nabla g_i}+L^{id}\int_{Q_{0,2R}^i}g_i\l(x_1,\ldots,x_i\r)\rmd \l(x_1,\ldots, x_i\r)\r).
    \end{align}
    For $\mathcal C^1$ functions $f_i\colon\l(\R^d\r)^i\to\C$ there exist dimensional constants $\Lambda_i>0$ such that  
    \begin{equation}
        \abs{\frac{1}{L^{id}}\sum_{(x_j)_{j=1}^i\in\l(\Z_L^d\r)^i}f\l((x_j)_{j=1}^i\r)-\int_{\l(\R^d\r)^i}f\l((x_j)_{j=1}^i\r)\rmd \l(x_1,\ldots,x_i\r)}\leq\frac{\Lambda_i}{L^{id+1}}\sum_{\l(x_j\r)_{j=1}^i\in\l(\Z_L^d\r)^i}\sup_{\bigtimes_{j=1}^iQ_{x_j,\frac{1}{2L}}}\abs{\nabla_{\l(x_j\r)_{j=1}^i}f}
    \end{equation}
\end{lemma}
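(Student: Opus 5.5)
The plan is to compare each lattice point value with the average of the function over the small cube of side $1/L$ centred at that point. Since the cubes $Q_{x,\frac{1}{2L}}$, $x\in\Z_L^d$, tile $\R^d$ up to sets of measure zero, and each has volume $L^{-d}$, we can write for a product point $X=(x_1,\ldots,x_i)$ and the product cube $Q_X\coloneqq\bigtimes_{j=1}^iQ_{x_j,\frac{1}{2L}}$ (of volume $L^{-id}$)
\begin{equation}
    g_i(X) = L^{id}\int_{Q_X}g_i(Y)\rmd Y + L^{id}\int_{Q_X}\l(g_i(X)-g_i(Y)\r)\rmd Y.
\end{equation}
For the error term we use the mean value theorem along the segment from $X$ to $Y$, which stays inside the convex set $Q_X$. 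Each coordinate difference is at most $\frac{1}{2L}$, so $\abs{X-Y}\leq\frac{\sqrt{id}}{2L}$ and
\begin{equation}
    \abs{g_i(X)-g_i(Y)}\leq\frac{\sqrt{id}}{2L}\sup_{Q_X}\abs{\nabla g_i}.
\end{equation}
Integrating over $Q_X$ and multiplying by $L^{id}$ bounds the error term by $\frac{\sqrt{id}}{2L}\sup_{Q_X}\abs{\nabla g_i}$.

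For the first inequality, sum the identity over $x_1,\ldots,x_i\in Q_{0,R}^{\Z_L^d}$. Take $L$ large enough that $\frac{1}{2L}\leq R$; otherwise enlarge $\Lambda_i$, since the lattice sums are then trivially comparable. Under this condition $Q_{x,\frac{1}{2L}}\subseteq Q_{0,2R}$ for every $x\in Q_{0,R}$. The cubes $Q_X$ are disjoint up to measure zero and $g_i>0$, so the main terms sum to at most $L^{id}\int_{Q_{0,2R}^i}g_i$. The error terms sum to at most $\frac{\sqrt{id}}{2L}\sum_X\sup_{Q_X}\abs{\nabla g_i}$. We may then enlarge the index set from $Q_{0,R}^{\Z_L^d}$ to $Q_{0,2R}^{\Z_L^d}$, since every added term is nonnegative. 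Taking $\Lambda_i=\max(1,\sqrt{id}/2)$ gives the claim. \Cref{case distinction} could replace $Q_{0,R}$ by the exactly tiled cube $Q_m$ if a sharper domain is needed, but the crude inclusion into $Q_{0,2R}$ is enough here.

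For the second inequality, apply the same decomposition to complex $f$, treating real and imaginary parts separately or directly via the mean value inequality for vector-valued maps. Summing over all of $(\Z_L^d)^i$ and dividing by $L^{id}$, the main terms reassemble exactly into $\int_{(\R^d)^i}f$, because the cubes $Q_X$ tile $(\R^d)^i$. The difference is then bounded by $\frac{\sqrt{id}}{2L^{id+1}}\sum_X\sup_{Q_X}\abs{\nabla f}$.

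The only delicate point is justifying that these infinite sums and the integral converge, so that they can be rearranged. If the right-hand side is infinite there is nothing to prove. If it is finite, the sum $\sum_X\sup_{Q_X}\abs{f(X)-f(\cdot)}$ converges, so absolute summability of $f$ on the lattice is equivalent to integrability of $f$. In the applications both hold, since the integrands are compactly supported in $B_R$ or are in $W^{1,1}$. I would state this integrability as an implicit hypothesis, noting that the inequality is interpreted with both sides finite.
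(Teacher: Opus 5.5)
Your argument is correct for $L\geq\frac{1}{2R}$ and uses the same mechanism as the paper: average over the cells $Q_{x,\frac{1}{2L}}$ and apply the mean value theorem with $\abs{X-Y}\leq\frac{\sqrt{id}}{2L}$. Your second inequality is handled exactly as in the paper.

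The first inequality differs. The paper invokes \cref{case distinction} to replace $Q_{0,R}^{\Z_L^d}$ by the exactly tiled cube $Q_m^{\Z_L^d}$. It then bounds the two-sided difference $\bigl|L^{-d}\sum g-\int_{Q_{0,R}}g\bigr|$ and absorbs the mismatch region ($Q_{0,R}\setminus Q_m$ or $Q_m\setminus Q_{0,R}$) into an extra $\int g$ term. This is why its constant is $\max(2,\frac{\sqrt{id}}{2})$, and it only writes out $i=1$. You instead use positivity to place each product cell inside $Q_{0,2R}^i$, which gives the one-sided bound directly. You get constant $\max(1,\frac{\sqrt{id}}{2})$, uniformly in $i$, and nothing is lost because only the one-sided bound is stated and used later.

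One claim in your proposal is false: ``otherwise enlarge $\Lambda_i$, since the lattice sums are then trivially comparable.'' Take $g_i\equiv c>0$ and $RL<\frac12$.
\begin{itemize}
\item Then $Q_{0,R}^{\Z_L^d}=\{0\}$, so the left-hand side equals $c$.
\item The gradient term vanishes, so the right-hand side is $\Lambda_i(4RL)^{id}c$.
\item This tends to $0$ as $RL\to0$, so no constant depending only on $d$ works.
\end{itemize}
The lemma therefore needs a restriction such as $L\geq\frac{1}{2R}$. The paper's proof tacitly needs $RL\geq\frac14$ too, for the inclusion $Q_m\subseteq Q_{0,2R}$ when $m=0$. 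This is harmless in the applications, where $R$ is fixed and $L\to\infty$. You should state it as a hypothesis rather than claim the small-$L$ case.

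Your integrability caveat for the second inequality is a genuine missing hypothesis of the statement, which the paper leaves implicit. For a nonzero constant $f$, the left-hand side is meaningless while the right-hand side vanishes.
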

\begin{proof}
    We make a case distinction as in \cref{case distinction}. Suppose first that $m/L+1/(2L)<R$ and apply \cref{case distinction} which allows to write 
    \begin{equation}
        \sum_{x\in Q_{0,R}^{\Z_L^d}}g(x) = \sum_{x\in Q_{m}^{\Z_L^d}}g(x)\text{ and }\int_{Q_{0,R}}g(y)\rmd y = \sum_{x\in Q_{m}^{\Z_L^d}}\int_{Q_{x,\frac{1}{2L}}}g(y)\rmd y + \int_{Q_{0,R}\setminus Q_{m}}g(y)\rmd y.
    \end{equation}
    Using the fact that ${1}/{L^d}=\int_{Q_{x,\frac{1}{2L}}}1\rmd y$ allows to write further 
    \begin{equation}
    \begin{aligned}\label{create a contrast}
        \abs{\frac{1}{L^d}\sum_{x\in Q_{0,R}^{\Z_L^d}}g(x)-\int_{Q_{0,R}}g(y)\rmd y}&\leq\sum_{x\in Q_{m}^{\Z_L^d}}\int_{Q_{x,\frac{1}{2L}}}\abs{g(x)-g(y)}\rmd y+\int_{Q_{0,R}}g(y)\rmd y\\&\leq\sum_{x\in Q_{0,R}^{\Z_L^d}}\sup_{Q_{x,\frac{1}{2L}}}\abs{\nabla g}\int_{Q_{x,\frac{1}{2L}}}\abs{x-y}\rmd y + \int_{Q_{0,R}}g(y)\rmd y.
    \end{aligned}
    \end{equation}
    The geometric fact $\abs{x-y}\leq\frac{\sqrt{d}}{2L}$ leads the refined estimate to 
    \begin{equation}\label{right before assertion}
        \abs{\frac{1}{L^d}\sum_{x\in Q_{0,R}^{\Z_L^d}}g(x)-\int_{Q_{0,R}}g(y)\rmd y}\leq \frac{\sqrt{d}}{2L^{d+1}}\sum_{x\in Q_{0,R}^{\Z_L^d}}\sup_{Q_{x,\frac{1}{2L}}}\abs{\nabla g}+\int_{Q_{0,R}}g(y)\rmd y
    \end{equation}
    which delivers 
    \begin{equation}
        \sum_{x\in Q_{0,R}^{\Z_L^d}}g(x)\leq\Lambda\l(\frac{1}{L}\sum_{x\in Q_{0,R}^{\Z_L^d}}\sup_{Q_{x,\frac{1}{2L}}}\abs{\nabla g}+L^d\int_{Q_{0,R}}g(x)\rmd x\r)
    \end{equation}
    for $\Lambda_1\coloneqq\max\l(2,\frac{\sqrt{d}}{2}\r)$.

    Now suppose $m/L+1/(2L)\geq R$. In that case $Q_{0,R}\subseteq Q_m\subseteq Q_{0,2R}$. It is similarly true that 
    \begin{equation}
        \sum_{x\in Q_{0,R}^{\Z_L^d}}g(x) = \sum_{x\in Q_{m}^{\Z_L^d}}g(x)\text{ and }\int_{Q_{0,R}}g(y)\rmd y = \sum_{x\in Q_{m}^{\Z_L^d}}\int_{Q_{x,\frac{1}{2L}}}g(y)\rmd y - \int_{Q_{m}\setminus Q_{0,R}}g(y)\rmd y.
    \end{equation}
    In contrast to \cref{create a contrast}, the estimate is now 
    \begin{equation}
    \begin{aligned}
        \abs{\frac{1}{L^d}\sum_{x\in Q_{0,R}^{\Z_L^d}}g(x)-\int_{Q_{0,R}}g(y)\rmd y}&\leq\sum_{x\in Q_{m}^{\Z_L^d}}\int_{Q_{x,\frac{1}{2L}}}\abs{g(x)-g(y)}\rmd y+\int_{Q_{m}\setminus Q_{0,R}}g(y)\rmd y\\&\leq \frac{\sqrt{d}}{2L^{d+1}}\sum_{x\in Q_{0,2R}^{\Z_L^d}}\sup_{Q_{x,\frac{1}{2L}}}\abs{\nabla g}+\int_{Q_{0,2R}}g(y)\rmd y
    \end{aligned}
    \end{equation}
    but $\Lambda_1$ may be chosen as before. The remaining inequalities can be proven in a similar way with $\Lambda_i\coloneqq\max\l(2,\frac{\sqrt{id}}{2}\r)$.

    We can obviously rewrite 
    \begin{equation}
        \int_{\l(\R^d\r)^i}f_i\l((x_j)_{j=1}^i\r)\rmd\l(x_1,\dots,x_i\r) = \sum_{(x_j)_{j=1}^i\in\l(\Z_L^d\r)^i}\int_{\bigtimes_{j=1}^iQ_{x_j,\frac{1}{2L}}}f_i\l((x_j)_{j=1}^i\r)\rmd\l(x_1,\ldots,x_i\r)
    \end{equation}
    and recall that 
    \begin{equation}
        \frac{1}{L^{id}}=\int_{\bigtimes_{j=1}^iQ_{x_j,\frac{1}{2L}}}1\rmd\l(x_1,\ldots,x_i\r)
    \end{equation}
    so that 

    \begin{equation}
        \begin{aligned}
            \Bigg|\frac{1}{L^{id}}&\sum_{(x_j)_{j=1}^i\in\l(\Z_L^d\r)^i}f\l((x_j)_{j=1}^i\r)-\int_{\l(\R^d\r)^i}f\l((x_j)_{j=1}^i\r)\rmd \l(x_1,\ldots,x_i\r)\Bigg|\\&\leq\sum_{\l(x_j\r)_{j=1}^i\in\l(\Z_L^d\r)^i}{\int_{\bigtimes_{j=1}^iQ_{x_j,\frac{1}{2L}}}\abs{f\l(\l(x_j\r)_{j=1}^i\r)-f\l(\l(y_j\r)_{j=1}^i\r)}\rmd\l(y_1,\ldots,y_i\r)}\\&\leq \sum_{\l(x_j\r)_{j=1}^i\in\l(\Z_L^d\r)^i}\sup_{\bigtimes_{j=1}^iQ_{x_j,\frac{1}{2L}}}\abs{\nabla_{\l(x_j\r)_{j=1}^i}f}{\int_{\bigtimes_{j=1}^iQ_{x_j,\frac{1}{2L}}}\abs{\l(x_j\r)_{j=1}^i-\l(y_j\r)_{j=1}^i}\rmd\l(y_1,\ldots,y_i\r)}.
        \end{aligned}
    \end{equation}
    We now use the fact that 
    \begin{equation}
        \abs{\l(x_j\r)_{j=1}^i-\l(y_j\r)_{j=1}^i}^2=\sum_{j=1}^i\abs{x_j-y_j}^2\leq\sum_{j=1}^i\frac{d}{(2L)^2}=\frac{id}{(2L)^2}.
    \end{equation}
    so that $\Lambda_i$ from before delivers the result.
\end{proof}

\begin{lemma}\label{all possibilities} Let $C$ be a couple and consider a node $\node\in\Node(C)$.
    \begin{itemize}
        \item[1.)] If $\node$ has degree $1$, denote $\node'\in C(\node)$ the vertex that belongs to $\Node_{<''}(C)$. Then there exists $\node''\in C(\node)\setminus\{\node'\}$ such that $K_C(\node'')$ is a signed combination of $\l(K_C(\node''')\r)_{\node'<''\node'''\in\Node_{<''}(C)}$.
        \item[2.)] If $\node$ has degree $2$, and we denote by $\node_1$ and $\node_2$ the children of $\node$ that belong to $\Node_{<''}(C)$ and by $\node_1'$, $\node_2'$ and $\node_3'$ the children of $\node$ that do not belong to $\Node_{<''}(C)$, we have the following case distinction: 
        \begin{align}
            &\begin{cases}
                K_C(\node_i')&=-K_C(\node_1)-K_C(\node_2)+G,\\
                K_C(\node_j')&= G,\\
                K_C(\node_k')&=G,
            \end{cases}
            \text{ or }\begin{cases}
                K_C(\node_i')&=-K_C(\node_1)+G,\\
                K_C(\node_j')&=-K_C(\node_2)+G,\\
                K_C(\node_k')&=G
            \end{cases}\text{ or }\\
            &\begin{cases}
                K_C(\node_i')&=\iota_1K_C(\node_1)+\iota_2K_C(\node_2)+G,\\
                K_C(\node_j')&=\iota_3K_C(\node_2)+G,\\
                K_C(\node_k')&=\iota_4K_C(\node_2)+G,
            \end{cases}\text{ or }\begin{cases}
                K_C(\node_i')&=\iota_1K_C(\node_1)+\iota_2K_C(\node_2)+G,\\
                K_C(\node_j')&=\iota_3K_C(\node_1)+G,\\
                K_C(\node_k')&=\iota_4K_C(\node_1)+G,
            \end{cases}\text{ or }\\&\begin{cases}
                K_C(\node_i')&=\iota_1K_C(\node_1)+\iota_1K_C(\node_2)+G,\\
                K_C(\node_j')&=\iota_2K_C(\node_1)+\iota_3K_C(\node_2)+G,\\
                K_C(\node_k')&=\iota_4K_C(\node_1)+\iota_5K_C(\node_2)+G,
            \end{cases}
        \end{align}
        where in each case, $\iota_i\in\{\pm\}$ are such that adding the three lines on the right-hand side of each case delivers exactly $-K_C(\node_1)-K_C(\node_2)+G$. In the notation, we denote by $G$ any signed combination of $\l(K_C(\node'')\r)_{\node_1,\node_2<''\node''\in\Node_{<''}(C)}$.
        \item[3.)] If $\node$ is of degree $3$, $\node_1,\node_2,\node_3\in C(\node)\cap\Node_{<''}(C)$ and $\node_1',\node_2'\in\l(\Node(C)\setminus\Node_{<''}(C)\r)$, then
        \begin{equation}
            \begin{aligned}
                K_C(\node_1')&=G\wedge K_C(\node_2')=-K_C(\node_1)-K_C(\node_2)-K_C(\node_3)+G\text{ or }\\K_C(\node_1')&=-K_C(\node_3)+G\wedge K_C(\node_2')=-K_C(\node_1)-K_C(\node_2)+G\text{ or }\\K_C(\node_1')&=-K_C(\node_2)+G\wedge K_C(\node_2')=-K_C(\node_1)-K_C(\node_3)+G\text{ or }\\K_C(\node_1')&=-K_C(\node_1)+G\wedge K_C(\node_2')=-K_C(\node_2)-K_C(\node_3)+G\text{ or }\\
                K_C(\node_1')&=-K_C(\node_2)-K_C(\node_3)+G\wedge K_C(\node_2')=-K_C(\node_1)+G\text{ or }\\K_C(\node_1')&=-K_C(\node_1)-K_C(\node_3)+G\wedge K_C(\node_2')=-K_C(\node_2)+G\text{ or }\\
                K_C(\node_1')&=-K_C(\node_1)-K_C(\node_2)+G\wedge K_C(\node_2')=-K_C(\node_3)+G\text{ or }\\K_C(\node_1')&=-K_C(\node_1)-K_C(\node_2)-K_C(\node_3)+G\wedge K_C(\node_2')=G,
        \end{aligned}
        \end{equation}
        and $G$ denotes a signed combination of $\l(K_C(\node'')\r)_{\node_1,\node_2,\node_3<''\node''\in\Node_{<''}(C)}$.
        \item[4.)] If $\node$ has degree $4$ and if we denote $\node_1,\ldots,\node_4\in\Node_{<''}(C)$ the children of $\node$ that belong to $\Node_{<''}(C)$, the remaining vertex $\node''\in C(\node)\setminus\{\node_i\}_{i=1}^4$ satisfies 
        \begin{equation}
            K_C(\node'')=-\sum_{i=1}^4K_C(\node_i)+K_C(\node),
        \end{equation}
        where $G=K_C(\node)$ is of course a signed combination of $\l(K_C(\node''')\r)_{\node_1,\ldots,\node_4<''\node'''\in\Node_{<''}(C)}$.
    \end{itemize}
\end{lemma}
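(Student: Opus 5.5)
The argument rests on three inputs. The first is the linear relation at a node, $K_C(\node)=\sum_{\node'\in C(\node)}K_C(\node')$. The second is the triangularity property of \cref{change of variables}: every $K_C(\node')$ is a signed combination of the basis elements $K_C(\node''')$ with $\node'\leq''\node'''\in\Node_{<''}(C)$. The third is the compatibility of $<''$ with parentality, which forces $\node'<''\node$ for every child $\node'$ of $\node$.

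For a node $\node$ of degree $i$, I would write each non-basis child as a signed combination of basis vectors. Each such child lies $<''$-above, or equal to, the appropriate threshold. I then split each combination into two parts. One part involves only the basis children $\node_1,\dots,\node_i$ themselves; these are the only basis vectors that can lie between the children and the ``later'' region. The other part, $G$, involves only basis vectors strictly $<''$-above all the $\node_j$. Basis vectors strictly below some $\node_j$ but not among the children are excluded by triangularity, applied from the smallest non-basis child. I expect this to be the step needing the most care. My first attempt would order the children by $<''$ and argue by induction along the enumeration $f$ used in the proof of \cref{change of variables}, adapted from \cite{Lukkarinen_2010}. In that construction a vertex is added to $\Node_{<''}(C)$ exactly when its wave number is not already spanned by later basis vectors. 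So a non-basis child is spanned by basis vectors that are later than itself. Among those, the only ones at or below the level of the children are basis siblings.

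Next I would sum the children and compare with $K_C(\node)$. The node $\node$ is $<''$-above all its children, so $K_C(\node)$ is a $G$-type combination, meaning it involves only vectors later than the $\node_j$. Linear independence of the basis then forces the coefficients of $K_C(\node_1),\dots,K_C(\node_i)$ in $\sum_{\text{non-basis children}}K_C(\node'_m)$ to be exactly $-1$ each. This is precisely the constraint ``adding the lines gives $-\sum_j K_C(\node_j)+G$''.

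The case lists then come from enumerating the possible distributions of the signed coefficients in $\{-1,0,1\}$ among the $5-i$ non-basis children, subject to each basis child appearing with total coefficient $-1$.

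\textbf{Degree 4.} There is a single non-basis child, which gives $K_C(\node'')=K_C(\node)-\sum_{j=1}^4K_C(\node_j)$ directly.

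\textbf{Degree 3.} There are two non-basis children. Each of $\node_1,\node_2,\node_3$ has total coefficient $-1$, distributed between them. Restricting to assignments of each $-1$ to one child gives the $2^3=8$ listed cases. Cancelling $\pm$ pairs would require coefficients outside the signed-combination form when combined with the first-child triangularity, and I would rule them out by that argument, or absorb them.

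\textbf{Degree 2.} With three non-basis children, the enumeration produces the listed patterns, allowing $\pm$ signs that cancel across children.

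\textbf{Degree 1.} Among the four non-basis children, the total coefficient of $K_C(\node')$ is $-1$. By pigeonhole, at least one child $\node''$ has coefficient $0$ on $K_C(\node')$. Hence $K_C(\node'')$ is a signed combination of basis vectors strictly later than $\node'$, which is the claim.

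The main obstacle is the triangularity and localisation step: showing rigorously that no basis vector strictly between the children (other than the $\node_j$) can enter. This requires going into the spanning-tree construction behind \cref{change of variables}.
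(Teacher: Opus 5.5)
Your approach is the same as the paper's: compare coefficients in $K_C(\node)=\sum_{\node'\in C(\node)}K_C(\node')$, using triangularity from \cref{change of variables} and linear independence, then enumerate sign patterns. That part is sound. Each basis child $\node_j$ must receive total coefficient $-1$ from the non-basis children. In 1.), four entries of $\{-1,0,1\}$ summing to $-1$ must contain a $0$; this is a parity argument, not pigeonhole. In 3.), no cancelling pairs can occur at all.

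The gap is the localisation step you flag, and it cannot be closed because the claim is false. Triangularity applied to a non-basis child $\node''$ only excludes basis vectors $<''\node''$. If $\node''<''\node_j$, a basis vector that is not a sibling but lies between them can enter the expansion. Your description of the basis is correct: bullets two and three of \cref{change of variables} force $x\in\Node_{<''}(C)$ iff $K_C(x)\notin\mathrm{span}\{K_C(x')\colon x'>''x\}$.

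Apply this to the following couple in $\mathcal C_{1,1}$. Its roots are $\node$, with leaf children $c_1,\dots,c_5$, and $m$, of opposite sign, with leaf children $w,z_2,\dots,z_5$. Set $\sigma(c_1)=w$ and $\sigma(c_j)=z_j$; signs match positionwise. Take the order $c_2<''c_3<''c_4<''c_5<''z_2<''z_3<''z_4<''z_5<''w<''c_1<''\node<''m$. Then $\Node_{<''}(C)=\{z_3,z_4,z_5,c_1,m\}$, so $\deg_{<''}(\node)=1$ with $\node'=c_1$. Yet $K_C(c_j)=-K_C(z_j)$ for $j=3,4,5$, and $K_C(c_2)=-K_C(m)+K_C(z_3)+K_C(z_4)+K_C(z_5)-K_C(c_1)$. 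No sibling of $c_1$ is a signed combination of basis vectors after $c_1$. So your ``hence'' in the degree-one case fails, as does your claim that the only basis vectors at or below the children are basis siblings.

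Similarly, take the order $c_4<''c_5<''z_4<''z_5<''w<''z_2<''z_3<''c_1<''c_2<''c_3<''\node<''m$. The basis is then $\{z_5,c_1,c_2,c_3,m\}$, $\node$ has degree $3$, and $K_C(c_5)=-K_C(z_5)$. This contradicts the claimed form of $G$ in 3.); the same mechanism breaks 2.).

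The paper's own proof does the same coefficient count and then reads off the conclusion, so it does not supply this step either. What the argument actually establishes is weaker. The coefficients on the $K_C(\node_j)$ are as enumerated, and the remainder is a signed combination of basis vectors other than the $\node_j$. Each of these is $<''$-later than the non-basis child it belongs to, but not necessarily later than the $\node_j$. In 1.) this gives only a sibling with vanishing coefficient on $K_C(\node')$.

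To get the lemma you would have to restrict the admissible orders or weaken the statement to this form. No argument through the spanning-tree construction can prove it as written, since that construction determines $\Node_{<''}(C)$ uniquely.
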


\begin{proof}
    The proof is mostly a question of counting.  
    \begin{itemize}
        \item[1.)] Since $\deg(\node)=1$, any $\node''\in S(\node')\setminus\{\node'\}$ satisfies $\node''\notin\Node_{<''}(C)$ and \cref{change of variables} implies that $K_C(\node'')$ is a signed combination of $\l(K_C(\node''')\r)_{\node''<\node'''\in\Node_{<''}(C)}$. The same is true for $K_C(\node)=\sum_{\node'''\in C(\node)}K_C(\node''')$. Denoting the signed coordinate contribution of $K_C(\node')$ to each $\node'''\in C(\node)$ by $s_{\node'''_i}\in\{-1,0,1\}$, we find 
        \begin{equation}
            1+\sum_{i=1}^4s_{\node'''_i}=0
        \end{equation}
        and this equation can only hold true if there exists at least one $i\in\{1,2,3,4\}$ such that $s_{\node'''_i}=0$. The other two signs are automatically $1$ and $-1$. Thence, the vertex we were looking for is $\node''=\node_i'''$.
        \item[2.)] We denote $\node_1,\node_2\in C(\node)\cap\Node_{<''}(C)$ and $\node_1',\node_2',\node_3'\in C(\node)\cap\l(\Node(C)\setminus\Node_{<''}(C)\r)$ so that 
        \begin{equation}
            K_C(\node_1)+K_C(\node_2)+K_C(\node_1')+K_C(\node_2')+K_C(\node_3') = K_C(\node).
        \end{equation}
        As before, we denote by $s_{\node_i}^{\node_j'}\in\{-1,0,1\}$ the signed coordinate contribution of $K_C({\node_i})$ to the signed expansion of $K_C(\node_j')$. We obtain 
        \begin{align}
            1+s_{\node_1}^{\node_1'}+s_{\node_1}^{\node_2'}+s_{\node_1}^{\node_3'}&=0,\label{first condition}\\
            1+s_{\node_2}^{\node_1'}+s_{\node_2}^{\node_2'}+s_{\node_2}^{\node_3'}&=0.\label{second condition}
        \end{align}
        These two equations deliver $36$ possibilities in total. Indeed, \cref{first condition} can only be achieved by $s_{\node_1}^{\node'_i}=-1$ and the remaining signs $s_{\node_1}^{\node'_j}=s_{\node_1}^{\node'_k}=0$ or by $s_{\node_1}^{\node'_i}=1$ and the remaining signs $s_{\node_1}^{\node'_j}=s_{\node_1}^{\node'_k}=-1$. Of course, the same holds true for \cref{second condition}. If the first case applied for \cref{first condition}, there are six different combinations for any fixed $s_{\node_1}^{\node'_i}$. There are three different fixed combinations for the $s_{\node_1}^{\node'_i}$ so that we get $18$ different combinations. Repeating the argument with the second arrangement gives an additional $18$ possibilities, and we end up with $36$ different combinations. 
        
        \item[3.)] As in 2.), we now have 
        \begin{equation}
            K_C(\node_1)+K_C(\node_2)+K_C(\node_3)+K_C(\node_1')+K_C(\node_2')=K_C(\node)
        \end{equation}
        and thus this time 
        \begin{align}
            1+s_{\node_1}^{\node_1'}+s_{\node_1}^{\node_2'}&=0,\label{jo}\\
            1+s_{\node_2}^{\node_1'}+s_{\node_2}^{\node_2'}&=0,\label{transen}\\
            1+s_{\node_3}^{\node_1'}+s_{\node_3}^{\node_2'}&=0.\label{ekelhaft}
        \end{align}
        \Cref{jo,transen,ekelhaft} imply that there exists $i,j,k\in\{1,2\}$ such that $s_{\node_1}^{\node_i'}=s_{\node_2}^{\node_j'}=s_{\node_3}^{\node_k'}=0$. In this case, there are only $8$ combinations to consider. The combinations are $(i,j,k)=(1,1,1)$, $(i,j,k)=(1,1,2)$, $(i,j,k)=(1,2,1)$, $(i,j,k)=(2,1,1)$, $(i,j,k)=(1,2,2)$, $(i,j,k)=(2,1,2)$, $(i,j,k)=(2,2,1)$ and $(i,j,k)=(2,2,2)$.
        \item[4.)] The statement holds trivially by the definition of $K_C$ and \cref{change of variables}.
    \end{itemize}
\end{proof}

\begin{corollary}\label{classifying resonant nodes}
    If $\node\in\Node(C)$ is a resonant node, then $\deg_{<''}(\node)\in\{0,1,2\}$ for all $<''\in\M\l(\Leaf(C)\sqcup\Node(C)\r)$. 
\end{corollary}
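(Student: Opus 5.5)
Since $\deg_{<''}(\node)\in\{0,\ldots,4\}$ by \cref{k=0 is included lol} ($\mathcal P_5=\emptyset$), it suffices to rule out degrees $3$ and $4$ for a resonant node. A resonant node is one at which $\Omega_\node$ vanishes identically as a function of the decoration $\kappa\in(\R^d)^{\Leaf(C)_+}$. The plan is to use the coordinates provided by \cref{change of variables}, in which $(K_C(\node'))_{\node'\in\Node_{<''}(C)}$ is a basis of $V(C)$. We then show that if $\node$ had degree $3$ or $4$, $\Omega_\node$ would be a nonzero quadratic polynomial in these free coordinates, contradicting $\Omega_\node\equiv0$.

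First write $\Omega_\node=\iota_\node|K_C(\node)|^2-\sum_{\node'\in C(\node)}\iota_{\node'}|K_C(\node')|^2$. The sign pattern of the children (Figure \ref{fig:placeholder}) is $(\iota_\node,-\iota_\node,-\iota_\node,\iota_\node,\iota_\node)$, so up to the global factor $\iota_\node$ we have
\[\Omega_\node=|K|^2-|K_1|^2+|K_2|^2+|K_3|^2-|K_4|^2-|K_5|^2,\]
with $K=\sum_i K_i$. By \cref{change of variables}, $K_C(\node)$ is a signed combination of basis vectors strictly above the children in $<''$. Since $<''$ is compatible with parentality, it therefore involves none of the children that lie in $\Node_{<''}(C)$.

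\textbf{Degree $4$.} By \cref{all possibilities} 4.), four children $\node_1,\dots,\node_4$ are free coordinates, and the fifth is $K_C(\node)-\sum_{i=1}^4 K_C(\node_i)$, where $K_C(\node)=G$ is independent of them. Fix $G=0$ and let only $x=K_C(\node_1)$ vary, setting the other free variables to zero. The quadratic part in $x$ collects $\pm|x|^2$ from child $\node_1$ and $\pm|x|^2$ from the dependent child. Working through which slots these children occupy, the coefficient is $-\iota_{\node_1}-\iota_{\text{dep}}$, which is nonzero unless the two slots carry opposite signs. To get a genuine contradiction in every configuration, I would examine the cross terms $\langle K_C(\node_i),K_C(\node_j)\rangle$. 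Because the dependent child contains all four free vectors, the coefficient of $\langle x_i,x_j\rangle$ is $-2\iota_{\text{dep}}\neq0$ for every pair $i\neq j$. This is nonzero, so $\Omega_\node\not\equiv0$.

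\textbf{Degree $3$.} Run through the eight cases of \cref{all possibilities} 3.). In each case one of the two dependent children $\node'_a$ contains at least two of $K_C(\node_1),K_C(\node_2),K_C(\node_3)$ with coefficient $-1$, since the three free vectors are distributed between only two dependent children. Its square then produces a cross term $\langle K_C(\node_i),K_C(\node_j)\rangle$ with coefficient $\mp 2\iota_{\node'_a}$. The only other source of this cross term is the other dependent child, and by the case list that child does not contain both $\node_i$ and $\node_j$. Nor does $K_C(\node)$, which contains none of them. Hence this coefficient is nonzero, $\Omega_\node$ is not identically zero, and we have a contradiction. Degrees $0,1,2$ are not excluded by this argument, because there the free variables can cancel in pairs, as in the trivial resonances listed in \cref{the heuristics}.

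\textbf{Main obstacle.} The delicate point is justifying that a cross term in the free coordinates really forces $\Omega_\node\not\equiv0$ on $(\R^d)^{\Leaf(C)_+}$. This requires the $G$-parts not to reintroduce the same free vectors. That is exactly the content of the signed-combination structure in \cref{change of variables}: $G$ only involves basis vectors $<''$-above the children, and these are linearly independent from the $K_C(\node_i)$. I would state this independence explicitly, and then read off the coefficient of $\langle K_C(\node_i),K_C(\node_j)\rangle$ as a polynomial identity in independent vector variables.
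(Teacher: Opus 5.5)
Your argument is correct, but it takes a different route from the paper.

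The paper works with the pairing definition of resonance (\cref{definition of being a resonant node}) and stays at the linear level. Suppose three (or four) children of $\node$ lie in $\Node_{<''}(C)$. Linear independence forbids two of them from forming one of the cancelling pairs $K_C(\node_a)+K_C(\node_b)=0$, so one of them must be the child with $K_C(\node_i)=K_C(\node)$. But by the third point of \cref{change of variables}, $K_C(\node)$ is a signed combination of basis vectors $K_C(\node')$ with $\node\leq''\node'$, which excludes all children of $\node$. This contradicts uniqueness of the basis expansion.

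You instead use only the consequence $\Omega_\node\equiv0$. From the bookkeeping of \cref{all possibilities} (each free child's vector occurs with coefficient $-1$ in exactly one dependent sibling) and pigeonhole, you show that at degree $3$ or $4$ some mixed term $\langle K_C(\node_i),K_C(\node_j)\rangle$ appears in $\Omega_\node$ with coefficient $-2\iota_{\node'}\neq0$. This is legitimate, since the basis vectors are independent coordinates and $K_C(\node)$ does not involve the children.

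The two approaches buy different things. The paper's argument is shorter and needs neither $\Omega_\node$ nor the sign pattern of the nonlinearity. Yours proves more: no node of degree $3$ or $4$ can have an identically vanishing resonance factor. That is a priori a larger class than pairing-resonant nodes, and the nondegeneracy you exhibit (a nonvanishing mixed term) is the same kind that case 2.) of \cref{main technical necessary theorem} relies on.

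To connect with the paper's formal definition, state explicitly that pairing-resonance implies $\Omega_\node\equiv0$. The children carry signs $(\iota_\node,-\iota_\node,-\iota_\node,\iota_\node,\iota_\node)$, so after removing two opposite-sign pairs the remaining child has sign $\iota_\node$ and cancels $\iota_\node\abs{K_C(\node)}^2$. The digression on the $\abs{x}^2$ coefficient in your degree-$4$ case can be dropped.
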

\begin{proof}
    Although this is a direct application of \cref{all possibilities}, we may argue directly using \cref{change of variables}: If $\deg_{<''}(\node)=3$, then, due to the linear independence of $\l(K_C(\node')\r)_{\node'\in\Node_{<''}(C)}$, $K_C(\node)=K_C(\node_i)$ for some $i\in\{1,2,3\}$, where we have denoted $C(\node)\cap\Node_{<''}(C)=\{\node_1,\node_2,\node_3\}$. But this contradicts the third point of \cref{change of variables}. The same argument holds if one assumes $\deg_{<''}(\node)=4$.
\end{proof}

\begin{theorem}\label{main technical necessary theorem}

    Let $p\in(1,\infty)$ and $\tilde\gamma>0$. We distinguish the following cases.
    \begin{itemize}
        \item[1.)] For degree $i=1$ nodes. Suppose $A\in\R\setminus\{0\}$, $B,C\in\R$, $\iota\in\{\pm\}$, $\gamma\in\l(0,\tilde\gamma\r)$ and take $P\in\R^d\setminus\{0\}$ with $\abs{P}\geq\epsilon^{\gamma}$. We have
    \begin{align}
        \gamma_\iota(x)\coloneqq\frac{1}{\abs{A+\rmi\l(B+\epsilon^{-\tilde\gamma}\l(\abs{x}^2+\iota\abs{x-P}^2+C\r)\r)}^p},\ \int\limits_{Q_{0,(2n+1)R}}\gamma_\iota\rmd x\leq\Lambda\abs{A}^{1-p}n^{d-1}\epsilon^{\tilde\gamma-\gamma}.
        \end{align}
        \item[2.)] For degree $i\in\llbracket 2,4\rrbracket$ nodes. Let $\l(\iota_j\r)_{j\in\llbracket1,i\rrbracket}\in\{\pm\}^i$, $\l(\iota_j^{j'}\r)_{\substack{j\in\llbracket1,i\rrbracket\\j'\in\llbracket i+1,5\rrbracket}}\in\{\pm,0\}^5$ and write $\iota=\Big(\l(\iota_j\r)_{j\in\llbracket1,i\rrbracket},\l(\iota_j\r)_{j\in\llbracket i+1,5\rrbracket},\l(\iota_j^{j'}\r)_{\substack{j\in\llbracket1,i\rrbracket\\j'\in\llbracket i+1,5\rrbracket}}\Big)$. Further, assume $A\in\R\setminus\{0\}$, $B,C\in\R$ and $P_j\in\R^d$ for $j\in\llbracket i+1,5\rrbracket$. If we define
    \begin{equation}
        \gamma_{\iota}\l(x_1,\ldots,x_i\r)\coloneqq\frac{1}{\abs{A+\rmi\l(B+\epsilon^{-\tilde\gamma}\l(\sum_{j=1}^i\iota_j\abs{x_j}^2+\sum_{j=i+1}^5\iota_j\abs{\sum_{j'=1}^i\iota_j^{j'}x_{j'}-P_j}^2+C\r)\r)}^p},
    \end{equation}
    then
    \begin{equation}
        \int\limits_{Q_{0,(2n+1)R}^i}\gamma_{\iota}\l(x_1,\ldots,x_i\r)\rmd\l(x_1,\ldots,x_i\r)\leq \Lambda\abs{A}^{1-p}n^{id-2}\epsilon^{\tilde\gamma}
    \end{equation}
    
    \end{itemize}
    \end{theorem}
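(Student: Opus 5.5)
The approach in both cases is to reduce the integral to a one-dimensional integral over the value of the phase, via the coarea formula. The only inputs are the elementary bound
\begin{equation*}
\int_\R\frac{\rmd s}{\abs{A+\rmi(B+\epsilon^{-\tilde\gamma}s)}^p}=\epsilon^{\tilde\gamma}\abs{A}^{1-p}\int_\R\frac{\rmd u}{(1+u^2)^{p/2}}\lesssim\epsilon^{\tilde\gamma}\abs{A}^{1-p},
\end{equation*}
valid since $p>1$, and a lower bound on the gradient of the phase, or more precisely a bound on the $(d-1)$-dimensional measure of its level sets. The point is that $\gamma_\iota$ depends on $x$ only through a quadratic polynomial $\phi$, so after the change of variables $s=\phi(x)$ the $s$-integral produces the factor $\epsilon^{\tilde\gamma}\abs{A}^{1-p}$. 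What remains is a geometric factor $\sup_s\int_{\{\phi=s\}\cap Q}\abs{\nabla\phi}^{-1}\rmd\mathcal H^{d-1}$, and the content of the theorem is that this factor is bounded polynomially in $n$, losing at most $\epsilon^{-\gamma}$ in the degree-one case.

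For degree-one nodes I would split into the two signs. For $\iota=+$, completing the square gives $\phi(x)=\abs{x}^2+\abs{x-P}^2=2\abs{x-P/2}^2+\abs{P}^2/2$. I would use polar coordinates around $P/2$ with $r=\abs{x-P/2}\le\Lambda nR$ and substitute $s=2r^2$, so that $r^{d-1}\rmd r=\tfrac14 r^{d-2}\rmd s\lesssim n^{d-2}\rmd s$ (here $d\ge2$ is used). This gives $n^{d-2}\epsilon^{\tilde\gamma}\abs{A}^{1-p}$, which is even better than required. For $\iota=-$ the phase $\phi=2x\cdot P-\abs{P}^2$ is linear. I would decompose $x=x_\parallel+x_\perp$ along $P/\abs P$ and substitute $s=2\abs{P}x_\parallel$. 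The perpendicular integral over the box gives $(nR)^{d-1}$, and the Jacobian gives $\abs{P}^{-1}\le\epsilon^{-\gamma}$. Together this yields $\Lambda\abs{A}^{1-p}n^{d-1}\epsilon^{\tilde\gamma-\gamma}$.

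For degree $i\in\llbracket2,4\rrbracket$ the phase is a quadratic form in $(x_1,\ldots,x_i)\in\R^{id}$ plus linear terms. My plan is to single out one variable, say $x_1$, freeze the remaining $x_2,\ldots,x_i$, and apply the degree-one computation in $x_1$. This produces a factor $n^{(i-1)d}$ from the frozen variables. The hard case is when the quadratic coefficient in $x_1$ vanishes, which happens whenever $\iota_1+\sum_j\iota_j(\iota_1^{j})^2=0$. Then the phase is linear in $x_1$ with a coefficient $P(x_2,\ldots,x_i)$ that can be arbitrarily small, and the degree-one estimate would lose $\epsilon^{-\gamma}$. To avoid this I would use the case list of \cref{all possibilities}, which constrains the coefficient patterns $\iota_j^{j'}$. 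In each case I would choose a linear change of variables (e.g.\ $y=x_1\pm x_2$) under which the form has a nondegenerate definite or indefinite $2d$-dimensional block in two of the variables.

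The step I expect to be the main obstacle is the resulting integral over $(y_1,y_2)\in\R^{2d}$. For a nondegenerate quadratic form $\phi$ on $\R^{2d}$, the level sets $\{\phi=s\}$ restricted to a ball of radius $\sim n$ have $\mathcal H^{2d-1}$-measure divided by $\abs{\nabla\phi}$ of order $n^{2d-2}$, uniformly in $s$, and this holds also for hyperboloids since $2d\ge4$. This gives the bound $n^{2d-2}\epsilon^{\tilde\gamma}\abs{A}^{1-p}$. Combined with the factor $n^{(i-2)d}$ from the remaining frozen variables, this yields $n^{id-2}$. The difficulty is making the cancellation pattern of \cref{all possibilities} actually guarantee such a nondegenerate block. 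Where it does not, for example when every $x_j$ enters only through perfect cancellations, I would check directly that $\phi$ is then linear in some combination with a coefficient that is a fixed nonzero integer combination of the $x_j$, and integrate that variable first in the linear manner. The level-set measure estimate for indefinite forms near the origin also needs care; I would handle it by a dyadic decomposition in $\abs{\nabla\phi}$.
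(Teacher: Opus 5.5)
\textbf{Gap: case 2 fails when the quadratic part of the phase vanishes, and the statement itself is false there.}

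Your treatment of case 1, and of the subcase of case 2 where the $\abs{x_1}^2$-coefficient $c$ is nonzero, is the paper's argument. The only difference is that the paper integrates along the coordinate with $\abs{P_{i_0}}=\norm{P}_\infty$ rather than along $P/\abs{P}$, which is equivalent. The problem is the degenerate subcase, and it cannot be closed.

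Write the quadratic part of the phase as $\mathbf Q\otimes I_d$, with $\mathbf Q$ a symmetric integer $i\times i$ matrix.

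\emph{If $\mathbf Q\neq0$}, your plan works, and more simply than you expect. Some $v\in\{e_j,e_j+e_k\}$ has $v^{T}\mathbf Q v\neq0$. A unimodular shear making $v$ a coordinate direction (this is the paper's $x_2'=x_2+x_1$) brings you back to $c\neq0$. No $2d$-dimensional level-set or dyadic analysis is needed.

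\emph{If $\mathbf Q=0$}, the phase is affine in $(x_1,\ldots,x_i)$. Its slopes are combinations of the $P_j$ alone, not ``nonzero integer combinations of the $x_j$'' as you assert for the perfect-cancellation case. Here is a concrete counterexample:
\begin{itemize}
\item Take $i=2$, $\iota_1=\iota_2=+$, $\iota_3=\iota_4=\iota_5=-$, $\iota_3^1=\iota_4^2=-$, and all other $\iota_j^{j'}=0$.
\item This is the second degree-$2$ pattern of \cref{all possibilities} (partners $-K_C(\node_1)+G$, $-K_C(\node_2)+G$, $G$). So appealing to that list cannot exclude it.
\item The phase is $-2x_1\cdot P_3-2x_2\cdot P_4-\abs{P_3}^2-\abs{P_4}^2-\abs{P_5}^2+C$.
\item With $P_3=P_4=P_5=0$ and $B=C=0$, the integrand is identically $\abs{A}^{-p}$. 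The integral is then $\abs{A}^{-p}(2(2n+1)R)^{2d}$, which is not bounded by $\Lambda\abs{A}^{1-p}n^{2d-2}\epsilon^{\tilde\gamma}$ as $\epsilon\to0$.
\item The same failure occurs for nonzero but tiny $P_3,P_4$, i.e.\ for non-resonant nodes.
\end{itemize}

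The paper's proof breaks at the same point. Its claim that $c=0$ forces a term $\tilde c\,x_1\cdot x_2$ with $\tilde c\neq0$ is false for this configuration: the only $x_1$-dependence left is $-2x_1\cdot P_3$. What is actually provable is:
\begin{itemize}
\item if $\mathbf Q\neq0$, the bound $\Lambda\abs{A}^{1-p}n^{id-2}\epsilon^{\tilde\gamma}$, by your route;
\item if $\mathbf Q=0$ and the slope vectors $w_j$ satisfy $\max_j\abs{w_j}\geq\epsilon^{\gamma}$, the bound $\Lambda\abs{A}^{1-p}n^{id-1}\epsilon^{\tilde\gamma-\gamma}$, by integrating the corresponding $x_j$ first as in case 1 with $\iota=-$;
\item without a lower bound on the slopes, no $\epsilon$-gain at all.
\end{itemize}
So case 2 needs an extra hypothesis. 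In the application, near-resonant nodes of degree $\geq2$ (partners $-K_C(\node_1)+G$ with $\abs{G}$ small) must be set aside and counted like $\tilde{\mathcal P}_1$. Crediting every node of $\mathcal P_{j}\setminus\mathcal P_{\mathrm{res},j}$ with a gain is not justified.

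A minor point: in case 1 with $\iota=+$, your bound $r\le\Lambda nR$ presumes $\abs{P}\lesssim nR$. For general $P$, use that a sphere of radius $r$ meets the cube in area $\lesssim\min(r^{d-1},n^{d-1})$; this still yields $n^{d-2}$.
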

\begin{proof}The proof is as follows.
        \begin{itemize}
            \item[1.)] We first consider $\iota=+$ and find 
        \begin{equation}
        \begin{aligned}\label{already estimated this shit}
            \int\limits_{Q_{0,2(2n+1)R}}\gamma_+^1\rmd x&\leq\abs{A}^{-p}\int\limits_{B_{\sqrt{d}(2n+1)R}(-P/2)}\frac{\rmd x}{\abs{1+\rmi A^{-1}\l(B+\epsilon^{-\tilde\gamma}\l(2\abs{x}^2+\abs{P}^2/2+C\r)\r)}^p}\\&\leq\Lambda\abs{A}^{-p}\int\limits_{0}^{\sqrt{d}(2n+1)R}\frac{r^{d-1}\rmd r}{\abs{1+\rmi A^{-1}\l(B+\epsilon^{-\tilde\gamma}\l(2r^2+\abs{P}^2/2+C\r)\r)}^p}\\&=\frac{\Lambda}{2}\abs{A}^{-p}\int\limits_{0}^{\l(\sqrt{d}(2n+1)R\r)^2}\frac{u^{\frac{d-2}{2}}\rmd u}{\abs{1+\rmi A^{-1}\l(B+\epsilon^{-\tilde\gamma}\l(2u+\abs{P}^2/2+C\r)\r)}^p}\\&\leq\Lambda\abs{A}^{1-p}\l(\sqrt{d}(2n+1)R\r)^{d-2}\epsilon^{\tilde\gamma}\int\limits_\R\frac{\rmd r'}{\abs{1+\rmi r'}^p}\leq\Lambda\abs{A}^{1-p}n^{d-2}\epsilon^{\tilde\gamma}.
        \end{aligned}
        \end{equation}
        In the case of $\iota=-$, we may first write 
        \begin{equation}
            \int\limits_{Q_{0,(2n+1)R}}\gamma_-^1\rmd x =\abs{A}^{-p}\int\limits_{Q_{-P/2,(2n+1)R}}\frac{\rmd x}{\abs{1+\rmi A^{-1}\l(B+\epsilon^{-\tilde\gamma}\l(2xP+C\r)\r)}^p}.
        \end{equation}
        It is now important to recognize that $P\neq0$ so that $\norm{P}_\infty\coloneqq\max_{1\leq i\leq d}\abs{P_i}>0$. Denote by $i_0\in\llbracket1,d\rrbracket$ the index that satisfies $\abs{P_{i_0}}=\norm{P}_\infty$ and apply Fubini's theorem to rewrite
        \begin{equation}
            \begin{gathered}
            \int\limits_{Q_{0,(2n+1)R}}\gamma_-^1\rmd x\\=\abs{A}^{-p}\int\limits_{-\frac{P_1}{2}-(2n+1)R}^{-\frac{P_1}{2}+(2n+1)R}\cdots\int\limits_{-\frac{P_{i_0}}{2}-(2n+1)R}^{-\frac{P_{i_0}}{2}+(2n+1)R}\frac{\rmd x_{i_0}\prod\limits_{i\in\llbracket1,d\rrbracket\setminus\{i_0\}}\rmd x_i}{\abs{1+\rmi A^{-1}\l(B+\epsilon^{-\tilde\gamma}\l(2x_{i_0}P_{i_0}+2\sum_{i\in\llbracket1,d\rrbracket\setminus\{i_0\}}x_iP_i+C\r)\r)}^p},
        \end{gathered}
        \end{equation}
        that is, we would like to carry out the $x_{i_0}$ integration first and substitute the imaginary part in the denominator as before. This leads to 
        \begin{equation}
        \begin{aligned}
            \int\limits_{Q_{0,2(2n+1)R}}\gamma_-^1\rmd x&\leq\Lambda\frac{\abs{A}^{1-p}}{\norm{P}_\infty}\epsilon\int\limits_{-\frac{P_1}{2}-(2n+1)R}^{-\frac{P_1}{2}+(2n+1)R}\cdots\int_\R\frac{\rmd x_{i_0}'}{\abs{1+\rmi x_{i_0}'}^p}\prod_{i\in\llbracket1,d\rrbracket\setminus\{i_0\}}\rmd x_i\\&\leq\Lambda\abs{A}^{1-p}\norm{P}_\infty^{-1}n^{d-1}\epsilon^{\tilde\gamma}\leq\Lambda\abs{A}^{1-p}\abs{P}^{-1}n^{d-1}\epsilon^{\tilde\gamma}\leq\Lambda\abs{A}^{1-p}n^{d-1}\epsilon^{\tilde\gamma-\gamma},
        \end{aligned}
        \end{equation}
        where we used the fact that all norms in finite-dimensional vector spaces are equivalent. 
            \item[2.)]  We consider the appearance of the variable $x_1$ in the sum 
        \begin{equation}            \sum_{j=1}^i\iota_j\abs{x_j}^2+\sum_{j=i+1}^5\iota_j\abs{\sum_{j'=1}^i\iota_j^{j'}x_{j'}-P_j}^2.
        \end{equation}
        We enumerate 
        \begin{equation}
            \l\{j\in\llbracket i+1,5\rrbracket\bigm\vert\iota_j^1\neq0\r\} = \l\{k_1,\ldots,k_q\r\}
        \end{equation}
        and thus have 
        \begin{equation}
        \begin{gathered}
            \sum_{\substack{j\in\llbracket i+1,5\rrbracket\\\iota_j^1\neq0}}\iota_j\abs{\sum_{j'=1}^i\iota_j^{j'}x_{j'}-P_j}^2=\iota_{k_1}\abs{\sum_{j'=1}^i\iota_{k_1}^{j'}x_{j'}-P_{k_1}}^2+\cdots+\iota_{k_q}\abs{\sum_{j'=1}^i\iota_{k_q}^{j'}x_{j'}-P_{k_q}}^2\\
            =\l(\iota_{k_1}+\cdots+\iota_{k_q}\r)\abs{x_1}^2+2\iota_{k_1}\iota_{k_1}^1x_1\l(\sum_{j'=2}^i\iota_{k_1}^{j'}x_{j'}-P_{k_1}\r)+\cdots+2\iota_{k_q}\iota_{k_q}^1x_1\l(\sum_{j'=2}^i\iota_{k_q}^{j'}x_{j'}-P_{k_q}\r).
        \end{gathered}
        \end{equation}
        Thence, the total contribution of $x_1$ reads 
        \begin{equation}
            \l(\iota_1+\iota_{k_1}+\cdots+\iota_{k_q}\r)\abs{x_1}^2+2\iota_{k_1}\iota_{k_1}^1x_1\l(\sum_{j'=2}^i\iota_{k_1}^{j'}x_{j'}-P_{k_1}\r)+\cdots+2\iota_{k_q}\iota_{k_q}^1x_1\l(\sum_{j'=2}^i\iota_{k_q}^{j'}x_{j'}-P_{k_q}\r).\label{x1 contribution}
        \end{equation}
        There are now two qualitatively distinct cases to distinguish. The first case is $c\coloneqq\iota_1+\iota_{k_1}+\cdots+\iota_{k_q}\neq0$. Setting $c_l\coloneqq\iota_{k_l}\iota^1_{k_l}\l(\sum_{j'=2}^i\iota_{k_l}^{j'}x_{j'}-P_{k_l}\r)$, we have 
        \begin{equation}
            c\abs{x_1}^2 + 2(c_1+\cdots+c_q)x_1 = c\l(\abs{x_1}^2+\frac{2}{c}(c_1+\cdots+c_q)x_1\r)=c\abs{x_1+\frac{c_1+\cdots+c_q}{c}}^2-\frac{\abs{c_1+\cdots+c_q}^2}{c}
        \end{equation}
        The integral over $x_1$ now takes the form 
        \begin{equation}
            \begin{aligned}
                &\int\limits_{Q_{0,(2n+1)R}}\gamma_{\iota}\l(x_1,\ldots,x_i\r)\rmd x_1 \\&= \abs{A}^{-p}\int\limits_{Q_{0,(2n+1)R}}\frac{1}{\abs{1+\rmi A^{-1}\l(B+\epsilon^{-\tilde\gamma}\l(c\abs{x_1+\frac{c_1+\cdots+c_q}{c }}^2+R(x_2,\ldots,x_i)\r)\r)}^p}\rmd x_1\\
                &=\abs{A}^{-p}\int_{Q_{\frac{c_1+\cdots+c_{q}}{c},(2n+1)R}}\frac{1}{\abs{1+\rmi A^{-1}\l(B+\epsilon^{-\tilde\gamma}\l(c\abs{x_1}^2+R(x_2,\ldots,x_i)\r)\r)}^p}\rmd x_1,
            \end{aligned}
        \end{equation}
        where $R(x_2,\ldots,x_i)$ is the rest in terms of the remaining variables. One can check that $\frac{\abs{c_1+\cdots+c_q}}{\abs{c}}\leq2(5-i)(i-1)\sqrt{d}(2n+1)R$ and thus $Q_{\frac{c_1+\cdots+c_{q}}{c},(2n+1)R}\subseteq Q_{0,\Lambda n}$ which leads to 
        \begin{equation}
            \label{already done bitch}\int\limits_{Q_{0,(2n+1)R}}\gamma_{\iota}\l(x_1,\ldots,x_i\r)\rmd x_1\leq\abs{A}^{-p}\int_{Q_{0,\Lambda n}}\frac{\rmd x_1}{\abs{1+\rmi A^{-1}\l(B+\epsilon^{-\tilde\gamma}\l(c\abs{x_1}^2+R(x_2,\ldots,x_i)\r)\r)}^p}.
        \end{equation}
        and can be dealt with as in \cref{already estimated this shit}. It leads to 
        \begin{equation}
            \int\limits_{Q_{0,(2n+1)R}}\gamma_{\iota}\l(x_1,\ldots,x_i\r)\rmd x_1\leq\Lambda\abs{A}^{1-p}n^{d-2}\epsilon^{\tilde\gamma}
        \end{equation}
        so that 
        \begin{equation}
            \int\limits_{Q_{0,(2n+1)R}^i}\gamma_{\iota}\l(x_1,\ldots,x_i\r)\rmd\l(x_1,\ldots,x_i\r)\leq\Lambda\abs{A}^{1-p}n^{id-2}\epsilon^{\tilde\gamma}.
        \end{equation}

        In the second case, $c=0$. The sole contribution to the $x_1$ integral comes from 
        \begin{equation}\label{mixed combination}
            2\iota_{k_1}\iota_{k_1}^1x_1\l(\sum_{j'=2}^i\iota_{k_1}^{j'}x_{j'}-P_{k_1}\r)+\cdots+2\iota_{k_q}\iota_{k_q}^1x_1\l(\sum_{j'=2}^i\iota_{k_q}^{j'}x_{j'}-P_{k_q}\r)
        \end{equation}
        according to \cref{x1 contribution}. Since $i\geq2$, there exists some $\tilde c\in\R\setminus\{0\}$ such that one of the terms in \cref{mixed combination} is exactly $\tilde cx_1x_2$. If we substitute $x_2'=x_2+x_1$ for $x_2$, we have to carry out the integration over $x_2'$ over $Q_{x_1,2(2n+1)R}$ before the integration over $x_1$. But we can argue again that $Q_{x_1,2(2n+1)R}\subseteq Q_{0,4(2n+1)R}$, estimate the integral by replacing the integration domain of $x_2'$ with $Q_{0,4(2n+1)R}$ and then use Fubini's theorem to integrate first in the $x_1$ variable. The denominator of the integrand is now quadratic in the $x_1$ variable, so that we can apply \cref{already estimated this shit}. The remaining integrals contribute as $\mathcal O\l(n^{({i-1})d}\r)$. To summarize, 
        \begin{equation}
            \int\limits_{Q_{0,(2n+1)R}^i}\gamma_{\iota}\l(x_1,\ldots,x_i\r)\rmd\l(x_1,\ldots,x_i\r)\leq \Lambda\abs{A}^{1-p}n^{id-2}\epsilon^{\tilde\gamma}.
        \end{equation}
        \end{itemize}    
\end{proof}

\begin{corollary}\label{get gain although loss}
    Let $n\leq\abs{\ln\epsilon}$. Then there exists $\alpha=\alpha(d)>0$ such that for all $i\in\llbracket1,4\rrbracket$
    \begin{equation} \int\limits_{Q_{0,2(2n+1)R}^i}\gamma_{\iota}\l(x_1,\ldots,x_i\r)\rmd\l(x_1,\ldots,x_i\r)\leq\Lambda\abs{A}^{1-p}\epsilon^{\alpha}.
    \end{equation}
\end{corollary}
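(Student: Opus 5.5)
The plan is to obtain the corollary directly from \cref{main technical necessary theorem}, absorbing the polynomial losses in $n$ into a small loss of powers of $\epsilon$ by means of the restriction $n\leq\abs{\ln\epsilon}$.

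The first step is to compare domains. The corollary integrates over $Q_{0,2(2n+1)R}^i$, whereas the theorem is stated on $Q_{0,(2n+1)R}^i$. Since $2(2n+1)R\leq(2n'+1)R$ with $n'=2n+1\lesssim n$, and all bounds in the theorem depend on $n$ only through powers $n^{d-1}$ or $n^{id-2}$, the enlargement costs only a factor $\Lambda^{id}$ that depends on $d$ alone. (The proof of part 1.) in fact already works on the doubled cube.)

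Next, the theorem bounds the integral in each of the two regimes.

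\textbf{Degree $i=1$.} We choose a fixed $\gamma\in(0,\tilde\gamma)$, say $\gamma=\tilde\gamma/2$, and obtain the bound $\Lambda\abs{A}^{1-p}n^{d-1}\epsilon^{\tilde\gamma-\gamma}$. This requires the hypothesis $\abs{P}\geq\epsilon^{\gamma}$ in the case $\iota=-$. In the case $\iota=+$ the bound $n^{d-2}\epsilon^{\tilde\gamma}$ holds with no condition on $P$. I will read the corollary as being applied under the same standing hypotheses as the theorem, so that $\abs{P}\geq\epsilon^{\gamma}$ is imposed whenever $\iota=-$. The complementary region $\abs{P}<\epsilon^{\gamma}$ has to be handled separately by a volume bound at the level of the sum, and is not part of this statement.

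\textbf{Degrees $i\in\llbracket2,4\rrbracket$.} Here the bound is $\Lambda\abs{A}^{1-p}n^{id-2}\epsilon^{\tilde\gamma}$.

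The remaining step is to absorb the powers of $n$. Since $n\leq\abs{\ln\epsilon}$, every factor satisfies $n^{m}\leq\abs{\ln\epsilon}^{m}$ with $m\leq 4d$. For any $\theta>0$ we have $\abs{\ln\epsilon}^{4d}\leq C_{\theta,d}\,\epsilon^{-\theta}$ for all $\epsilon\in(0,1)$, because $x\mapsto x^{4d}e^{-\theta x}$ is bounded on $[0,\infty)$. Taking $\theta=\tilde\gamma/4$, we define
\[
\alpha\coloneqq\min\l(\tilde\gamma-\gamma,\tilde\gamma\r)-\tilde\gamma/4=\tilde\gamma/4>0.
\]
Then all four cases are bounded by $\Lambda\abs{A}^{1-p}\epsilon^{\alpha}$ after enlarging $\Lambda$. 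In this argument $\alpha$ depends on $\tilde\gamma$, and on $d$ only through the constant.

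The main subtlety is not analytic. It is bookkeeping: one has to make sure that $\gamma$ and $\tilde\gamma$ are fixed once and for all, independently of $n$ and $L$, so that the constant $C_{\theta,d}$ and the resulting $\alpha$ are uniform. One also has to state clearly that the degree-one, $\iota=-$ case still carries the lower bound on $\abs{P}$.
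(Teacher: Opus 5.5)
Your proof is correct and follows the paper's route. The paper simply calls the corollary a direct application of \cref{main technical necessary theorem}, and you fill in the implicit steps: enlarging the cube by replacing $n$ with $2n+1$, and absorbing the polynomial factors $n^{m}\le\abs{\ln\epsilon}^{m}\lesssim\epsilon^{-\theta}$ into a slightly smaller power of $\epsilon$. Your caveat that the degree-one, $\iota=-$ case still requires $\abs{P}\geq\epsilon^{\gamma}$ matches how the paper uses the bound, since the complementary nodes are split off into $\tilde{\mathcal P}_1$; likewise, the paper only invokes the degree $2$--$4$ bound for non-resonant nodes.
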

\begin{proof}
    This is a direct application of \cref{main technical necessary theorem}.
\end{proof}

\begin{lemma}\label{the lemma that fixes beta}
    The function $\gamma_\iota$ from \cref{main technical necessary theorem} satisfies 
    \begin{equation}
        \abs{\nabla\gamma_\iota}\leq\Lambda\abs{A}^{-p-1}\epsilon^{-\tilde\gamma}n.
    \end{equation}
    Furthermore, for all $i\in\llbracket1,4\rrbracket$,
    \begin{equation}
        \frac{1}{L}\sum_{x_1,\ldots,x_i\in Q_{0,2R}^{\Z_L^d}}\sup_{\bigtimes_{j=1}^iQ_{x_j,\frac{1}{2L}}}\abs{\nabla\gamma_\iota}\leq\Lambda L^{id}\abs{A}^{-p-1}n^{id+1}\epsilon^{\beta-\tilde\gamma}.
    \end{equation}
\end{lemma}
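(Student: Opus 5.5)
The plan is to compute the gradient of $\gamma_\iota$ directly by the chain rule and then sum the resulting pointwise bound over the lattice points. Write $\gamma_\iota = \abs{D}^{-p}$ with
\begin{equation*}
D \coloneqq A+\rmi\l(B+\epsilon^{-\tilde\gamma}\Phi\r),
\end{equation*}
where $\Phi$ is the real quadratic phase, namely $\abs{x}^2+\iota\abs{x-P}^2+C$ in case 1.) of \cref{main technical necessary theorem} and the general signed sum of squares in case 2.).

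\textbf{Pointwise bound.} Since $\abs{D}^2=A^2+(B+\epsilon^{-\tilde\gamma}\Phi)^2$, we have
\begin{equation*}
\nabla\abs{D}^{-p}=-p\,\abs{D}^{-p-2}\l(B+\epsilon^{-\tilde\gamma}\Phi\r)\epsilon^{-\tilde\gamma}\nabla\Phi .
\end{equation*}
Using $\abs{B+\epsilon^{-\tilde\gamma}\Phi}\le\abs{D}$ and $\abs{D}\ge\abs{A}$, this gives
\begin{equation*}
\abs{\nabla\gamma_\iota}\le p\,\epsilon^{-\tilde\gamma}\abs{D}^{-p-1}\abs{\nabla\Phi}\le p\,\abs{A}^{-p-1}\epsilon^{-\tilde\gamma}\abs{\nabla\Phi}.
\end{equation*}
Next I bound $\nabla\Phi$. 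Each term of $\Phi$ has the form $\pm\abs{\sum_{j'}\iota^{j'}_jx_{j'}-P_j}^2$, whose gradient is linear in the $x_{j'}$ with coefficients in $\{0,\pm1,\pm2\}$. On the relevant domain $x_j\in Q_{0,\Lambda nR}$, so $\abs{\nabla\Phi}\lesssim n$. The shifts $P_j$ are themselves signed combinations of wave numbers of size at most $(2n+1)R$, because leaves are supported in $B_R$ and there are at most $\mathcal O(n)$ of them below any node. This yields $\abs{\nabla\gamma_\iota}\le\Lambda\abs{A}^{-p-1}\epsilon^{-\tilde\gamma}n$.

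\textbf{Summed bound.} I apply the pointwise bound on each small cube $\bigtimes_jQ_{x_j,\frac1{2L}}$; it holds there since these cubes still lie in a box of size $\mathcal O(nR)$. I then count lattice points: the number of $(x_1,\ldots,x_i)\in(Q_{0,2R}^{\Z_L^d})^i$ is at most $(\Lambda LR)^{id}$, and if the domain is scaled with $n$ as in \cref{get gain although loss}, at most $\Lambda L^{id}n^{id}$. This gives
\begin{equation*}
\frac1L\sum\sup\abs{\nabla\gamma_\iota}\le\Lambda L^{id-1}n^{id+1}\abs{A}^{-p-1}\epsilon^{-\tilde\gamma}.
\end{equation*}
Finally, the scaling law $\epsilon=L^{-1/\beta}$ gives $L^{-1}=\epsilon^{\beta}$, which turns the factor $L^{-1}$ into $\epsilon^\beta$ and produces the claimed $\Lambda L^{id}\abs{A}^{-p-1}n^{id+1}\epsilon^{\beta-\tilde\gamma}$.

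\textbf{Main obstacle.} The only delicate point is making sure the coefficients of $\nabla\Phi$ grow only linearly in $n$, uniformly over all the sign configurations $\iota^{j'}_j$ and the shifts $P_j$. I would handle this by noting that $\Phi$ has at most five squared terms with bounded combinatorial coefficients, so all constants depend only on $d$, $R$ and $p$. Everything else is bookkeeping, and this is what will later fix the admissible range of $\tilde\gamma<\beta$.
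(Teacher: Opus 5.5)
Your proposal is correct and follows the paper's argument. Both compute the gradient by the chain rule, use $\abs{B+\epsilon^{-\tilde\gamma}\Phi}\le\abs{D}$ and $\abs{D}\ge\abs{A}$ together with a phase gradient that is linear in $n$ on the $\mathcal O(nR)$ domain, and then count lattice points and use $L^{-1}=\epsilon^{\beta}$. The only difference is cosmetic: the paper works out one representative two-variable phase and sums over $Q_{0,2(n+1)R}$, whereas you treat the general phase $\Phi$ at once and account for both domain conventions.
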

\begin{proof}
The calculations for all cases are very similar, and we exemplify this by considering
\begin{equation}
    \gamma_\iota(x,y)\coloneqq\frac{1}{\abs{A+\rmi\l(B+\epsilon^{-\tilde\gamma}\l(\iota_1\abs{x}^2+\iota_2\abs{y}^2-\iota_3\abs{x+y-P}^2+C\r)\r)}^p},
\end{equation}
where $A\in\R\setminus\{0\}$, $B,C\in\R$, $\iota\coloneqq\l(\iota_1,\iota_2,\iota_3\r)\in\{\pm\}^3$ and $P\in B_{(2n+1)R}(0)$. The straightforward calculation

\begin{equation}
    \nabla\gamma_\iota(x,y)=-\frac{2p\epsilon^{-\tilde\gamma}\l(B+\epsilon^{-\tilde\gamma}\l(\iota_1\abs{x}^2+\iota_2\abs{y}^2-\iota_3\abs{x+y-P}^2+C\r)\r)}{\abs{A+\rmi\l(B+\epsilon^{-\tilde\gamma}\l(\iota_1\abs{x}^2+\iota_2\abs{y}^2-\iota_3\abs{x+y-P}^2+C\r)\r)}^{p+2}}\begin{pmatrix}
        \iota_1x-\iota_3(x+y-P)\\\iota_1y-\iota_3(x+y-P)
    \end{pmatrix}
\end{equation}
delivers
\begin{equation}
    \abs{\nabla\gamma_\iota}\leq\Lambda\abs{A}^{-p-1}\epsilon^{-\tilde\gamma}\l(\abs{x}+\abs{y}+\abs{P}\r)
\end{equation}
and since $x,y,P\in B_{(2n+1)R}(0)$, we obtain 
\begin{equation}
    \abs{\nabla_{x,y}\gamma_\iota}\leq\Lambda\abs{A}^{-p-1}\epsilon^{-\tilde\gamma}n
\end{equation}
and therefore 
\begin{gather}
    \frac{1}{L}\sum_{x,y\in Q_{0,2(n+1)R}^{\Z_L^d}}\sup_{\substack{Q_{x,\frac{1}{2L}}\times Q_{y,\frac{1}{2L}}}}\abs{\nabla\gamma_\iota}\leq\Lambda L^{-1}\abs{A}^{-p-1}\epsilon^{-\tilde\gamma}n\abs{Q_{0,2(n+1)R}^{\Z_L^d}}^2\\
    \leq\Lambda L^{2d}\abs{A}^{-p-1}n^{2d+1}\epsilon^{\beta-\tilde\gamma},
\end{gather}
where in the last step we used $L^{-1}=\epsilon^\beta$.
\end{proof}

\begin{corollary}\label{gradients always give epsilon gain}
    If $\beta>\tilde\gamma$, there exists an $\alpha=\alpha(d,\beta)>0$ such that for all $n\leq\abs{\ln\epsilon}$
        \begin{equation}
        \frac{1}{L}\sum_{x_1,\ldots,x_i\in Q_{0,2R}^{\Z_L^d}}\sup_{\bigtimes_{j=1}^iQ_{x_j,\frac{1}{2L}}}\abs{\nabla\gamma_\iota}\leq\Lambda L^{id}\abs{A}^{-p-1}\epsilon^{\alpha}.
    \end{equation}
\end{corollary}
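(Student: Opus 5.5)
The plan is to apply the preceding \cref{the lemma that fixes beta} directly and absorb the polynomial loss $n^{id+1}$ into a small power of $\epsilon$, using the restriction $n\leq\abs{\ln\epsilon}$. That lemma gives, for every $i\in\llbracket1,4\rrbracket$,
\begin{equation}
    \frac{1}{L}\sum_{x_1,\ldots,x_i\in Q_{0,2R}^{\Z_L^d}}\sup_{\bigtimes_{j=1}^iQ_{x_j,\frac{1}{2L}}}\abs{\nabla\gamma_\iota}\leq\Lambda L^{id}\abs{A}^{-p-1}n^{id+1}\epsilon^{\beta-\tilde\gamma}.
\end{equation}
The only work left is to turn the factor $n^{id+1}\epsilon^{\beta-\tilde\gamma}$ into $\epsilon^{\alpha}$, with a constant that does not depend on $n$ or $L$.

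First I fix the exponent. Set $\alpha\coloneqq\frac{\beta-\tilde\gamma}{2}>0$. Strictly, $\alpha$ then depends on $\tilde\gamma$ as well as on $d,\beta$; in the later application $\tilde\gamma$ will be tied to $\beta$, for instance $\tilde\gamma=1$ with $\beta>1$, so this is consistent with the notation $\alpha(d,\beta)$. Next I use $n\leq\abs{\ln\epsilon}$ and $i\leq4$ to bound
\begin{equation}
    n^{id+1}\leq\abs{\ln\epsilon}^{4d+1}.
\end{equation}
It then suffices to show that $\abs{\ln\epsilon}^{4d+1}\epsilon^{(\beta-\tilde\gamma)/2}$ is bounded uniformly for $\epsilon\in(0,1)$. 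This is elementary: the function $s\mapsto s^{m}e^{-cs}$ on $[0,\infty)$, with $s=\abs{\ln\epsilon}$, $m=4d+1$ and $c=\frac{\beta-\tilde\gamma}{2}$, attains its maximum $(m/(ce))^m$. That maximum depends only on $d,\beta,\tilde\gamma$, so it can be absorbed into $\Lambda$. Multiplying by the remaining factor $\epsilon^{\alpha}$ gives the claim.

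One detail needs care: the summation domain. \Cref{the lemma that fixes beta} is stated over $Q_{0,2R}$, but its proof in fact sums over a cube of radius $\sim(2n+1)R$, which produced the $n^{id}$ count of lattice points. The corollary over $Q_{0,2R}$ is weaker, so it follows a fortiori. I expect no real obstacle here. The only subtlety is making sure the implicit constant is uniform in $n\leq\abs{\ln\epsilon}$, and the explicit maximisation above handles that.
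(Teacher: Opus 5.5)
Your proposal is correct and matches the paper's argument. The paper's proof is the one line that the corollary follows from \cref{the lemma that fixes beta}, and your absorption of $n^{id+1}\leq\abs{\ln\epsilon}^{4d+1}$ into half of the gain $\epsilon^{\beta-\tilde\gamma}$, with $\alpha=\frac{\beta-\tilde\gamma}{2}$, is exactly the step it leaves implicit. You are also right that $\alpha$ really depends on $\tilde\gamma$ as well; this is harmless because $\tilde\gamma=1$ in the later application.
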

\begin{proof}
    This follows from \cref{the lemma that fixes beta}.
\end{proof}

\subsection{Weighing gains against losses}
\begin{definition}\label{definition of being a resonant node}
    Let $C$ be a couple. We call a node $\node\in\Node(C)$ \textbf{resonant} if $\node_1,\ldots,\node_5$ (unordered) denote its children with $\iota_{\node_1}+\iota_{\node_2}=0$, $\iota_{\node_3}+\iota_{\node_4}=0$, $K_C\l(\node_1\r)+K_C\l(\node_2\r)=0$, $K_C\l(\node_3\r)+K_C\l(\node_4\r)$ and $K_C\l({\node_5}\r)=K_C(\node)$. We call $\node$ \textbf{$1,\ldots,5$-resonant} if the children $\node_1,\ldots,\node_5$ are ordered from left to right and $K_C\l(\node_i\r)=K_C(\node)$ for some $i\in\{1,\ldots,5\}$. We denote $\Node_{\mathrm{res}}(C)\subseteq\Node(C)$ the subset of resonant nodes and $n_{\mathrm{res}}(C)\coloneqq\abs{\Node_{\mathrm{res}}(C)}$. 
\end{definition}

\begin{remark}\label{equivalent definition of being resonant}
     The definition for $\node\in\Node(C)$ to be resonant is equivalent to requiring that $\l\{\leaf\leq\node_1\mid\leaf\in\Leaf(C)\r\}\sqcup\l\{\leaf\leq\node_2\mid\leaf\in\Leaf(C)\r\}$ and $\l\{\leaf\leq\node_3\mid\leaf\in\Leaf(C)\r\}\sqcup\l\{\leaf\leq\node_4\mid\leaf\in\Leaf(C)\r\}$ be self-coupled. 
\end{remark}

We define 

\begin{equation}
    A_\node(\kappa,\xi)\coloneqq\frac{\abs{Q_\node}}{\abs{n\delta^{-1}-\rmi\l(\xi+\epsilon^{-1}\omega_\node(<')\r)}}
\end{equation}

and using \cref{change of variables}, we can consider the linear isomorphism $I\colon\l(\R^d\r)^{\Leaf(C)_+}\to\l(\R^d\r)^{2n(C)+1}$ defined by 
\begin{equation}
    I\l(\kappa\r)\coloneqq\l(K_C(\node_1)(\kappa),\ldots,K_C(\node_{2n})(\kappa),K_C(r_1)(\kappa)\r).
\end{equation}
By definition, $I\l(\l(\Z_L^d\r)^{\Leaf(C)_+}\r)\subseteq\l(\Z_L^d\r)^{2n(C)+1}$ and more precisely \begin{equation}
    I\l(\l(B_R^{\Z_L^d}(0)\r)^{\Leaf(C)_+}\r)\subseteq\l(B_{(2n(C)+1)R}^{\Z_L^d}(0)\r)^{2n(C)+1}
\end{equation} 
and 
\begin{equation}
    I\l(\D_k(C)\r)\subseteq\l(B_{(2n(C)+1)R}^{\Z_L^d}(0)\r)^{2n(C)}\times\{k\}.
\end{equation}
\begin{remark}
    From now on, we will abbreviate $n=n(C)$. As can be seen, the isomorphism $I$ rescales the ball within which our coordinates are taken at worst linearly in the number of nodes, that is, with $\mathcal O(n)$. We thus should be careful in accounting for potential losses that ultimately emerge from breaking mass conservation, see \cref{breaking mass conservation}. One can show that there exist couples that exhibit a factorial loss in the number of nodes.
\end{remark}

We now define 

\begin{equation}
    \tilde A_\node\l(\tilde k_1,\ldots,\tilde k_{2n},k,\xi\r)\coloneqq A_\node\l(I^{-1}\l(\tilde k_1,\ldots,\tilde k_{2n},k\r),\xi\r),
\end{equation}
where the domain is

\begin{equation}
    \l(\tilde k_1,\ldots,\tilde k_{2n}\r)\in\l(B_{(2n(C)+1)R}^{\Z_L^d}(0)\r)^{2n}.
\end{equation}
We identify total order relations $<_\rho\in\M(\Node(C))$ with the underlying bijections $\rho\colon\llbracket1,n\rrbracket\to\Node(C)$.
We may order 

\begin{equation}
    \bigsqcup_{i=1}^4\mathcal P_i=\l\{\node_1<_\rho\cdots<_\rho\node_{n(C)-p_0}\r\}
\end{equation}
and initially estimate 

\begin{equation}
\begin{aligned}
    \prod_{\node\in\Node(C)}A_\node(\kappa,\xi)&\leq\l(\frac{\delta}{n}\r)^{p_0}\prod_{i=1}^4\prod_{\node\in\mathcal P_i}A_\node(\kappa,\xi)\\
    &=\l(\frac{\delta}{n}\r)^{p_0}\prod_{i=1}^{n(C)-p_0}A_{\node_i}(\kappa,\xi)
\end{aligned}
\end{equation}
so that 
\begin{equation}\label{needs structure}
    \sum_{\substack{\kappa\in\D_k(C)\\\kappa\l(\Leaf(C)_+\r)\subseteq B_R(0)}}\prod_{\node\in\Node(C)}A_\node(\kappa,\xi)\leq\l(\frac{\delta}{n}\r)^{p_0}\sum_{\l(x_i\r)_{i=1}^{2n}\in\l(B_{(2n+1)R}^{\Z_L^d}(0)\r)^{2n}}\prod_{i=1}^{n-p_0}\tilde A_{\node_i}(\bm x,\xi),
\end{equation}
where $\bm x=\l(x_1,\ldots,x_{2n}\r)$.
Denoting $\node_i\in\mathcal P_{j_i}$ for $j_i\in\llbracket1,4\rrbracket$ and $i\in\llbracket1,n-p_0\rrbracket$ such that $\sum_{i=1}^{n-p_0}j_i=2n$ (according to \cref{k=0 is included lol}), we may give the sum in \cref{needs structure} additional structure by observing that
\begin{equation}
    \begin{gathered}
        \sum_{\substack{\kappa\in\D_k(C)\\\kappa\l(\Leaf(C)_+\r)\subseteq B_R(0)}}\prod_{\node\in\Node(C)}A_\node(\kappa,\xi)\\\leq\l(\frac{\delta}{n}\r)^{p_0}\sum_{\l(x_i\r)_{i=2n(C)-j_{n(C)-p_0}+1}^{2n(C)}\in\l(B_{(2n+1)R}^{\Z_L^d}(0)\r)^{j_{n(C)-p_0}}}\tilde A_{\node_{n(C)-p_0}}(\bm{x},\xi)\cdots\sum_{\l(x_i\r)_{i=1}^{j_1}\in\l(B_{(2n+1)R}^{\Z_L^d}(0)\r)^{j_{1}}}\tilde A_{\node_1}(\bm{x},\xi).
    \end{gathered}
\end{equation}

Although, by construction, \begin{equation}
    \Node_{<''}(C)\coloneqq\l\{\tilde\node_1<''\cdots<''\tilde\node_{2n(C)}<''\root_1\r\}
\end{equation} 
and thus also $\mathcal P_i$ and $p_i\coloneqq\abs{\mathcal P_i}$ depend on the underlying order $<''\in\M\l(\Leaf(C)\sqcup\Node(C)\r)$, we suppress this dependency as the following two important relations are independent of $<''\in\M(\Leaf(C)\sqcup\Node(C))$:
\begin{align}
    p_0+p_1+p_2+p_3+p_4&=n,\label{first important independent}\\
    p_1+2p_2+3p_3+4p_4&=2n.\label{second important independent}
\end{align}
We denote 
\begin{align}
    \mathcal P_{\mathrm{res},i}&\coloneqq\mathcal P_i\cap\Node_{\mathrm{res}}(C),\\
    p_{\mathrm{res},i}&\coloneqq\abs{\mathcal P_{\mathrm{res},i}}.
\end{align}
and recall from \cref{classifying resonant nodes} that $p_{\mathrm{res},3}=p_{\mathrm{res},4}=0$, so that $p_{\mathrm{res},1}+p_{\mathrm{res},2}=n_{\mathrm{res}}$. We further denote by $\tilde{\mathcal P}_{1}$ the subset of degree $1$ nodes so that the vector $P$ (which represents a signed combination of resonant factors whose wave numbers are not summed over) from point 1.) of \cref{main technical necessary theorem} satisfies $\abs{P}<\epsilon^\gamma$ for some fixed $\gamma\in(0,1)$. Denote $\tilde p_1\coloneqq\abs{\mathcal P_1}$. One can prove $\mathcal P_{\mathrm{res},1}\subsetneq\tilde{\mathcal P}_1$ for our quintic non-linearity which requires us to be careful in accounting for gains in $\epsilon$ and potential losses for degree $1$ nodes.

\begin{lemma}\label{the summation cases}
    Let $i\in\llbracket1,n(C)-p_0\rrbracket$ and $p> 1$. There exists an $\alpha_0>0$ such that for all $n\leq\abs{\ln\epsilon}$, 
    \begin{equation}\label{All summation cases}
    \begin{gathered}
        \sum_{\l(x_j\r)_{j=\sum_{k=1}^{i-1}j_k+1}^{\sum_{k=1}^{i}j_k}\in\l(B_{(2n+1)R}^{\Z_L^d}\r)^{j_i}}\tilde A_{\node_{i}}^p\\\leq\begin{cases}
        \Lambda L^{j_id}\l(\frac{\delta}{n}\r)^p\epsilon^{j_i\alpha_0}&\text{if }j_i\geq2\text{ and }\node_{i}\in\mathcal P_{j_i}\setminus\mathcal P_{\mathrm{res},j_i}\text{ or }j_i=1\text{ and }\node_{i}\in\mathcal P_1\setminus\tilde{\mathcal P}_1,\\
        \Lambda L^{j_id}\l(\frac{\delta}{n}\r)^p&\text{if }j_i\geq2\text{ and }\node_i\in\mathcal P_{\mathrm{res},j_i}\text{ or }j_i=1\text{ and }\node_{i}\in\tilde{\mathcal P}_1.
    \end{cases}        
    \end{gathered}
\end{equation}
\end{lemma}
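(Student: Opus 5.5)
The bound is obtained by freezing all summation variables other than the $j_i$ coordinates $x_j$ attached to $\node_i$, and then treating $\tilde A_{\node_i}^p$ as a function of those $j_i$ variables only. By \cref{change of variables} and \cref{all possibilities}, the quantity $\omega_{\node_i}(<')$ is a signed combination of squared wave numbers. In the new coordinates only the children of $\node_i$ lying in $\Node_{<''}(C)$ are free, namely $x_1,\ldots,x_{j_i}$ after relabelling. The remaining children are of the form $\sum\iota^{j'}_jx_{j'}-P_j$, where $P_j$ collects the variables that are later in $<''$ and hence frozen at this stage. The other contributions to $\omega_{\node_i}$ (the parent term and the $\Omega$'s of nodes above $\node_i$ in $<'$) do not depend on the current variables and are absorbed into $B$ and $C$. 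Thus
\begin{equation}
\tilde A_{\node_i}^p\le \norm{Q}_\infty^p\,\gamma_\iota(x_1,\ldots,x_{j_i}),
\end{equation}
with $A=n\delta^{-1}$, $\tilde\gamma=1$, and $\gamma_\iota$ exactly of the form in \cref{main technical necessary theorem}. The prefactor $(\delta/n)^p=\abs{A}^{-p}$ then appears once the sum is compared with an integral.

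The first step applies \cref{sums into integrals} to $g=\gamma_\iota$ on $Q_{0,(2n+1)R}^{j_i}$, which contains the discrete balls. This splits the sum into an integral term $L^{j_id}\int\gamma_\iota$ and a gradient term $L^{-1}\sum\sup\abs{\nabla\gamma_\iota}$. The gradient term is handled by \cref{the lemma that fixes beta} and \cref{gradients always give epsilon gain}. Since $\beta>1=\tilde\gamma$, it is at most $\Lambda L^{j_id}\abs{A}^{-p-1}\epsilon^{\alpha}$, and the extra factor $\abs{A}^{-1}=\delta/n\le1$ only helps. In the resonant or degenerate cases the $\epsilon^\alpha$ is discarded, and a bound $\Lambda L^{j_id}(\delta/n)^p$ follows trivially. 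It comes from $\gamma_\iota\le\abs{A}^{-p}$ and the cardinality $\abs{B^{\Z_L^d}_{(2n+1)R}}^{j_i}\lesssim (nL)^{j_id}$. The power $n^{j_id}$ has to be controlled: in the resonant case the authors should instead use \cref{the assumption i have to use} to count, which yields the $L^{j_id}$ without polynomial loss. That is the route I would take for the second line of \eqref{All summation cases}.

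For the first line, in the non-resonant case with $j_i\ge2$, I would use case 2.) of \cref{main technical necessary theorem}. This gives $\int\gamma_\iota\le\Lambda\abs{A}^{1-p}n^{j_id-2}\epsilon$. The step that needs care is checking that the quadratic form is not identically degenerate. For a non-resonant node of degree $\ge2$, the sign patterns listed in \cref{all possibilities} yield either a nonzero coefficient $c$ of $\abs{x_1}^2$ or a genuine cross term $x_1x_2$, which is what the proof of case 2.) requires; in a fully cancelling pattern the children pair off and the node would be resonant. For $j_i=1$ and $\node_i\notin\tilde{\mathcal P}_1$, case 1.) gives $\epsilon^{1-\gamma}n^{d-1}\abs{A}^{1-p}$, because $\abs{P}\ge\epsilon^\gamma$ by definition of $\tilde{\mathcal P}_1$.

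To finish, $\abs{A}^{1-p}=(\delta/n)^{p-1}$ has to be converted into $(\delta/n)^p$, at the cost of a factor $n/\delta$. This loss, together with the polynomial losses $n^{j_id}$, is absorbed using $n\le\abs{\ln\epsilon}$. Concretely, $n^{C}\le\abs{\ln\epsilon}^C\le\epsilon^{-\alpha/2}$ for small $\epsilon$, as in \cref{get gain although loss}. Choosing $\alpha_0$ with $j_i\alpha_0\le\min(1-\gamma,\alpha)/2$ for $j_i\le4$ then yields the first line. The main obstacle is the nondegeneracy verification for non-resonant nodes of degree $2$ through $4$ (and making $B,C$ genuinely independent of the current variables). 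I would attack it by going through the sign tables of \cref{all possibilities} case by case.
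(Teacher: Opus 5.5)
Your outline follows the paper's proof almost step for step: identify $\tilde A_{\node_i}$ with $\gamma_\iota$ for $A=n\delta^{-1}$, $B=\xi$, $\tilde\gamma=1$; split with \cref{sums into integrals}; handle the gradient term via \cref{gradients always give epsilon gain} and the integral term via \cref{main technical necessary theorem}; get the second line from Assumption~\ref{the assumption i have to use}. The one step you add yourself, the nondegeneracy check you call the main obstacle, does not go through. You claim that a non-resonant node of degree $\ge2$ always gives $c\neq0$ or a genuine $x_1\cdot x_2$ term, and that full cancellation forces the children to pair off. This conflates two different things. Cancellation of the quadratic part is a condition on signs. Self-coupling is the identical vanishing of the frozen linear forms.

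Concretely, take a degree-$2$ node $\node$ in the second pattern of \cref{all possibilities}, 2.): $K_C(\node_1')=-K_C(\node_1)+G_1$, $K_C(\node_2')=-K_C(\node_2)+G_2$, $K_C(\node_3')=G_3$. Suppose $\iota_{\node_1'}=-\iota_{\node_1}$ and $\iota_{\node_2'}=-\iota_{\node_2}$; this is forced, e.g., when $\node_1,\node_2$ are the two children of sign $-\iota_{\node}$. With $x_m=K_C(\node_m)$ you get
\[
\Omega_{\node}=-2\iota_{\node_1}\,x_1\cdot G_1-2\iota_{\node_2}\,x_2\cdot G_2+\bigl(\text{terms independent of }x_1,x_2\bigr).
\]
So $c=0$ in both variables and there is no cross term: the phase is affine in $(x_1,x_2)$, with slopes given by the frozen vectors $G_1,G_2$. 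The node is resonant exactly when $G_1\equiv0\equiv G_2$ as linear forms. If, say, $G_1\not\equiv0$, the node is non-resonant. Yet the bound must hold uniformly in the frozen variables and in $\xi$. On frozen configurations where the values of both $G_1$ and $G_2$ vanish (or are $O(\epsilon)$), $\gamma_\iota$ is essentially constant in $(x_1,x_2)$. Choosing $\xi$ to cancel the constant part of the phase, no power of $\epsilon$ is gained. So the first line of \eqref{All summation cases} fails for such nodes, and no inspection of the sign tables can repair it.

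These nodes are the degree-$1$ phenomenon in disguise. In each variable you are in case 1.) of \cref{main technical necessary theorem} with $P=G_1$ or $P=G_2$, and the gain $\epsilon^{1-\gamma}$ exists only when the corresponding $\abs{G_m}\ge\epsilon^{\gamma}$. A correct statement needs one of two things:
\begin{itemize}
\item an exceptional class of degree-$\ge2$ nodes analogous to $\tilde{\mathcal P}_1$, after which the counting $(p_1-\tilde p_1)+2(p_2-p_{\mathrm{res},2})+3p_3+4p_4\ge n-n_{\mathrm{res}}$ in \cref{key proposition} must be redone; or
\item an argument using the smallness of the set of frozen configurations with small $\abs{G_m}$ in the later summations, which is no longer node-by-node.
\end{itemize}
Citing case 2.) of \cref{main technical necessary theorem} verbatim does not close the hole. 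Its proof asserts that when $c=0$ some term is exactly $\tilde c\,x_1x_2$, which fails in precisely this configuration (with all $P_j=0$ it yields a constant $\gamma_\iota$). The paper's proof of \cref{the summation cases} therefore has the same gap.

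A smaller point: when you convert $\abs{A}^{1-p}$ into $(\delta/n)^p$, the factor $n$ is absorbed by $n\le\abs{\ln\epsilon}$, but $\delta^{-1}$ is not. The paper absorbs $\delta^{-1}$ into the $\epsilon$-gain by working with $\epsilon\le\delta^{\mathcal W}$. You should say so explicitly, since a $\delta$-dependent $\Lambda$ would be raised to a power proportional to $n$ in \cref{key proposition}.
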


\begin{proof}
We recall that

\begin{equation}
    \tilde\omega_{\node_{i}}(<')= \iota_{\node_{i}}\tilde\Omega_{\node_{i}}+\sum_{\node_{i}<\node'\in\Node(C)}\iota_{\node'}\tilde\Omega_{\node'},
\end{equation}
where the notation $\tilde\omega$ and $\tilde\Omega$ denotes the application of the coordinate transformation $I$. Now, $\tilde A_{n_{j_i}}$ will be of the form of one of the $\gamma_\iota$ that were defined in \cref{main technical necessary theorem}. More precisely, in the notation of \cref{main technical necessary theorem}, $A=n$, $B=\xi$, $C=\sum_{\node_{j_i}<\node'\in\Node(C)}\iota_{\node'}\tilde\Omega_{\node'}$ and $\tilde\gamma=1$. We may initially convert the sum(s) on the left-hand side of \cref{All summation cases} into sum(s) over gradients and integral(s) over cubes as presented generally in \cref{sums into integrals}. Now, the case $j_i=1$ and $\node_{i}\in\mathcal P_1\setminus\tilde{\mathcal P}_1$ corresponds to point 1.) of \cref{main technical necessary theorem} and the case $j_i\geq2$ and $n_i\in\mathcal P_{j_i}\setminus\mathcal P_{\mathrm{res},j_i}$ corresponds to case 2.) of \cref{main technical necessary theorem}. In these two cases, we apply \cref{get gain although loss,gradients always give epsilon gain} and obtain 

\begin{equation}
    \begin{aligned}
    \sum_{\l(x_j\r)_{j=\sum_{k=1}^{i-1}j_k+1}^{\sum_{k=1}^{i}j_k}\in\l(B_{(2n+1)R}^{\Z_L^d}\r)^{j_i}}\tilde A_{\node_{i}}^p&\leq\Lambda L^{j_id}\l(\frac{\delta}{n}\r)^{p+1}\epsilon^{\alpha}+\Lambda L^{j_id}\l(\frac{\delta}{n}\r)^{p-1}\epsilon^{\alpha}\label{something i gotta talk about}\\
    &\leq\Lambda L^{j_id}\l(\frac{\delta}{n}\r)^p\epsilon^{j_i\alpha_0},
\end{aligned}
\end{equation}
for all $0<\epsilon\leq\delta^{\mathcal W}\leq\delta\ll1$, where $\mathcal W>0$ is large enough and we estimated the first term of \cref{something i gotta talk about} trivially and introduced an additional factor of $n^{-1}$ into the second term while simultaneously exploiting $n\leq\abs{\ln\epsilon}$.

The remaining cases of \cref{All summation cases} are relying on \cref{the assumption i should use}. 
Indeed, 
\begin{equation}
    \sum_{\l(x_j\r)_{j=\sum_{k=1}^{i-1}j_k+1}^{\sum_{k=1}^{i}j_k}\in\l(B_{(2n+1)R}^{\Z_L^d}\r)^{j_i}}\tilde A_{\node_{i}}\leq\frac{\delta}{n}\sum_{\l(x_j\r)_{j=\sum_{k=1}^{i-1}j_k+1}^{\sum_{k=1}^{i}j_k}\in\l(B_{(2n+1)R}^{\Z_L^d}\r)^{j_i}}\abs{\tilde Q_{\node_i}}\leq \Lambda L^{j_id}\frac{\delta}{n}
\end{equation}
and this concludes the proof.
\end{proof}

\begin{remark}\label{observation on cardinality}
    Let $C\in\mathcal C^{\eta,\eta',\iota,-\iota}_{n,n}$, then $C$ contains $4n+1$ positive (and negative) leaves. For this particular fixed couple, there are $(4n+1)!$ many coupling maps. Thence
    \begin{equation}
        \abs{\mathcal C^{\eta,\eta',\iota,-\iota}_{n,n}}\leq\Lambda^n(4n+1)!.
    \end{equation}
    and 
\begin{equation}
    (4n+1)!\leq(4n+1)^{4n+1}\leq(5n)^{5n} = 5^{5n}n^{5n}\leq\Lambda^nn^{5n}.
\end{equation}
\end{remark}

\begin{definition}
    Let $C$ be a couple and $\node\in\Node(C)\sqcup\Leaf(C)$. We call 
    \begin{equation}
        \mathrm{Off}(\node)\coloneqq\{\leaf\leq\node\mid\leaf\in\Leaf(C)\}
    \end{equation}
    the \textbf{offspring set} of $\node$.
\end{definition}
The proof of the cardinality of the set of couples, each of which contains a constant number of resonant nodes, follows the strategy of Lemma 2.4 of \cite{desuzzoni2025waveturbulencesemilinearkleingordon}.
\begin{lemma}\label{cardinality estimate on the number of couples with q resonant nodes}
    We have 
    \begin{equation}
        c_{n,q}\coloneqq\abs{\bigsqcup_{n_1+n_2=n}\l\{C\in\mathcal C_{n_1,n_2}\mid n_{\mathrm{res}}(C)=q\r\}}\leq\Lambda^{n}\l(2(n-q)+2\r)!
    \end{equation}
\end{lemma}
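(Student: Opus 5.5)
We follow the strategy of Lemma 2.4 of \cite{desuzzoni2025waveturbulencesemilinearkleingordon}. The count splits into two parts. First we count the underlying pair of signed, coloured $5$-ary trees $(T_1,T_2)$ with $n_1+n_2=n$. Second we count the coupling maps $\sigma$ compatible with exactly $q$ resonant nodes.

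\textbf{Step 1: the trees.} By the cardinality remark for $\mathcal T_n^{\iota,\eta}$, the number of pairs of trees, summed over $n_1+n_2=n$ and over the finitely many signs and colours, is at most $(n+1)\Lambda^{n}\leq\Lambda^n$ after enlarging $\Lambda$. So it suffices to show the following: for a fixed pair $(T_1,T_2)$, the number of involutions $\sigma$ with $n_{\mathrm{res}}(C)=q$ is at most $\Lambda^n(2(n-q)+2)!$.

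\textbf{Step 2: the pairings, by induction on the depth of resonant nodes.} We use \cref{equivalent definition of being resonant}. A node $\node$ is resonant exactly when two disjoint pairs of its children have self-coupled offspring sets, and the fifth child carries the momentum of $\node$.

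We process the nodes of $C$ bottom-up along the parentality order. At each resonant node $\node$, we first choose which two pairs of children are self-coupled; there are at most a bounded number of choices, $\binom{5}{2}\binom{3}{2}$. Then the pairing of the leaves below those four children is internal to those four subtrees. This lets us contract $\node$: we replace it by its fifth child, and remove the four self-coupled subtrees together with their internal pairings.

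Doing this recursively, the data of $\sigma$ is determined by three things:
\begin{itemize}
\item[(a)] a bounded choice at each of the $q$ resonant nodes, contributing at most $\Lambda^q$;
\item[(b)] the pairings internal to the removed self-coupled blocks;
\item[(c)] the pairing of the leaves that survive, whose number is controlled by the $n-q$ non-resonant nodes.
\end{itemize}

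\textbf{Step 3: counting.} A reduced couple with $m$ branching nodes has $4m+2$ leaves, hence at most $(2m+1)!$ admissible pairings.

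We do \emph{not} bound each surviving pairing naively. Instead we argue as in \cite{desuzzoni2025waveturbulencesemilinearkleingordon}: the pairing is determined up to a factor $\Lambda^n$ by an ordering of the $2(n-q)+2$ "free" leaf-pairs that are not forced by resonance. The forced pairings inside the self-coupled blocks are themselves recursively resonant or non-resonant sub-couples. We therefore apply the same bound to them, and use the superadditivity $a!\,b!\leq(a+b)!$ to merge the factorials coming from different blocks. Summing the $\Lambda^n$ choices of which nodes are resonant then gives $\Lambda^n(2(n-q)+2)!$.

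\textbf{Main obstacle.} The hardest step is Step 3. We must show that each non-resonant node contributes only $O(1)$ to the factorial argument, namely two free leaf-pairs, while resonant nodes contribute only $\Lambda$. Here the quintic structure genuinely differs from the quadratic Klein–Gordon case.

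Our first attempt will be to encode $\sigma$ sequentially. We visit the non-resonant nodes in a compatible total order, and at each one pair at most two new leaves into an already-built list of length $\lesssim 2(n-q)$. This yields a product $\prod_{j\leq n-q}\Lambda j^2\lesssim\Lambda^n(2(n-q)+2)!$.

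If this encoding fails, the fallback is induction on $n$. We remove a maximal resonant node, then use $c_{n,q}\leq\Lambda\, c_{n-1,q-1}+\Lambda(2(n-q)+2)^2c_{n-1,q}$ and iterate.
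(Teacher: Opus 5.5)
\textbf{Gap in Step 3.} Your Step 2 sets up the right structure, and it is the one the paper uses. A resonant node makes $X_{\node,1}=\mathrm{Off}(\node_1)\sqcup\mathrm{Off}(\node_2)$ and $X_{\node,2}=\mathrm{Off}(\node_3)\sqcup\mathrm{Off}(\node_4)$ self-coupled. Contracting it leaves genuine couples and does not change any other momentum. Choosing the resonant nodes and their child-pairs costs only $\Lambda^n$.

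Step 3, however, is the whole content of the lemma, and you assert it rather than prove it. The sentence ``the pairing is determined up to a factor $\Lambda^n$ by an ordering of the $2(n-q)+2$ free leaf-pairs'' is a restatement of the conclusion. Moreover, the merging tool you name cannot produce it. Once all $q$ designated resonant nodes are contracted (trees fixed), the admissible $\sigma$ are exactly tuples of pairings of $2q+1$ couples with $m_j$ nodes each, where $\sum_j m_j=n-q$. Their number is therefore $\prod_j(2m_j+1)!$. Since $\sum_j(2m_j+1)=2n+1$, superadditivity $a!\,b!\le(a+b)!$ returns only the trivial bound $(2n+1)!$. The recursive version fails the same way. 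Contract one resonant node and insert $(2k'+2)!\,(2k_1+2)!\,(2k_2+2)!$ for the three pieces, with $k'+k_1+k_2=n-q$; superadditivity then gives $(2(n-q)+6)!$, so the induction does not close.

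The missing ingredient is an inequality that gains when factorials are merged, namely $\prod_{i=1}^k a_i!\le\bigl(\sum_i a_i-k+1\bigr)!$ for integers $a_i\ge1$. This is exactly what the paper does. It bounds the number of $\sigma$ making the $2q$ sets $X_{\node_i,j}$ self-coupled by $\prod_{i,j}m_{i,j}!\,\bigl(2n+1-\sum m_{i,j}\bigr)!$, where $m_{i,j}=\abs{(X_{\node_i,j})_+}\ge1$. This is at most $\bigl(\sum m_{i,j}-2q+1\bigr)!\,\bigl(2n+1-\sum m_{i,j}\bigr)!\le(2(n-q)+2)!$. One then multiplies by $\binom{2n}{2q}$ for the choice of sets and by $\Lambda^n$ for the trees.

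In your contraction picture, the same inequality with $a_j=2m_j+1\ge1$ gives $(2(n-q)+1)!$ at once. Alternatively, $a!\,b!\le2\,(a+b-2)!$ for $a,b\ge2$ makes your recursion close with a factor $4$ per resonant node.

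Your two fallbacks do not supply this step. The sequential encoding is only a heuristic: it does not say how leaves inside a self-coupled block, which must be paired within that block, are encoded. In $c_{n,q}\le\Lambda c_{n-1,q-1}+\Lambda(2(n-q)+2)^2c_{n-1,q}$ no reduction map is defined. Deleting a resonant node removes its four paired subtrees, not one node, unless those children are leaves. Deleting a non-resonant node requires re-wiring the partners of its five leaves, which you do not define.
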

\begin{proof}
    Let $C=\l(T_1,T_2,\sigma\r)\in\mathcal C_{n_1,n_2}$ and $\node\in\Node_{\mathrm{res}}(C)$ be a resonant node (if existent). Its children $C(\node)=\l\{\node_1,\ldots,\node_5\r\}$ (unordered list) satisfy $K_C\l(\node_1\r)+K_C\l(\node_2\r)=0$ and $K_C\l(\node_3\r)+K_C\l(\node_4\r)=0$ which is equivalent to stating that $X_{\node,1}\coloneqq\mathrm{Off}\l(\node_1\r)\sqcup\mathrm{Off}\l(\node_2\r)$ and $X_{\node,2}\coloneqq\mathrm{Off}\l(\node_3\r)\sqcup\mathrm{Off}\l(\node_4\r)$ are self-coupled (see \cref{equivalent definition of being resonant}). We enumerate the resonant nodes $\l(\node_i\r)_{i=1}^q$ and also the associated self-coupled offspring sets $\l(X_{\node_i,1},X_{\node_i,2}\r)_{i=1}^q$. We set $m_{i,j}\coloneqq \abs{\l(X_{\node_i,j}\r)_+}$ to be the cardinality of the positive leaves inside $X_{\node_i,j}$. The number of pairings $\sigma$ for which $X_{\node_i},j$ is self-coupled for all $i\in\llbracket1,q\rrbracket$ and $j=1,2$ is 
    \begin{equation}\label{combinatorial expression}
        \prod_{(i,j)\in\llbracket1,q\rrbracket\times\{1,2\}}m_{i,j}!\times\l(2n+1-\sum_{(i,j)\in\llbracket1,q\rrbracket\times\{1,2\}}m_{i,j}\r)!,
    \end{equation}
    where $2n+1-\sum_{(i,j)\in\llbracket1,q\rrbracket\times\{1,2\}}m_{i,j}$ corresponds to the number of positive leaves inside $\Leaf(C)_+$ that do not belong to any of the $X_{\node_i,j}$. We apply the fact that for any sequence of positive integers $\l(a_i\r)_{i\in\llbracket1,q\rrbracket}$, we have 
    \begin{equation}
        \prod_{i=1}^qa_i!\leq\l(\sum_{i=1}^qa_i-q+1\r)!.
    \end{equation}
    Using this, we obtain
    \begin{equation}\label{estimate on different pairings}
        \abs{\eqref{combinatorial expression}}\leq\l(\sum_{(i,j)\in\llbracket1,q\rrbracket\times\{1,2\}}m_{i,j}-2q+1\r)!\l(2n+1-\sum_{(i,j)\in\llbracket1,q\rrbracket\times\{1,2\}}m_{i,j}\r)!\leq(2(n-q)+2)!
    \end{equation}
    The choice of $2q$ self-coupled offspring sets among the $2n$ possibilities, we obtain 
\begin{equation}
    \abs{\l\{\sigma\text{ pairing such that }n_{\mathrm{res}}(C)=q\r\}}\leq\binom{2n}{2q}(2(n-q)+2)!\leq\Lambda_1^n(2(n-q)+2)!.
\end{equation}
Now 
\begin{equation}\label{estimate on set of trees}
    \abs{\bigsqcup_{\substack{\eta,\eta'\in\{0,1\}\\\iota,\iota'\in\{\pm\}\\n_1+n_2=n}}\T_{n_1}^{\iota,\eta}\times\T_{n_2}^{\iota',\eta'}}\leq\Lambda_2^n
\end{equation}
such that 
\begin{equation}
c_{n,q}\leq\eqref{estimate on different pairings}\cdot\eqref{estimate on set of trees}\leq\Lambda^n(2(n-q)+2)!,    
\end{equation}
where $\Lambda\coloneqq\Lambda^1\Lambda^2$.
\end{proof}

\begin{proposition}\label{key proposition}
Recall

    \begin{equation}\label{to be argued against intensely}
    \sup_{t\in[0,\delta]}\abs{{\J_C^{<'}}\l(\epsilon^{-1}t,k\r)}\leq\Lambda^{n+1}L^{-2dn}\int_\R\frac{\sum\limits_{\substack{\kappa\in\mathcal D_k(C)\\\kappa\l(\Leaf(C)_+\r)\subseteq B_R(0)}}\prod\limits_{\node\in\Node(C)}\frac{\abs{Q_\node}}{\abs{n\delta^{-1}-\rmi\l(\xi+\epsilon^{-1}\omega_\node(<')\r)}}}{\abs{n\delta^{-1}-\rmi\xi}}\rmd\xi.
\end{equation}
    There exist $\mathcal K,\alpha>0$ such that for all $n=n(C),2k\leq\abs{\ln\epsilon}$
    \begin{align}
        \sup_{t\in[0,\delta]}\abs{\J_C^{<'}\l(\epsilon^{-1}t,k\r)}&\leq\Lambda(\Lambda\delta)^n\epsilon^{\alpha\l(n-n_{\mathrm{res}}\r)}\frac{L^{\mathcal K}}{n^{n+1}},\label{the first to be estimated}\\
        \sup_{t\in[0,\delta]}\abs{\J_C\l(\epsilon^{-1}t,k\r)}&\leq\Lambda\l(\Lambda\delta\r)^n\epsilon^{\alpha\l(n-n_{\mathrm{res}}\r)}\frac{L^{\mathcal K}}{n}\label{the second to be estimated},\\
        \sup_{t\in[0,\delta]}\E\l(\abs{\widehat{f_k^\eta}\l(\epsilon^{-1}t,k\r)}\r)&\leq\Lambda(\Lambda\delta)^{2k}\frac{L^{\mathcal K}}{2k} \label{the second estimate}
    \end{align}
\end{proposition}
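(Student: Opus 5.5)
We bound each of the three quantities in turn, starting from \eqref{to be argued against intensely}. For \eqref{the first to be estimated} we fix $<'\in\M(\Node(C))$ and complete it, using a fixed order on the leaves, to $<''\in\M(\Leaf(C)\sqcup\Node(C))$. \Cref{change of variables} then yields the basis $\Node_{<''}(C)$ and the coordinate map $I$. We bound the degree-$0$ factors trivially by $\delta/n$ and reorganize the $\kappa$-sum as the iterated sum \eqref{needs structure}. The innermost sums are performed first, following the order $<''$, so that at every stage the variables already summed over enter the later factors only through the constants $C$ and $P_j$ of \cref{main technical necessary theorem}. This is legitimate because of the third point of \cref{change of variables} and the case analysis of \cref{all possibilities}.

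The $\xi$-integral is handled by Hölder's inequality. We split off $\abs{n\delta^{-1}-\rmi\xi}^{-1}$ and distribute a small power across the node factors, so that each node sum carries $\tilde A_{\node_i}^p$ with some $p>1$ and $\int_\R\abs{n\delta^{-1}-\rmi\xi}^{-q}\rmd\xi\lesssim(n/\delta)^{1-q}$ is finite. \Cref{the summation cases} then bounds each node sum by $\Lambda L^{j_id}(\delta/n)^p$, multiplied by $\epsilon^{j_i\alpha_0}$ whenever the node is non-resonant and does not belong to $\tilde{\mathcal P}_1$. By \eqref{second important independent} we have $\sum_i j_i=2n$, so the factors $L^{j_id}$ exactly cancel the prefactor $L^{-2dn}$. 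Collecting the $(\delta/n)$ factors over all $n$ nodes, together with the $\xi$-integral, gives $(\Lambda\delta)^n n^{-n-1}$. For the exponent of $\epsilon$, \cref{classifying resonant nodes} guarantees that resonant nodes have degree at most $2$, hence each resonant node costs at most $j_i\le2$ units of gain.

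The delicate point is the set $\tilde{\mathcal P}_1\setminus\mathcal P_{\mathrm{res},1}$, together with the degree-$0$ nodes. These are non-resonant nodes that nonetheless produce no gain, and they must be compensated by the factor $L^{\mathcal K}$. I would compensate as follows. In the degree-$1$ small-$P$ case, the condition $\abs{P}<\epsilon^{\gamma}$ confines a lattice variable to a ball of radius $\epsilon^\gamma$. If $\epsilon^\gamma\ge L^{-1}$ this is a volume gain $\epsilon^{\gamma d}L^{d}$ relative to the trivial count, which can be traded for a power of $\epsilon$. If instead $P=0$ exactly, the combinatorics of \cref{all possibilities} force a pairing structure that makes the node resonant, except in a bounded number of boundary configurations (e.g.\ those involving the roots). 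Those exceptional losses are absorbed into $L^{\mathcal K}$ with $\mathcal K$ independent of $n$. Combined with $p_0$ and \eqref{first important independent}, this should yield $\epsilon^{\alpha(n-n_{\mathrm{res}})}$ after shrinking $\alpha$. I expect this bookkeeping, which shows that non-gaining non-resonant nodes can be charged to resonant ones or to an $n$-independent power of $L$, to be the main obstacle.

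For \eqref{the second to be estimated} we sum over $<'\in\M(\Node(C))$. The number of linear extensions satisfies $\abs{\M(\Node(C))}\le n!\le n^n$, and this cancels $n^{-n-1}$ up to the factor $1/n$. Finally, for \eqref{the second estimate} we apply Cauchy--Schwarz, $\E\abs{\widehat{f_k^\eta}}\le(\E\abs{\widehat{f_k^\eta}}^2)^{1/2}$, and expand the second moment as $\sum_{C}\J_C$ over couples with $2k$ nodes. We then group couples by $q=n_{\mathrm{res}}$ and use \cref{cardinality estimate on the number of couples with q resonant nodes}. Each group contributes $\Lambda^{2k}(2(2k-q)+2)!\,\epsilon^{\alpha(2k-q)}$, and for $2k\le\abs{\ln\epsilon}$ the gain controls the factorial: indeed $(2m+2)!\,\epsilon^{\alpha m}\le\Lambda^m(m\epsilon^{\alpha/2})^{2m}\cdot(\dots)$ stays bounded, since $m\le\abs{\ln\epsilon}$ implies $m\,\epsilon^{\alpha/2}\lesssim1$. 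Summing over $q\le2k$ and adjusting $\Lambda$ and $\mathcal K$ gives the claim.
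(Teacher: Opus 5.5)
The step you flag as the main obstacle is where your proposal breaks, and the fix you sketch does not work. The non-resonant nodes in $\tilde{\mathcal P}_1\setminus\mathcal P_{\mathrm{res},1}$ have no reason to be exceptional; the paper itself notes $\mathcal P_{\mathrm{res},1}\subsetneq\tilde{\mathcal P}_1$. Your claims that $P=0$ forces resonance outside boundedly many configurations, and that $\abs{P}<\epsilon^\gamma$ gives a usable volume gain, are unproven. If the number of such nodes grows like $n\sim\abs{\ln\epsilon}$, no $n$-independent factor $L^{\mathcal K}$ can absorb them.

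The missing observation is that the gain in \cref{the summation cases} is $\epsilon^{j_i\alpha_0}$, i.e.\ proportional to the \emph{degree}. So the budget is $\sum_i ip_i=2n$ units, not $n$. A degree-$0$ node costs nothing, a node of $\tilde{\mathcal P}_1$ costs one unit, and a node of $\mathcal P_{\mathrm{res},2}$ costs two (recall $p_{\mathrm{res},3}=p_{\mathrm{res},4}=0$ by \cref{classifying resonant nodes}). Hence the exponent is
\begin{equation*}
(p_1-\tilde p_1)+2(p_2-p_{\mathrm{res},2})+3p_3+4p_4=2n-\tilde p_1-2p_{\mathrm{res},2}\ge 2n-(\tilde p_1-p_{\mathrm{res},1})-2n_{\mathrm{res}}\ge n-n_{\mathrm{res}},
\end{equation*}
because $\tilde p_1-p_{\mathrm{res},1}$ counts non-resonant nodes and is therefore at most $n-n_{\mathrm{res}}$. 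You already had both ingredients: $\sum_ij_i=2n$, and resonant nodes cost at most two units. The non-gaining nodes are paid for by the surplus degree of the other nodes, and $L^{\mathcal K}$ plays no role here.

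$L^{\mathcal K}$ actually comes from the $\xi$-integral, and your H\"older step does not control it. The bounds of \cref{the summation cases} are uniform in $\xi$, which enters only as $B=\xi$. As long as the node sums are bounded uniformly in $\xi$, no H\"older splitting $w=w^{s}w^{1-s}$ of $w(\xi)=\abs{n\delta^{-1}-\rmi\xi}^{-1}$ makes both pieces integrable. You would need $sq>1$ and $(1-s)p>1$ with $\frac1p+\frac1q=1$, and these are incompatible.

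The paper instead applies H\"older to the $\kappa$-sum, using that the number of admissible $\kappa$ is at most $\Lambda^nL^{2dn}$, and keeps $w$ to the first power. It then splits the $\xi$-integral into two regions:
\begin{itemize}
\item On $\abs{\xi}\le L^{\mathcal K}$, the integral costs the factor $L^{\mathcal K}$.
\item On $\abs{\xi}>L^{\mathcal K}$, the bounds $\abs{\Omega_\node}\lesssim n^2$ and $n\le\abs{\ln\epsilon}$ give $\epsilon^{-1}\abs{\omega_\node(<')}\lesssim L^{1/\beta}\log(L)^3$. So for $\mathcal K>1/\beta$ every node factor is $\lesssim n/(\delta\abs{\xi})$, and this region contributes only $\Lambda^nL^{-\mathcal Kn}/n$.
\end{itemize}
The rest of your proposal matches the paper: summing over $<'$ with $\abs{\M(\Node(C))}\le n!$, and using \cref{cardinality estimate on the number of couples with q resonant nodes} together with $m\le\abs{\ln\epsilon}$ for the last bound.
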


\begin{proof}

We divide the integral in \cref{to be argued against intensely} into the regions $\l\{\abs{\xi}\leq L^{\mathcal K}\r\}$ and $\l\{\abs{\xi}>L^{\mathcal K} \r\}$ and consider the region $\l\{\abs{\xi}\leq L^{\mathcal K}\r\}$ first. 
Let $p>1$. Using Hölder's inequality for the sum over decorations leads to

\begin{equation}\label{moinsoni}
    \begin{gathered}
        L^{-2dn}\sum\limits_{\substack{\kappa\in\mathcal D_k(C)\\\kappa\l(\Leaf(C)_+\r)\subseteq B_R(0)}}\prod\limits_{\node\in\Node(C)}A_\node\leq\Lambda^n\l(L^{-2dn}\sum\limits_{\substack{\kappa\in\mathcal D_k(C)\\\kappa\l(\Leaf(C)_+\r)\subseteq B_R(0)}}\prod\limits_{\node\in\Node(C)}A_\node^p\r)^{1/p},
    \end{gathered}
\end{equation}

where we have again used \begin{equation}
    \l(\sum_{\substack{\kappa\in\mathcal D_k(C)\\\kappa\l(\Leaf(C)_+\r)\subseteq B_R(0)}}1\r)^{1-1/p}\leq\Lambda^{n}L^{2dn(1-1/p)}.
\end{equation}

We may apply \cref{the summation cases} and \cref{the assumption i have to use} and bound  

\begin{equation}\label{the estimate to be estimated}
    \begin{gathered}
        \sum\limits_{\substack{\kappa\in\mathcal D_k(C)\\\kappa\l(\Leaf(C)_+\r)\subseteq B_R(0)}}\prod\limits_{\node\in\Node(C)}A_\node^p \\\leq\Lambda^nL^{2dn}\l(\frac{\delta}{n}\r)^{pn}\epsilon^{\alpha_0\l(\l(p_1-\tilde p_{1}\r)+2\l(p_2-p_{\mathrm{res},2}\r)+3p_3+4p_4\r)}
    \end{gathered}
\end{equation}
We crucially observe that 
\begin{equation}
\begin{gathered}
    \l(p_1-\tilde p_{1}\r)+2\l(p_2-p_{\mathrm{res},2}\r)+3p_3+4p_4 = 2\l(n-n_{\mathrm{res}}\r)-\l(\tilde p_1-p_{\mathrm{res},1}\r)+p_{\mathrm{res},1}\\\geq n-n_{\mathrm{res}}
\end{gathered}
\end{equation}
having used $p_i-p_{\mathrm{res},i}\leq n-n_{\mathrm{res}}$ and $\tilde p_1-p_{\mathrm{res},1}\leq n-n_{\mathrm{res}}$. Inserting this into \cref{the estimate to be estimated} gives
\begin{equation}
    \abs{\eqref{the estimate to be estimated}}\leq\Lambda^nL^{2dn}\l(\frac{\delta}{n}\r)^{pn}\epsilon^{\alpha_0\l(n-n_{\mathrm{res}}\r)}.
\end{equation}
Combining these estimates with the integration over $\l\{\abs{\xi}\leq L^{\mathcal K}\r\}$ delivers
\begin{equation}\label{what we are about to estimate motherfucker}
\begin{aligned}
    \int_{\abs{\xi}\leq L^{\mathcal K}}\Big[\cdots\Big]\rmd\xi&\leq\Lambda^n \l(\frac{\delta}{n}\r)^n\epsilon^{\frac{\alpha_0}{p}\l(n-n_{\mathrm{res}}\r)}\int_{\abs{\xi}\leq L^{\mathcal K}}\frac{\rmd\xi}{\abs{n-\rmi\xi}}\\&\leq\Lambda^n\frac{\delta^n}{n^{n+1}}\epsilon^{\frac{\alpha_0}{p}\l(n-n_{\mathrm{res}}\r)} L^{\mathcal K},
\end{aligned}
\end{equation}
where the numerator of the integrand in \cref{what we are about to estimate motherfucker} is exactly the right-hand side of \cref{moinsoni}.

For the second integration regime, we take into account $\abs{\Omega_\node}\leq\Lambda n^2$ such that $\abs{\omega_\node(<')}\leq\Lambda n^3\leq\Lambda\abs{\log\epsilon}^3\leq\Lambda{\log(L)}^3$ for all $\node\in\Node(C)$. Thence, $\epsilon^{-1}\abs{\omega_\node\l(<'\r)}\leq\Lambda L^{\frac{1}{\beta}}\log(L)^3$. Consider first the regime $\l\{\xi\leq -L^{\mathcal K}\r\}$, extract the sum $\sum\limits_{\substack{\kappa\in\D_k(C)\\\kappa\l(\Leaf(C)_+\r)\subseteq B_R(0)}}$ out of the integral and write the remaining integral as

\begin{equation}
    \int_{-\infty}^{-L^{\mathcal K}}\Big[\cdots\Big]\rmd\xi = \l(\frac{\delta}{n}\r)^{n+1}\int_{-\infty}^{-L^{\mathcal K}}\frac{1}{\abs{1-\rmi \delta n^{-1}\xi}}\prod\limits_{\node\in\Node(C)}\frac{\abs{Q_\node}}{\abs{1-\rmi \delta n^{-1}\l(\xi+\epsilon^{-1}\omega_\node(<')\r)}}\rmd\xi.
\end{equation}
What is more, 
\begin{equation}
    \begin{gathered}
        \frac{1}{\abs{1-\rmi \delta n^{-1}\l(\xi+\epsilon^{-1}\omega_\node(<')\r)}}\leq\frac{n}{\delta\abs{\xi+\epsilon^{-1}\omega_\node(<')}}\leq\frac{n}{\delta\l(-\xi-\epsilon^{-1}\abs{\omega_\node(<')}\r)}\\\leq\frac{1}{\Lambda L^{\frac{1}{\beta}}\log(L)^3}\frac{n}{\delta\l(-\frac{\xi}{\Lambda L^{\frac{1}{\beta}}\log(L)^3}-1\r)}
    \end{gathered}
\end{equation}
and thus, if we choose $\mathcal K>\frac{1}{\beta}$ and $L\gg1$ large enough, $-\frac{\xi}{\Lambda L^{\frac{1}{\beta}}\log(L)^3}$ is very large so that

\begin{equation}
    \frac{1}{-\frac{\xi}{\Lambda L^{\frac{1}{\beta}}\log(L)^3}-1}\leq\frac{\Lambda L^{\frac{1}{\beta}}\log(L)^3}{\abs{\xi}}.
\end{equation}
We have
\begin{equation}
    \frac{1}{\abs{1-\rmi \delta n^{-1}\l(\xi+\epsilon^{-1}\omega_\node(<')\r)}}\leq\frac{n}{\delta}\frac{\Lambda}{\abs{\xi}}
\end{equation}
and find 
\begin{equation}
    \int_{-\infty}^{-L^{\mathcal K}}\Big[\cdots\Big]\rmd\xi\leq\Lambda^n\int_{-\infty}^{L^{\mathcal K}}\frac{1}{\abs{\xi}^{n+1}}\rmd\xi\leq\frac{\Lambda^n}{n}L^{-\mathcal Kn}.
\end{equation}
The same can be said for the integration regime $\l\{\xi\geq L^{\mathcal K}\r\}$ and so 

\begin{equation}
    \int_{\abs{\xi}\geq L^{\mathcal K}}\Big[\cdots\Big]\rmd\xi\leq\frac{\Lambda^n}{n}L^{-\mathcal Kn}
\end{equation}
which is an arbitrary large power of $L^{-1}$ and thus makes the estimate trivial for this regime. 

We have proven \cref{the first to be estimated}. \Cref{the second to be estimated} follows simply from 
\begin{equation}
    \abs{\J_C\l(\epsilon^{-1}t,k\r)}\leq\abs{\M(\Node(C))}\abs{\J_C^{<'}\l(\epsilon^{-1}t,k\r)}
\end{equation}
and $\abs{\M(\Node(C))}\leq n!$.

Finally, with \cref{cardinality estimate on the number of couples with q resonant nodes},
\begin{equation}
    \E\l(\abs{f_n^\eta\l(\epsilon^{-1}t,k\r)}^2\r)\leq\sum_{C\in\mathcal C_{n,n}^{\eta,\eta,+,-}}\abs{\J_C\l(\epsilon^{-1}t,k\r)}\leq\Lambda(\Lambda\delta)^{2n}\frac{L^{\mathcal K}}{2n}\sum_{q=1}^n\epsilon^{\alpha(n-q)}c_{n,q}\leq\Lambda(\Lambda\delta)^{2n}\frac{L^{\mathcal K}}{2n}\sum_{m=0}^{n-1}\epsilon^{\alpha m}(2m+2)!
\end{equation}
and we may use that $(2m+2)!\leq(4m)!\leq(4m)^{4m}\leq16^n\abs{\ln\epsilon}^{4m}$ such that 
\begin{equation}
    \E\l(\abs{f_n^\eta\l(\epsilon^{-1}t,k\r)}^2\r)\leq\Lambda(\Lambda\delta)^{2n}\frac{L^{\mathcal K}}{2n}\sum_{m=0}^{n-1}\l(\epsilon^{\alpha}\abs{\ln\epsilon}^4\r)^m\leq\Lambda(\Lambda\delta)^{2n}\frac{L^{\mathcal K}}{2n}\sum_{m=0}^{n-1}\epsilon^{\alpha'm}\leq\Lambda(\Lambda\delta)^{2n}\frac{L^{\mathcal K}}{2n},
\end{equation}
for any $\alpha'\in\l(0,\frac{\alpha}{4}\r)$.
\end{proof}

\subsection{The time derivative of $\J_T$}\label{derivative moving}

Recall that 

\begin{equation}
\begin{gathered}
    \widehat{\J_{T}}(t,k) = \l(-\frac{\rmi\epsilon}{L^{2d}}\r)^n\prod_{\node\in\Node(C)}\iota_\node\sum_{\kappa\in\mathcal D_k(T)}\prod_{\node\in\Node(T)}Q_\node^T(\kappa)\\\cdot\int_{I_T(t)}\prod_{\node\in\Node(T)}e^{\rmi\iota_\node\Omega^T_\node(\kappa)t_\node}\rmd t_\node\prod_{\leaf\in\Leaf(T)}\mu^{\eta_\leaf,\iota_\leaf}_{\kappa(\leaf)}
\end{gathered}
\end{equation}
We identify $\M(\Node(A))$ with the set of bijections $\llbracket1,n(A)\rrbracket\to\Node(A)$ and can use as before that

\begin{equation}\label{separation of integration domains}
    I_A(t) = \mathcal Z_A\sqcup\bigsqcup_{\rho\in\M(\Node(A))}\l\{\l(t_\node\r)_{\node\in\Node(A)}\in[0,t]^{n(A)}\bigm\vert0\leq t_{\rho(1)}<\cdots<t_{\rho(n(A))}\leq t\r\},
\end{equation}
where $\mathcal Z_A$ is a subset of $[0,t]^{n(A)}$ of measure zero. It constitutes the boundaries of the right subset in \cref{separation of integration domains}. Then it is generally the case that  

\begin{gather}\label{want to take time derivative}
    \int_{I_A(t)}\prod_{\node\in\Node(A)}f_\node(t_\node)\rmd t_\node = \sum_{\rho\in\M(\Node(A))}\int_{0\leq t_{\rho(1)}<\cdots< t_{\rho(n(A))}\leq t}\prod_{i=1}^{n(A)}f_{\rho(i)}(t_{\rho(i)})\rmd t_{\rho(i)}
\end{gather}
Taking now the time derivative of \cref{want to take time derivative}, we find 
\begin{gather}
    \p_t\int_{I_A(t)}\prod_{\node\in\Node(A)}f_\node(t_\node)\rmd t_\node = \sum_{\rho\in\M(\Node(A))}f_{\rho(n(A))}(t)\int_{0\leq t_{\rho(1)}<\cdots<t_{\rho(n(A)-1)}\leq t}\prod_{i=1}^{n(A)-1}f_{\rho(i)}\l(t_{\rho(i)}\r)\rmd t_{\rho(i)}\\f_{\root_A}(t)\sum_{\rho\M(\Node(A)\setminus\{\root_A\}}\int_{0\leq t_{\rho(1)}<\cdots t_{\rho(n(A)-1)}\leq t}\prod_{i=1}^{n(A)-1}f_{\rho(i)}(t_{\rho(i)})\rmd t_{\rho(i)} = f_{\root_A}(t)\int_{I_A'(t)}\prod_{\node\in\Node(A)\setminus\{\root_A\}}f_\node(t_\node)\rmd t_\node,
\end{gather}
where 

\begin{equation}
    I_A'(t)\coloneqq\l\{\l(t_\node\r)_{\node\in\Node(A)\setminus\{\root_A\}}\in[0,t]^{n(A)-1}\mid\node\leq\node'\text{ implies }t_\node\leq t_{\node'}\r\}.
\end{equation}
This implies 
\begin{equation}
    \begin{gathered}
        \p_t\widehat{\J_{T}}(t,k) = \l(-\frac{\rmi\epsilon}{L^{2d}}\r)^n\prod_{\node\in\Node(C)}\iota_\node\sum_{\kappa\in\mathcal D_k(T)}\prod_{\node\in\Node(T)}Q_\node^T(\kappa)e^{\rmi\iota\Omega_{\root_A}^T(\kappa)t}\\\cdot\int_{I_A'(t)}\prod_{\node\in\Node(A)\setminus\{\root_A\}}e^{\rmi\iota\Omega_\node^T(\kappa)t_\node}\rmd t_\node\prod_{\leaf\in\Leaf(T)}\mu^{\eta_\leaf,\iota_\leaf}_{\kappa(\leaf)}
    \end{gathered}
\end{equation}
Similarly, as before, one proves that 
\begin{gather}
    \E\l(\p_t\widehat{F_{n_1}^{\eta_1,\iota_1}}\l(\epsilon^{-1}t,k\r)\p_t\widehat{F_{n_2}^{\eta_2,\iota_2}}\l(\epsilon^{-1}t,k\r)\r) = \sum_{C\in\mathcal C_{n_1,n_2}^{\eta_1,\eta_2,\iota_1,\iota_2}}{\J_C'}\l(\epsilon^{-1}t,k\r),
\end{gather}
where 
\begin{equation}
    \begin{gathered}
    {\J_C'}\l(\epsilon^{-1}t,k\r)\coloneqq\l(\frac{-\rmi}{L^{2d}}\r)^{n(C)}\prod_{\node\in\Node(C)}\iota_\node\sum_{\kappa\in\mathcal D_k(C)}\l[\prod_{\node\in\Node(T)}Q_\node \r]e^{\rmi\epsilon^{-1}\iota_{\root_1}\Omega_{\root_1}t}e^{\rmi\epsilon^{-1}\iota_{\root_2}\Omega_{\root_2}t}\\\cdot\int_{I_C'(t)}\prod_{\node\in\Node(C)\setminus\mathcal R(C)}e^{\rmi\epsilon^{-1}\iota_\node\Omega_\node t_\node}\rmd t_\node\cdot\prod_{\leaf\in\Leaf(C)_+}M^{\eta_\leaf,\eta_{\sigma(\leaf)}}(\kappa(\leaf))^{\iota_\leaf},
\end{gathered}
\end{equation}
where $I_C'(t)\coloneqq I_{T_1}'(t)\times I_{T_2}'(t)$.
\begin{proposition}\label{key proposition 2}
    There exist $\mathcal K,\alpha>0$ such that for all $n=n(C),2k\leq\abs{\ln\epsilon}$ 
    \begin{align}
        \sup_{t\in[0,\delta]}\abs{\J_C'\l(\epsilon^{-1}t,k\r)}&\leq\Lambda\l(\Lambda\delta\r)^n\epsilon^{\alpha\l(n-n_{\mathrm{res}}\r)}\frac{L^{\mathcal K}}{n},\\
        \sup_{t\in[0,\delta]}\E\l(\abs{\p_t\widehat{f_k^\eta}\l(\epsilon^{-1}t,k\r)}\r)&\leq\Lambda(\Lambda\delta)^{2k}\frac{L^{\mathcal K}}{2k}
    \end{align}
\end{proposition}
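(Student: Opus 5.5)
The plan is to repeat the proof of \cref{key proposition} almost word for word, since $\J_C'$ has the same structure as $\J_C$. There are only two differences. First, the two root time variables are frozen at $t$, so the root phases $e^{\rmi\epsilon^{-1}\iota_{\root_i}\Omega_{\root_i}t}$ sit outside the integral. Second, the time domain is $I_C'(t)=I'_{T_1}(t)\times I'_{T_2}(t)$, which involves only the $n-2$ non-root nodes. The root phases have modulus one and can be bounded trivially. Up to a factor $\Lambda^2$, the quantity to estimate is therefore a sum over $\kappa\in\D_k(C)$ of $L^{-2dn}\prod_{\node}\abs{Q_\node}$, times an ordered time integral over the non-root nodes, times the product of the $M$'s.

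Concretely, I would decompose $I_C'(t)$ into its simplices indexed by $<'\in\M(\Node(C)\setminus\mathcal R(C))$, exactly as in \eqref{separation of integration domains}. I then apply the resolvent identity (\cref{resolvent identity}) with $\nu=n\delta^{-1}$ to obtain the analogue $\J_C'^{<'}$ of \eqref{initial estimate}. This has one factor $\abs{n\delta^{-1}-\rmi(\xi+\epsilon^{-1}\omega_\node(<'))}^{-1}$ for each non-root node. The roots carry no denominator; only their $Q$ factors remain.

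Next I run the change of variables of \cref{change of variables} with an order $<''$ extending $<'$ to all vertices, placing the roots last, which is consistent with parentality. I split the $\xi$-integral at $L^{\mathcal K}$ and apply Hölder as in \eqref{moinsoni}. The non-root nodes are handled by \cref{the summation cases} and yield the gain $\epsilon^{\alpha_0(\cdot)}$. The root nodes, like the $\mathcal P_0$ nodes, are summed using only \cref{the assumption i have to use}, which gives $L^{j d}$ with no $\epsilon$ gain. This is the main point to check. Losing the gain at the two roots costs at most a bounded power $\epsilon^{-8\alpha_0}$ in the counting $(p_1-\tilde p_1)+2(p_2-p_{\mathrm{res},2})+3p_3+4p_4\ge n-n_{\mathrm{res}}$. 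Each root node contributes at most $4$ to this sum, since $\mathcal P_5=\emptyset$. I absorb this loss either by shrinking $\alpha$, using $\epsilon^{\alpha_0(n-n_{\mathrm{res}})-8\alpha_0}\le \epsilon^{(\alpha_0/2)(n-n_{\mathrm{res}})}$ when $n-n_{\mathrm{res}}\ge16$, or, in the remaining bounded cases, into $L^{\mathcal K}$ by enlarging $\mathcal K$, since $\epsilon^{-1}=L^{1/\beta}$. The missing two denominators cost a factor $(n/\delta)^2$ relative to \eqref{the first to be estimated}. This is harmless because the bound there carries $n^{-(n+1)}$, which beats $\abs{\M(\Node(C)\setminus\mathcal R(C))}\le n!$ times $n^2$ up to $\Lambda^n$ once we pass from $\J_C'^{<'}$ to $\J_C'$. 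The large-$\xi$ region is treated verbatim, with $n-1$ factors $\abs{\xi}^{-1}$ giving $L^{-\mathcal K(n-1)}$.

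Finally, the second estimate follows as at the end of \cref{key proposition}. I bound $\E\abs{\p_t\widehat{f_k^\eta}}\le(\E\abs{\p_t\widehat{f_k^\eta}}^2)^{1/2}$ and write the second moment as a sum of $\J_C'$ over $\mathcal C_{k,k}^{\eta,\eta,+,-}$ (the time rescaling just produces a harmless factor). I then group couples by $q=n_{\mathrm{res}}$, apply \cref{cardinality estimate on the number of couples with q resonant nodes}, and sum $\epsilon^{\alpha m}(2m+2)!\le(\epsilon^{\alpha}\abs{\ln\epsilon}^4)^m\Lambda^m$ as a convergent geometric series. I expect the main obstacle to be the bookkeeping at the roots: verifying that placing the roots at the top of $<''$ does not destroy the degree structure of \cref{k=0 is included lol}, and that the lost gains really are $O(1)$ in number, so that they cost only a fixed power of $L$ rather than something growing with $n$.
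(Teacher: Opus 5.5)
Your proposal is correct and follows the paper's route: the paper's proof of \cref{key proposition 2} is only the remark that the argument of \cref{key proposition} carries over to $\J_C'$, with the second bound then obtained by summing over couples grouped by $n_{\mathrm{res}}$ via \cref{cardinality estimate on the number of couples with q resonant nodes}. Your explicit treatment of the two frozen roots supplies the detail the paper leaves implicit: there are no resolvent denominators at the roots, so you lose at most eight factors $\epsilon^{\alpha_0}$ and two factors $\delta/n$, and these are absorbed into $L^{\mathcal K}$, a smaller $\alpha$, and the count of orders on the $n-2$ non-root nodes.
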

\begin{proof}
    The proof is very similar to that of \cref{key proposition}.
\end{proof}

\subsection{A norm estimate on the Dyson iterates}

The previous results may now be collected to prove

\begin{corollary}\label{The estimate the whole world was looking for}
    Let $A>0$. There exist $\Lambda,\mathcal K,\alpha>0$ such that with probability greater than or equal to $1-L^{-A}$, 
    \begin{equation}
        \norm{f_n^\eta}_{\mathcal C\l(\l[0,\delta\epsilon^{-1}\r],H^s\l(\mathbb T_L^d\r)\r)}\leq\Lambda(\Lambda\delta)^{2n}\delta^2{L^{\mathcal K+d+\frac{2}{\beta}+2A}}
    \end{equation}
for all $n\leq\abs{\ln\epsilon}$.
\end{corollary}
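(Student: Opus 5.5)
We turn the moment bounds of \cref{key proposition} and \cref{key proposition 2} into a high-probability bound in $\mathcal C([0,\delta\epsilon^{-1}],H^s)$ by Chebyshev/Markov and a union bound over modes and times, followed by a time-continuity argument. Throughout, $f_n^\eta = e^{\rmi t\Delta}F_n^\eta$, so $\abs{\widehat{f_n^\eta}(t,k)}=\abs{\widehat{F_n^\eta}(t,k)}$ and the $H^s$ norm only sees $\abs{\widehat{F_n^\eta}}$.

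\textbf{Step 1 (support).} Every leaf carries a wave number in $B_R(0)$, since the $M^{\eta,\eta'}$ are supported there. Hence $\widehat{F_n^\eta}(t,k)=0$ unless $\abs{k}\leq(4n+1)R\lesssim\abs{\ln\epsilon}R$. The $H^s$ sum therefore runs over at most $\Lambda(nR)^dL^d$ modes, and the weight $\langle k\rangle^{2s}$ is bounded by $\Lambda^n$ on this set. It then suffices to control $\sup_t\sup_{\abs{k}\lesssim nR}\abs{\widehat{F_n^\eta}(t,k)}$, which costs a factor $L^{d}$ (up to logs absorbed into $\Lambda^n$) when passing to $\norm{\cdot}_{H^s}$.

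\textbf{Step 2 (fixed times and modes).} Discretize $[0,\delta]$ (rescaled time) into a grid $t_j$ of mesh $L^{-2/\beta}$, giving about $\delta L^{2/\beta}$ points. For each $(t_j,k)$ apply Markov's inequality to the bound $\E\abs{\widehat{f_n^\eta}}\leq\Lambda(\Lambda\delta)^{2n}L^{\mathcal K}/(2n)$ from \cref{key proposition}, at threshold $\lambda=\Lambda(\Lambda\delta)^{2n}L^{\mathcal K+d+2/\beta+2A}$. The failure probability per point is then $\lesssim L^{-d-2/\beta-2A}$. A union bound over the $\lesssim L^{d}\delta L^{2/\beta}$ pairs $(t_j,k)$ and over $n\leq\abs{\ln\epsilon}$ (a log factor) gives total failure probability $\leq L^{-A}$ for $L$ large. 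This union bound is what produces the exponent $\mathcal K+d+\frac2\beta+2A$ in the statement.

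\textbf{Step 3 (continuity in time).} Between grid points, write $\widehat{F_n^\eta}(t)-\widehat{F_n^\eta}(t_j)=\int_{t_j}^t\p_\tau\widehat{F_n^\eta}\,\rmd\tau$. Control $\sup_\tau\abs{\p_\tau\widehat{F_n^\eta}}$ in the same way using \cref{key proposition 2}, at the grid points plus a further layer argument: $\p_t\widehat{\J_T}$ is explicit, and its own derivative is crudely bounded polynomially in $L$ via \cref{trivial bound}-type estimates. The mesh length $L^{-2/\beta}$ times this bound is then dominated by $\lambda$; the factor $\delta^2$ arises from integrating in time over $[0,\delta]$ and from the time-integral prefactor in the Dyson iterate. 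Combining the estimates from Steps 1–3 for all $n\leq\abs{\ln\epsilon}$ simultaneously on one event $\mathcal E_{L,A}$ yields the claim.

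\textbf{Main obstacle.} The crude Markov bound only gives a polynomial loss in $L$, which the statement tolerates. The genuinely delicate point is the uniformity in $t$ over the long interval $[0,\delta\epsilon^{-1}]$ in original time. Here the derivative bound of \cref{key proposition 2} together with the $\epsilon^{-1}$ rescaling must be balanced against the grid size. I would first try the simple mesh $L^{-2/\beta}$; if that fails, I would instead apply Sobolev embedding in time, $W^{1,1}\subset L^\infty$, on each grid cell.
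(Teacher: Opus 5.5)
\textbf{The gap is in Step 3.} Steps 1–2 are fine. Since $\mathcal K$ is existentially quantified, the extra $L^{d/2}$ you incur when summing modes into the $H^s$ norm, after already paying $L^d$ in the union bound over $k$, is harmless. What fails is the passage from the time grid to all times.

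You control $\sup_\tau\abs{\p_\tau\widehat{F_n^\eta}}$ between grid points through a second derivative that you say is ``crudely bounded polynomially in $L$ via \cref{trivial bound}-type estimates''. This is false in the range $n\le\abs{\ln\epsilon}=\beta^{-1}\ln L$ that the statement requires.
\begin{itemize}
\item \cref{trivial bound} is a per-couple bound. Summing it over the $\Lambda^n(4n+1)!$ couples gives a factorial which, for $n\sim\beta^{-1}\ln L$, is of order $L^{c\ln\ln L}$. This is superpolynomial, and no fixed power $(\Lambda\delta)^n=L^{\beta^{-1}\ln(\Lambda\delta)}$ compensates it.
\item A pathwise bound is worse still: the sum over $k$-decorations of a single tree has $\sim L^{4dn}$ terms against a prefactor $L^{-2dn}$.
\item Beating this factorial is exactly what the gain-versus-loss analysis of \cref{key proposition} does. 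A second-derivative layer would require redoing that analysis for $\p_t^2$, which you have not done.
\item With a superpolynomial bound, refining the mesh only moves the superpolynomial loss into the union bound over grid points.
\end{itemize}
Your account of the factor $\delta^2$ (``from the time-integral prefactor in the Dyson iterate'') also does not hold up; for $n=0$ there is no time integral at all.

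\textbf{The fix, which is what the paper does.} Your fallback $W^{1,1}\subset L^\infty$ is the right idea. Applied inside the expectation, it removes the grid entirely. Note that even your ``further layer'' would only be usable in the form $\E\int_{\mathrm{cell}}\abs{\p^2_\tau\widehat{F_n^\eta}}\le h\sup_\tau\E\abs{\p^2_\tau\widehat{F_n^\eta}}$, which is that same trick one level higher. You should apply it directly to $f_n^\eta$ instead. The paper proceeds as follows.
\begin{itemize}
\item It sums the per-mode moment bounds of \cref{key proposition,key proposition 2} over the $O((nRL)^d)$ modes in the support. This gives, at each fixed time, $\E\norm{f_n^\eta(t)}_{H^s}^2+\E\norm{\p_tf_n^\eta(t)}_{H^s}^2\le\Lambda(\Lambda\delta)^{2n}L^{\mathcal K+d}n^{2s+d-1}$.
\item With $T=\delta\epsilon^{-1}$, it uses the embedding $\E\norm{f_n^\eta}^2_{\mathcal C([0,T],H^s)}\le T\,\E\norm{f_n^\eta}^2_{H^1([0,T],H^s)}$.
\item By Fubini it integrates the fixed-time bounds over $[0,T]$. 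This produces $T^2=\delta^2L^{2/\beta}$, which is the true origin of both the $\delta^2$ and the $\frac{2}{\beta}$.
\item A single Markov inequality at level $a_n=\Lambda(\Lambda\delta)^{2n}\delta^2L^{\mathcal K+d+\frac2\beta+2A}$ leaves failure probability $\lesssim n^{2s+d-1}L^{-2A}$. The surplus $L^{-A}$ absorbs the polylogarithmic factors and the union over $n\le\abs{\ln\epsilon}$.
\end{itemize}
This uses only fixed-time first-derivative moments, needs no union bound over $k$ or $t$, and explains the exponent $\mathcal K+d+\frac2\beta+2A$.
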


\begin{proof}
Using \cref{key proposition,key proposition 2}, we have 
\begin{equation}
    \E\l(\abs{\widehat{f^\eta_n}\l(\epsilon^{-1}t,k\r)}^2\r)+\E\l(\abs{\p_t\widehat{f^\eta_n}\l(\epsilon^{-1}t,k\r)}^2\r)\leq\Lambda(\Lambda\delta)^{2n}\frac{L^{\mathcal K}}{2n}
\end{equation}
and the fact that $\E\l(\abs{\widehat{f_n^\eta}\l(\epsilon^{-1}t,\cdot\r)}^2\r)$ and $\E\l(\abs{\p_t\widehat{f_n^\eta}\l(\epsilon^{-1}t,\cdot\r)}^2\r)$ have compact support inside $B_{(2n+1)R}(0)$ implies further 

\begin{equation}
    \E\l(\norm{f^\eta_n\l(\epsilon^{-1}t,\cdot\r)}^2_{H^s\l(\mathbb T_L^d\r)}\r)+\E\l(\norm{\p_tf^\eta_n\l(\epsilon^{-1}t,\cdot\r)}^2_{H^s\l(\mathbb T_L^d\r)}\r)\leq\Lambda(\Lambda\delta)^{2n}{L^{\mathcal K+d}}n^{2s+d-1}
\end{equation}
We find 

\begin{equation}
\begin{gathered}
    \E\l(\norm{f_n^\eta}^2_{\mathcal C\l([0,\delta\epsilon^{-1}],H^s\l(\mathbb T_L^d\r)\r)}\r)\leq\frac{\delta}{\epsilon}\E\l(\norm{f_n^\eta}^2_{H^1\l([0,\delta\epsilon^{-1}],H^s\l(\mathbb T_L^d\r)\r)}\r) \\= \frac{\delta}{\epsilon}\int_0^{\delta\epsilon^{-1}}\l(\E\l(\norm{f^\eta(t,\cdot)}^2_{H^s\l(\mathbb T_L^d\r)}\r)+\E\l(\norm{\p_tf^\eta(t,\cdot)}^2_{H^s\l(\mathbb T_L^d\r)}\r)\r)\rmd t\\\leq\Lambda(\Lambda\delta)^{2n}\delta^2{L^{\mathcal K+d+\frac{2}{\beta}}}n^{2s+d-1}
\end{gathered}
\end{equation}
If we define 
\begin{equation}
    a_n\coloneqq\Lambda(\Lambda\delta)^{2n}\delta^2{L^{\mathcal K+d+\frac{2}{\beta}+2A}}
\end{equation}
we have with Markov's inequality,
\begin{equation}
    \mathbb P\l(\norm{f_n^\eta}^2_{\mathcal C\l(\l[0,\delta\epsilon^{-1}\r],H^s\l(\mathbb T_L^d\r)\r)}>a_n\r)\leq a_n^{-1}\E\l(\norm{f_n^\eta}^2_{\mathcal C\l([0,\delta\epsilon^{-1}],H^s\l(\mathbb T_L^d\r)\r)}\r)\leq L^{-A}.
\end{equation}

\end{proof}

\subsection{Flower trees and the Fourier representation of $\Leaf^m$}

Recall 
\begin{equation}
    \begin{aligned}
        \widehat{F^\eta}(t,k) =\mu^\eta_k+\int_0^t C^{+}\l(\tau,F^\eta(\tau),\overline{F^\eta}(\tau),\overline{F^{\eta}}(\tau),{F^{\overline\eta}}(\tau),F^{\overline\eta}(\tau)\r)\rmd\tau
    \end{aligned}
\end{equation}
and define 
\begin{equation}
    \tilde C^\eta\l(t,f_1,\ldots,f_5\r)\coloneqq C^+\l(t,f_1^\eta,\overline{f_2^\eta},\overline{f_3^\eta},f_4^{\overline\eta},f_5^{\overline\eta}\r),
\end{equation}
where $f_i$ are considered to be $2$-component functions parametrized by the colour $\eta$.
Further, set
\begin{equation}
    F^\eta_{\leq N}\coloneqq\sum_{n\leq N}F^\eta_n
\end{equation}
and make the ansatz 
\begin{equation}
    F^\eta=F^\eta_{\leq N} + v^\eta
\end{equation}
so that $v^\eta = \mathcal W^\eta+ \Leaf(v)^\eta+\mathcal R^1(v)^\eta+\mathcal R^2(v)^\eta+\mathcal R^3(v)^\eta+\mathcal R^4(v)^\eta$,
where \begin{align}
    \mathcal W^\eta&\coloneqq-F_{\leq N}^\eta+F_0^\eta+\int_0^t C^{+}\l(\tau,F_{\leq N}^\eta(\tau),\overline{F_{\leq N}^\eta}(\tau),\overline{F_{\leq N}^{\eta}}(\tau),{F_{\leq N}^{\overline\eta}}(\tau),F_{\leq N}^{\overline\eta}(\tau)\r)\rmd\tau,\\
    &=\int_0^t\sum_{\substack{0\leq\sum_{i=1}^5n_i\leq N\\\sum_{i=1}^5n_i\geq N}}C^{+}\l(\tau,F_{n_1}^\eta(\tau),\overline{F_{n_2}^\eta}(\tau),\overline{F_{n_3}^{\eta}}(\tau),{F_{n_4}^{\overline\eta}}(\tau),F_{n_5}^{\overline\eta}(\tau)\r)\rmd\tau
    \\
    \mathcal L(v)^\eta&\coloneqq\int_0^t\sum_{\substack{i\in\llbracket1,5\rrbracket\\f_i=v\wedge f_j=F_{\leq N}\forall j\neq i}}\tilde C^\eta\l(\tau,f_1,\ldots,f_i,\ldots,f_5\r)\rmd\tau\\
    \mathcal R^1(v)^\eta&\coloneqq\int_0^t\sum_{\substack{i<j\in\llbracket1,5\rrbracket\\f_i=f_j=v\wedge f_k=F_{\leq N}\forall k\notin\{i,j\}}}\tilde C^\eta\l(f_1,\ldots,f_i,\ldots,f_j,\ldots,f_5\r)\rmd\tau,\\
    \mathcal R^2(v)^\eta&\coloneqq\int_0^t\sum_{\substack{i<j<k\in\llbracket1,5\rrbracket\\f_i=f_j=F_{\leq N}\wedge f_k=v\forall v\notin\{i,j\}}}\tilde C^\eta\l(f_1,\ldots,f_i,\ldots,f_j,\ldots,f_5\r)\rmd\tau,\\
    \mathcal R^3(v)^\eta&\coloneqq\int_0^t\sum_{\substack{i\in\llbracket1,5\rrbracket\\f_i=F_{\leq N}\wedge f_j=v\forall j\neq i}}\tilde C^\eta\l(\tau,f_1,\ldots,f_i,\ldots,f_5\r)\rmd\tau,\\
    \mathcal R^4(v)^\eta&\coloneqq\int_0^t C^{+}\l(\tau,v^\eta(\tau),\overline{v^\eta}(\tau),\overline{v^{\eta}}(\tau),{v^{\overline\eta}}(\tau),v^{\overline\eta}(\tau)\r)\rmd\tau,
\end{align}
where $\mathcal R^i(v)^\eta$ are respectively terms that are of order $i+1$ in $v$.

\begin{definition}

    We define the notion of a \textbf{flower tree} recursively (as the notion of a $5$-ary tree). In a flower tree, one leaf is always specified and called $\flower$. We set for brevity reasons $\T^1\coloneqq\T^{\iota,\eta}$, $\T^2=\T^3\coloneqq\T^{-\iota,\eta}$ and $\T^4=\T^5\coloneqq\T^{\iota,\overline\eta}$ and define recursively 
    \begin{equation}
        \begin{aligned}
            \l(\T_0^{\iota,\eta}\r)^\flower&\coloneqq\l\{\l(\flower,\iota,\eta\r)\r\},\\
             \l(\T_{n+1}^{\iota,\eta}\r)^\flower&\coloneqq\bigsqcup_{\substack{i\in\llbracket1,5\rrbracket}}\l[\bigsqcup_{T_1\in\T^1_{\leq N}}\cdots\bigsqcup_{T_i\in\l(\T^i_n\r)^\flower}\cdots\bigsqcup_{T_5\in\T_{\leq N}^5}\l\{\bullet\l(T_1,\ldots,T_i,\ldots,T_5\r)\r\}\r],
        \end{aligned}
    \end{equation}
    where $\mathcal T_{\leq N}^i\coloneqq\sqcup_{m=0}^N\mathcal T_m^i$. For any $T\in \l(\mathcal T_n^{\iota,\eta}\r)^\flower$, there exists a unique path from the root $\root$ to the flower $\flower$ that we call the \textbf{stem}. We denote the set of branching nodes of the stem by $\mathcal S(T)$. The \textbf{height} of a flower tree $T\in\l(\mathcal T_n^{\iota,\eta}\r)^\flower$ is defined to be $\abs{\mathcal S(T)}$. Iteratively, we have defined that a flower tree of height $n$ is formed by attaching four sub-trees each time of maximal scale $N$, and repeating $n$ times, starting from a single node. 
\end{definition}

\begin{definition}
    For brevity of notation, we denote $\T^1\coloneqq\T^{\iota,\eta}$, $\T^2=\T^3\coloneqq\T^{-\iota,\eta}$ and $\T^4=\T^5\coloneqq\T^{\iota,\overline\eta}$. We define the function
    \begin{equation}
        \J_{T}^\flower(t)\coloneqq\begin{cases}
            \J_{T}(t)&\text{if }T\in\mathcal T_{\leq N}^{\iota,\eta},\\
            v^{\eta,\iota}(t)&\text{if }T\in\l(\mathcal T_0^{\iota,\eta}\r)^\flower,\\
            \int_0^tC^{\iota}\l(\tau,\J_{T_1}^\flower(\tau),\J_{T_2}^\flower(\tau),\J_{T_3}^\flower(\tau),\J_{T_4}^\flower(\tau),\J_{T_5}^\flower(\tau)\r)\rmd\tau &\text{if }T\in\l(\mathcal T_n^{\iota,\eta}\r)^\flower\text{ and } n>0,
        \end{cases}
    \end{equation}
    and we understand $T=\bullet\l(T_1,T_2,T_3,T_4,T_5\r)$ with one $T_i\in\T^i_{n}$ and the rest $T_j\in\T^j_{\leq N}$ for all $j\neq i$.
\end{definition}

\begin{proposition}\label{tree decomposition for flower trees}
    For all $n\in\N$, $\eta\in\{0,1\}$ and $\iota\in\{\pm\}$, we have 
    \begin{equation}
        \mathcal L^n(v)^{\eta,\iota} = \sum_{T\in\l(\mathcal T_n^{\eta,\iota}\r)^\flower}\J_{T}^\flower
    \end{equation}
\end{proposition}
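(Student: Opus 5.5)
We argue by induction on the height $n$, following the proofs of the tree representation of the Dyson iterates $F_n^{\eta,\iota}=\sum_{T}\J_T$ given above.

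For $n=0$ the operator $\mathcal L^0$ is the identity, so $\mathcal L^0(v)^{\eta,\iota}=v^{\eta,\iota}$. The set $\l(\T_0^{\iota,\eta}\r)^\flower$ consists of the single flower $(\flower,\iota,\eta)$, and by definition $\J^\flower_{(\flower,\iota,\eta)}=v^{\eta,\iota}$. Hence the base case holds.

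Before the induction step we record the signed form of $\mathcal L$. By \eqref{useless property}, conjugating $\mathcal L(v)^\eta$ swaps $C^+$ with $C^-$ and flips the sign of every argument. Therefore, for both signs $\iota$,
\begin{equation*}
    \mathcal L(w)^{\eta,\iota}=\int_0^t\sum_{i=1}^5C^\iota\l(\tau,g_1,\ldots,g_5\r)\rmd\tau ,
\end{equation*}
where $g_i=w^{\eta_i,\iota_i}$ and $g_j=F_{\leq N}^{\eta_j,\iota_j}$ for $j\neq i$. The sign–colour pairs are
\begin{equation*}
    (\iota_1,\eta_1)=(\iota,\eta),\quad(\iota_2,\eta_2)=(\iota_3,\eta_3)=(-\iota,\eta),\quad(\iota_4,\eta_4)=(\iota_5,\eta_5)=(\iota,\overline\eta),
\end{equation*}
which match $\T^1,\ldots,\T^5$. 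This is the same conjugation computation as in the first lemma of the section, applied to one application of $\mathcal L$.

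For the induction step, write $\mathcal L^{n+1}(v)=\mathcal L\l(\mathcal L^n(v)\r)$, i.e. take $w=\mathcal L^n(v)$ above. In slot $i$ we insert the induction hypothesis for the sign and colour $(\iota_i,\eta_i)$:
\begin{equation*}
    \mathcal L^n(v)^{\eta_i,\iota_i}=\sum_{T_i\in(\T_n^i)^\flower}\J^\flower_{T_i}.
\end{equation*}
In every other slot $j\neq i$ we insert the lemma for Dyson iterates summed over $m\leq N$:
\begin{equation*}
    F_{\leq N}^{\eta_j,\iota_j}=\sum_{T_j\in\T^j_{\leq N}}\J_{T_j}=\sum_{T_j\in\T^j_{\leq N}}\J^\flower_{T_j},
\end{equation*}
where the last equality uses the first case of the definition of $\J^\flower$.

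$C^\iota$ is multilinear and the sums are finite, so the sums pull out of $C^\iota$ and the time integral. We obtain
\begin{equation*}
    \sum_{i=1}^5\ \sum_{T_i\in(\T^i_n)^\flower}\ \sum_{T_j\in\T^j_{\leq N},\,j\neq i}\ \int_0^tC^\iota\l(\tau,\J^\flower_{T_1},\ldots,\J^\flower_{T_5}\r)\rmd\tau .
\end{equation*}
By the third case of the definition of $\J^\flower$, each integrand term equals $\J^\flower_{\bullet(T_1,\ldots,T_5)}$.

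It remains to identify the index set. By the recursive definition, $\l(\T_{n+1}^{\iota,\eta}\r)^\flower$ is the disjoint union over $i$ and over these tuples of the trees $\bullet(T_1,\ldots,T_5)$. The map $(i,T_1,\ldots,T_5)\mapsto\bullet(T_1,\ldots,T_5)$ is a bijection onto $\l(\T_{n+1}^{\iota,\eta}\r)^\flower$. Indeed, the flower determines $i$, since exactly one child subtree contains it, and it also determines the subtrees. Reindexing the sum by this bijection gives $\mathcal L^{n+1}(v)^{\eta,\iota}=\sum_{T\in(\T^{\iota,\eta}_{n+1})^\flower}\J^\flower_T$, which closes the induction.

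The main point requiring care is the bookkeeping of signs and colours: the conjugation must send $\mathcal L$ to the version with $C^\iota$, and slot $i$ must receive the flower subtree with the correct pair $(\iota_i,\eta_i)$. The paper writes the index as both $(\eta,\iota)$ and $(\iota,\eta)$, and the slot conventions have to be matched to the definition of $\mathcal T^i$ consistently. A further ambiguity needs to be resolved before the reindexing is valid. If a tree in $\T_{\leq N}$ could itself be read as a flower tree, the union would fail to be disjoint. This is excluded because the flower $\flower$ is a distinguished leaf carrying the label $v$, distinct from all ordinary leaves.
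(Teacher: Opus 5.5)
Your proof is correct and is essentially the paper's argument. You argue by induction on the height, insert the induction hypothesis into the flower slot and the tree expansion of $F_{\leq N}$ into the other four slots, and then use the multilinearity of $C^\iota$ together with the recursive definitions of flower trees and of their associated functions. Your explicit handling of the sign $\iota=-$ via the conjugation identity for $C^{\pm}$, and of the bijection $(i,T_1,\ldots,T_5)\mapsto\bullet(T_1,\ldots,T_5)$, spells out steps the paper leaves implicit.
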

\begin{proof}
    We prove the assertion by the length of the stem of a flower tree. The statement is obviously true in the case $n=0$. Now assume $n>0$. We calculate, using the induction hypothesis in the first line, 
    \begin{equation}
        \begin{aligned}
        \mathcal L^{n+1}(v)^{\eta,\iota} &= \sum_{\substack{i\in\llbracket1,5\rrbracket}}\sum_{T_1\in\T_{\leq N}^1}\cdots\sum_{T_i\in\l(\T^i_n\r)^\flower}\cdots\sum_{T_5\in\T_{\leq N}^5}\int_0^tC^\iota\l(\tau,\J_{T_1}(\tau),\ldots,\J_{T_i}^\flower(\tau),\ldots,\J_{T_5}(\tau)\r)\rmd\tau\\&=\sum_{\substack{i\in\llbracket1,5\rrbracket}}\sum_{T_1\in\T_{\leq N}^1}\cdots\sum_{T_i\in\l(\T^i_n\r)^\flower}\cdots\sum_{T_5\in\T_{\leq N}^5}\J^\flower_{\bullet\l(T_1,\ldots,T_i,\ldots,T_5\r)}\\&=\sum_{T\in\l(\T_{n+1}^{\eta,\iota}\r)^\flower}\J_T^\flower
    \end{aligned}
    \end{equation}
\end{proof}

For $T\in\l(\T_n^{\iota,\eta}\r)^\flower$, we define $\mathcal D_{k,k_\flower}(T)$ as the subset of $\mathcal D_k(T)$ such that any $k$-decoration $\kappa$ of $T$ shall additionally satisfy $\kappa(\flower)=k_\flower$. From this definition, it is clear that 
\begin{equation}\label{obvious decomposition}
    \mathcal D_k(T) =\bigsqcup_{k_\flower\in\Z_L^d}\mathcal D_{k,k_\flower}(T)\text{ and thus }\sum_{\kappa\in\mathcal D(T)}=\sum_{k_\flower\in\Z_L^d}\sum_{\kappa\in\mathcal D_{k,k_\flower}(T)}.
\end{equation} 

\begin{remark}\label{trivial remark i should use somehow}
    Let $\tilde\node\in\Node(T)$ and define  
    \begin{equation}
        \begin{gathered}
            I_T\l(t,t_{\tilde\node}\r)\coloneqq\Big\{\l(t_\node\r)_{\node\in\Node(T)\setminus\{\tilde\node\}}\in[0,t]^{n(T)-1}\bigm\vert\node<\node'\text{ implies }t_\node< t_{\node'},\ \tilde\node<\node\\\text{ implies }t_{\tilde\node}< t_\node\text{ and }\node< \tilde\node\text{ implies }t_\node< t_{\tilde\node}\Big\},
    \end{gathered}
    \end{equation}
    then \begin{equation}
        I_T(t)=\l\{\l(t_\node\r)_{\node\in\Node(T)}\mid t_{\tilde\node}\in[0,t]\text{ and }\l(t_\node\r)_{\node\in\Node(T)\setminus\{\tilde\node\}}\in I_{T}(t,t_{\tilde\node})\r\}.
    \end{equation} 
\end{remark}

\begin{proposition}\label{distinction in stem}
    For any $T\in\l(\T_n^{\eta,\iota}\r)^\flower$, the Fourier transform of $\J_T^\flower$ reads 
    \begin{equation}\label{the fourier decomposition that we were about to prove}
        \widehat{\J_T^\flower}(t,k) =\frac{1}{L^d}\sum_{k_\flower\in\Z_L^d}\int_0^t\widehat{v^{\eta_\flower,\iota_\flower}}(t_\flower,k_\flower){\mathcal G_T}(t,t_\flower,k,k_\flower)\rmd t_\flower,
    \end{equation}
    where \begin{equation}
    \begin{gathered}
        {\mathcal G_T}\l(t,t_\flower,k,k_\flower\r)\coloneqq\frac{\l(-\rmi\epsilon\r)^{n(T)}}{L^{d(2n(T)-1)}}\prod_{\node\in\Node(T)}\iota_\node\sum_{\kappa\in\D_{k,k_\flower}(T)}e^{\rmi\iota_{P(\flower)}\Omega_{P(\flower)}^Tt_\flower}\prod_{\node\in\Node(T)}Q_\node^T(\kappa)\\\cdot\int_{I_T\l(t,t_\flower\r)}\prod_{\node\in\Node(T)\setminus\{P(\flower)\}}e^{\rmi\iota_\node\Omega_\node^T t_\node}\rmd t_\node\prod_{\leaf\in\Leaf(T)\setminus\{\flower\}}\mu_{\kappa(\leaf)}^{\eta_\leaf,\iota_\leaf}.
    \end{gathered}
    \end{equation}
\end{proposition}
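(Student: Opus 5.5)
I will prove \cref{distinction in stem} by induction on the height $n$ of the flower tree, in the same way as \cref{Fourier transform of J}, but now keeping the flower leaf and its time variable separate from the other leaves.

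For the base case $n=0$, the tree $T=(\flower,\iota,\eta)$ has no branching nodes. Then $\J_T^\flower=v^{\eta,\iota}$. Reading the formula with the conventions that $\Node(T)=\emptyset$ and that $\mathcal G_T$ acts as the identity kernel, it reduces to $\widehat{v}(t,k)$. (One could also start the induction at $n=1$ if one prefers to avoid this degenerate case.)

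For the inductive step, let $T=\bullet(T_1,\ldots,T_5)$. Exactly one child $T_i$ is a flower tree of height $n-1$, and the other $T_j$ belong to $\T^j_{\leq N}$. I will proceed as follows.

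\begin{enumerate}
\item Write out the Fourier transform of $C^\iota$. This gives $-\frac{\rmi\iota\epsilon}{L^{2d}}\int_0^t\sum_{\sum k_j=k}e^{\rmi\iota\tau\Omega}Q^\eta_\iota\prod_j\widehat{\J^\flower_{T_j}}(\tau,k_j)\rmd\tau$.
\item Insert \cref{Fourier transform of J} for each $T_j$ with $j\neq i$, and the induction hypothesis for $T_i$. The latter gives $\widehat{\J^\flower_{T_i}}(\tau,k_i)=L^{-d}\sum_{k_\flower}\int_0^\tau\widehat{v}(t_\flower,k_\flower)\mathcal G_{T_i}(\tau,t_\flower,k_i,k_\flower)\rmd t_\flower$.
\item Merge the decoration sums using the canonical bijection from the proof of \cref{Fourier transform of J}, restricted to fixed $\kappa(\flower)=k_\flower$ as in \eqref{obvious decomposition}. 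This gives $\D_{k,k_\flower}(T)\cong\{(k_j)_j,\kappa_j\}$ with $\kappa_i\in\D_{k_i,k_\flower}(T_i)$.
\item Take the products $\prod Q_\node$, $\prod\iota_\node$ and the leaf products over the disjoint unions $\Node(T)=\{\root\}\sqcup\bigsqcup_j\Node(T_j)$ and $\Leaf(T)\setminus\{\flower\}$.
\end{enumerate}

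Powers of $L$ are checked as follows. The root contributes $\epsilon L^{-2d}$. The induction hypothesis contributes $L^{-d}\cdot L^{-d(2n(T_i)-1)}$, and each $T_j$ contributes $L^{-2dn(T_j)}$. Altogether this gives $L^{-d}L^{-d(2n(T)-1)}\epsilon^{n(T)}$, as claimed.

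The main point is the time integrals. After the steps above one has $\int_0^t\rmd\tau\int_0^\tau\rmd t_\flower\cdots$, together with the inner simplices $I_{T_j}(\tau)$ and $I_{T_i}(\tau,t_\flower)$. I will exchange the order of integration to $\int_0^t\rmd t_\flower\int_{t_\flower}^t\rmd\tau$. Then I need to identify the resulting domain with $I_T(t,t_\flower)$ from \cref{trivial remark i should use somehow}.

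A notational subtlety arises here. In $\mathcal G_T$ the phase $e^{\rmi\iota_{P(\flower)}\Omega_{P(\flower)}t_\flower}$ is evaluated at $t_\flower$, and the node $P(\flower)$ is removed from the remaining integral. So $t_\flower$ must be interpreted as the time variable $t_{P(\flower)}$ of the stem node adjacent to the flower, and $I_T(t,t_\flower)$ is $I_T(t,t_{\tilde\node})$ with $\tilde\node=P(\flower)$.

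With this interpretation, the identification proceeds in two cases.
\begin{itemize}
\item If $n=1$, then $P(\flower)=\root$ and the statement is just \cref{trivial remark i should use somehow} applied to the root. The induction step above reduces to this when $T_i$ is the bare flower, in which case no $t_\flower$ integral comes from the induction hypothesis.
\item If $n>1$, then $P(\flower)\in\Node(T_i)$. The constraints "$t_{P(\flower)}<\tau$", "$\tau\leq t$" and "the $T_j$ times are at most $\tau$" together reproduce exactly the ordering constraints defining $I_T(t,t_{P(\flower)})$. This holds because the root is above every node, and the only constraints involving $P(\flower)$ are with its ancestors (including the root) and its descendants, which lie in $T_i$.
\end{itemize}
This is the same inductive description of $I_T(t)$ as in the proof of \cref{Fourier transform of J}, now with one time fixed. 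Fubini applies because all integrands are bounded and the domains are compact.

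I expect this bookkeeping of the integration domains, together with the correct placement of the exponential factors (root phase at $\tau$, $P(\flower)$ phase at $t_\flower$), to be the only real obstacle. The remaining steps are routine algebraic regrouping.
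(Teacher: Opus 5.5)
Your proof is correct, and your reading of $t_\flower$ as the time $t_{P(\flower)}$ of the stem node adjacent to the flower matches the paper. It takes a different route in how it organizes the same ingredients.

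\begin{itemize}
\item \textbf{The paper's route.} It first proves, by the same induction as in \cref{Fourier transform of J}, the unsliced identity. In that identity $\flower$ is treated as an ordinary leaf carrying the weight $\widehat{v^{\eta_\flower,\iota_\flower}}(t_{P(\flower)},\kappa(\flower))$ inside $\int_{I_T(t)}$. Only at the end does it apply \cref{trivial remark i should use somehow} once, with $\tilde\node=P(\flower)$, together with the splitting \eqref{obvious decomposition} of $\D_k(T)$ according to $k_\flower$.
\item \textbf{Your route.} You induct directly on the kernel representation. This forces you to start at height $1$, since the kernel form is degenerate at height $0$. It also forces you to redo, at every step, the exchange $\int_0^t\rmd\tau\int_0^\tau\rmd t_\flower=\int_0^t\rmd t_\flower\int_{t_\flower}^t\rmd\tau$ and the identification of the resulting domain with $I_T(t,t_\flower)$.
\end{itemize}

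The paper's ordering gives a shorter argument, with no special base case and no interaction between the slicing and the recursion. Yours makes explicit two things the paper leaves to the reader: the domain bookkeeping, and the normalization check $L^{-d}\cdot L^{-d(2n(T)-1)}=L^{-2dn(T)}$.
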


\begin{proof}

    As done before, one can prove by induction over $\abs{\Node(T)}$ and find
    \begin{equation}\label{analogous fourier transform}
        \begin{gathered}
        \widehat{\J_T^\flower}(t,k) = \l(\frac{-\rmi\epsilon}{L^{2d}}\r)^{n(T)}\prod_{\node\in\Node(T)}\iota_\node\sum_{\kappa\in\D_k(T)}\prod_{\node\in\Node(T)}Q_\node^T(\kappa)\\\cdot\int_{I_T(t)}\widehat{v^{\eta_\flower,\iota_\flower}}(t_{P(\flower)},\kappa(\flower))\prod_{\node\in\Node(T)}e^{\rmi\iota_\node\Omega_\node^T(\kappa)t_\node}\rmd t_\node\prod_{\leaf\in\Leaf(T)\setminus\{\flower\}}\mu_{\kappa(\leaf)}^{\eta_\leaf,\iota_\leaf}.
    \end{gathered}
    \end{equation}
    Using \cref{trivial remark i should use somehow}, we may rewrite 

    \begin{equation}\label{application of something particular}
        \begin{gathered}
        \int_{I_T(t)}\widehat{v^{\eta_\flower,\iota_\flower}}(t_{P(\flower)},\kappa(\flower))\prod_{\node\in\Node(T)}e^{\rmi\iota\Omega_\node^T t_\node}\rmd t_\node = \int_0^t\widehat{v^{\eta_\flower,\iota_\flower}}(t_{P(\flower)},\kappa(\flower))e^{\rmi\iota\Omega_{P(\flower)}t_{P(\flower)}}\\\cdot\int_{I_T(t,t_{P(\flower)})}\prod_{\node\in\Node(T)\setminus\{P(\flower)\}}e^{\rmi\iota\Omega_\node^T t_\node}\rmd t_\node\rmd t_{P(\flower)}.
    \end{gathered}
    \end{equation}
    and again decompose the sum via all decorations as in \cref{obvious decomposition}. Putting \cref{analogous fourier transform,application of something particular,obvious decomposition} together delivers \cref{the fourier decomposition that we were about to prove}.
\end{proof}

We now define 
\begin{equation}
    \l(\T_{m,n}^{\iota,\eta}\r)^\flower\coloneqq\l\{T\in\l(\T_m^{\iota,\eta}\r)^\flower\bigm\vert\abs{\Leaf(T)}=n+1\r\}.
\end{equation}
One can prove quite quickly that

\begin{align}
    \min\l\{\abs{\Leaf(T)}\mid T\in\l(\T_m^{\iota,\eta}\r)^\flower\r\} &= 4m,\\
    \max\l\{\abs{\Leaf(T)}\mid T\in\l(\T_m^{\iota,\eta}\r)^\flower\r\} &= 4m(4N+1).
\end{align}
Further denote by $\l(\T_{m,n}^{\iota,\eta,\iota_\flower,\eta_\flower}\r)^\flower\subseteq\l(\T_{m,n}^{\iota,\eta}\r)^\flower$ the subset of flower trees whose flower has sign $\iota_\flower$ and $\eta_\flower$. 
\begin{remark}
    Note that if $\iota_\flower$ or $\eta_\flower$ does not equal the sign or colour of the flower dictated by the sign and colour at the root, we automatically have $\l(\T_{m,n}^{\iota,\eta,\iota_\flower,\eta_\flower}\r)^\flower=\emptyset$.
\end{remark}
We may decompose 
\begin{equation}
    \l({\T}_m^{\iota,\eta}\r)^\flower = \bigsqcup_{\substack{\iota_\flower\in\{\pm\}\\\eta_\flower\in\{0,1\}}}\bigsqcup_{n=4m}^{4m(4N+1)}\l(\T_{m,n}^{\iota,\eta,\iota_\flower,\eta_\flower}\r)^\flower.
\end{equation}
In that way, we may use \cref{tree decomposition for flower trees,distinction in stem} and decompose

\begin{equation}
    \begin{gathered}
    \reallywidehat{\Leaf^m(v)^{\eta,\iota}}\l(\epsilon^{-1}t,k\r) 
    =\sum_{\substack{k_\flower\in\Z_L^d\\\iota_\flower\in\{\pm\}\\\eta_\flower\{0,1\}}}\sum_{n=4m}^{4m(4N+1)}\frac{1}{L^d}\int_0^{t}\widehat{v^{\eta_\flower,\iota_\flower}}\l(\epsilon^{-1}t_\flower,k_\flower\r){\mathcal Y_{m,n}^{\eta,\iota,\eta_\flower,\iota_\flower}}\l(\epsilon^{-1}t,\epsilon^{-1}t_\flower,k,k_\flower\r)\rmd t_\flower,
\end{gathered}
\end{equation}
where \begin{align}
    {\mathcal Y_{m,n}^{\eta,\iota,\eta_\flower,\iota_\flower}}&\coloneqq \sum_{T\in\l(\T_{m,n}^{\iota,\eta,\iota_\flower,\eta_\flower}\r)^\flower}{\mathcal G_T}.
\end{align}

\subsection{Flower couples}

\begin{definition}
    A \textbf{flower couple} is a couple formed by two flower trees such that the two flowers are paired. In particular, they have opposite signs. We define for $n_i\in\llbracket4m_i,4m_i(4N+1)\rrbracket$
    \begin{equation}
        \tilde{\mathcal C}_{m_1,n_1,m_2,n_2}^{\eta,\eta',\iota,\iota'}(\iota_\flower,\eta_\flower)\coloneqq\l\{\l(T_0,T_1,\sigma\r)\in\mathcal C_{m_1,m_2}^{\eta,\eta',\iota,\iota'}\bigm\vert T_0\in\l(\T_{m_1,n_1}^{\iota,\eta,\iota_\flower,\eta_\flower}\r)^\flower,\ T_1\in\l(\T_{m_2,n_2}^{\iota,\eta,\iota_\flower,\eta_\flower}\r)^\flower\text{ and }\sigma(\flower_0)=\flower_1\r\},
    \end{equation}
    where $\flower_0$ and $\flower_1$ are respectively the flowers of $T_0$ and $T_1$.
    For each flower couple $C\in\tilde{\mathcal C}^{\eta,\eta',\iota,\iota'}_{m_1,n_1,m_2,n_2}\l(\iota_\flower,\eta_\flower\r)$, we define $\mathcal D_{k,k_\flower}(C)$ to be the subset of all $k$-decorations such that $\kappa\l(\flower_0\r) = k_\flower$ and $\kappa\l(\flower_1\r) = -k_\flower$.
\end{definition}
\begin{remark}
     Note that as soon as the sign or colour of the root flips, the sign or colour of the flower flips too.
\end{remark}
\begin{proposition}\label{decomposition on flower trees}
    We can decompose 
    \begin{equation}
    \begin{gathered}
        \E\l({\mathcal Y_{m_1,n_1}^{\eta,\iota,\eta_\flower,\iota_\flower}\l(\epsilon^{-1}t,\epsilon^{-1}t_\flower,k,k_\flower\r)}{\mathcal Y_{m_2,n_2}^{\eta',\iota',\eta_\flower',\iota_\flower'}\l(\epsilon^{-1}t,\epsilon^{-1}t_\flower,-k,-k_\flower\r)}\r)\\=\sum_{C\in\tilde{\mathcal C}^{\eta,\eta',\iota,\iota'}_{m_1,n_1,m_2,n_2}\l(\iota_\flower,\eta_\flower\r)}\mathcal G_C\l(\epsilon^{-1}t,\epsilon^{-1}t_\flower,k,k_\flower\r),
    \end{gathered}
    \end{equation}
    where 
    \begin{equation}
    \begin{gathered}
        \mathcal G_C\l(\epsilon^{-1}t,\epsilon^{-1}t_\flower,k,k_\flower\r)\coloneqq\frac{(-\rmi)^{n(C)}}{L^{2d(n(C)-1)}}\prod_{\node\in\Node(C)}\iota_\node\sum_{\kappa\in\D_{k,k_\flower}(C)}e^{\rmi\epsilon^{-1}\l(\iota_{P(\flower_0)}\Omega_{P(\flower_0)}+\iota_{P(\flower_1)}\Omega_{P(\flower_1)}\r)t_\flower}\prod_{\node\in\Node(C)} Q_\node\\\cdot\int_{I_C(t,t_\flower)}\prod_{\node\in\Node(C)\setminus\{P(\flower_0),P(\flower_1)\}}e^{\rmi\epsilon^{-1}\iota_\node\Omega_\node t_\node}\rmd t_\node\prod_{\leaf\in\Leaf(C)_+\setminus\{\flower_0,\flower_1\}}M^{\eta_\leaf,\eta_{\sigma(\leaf)}}(\kappa(\leaf))^{\iota_\leaf}
    \end{gathered}
    \end{equation}
    and  
    \begin{equation}
        I_C\l(t,t_\flower\r)\coloneqq I_{T_0}\l(t,t_\flower\r)\times I_{T_1}\l(t,t_\flower\r)
    \end{equation} 
    for $C=\l(T_0,T_1,\sigma\r)\in\tilde{\mathcal C}_{m_1,n_1,m_2,n_2}^{\eta,\eta',\iota,\iota'}\l(\iota_\flower,\eta_\flower\r)$.
\end{proposition}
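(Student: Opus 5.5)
The plan is to mirror the proof of the earlier proposition expressing $\E\bigl(\widehat{\J_{T_1}}\widehat{\J_{T_2}}\bigr)$ as a sum over couples, now applied to the kernels $\mathcal G_T$ of \cref{distinction in stem}. First I expand both factors by definition,
\[
\mathcal Y_{m_1,n_1}^{\eta,\iota,\eta_\flower,\iota_\flower}\,\mathcal Y_{m_2,n_2}^{\eta',\iota',\eta_\flower',\iota_\flower'}=\sum_{T_0}\sum_{T_1}\mathcal G_{T_0}\,\mathcal G_{T_1},
\]
with $T_0\in\bigl(\T_{m_1,n_1}^{\iota,\eta,\iota_\flower,\eta_\flower}\bigr)^\flower$ and $T_1\in\bigl(\T_{m_2,n_2}^{\iota',\eta',\iota_\flower',\eta_\flower'}\bigr)^\flower$. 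I then insert the explicit formula for $\mathcal G_T$ after the time rescaling $t\mapsto\epsilon^{-1}t$, which cancels the factors $\epsilon^{n(T)}$ exactly as in the remark following \cref{Fourier transform of J}. The deterministic parts (the product of $\iota_\node$, the products of $Q^T_\node$, the phase $e^{\rmi\epsilon^{-1}\iota_{P(\flower)}\Omega_{P(\flower)}t_\flower}$ and the integral over $I_T(t,t_\flower)$) factor out of the expectation. Since $I_{T_0}(t,t_\flower)\times I_{T_1}(t,t_\flower)=I_C(t,t_\flower)$ by definition, the product of the two time integrals is the integral over $I_C(t,t_\flower)$.

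Second, the only randomness left is $\E\bigl(\prod_{\leaf\in(\Leaf(T_0)\cup\Leaf(T_1))\setminus\{\flower_0,\flower_1\}}\mu^{\eta_\leaf,\iota_\leaf}_{\kappa(\leaf)}\bigr)$, since the flowers carry $v$ and not the Gaussians. I apply Isserlis' theorem (\cref{Isserlis}) to this product. As before, nonzero contributions require pairings $\sigma'$ of the non-flower leaves that match opposite signs and opposite wave numbers, and each pair contributes $M^{\eta_\leaf,\eta_{\sigma'(\leaf)}}(\kappa(\leaf))^{\iota_\leaf}$. Extending $\sigma'$ by $\sigma(\flower_0)=\flower_1$ gives a bijection between such admissible pairings and the coupling maps of flower couples $C=(T_0,T_1,\sigma)\in\tilde{\mathcal C}^{\eta,\eta',\iota,\iota'}_{m_1,n_1,m_2,n_2}(\iota_\flower,\eta_\flower)$. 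Here I need the flowers to have opposite signs and to be paired, which is consistent with evaluating the second factor at $(-k,-k_\flower)$. The decoration constraints $\kappa_0\in\D_{k,k_\flower}(T_0)$ and $\kappa_1\in\D_{-k,-k_\flower}(T_1)$, together with the Kronecker deltas $\delta_{\kappa(\leaf)+\kappa(\sigma(\leaf))}$, collapse the double sum to $\sum_{\kappa\in\D_{k,k_\flower}(C)}$. The surviving product of covariances runs over $\Leaf(C)_+\setminus\{\flower_0,\flower_1\}$ (more precisely, over the positive member of each non-flower pair).

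Third, I track the normalisation. Each $\mathcal G_{T_i}$ carries $L^{-d(2n(T_i)-1)}$, so the product gives $L^{-2d(n(C)-1)}$ with $n(C)=n(T_0)+n(T_1)$. The factors $(-\rmi)^{n(T_i)}$ and the sign products combine to $(-\rmi)^{n(C)}\prod_{\node\in\Node(C)}\iota_\node$, and the two phase factors at $t_\flower$ merge into the single exponential appearing in $\mathcal G_C$. Collecting these pieces yields exactly the stated formula.

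The main obstacle is the bookkeeping of the pairing–couple bijection. I must check three things: that the color and sign constraints on the flower exclude every pairing that is not a flower couple (compare the remark that $\bigl(\T^{\iota,\eta,\iota_\flower,\eta_\flower}_{m,n}\bigr)^\flower$ is empty when signs mismatch); that the sign and colour data in the second factor match the set $\tilde{\mathcal C}$ as written; and that the exclusion of the flower leaves from the covariance product is compatible with the definition of $\D_{k,k_\flower}(C)$. I would handle these by a direct case check on $\iota_\flower,\iota'_\flower$, and otherwise follow the earlier proposition line by line.
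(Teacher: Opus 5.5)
Your proposal follows the paper's proof: expand into $\sum_{T_0,T_1}\E(\mathcal G_{T_0}\mathcal G_{T_1})$, apply Isserlis, identify the admissible pairings with coupling maps satisfying $\sigma(\flower_0)=\flower_1$, and identify the constrained pairs $(\kappa_0,\kappa_1)$ with $\D_{k,k_\flower}(C)$. Applying Isserlis only to the non-flower leaves, as you do, is in fact cleaner than the paper's sum over pairings of all leaves.

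Two points should be fixed rather than checked. First, the condition $\iota'_\flower=-\iota_\flower$ cannot come out of a case check. When $\iota=\iota'=\iota_\flower=\iota'_\flower$, the non-flower leaves of each tree are already sign-balanced and admit sign-respecting pairings, yet no flower couple exists. So $\iota'_\flower=-\iota_\flower$ must be a standing hypothesis; it is implicit in the definition of $\tilde{\mathcal C}$ and holds in the application to $\E\abs{\mathcal Y}^2$. Second, the time rescaling cancels only $\epsilon^{n(T)-1}$, because $t_{P(\flower)}$ is pinned to $t_\flower$ and only $n(T)-1$ times are integrated. The leftover factor $\epsilon$ per tree must be matched by the Jacobian of $t_\flower\mapsto\epsilon^{-1}t_\flower$ in the convention for the rescaled kernel; otherwise the identity holds only up to a factor $\epsilon^2$.
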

\begin{proof}
We may decompose
\begin{equation}
\begin{gathered}
    \E\l({\mathcal Y_{m_1,n_1}^{\eta,\iota,\eta_\flower,\iota_\flower}\l(\epsilon^{-1}t,\epsilon^{-1}t_\flower,k,k_\flower\r)}{\mathcal Y_{m_2,n_2}^{\eta',\iota',\eta_\flower',\iota_\flower'}\l(\epsilon^{-1}t,\epsilon^{-1}t_\flower,-k,-k_\flower\r)}\r) \\= \sum_{\substack{T_0\in\l(\T_{m_1,n_1}^{\iota,\eta,\iota_\flower,\eta_\flower}\r)^\flower\\T_1\in\l(\T_{m_2,n_2}^{\iota',\eta',\iota_\flower',\eta_\flower'}\r)^\flower}}\E\l(\mathcal G_{T_0}\l(\epsilon^{-1}t,\epsilon^{-1}t_\flower,k,k_\flower\r)\mathcal G_{T_1}\l(\epsilon^{-1}t,\epsilon^{-1}t_\flower,-k,-k_\flower\r)\r)\\
    =\sum_{\substack{T_0\in\l(\T_{m_1,n_1}^{\iota,\eta,\iota_\flower,\eta_\flower}\r)^\flower\\T_1\in\l(\T_{m_2,n_2}^{\iota',\eta',\iota_\flower',\eta_\flower'}\r)^\flower\\\sigma\text{ pairing of }\Leaf(T_0)\sqcup\Leaf(T_1)}}\frac{(-\rmi)^{m_1+m_2}}{L^{2d\l(m_1+m_2-1\r)}}\prod_{\node\in\Node(T_0)\sqcup\Node(T_1)}\iota_\node\\\cdot\sum_{\substack{\kappa_0\in\D_{k,k_\flower}(T_0)\\\kappa_1\in\D_{-k,-k_\flower}(T_1)\\\kappa(\leaf)+\kappa(\sigma(\leaf))=0\forall\leaf\in\l(\Leaf(T_0)\sqcup\Leaf(T_1)\r)_+}}e^{\rmi\epsilon^{-1}\l(\iota_{P(\flower_0)}\Omega_{P(\flower_1)}+\iota_{P(\flower_1)}\Omega_{P(\flower_1)}\r)t_\flower}\prod_{\node\in\Node(T_0)\sqcup\Node(T_1)} Q_\node\\\cdot\int_{I_{T_0}(t,t_\flower)\times I_{T_1}(t,t_\flower)}\prod_{\node\in\l(\Node(T_0)\sqcup\Node(T_1)\r)\setminus\{P(\flower_0),P(\flower_1)\}}e^{-\rmi\iota_\node\epsilon^{-1}\Omega_\node t_\node}\rmd t_\node\\\cdot\prod_{\leaf\in\l(\Leaf(T_0)\sqcup\Leaf(T_1)\r)_+\setminus\{\flower_0,\flower_1\}}\delta_{\iota_\leaf+\iota_{\sigma(\leaf)}}M^{\eta_\leaf,\eta_{\sigma(\leaf)}}(\kappa(\leaf))^{\iota_\leaf},
\end{gathered}
\end{equation}
where by $\kappa$ we mean the unique element in $\l(\Z_L^d\r)^{\Leaf(T_0)\sqcup\Leaf(T_1)}$ such that $\left.\kappa\right|_{\Leaf(T_i)}\coloneqq\kappa_i$. Using the fact that we may identify $\{(\kappa_0,\kappa_1)\in\D_{k,k_\flower}(T_0)\times\D_{-k,-k_\flower}(T_1)\mid\kappa(\leaf)+\kappa(\sigma(\leaf))=0\forall\leaf\in\l(\Leaf(T_0)\sqcup\Leaf(T_1)\r)_+\}\cong\D_{k,k_\flower}(C)$, we may rewrite

\begin{equation}
    \begin{gathered}
        \E\l({\mathcal Y_{m_1,n_1}^{\eta,\iota,\eta_\flower,\iota_\flower}\l(\epsilon^{-1}t,\epsilon^{-1}t_\flower,k,k_\flower\r)}{\mathcal Y_{m_2,n_2}^{\eta',\iota',\eta'_\flower,\iota'_\flower}\l(\epsilon^{-1}t,\epsilon^{-1}t_\flower,-k,-k_\flower\r)}\r)\\=\frac{(-\rmi)^{n(C)}}{L^{2d(n(C)-1)}}\sum_{C\in\tilde{\mathcal C}_{m_1,n_1,m_2,n_2}^{\eta,\eta',\iota,\iota'}}\prod_{\node\in\Node(C)}\iota_\node\sum_{\kappa\in\D_{k,k_\flower}(C)}e^{\rmi\epsilon^{-1}\l(\iota_{P(\flower_0)}\Omega_{P(\flower_0)}+\iota_{P(\flower_1)}\Omega_{P(\flower_1)}\r)t_\flower}\prod_{\node\in\Node(C)} Q_\node\\\cdot\int_{I_C(t,t_\flower)}\prod_{\node\in\Node(C)\setminus\{P(\flower_0),P(\flower_1)\}}e^{\rmi\epsilon^{-1}\iota_\node\Omega_\node t_\node}\rmd t_\node\prod_{\leaf\in\Leaf(C)_+\setminus\{\flower_0,\flower_1\}}M^{\eta_\leaf,\eta_{\sigma(\leaf)}}(\kappa(\leaf))^{\iota_\leaf}
    \end{gathered}
\end{equation}
which completes the proof.
\end{proof}

\subsection{Estimates on the kernels of $\Leaf^m$}

Recall, $I_C(t,t_\flower)=I_{T_1}(t,t_\flower)\times I_{T_2}(t,t_\flower)$ and decompose

\begin{equation}
    I_{T_i}(t,t_\flower) = \mathcal Z_{T_i}\sqcup\bigsqcup_{\rho\in\M(\Node(T_{i}))}I_{T_i}^\rho(t,t_\flower),
\end{equation}
where 
\begin{equation}
\begin{gathered}
    I_{T_i}^\rho\l(t,t_\flower\r)\coloneqq\Big\{\l(t_\node\r)_{\node\in\Node(T_i)\setminus\{P(\flower_i)\}}\in[0,t]^{n(T_i)-1},\ 0\leq t_{\rho(1)}<\cdots<t_{\rho(\rho^{-1}(P(\flower_i))-1)}\\\leq t_\flower\leq t_{\rho(\rho^{-1}(P(\flower_i))+1)}<\cdots<t_{\rho(n(T_i))}\leq t\Big\}
\end{gathered}
\end{equation}
and we denote 
\begin{align}
    \Node^{>_\rho}(T_i)&\coloneqq\rho\l(\llbracket\rho^{-1}(P(\flower_i))+1,n(T_i)\rrbracket\r),\\
    \Node^{<_\rho}(T_i)&\coloneqq\rho\l(\llbracket1,\rho^{-1}(P(\flower_i))-1\rrbracket\r),\\
    I_{T_i}^{>_\rho}(t,t_\flower)&\coloneqq\l\{\l(t_\node\r)_{\node\in\Node^{>_\rho}(T_i)}\in[t_\flower,t]^{n^{>_\rho}(T_i)}\bigm\vert t_\flower\leq t_{\rho(\rho^{-1}(P(\flower_i))+1)}<\cdots<t_{\rho(n(T_i))}\leq t\r\},\\
    I_{T_i}^{<_\rho}(t_\flower)&\coloneqq\l\{(t_\node)_{\node\in\Node^{<_\rho}(T_i)}\in[0,t_\flower]^{n^{<_\rho}(T_i)}\bigm\vert0\leq t_{\rho(1)}<\cdots<t_{\rho(\rho^{-1}(P(\flower_i))-1)}\leq t_\flower\r\},\\
    n^{>_{\rho}}(T_i)&\coloneqq\abs{\Node^{>_\rho}(T_i)},\\
    n^{<_{\rho}}(T_i)&\coloneqq\abs{\Node^{<_\rho}(T_i)},
\end{align}
and notice that $n^{<_\rho}(T_i)+n^{>_\rho}(T_i)=n(T_i)-1$.
With these definitions, we have
\begin{equation}
    I_{T_i}^\rho(t,t_\flower)=I_{T_i}^{<_\rho}(t_\flower)\times I_{T_i}^{>_\rho}(t,t_\flower)
\end{equation}
and of course, $I_{T_i}^{>_\rho}(t,t_\flower)=(t_\flower,\cdots,t_\flower)+I_{T_i}^{>_\rho}(t-t_\flower,0)$ so that we may rewrite
\begin{equation}
    \begin{gathered}
    \int_{I_{T_i}(t,t_\flower)}\prod_{\node\in\Node(T_i)\setminus\{P(\flower_i)\}}e^{\rmi\epsilon^{-1}\iota_\node\Omega_\node t_\node}\rmd t_\node \\= \sum_{\rho\in\M(\Node(T_i))}\int_{I_{T_i}^{>_\rho}(t,t_\flower)}\prod_{\node\in\Node^{>_\rho}(T_i)}e^{\rmi\epsilon^{-1}\iota_\node\Omega_\node t_\node}\rmd t_\node\int_{I_{T_i}^{<_\rho}(t_\flower)}\prod_{\node\in\Node^{<_\rho}(T_i)}e^{\rmi\epsilon^{-1}\iota_\node\Omega_\node t_\node}\rmd t_\node\\
    =\sum_{\rho\in\M(\Node(T_i))}\prod_{\node\in\Node^{>_\rho}(T_i)}e^{\rmi\epsilon^{-1}\iota_\node\Omega_\node t_\flower}\int_{I_{T_i}^{>_\rho}(t-t_\flower,0)}\prod_{\node\in\Node^{>_\rho}(T_i)}e^{\rmi\epsilon^{-1}\iota_\node\Omega_\node t_\node}\rmd t_\node\int_{I_{T_i}^{<_\rho}(t_\flower)}\prod_{\node\in\Node^{<_\rho}(T_i)}e^{\rmi\epsilon^{-1}\iota_\node\Omega_\node t_\node}\rmd t_\node.
\end{gathered}
\end{equation}
Applying the resolvent identity (\cref{resolvent identity}) twice leads to 

\begin{equation}\label{resolvent identity applied two times}
    \begin{gathered}
    \int_{I_{T_i}(t,t_\flower)}\prod_{\node\in\Node(T_i)\setminus\{P\l(\flower_i\r)\}}e^{\rmi\epsilon^{-1}\iota_\node\Omega_\node t_\node}\rmd t_\node = \sum_{\rho\in\M(\Node(T_i))}e^{\rmi\epsilon^{-1}\omega^\rho_{P(\flower_i)}t_\flower}\frac{e^{(n(T_i)-1)t}}{4\pi^2}\\\cdot\int_{\R\times\R}\frac{e^{-\rmi\xi_1(t-t_\flower)}e^{-\rmi\xi_2 t_\flower}}{\l(n^{>_\rho}(T_i)-\rmi\xi_1\r)\l(n^{<_\rho}(T_i)-\rmi\xi_2\r)}\prod_{\substack{\node_1\in\Node^{>_\rho}(T_i)}}\frac{1}{n^{>_\rho}(T_i)-\rmi\l(\xi_1+\omega_{\node_1}^\rho\r)}\\\cdot\prod_{\node_2\in\Node^{<_\rho}(T_i)}\frac{1}{n^{<_\rho}(T_i)-\rmi\l(\xi_2+\omega_{\node_2}^\rho\r)}\rmd\l(\xi_1,\xi_2\r)
\end{gathered}
\end{equation}
where 

\begin{align}
    \omega_{\flower}^\rho&\coloneqq\sum_{\node\in\Node^{>_\rho}(T_i)}\iota_\node\Omega_\node,\\
    \omega_\node^\rho&\coloneqq\begin{cases}
        \sum\limits_{\Node^{<_\rho}(T_i)\ni\node'\geq_\rho\node}\iota_{\node'}\Omega_{\node'}&\text{if }\node\in\Node^{<_\rho}(T_i),\\
        \sum\limits_{\Node^{>_\rho}(T_i)\ni\node'\geq_\rho\node}\iota_{\node'}\Omega_{\node'}&\text{if }\node\in\Node^{>_\rho}(T_i).
    \end{cases}
\end{align}
By convention, if one of the sets $\Node^{>_\rho}(T_i)$ or $\Node^{<_\rho}(T_i)$ is empty, then the corresponding integral in \cref{resolvent identity applied two times} is omitted. 

We can then decompose 

\begin{equation}\label{order decomposition}
    {\mathcal G_C}\l(\epsilon^{-1}t,\epsilon^{-1}t_\flower,k,k_\flower\r)=\sum_{\substack{\rho_1\in\M(\Node(T_1))\\\rho_2\in\M(\Node(T_2))}}{\mathcal G_C^{\rho_1,\rho_2}}\l(\epsilon^{-1}t,\epsilon^{-1}t_\flower,k,k_\flower\r)
\end{equation}
with 

\begin{equation}\label{all integrals to be calculated}
    \begin{gathered}
    \abs{{\mathcal G_C^{\rho_1,\rho_2}}\l(\epsilon^{-1}t,\epsilon^{-1}t_\flower,k,k_\flower\r)}\\\leq\Lambda^{n+1}L^{-2d(n-1)}\int_{\R^4}\frac{A_C^{\rho_1,\rho_2}(\xi_1,\xi_2,\xi_3,\xi_4,k,k_\flower)\rmd\l(\xi_1,\xi_2,\xi_3,\xi_4\r)}{\abs{n^{>_{\rho_1}}(T_1)-\rmi\xi_1}\abs{n^{<_{\rho_1}}(T_1)-\rmi\xi_2}\abs{n^{>_{\rho_2}}(T_2)-\rmi\xi_3}\abs{n^{<_{\rho_2}}(T_2)-\rmi\xi_4}}
    \end{gathered}
\end{equation}
and 
\begin{equation}
    \begin{gathered}
        A_C^{\rho_1,\rho_2}(\xi_1,\xi_2,\xi_3,\xi_4,k,k_\flower)\coloneqq \sum_{\substack{\kappa\in\D_{k,k_\flower}(C)\\\kappa\l(\Leaf(C)_+\r)\subseteq B_R(0)}}\prod_{\node\in\Node^{>_{\rho_1}}(T_1)}\frac{\abs{Q_\node}}{\abs{n^{>_{\rho_1}}(T_1)-\rmi\l(\xi_1+\omega_\node^{\rho_1}\r)}}\\\cdot\prod_{\node\in\Node^{<_{\rho_1}}(T_1)}\frac{\abs{Q_\node}}{\abs{n^{<_{\rho_1}}(T_1)-\rmi\l(\xi_2+\omega_\node^{\rho_1}\r)}}
    \\\cdot\prod_{\node\in\Node^{>_{\rho_2}}(T_2)}\frac{\abs{Q_\node}}{\abs{n^{>_{\rho_2}}(T_2)-\rmi\l(\xi_3+\omega_\node^{\rho_2}\r)}}\prod_{\node\in\Node^{<_{\rho_2}}(T_2)}\frac{\abs{Q_\node}}{\abs{n^{<_{\rho_2}}(T_2)-\rmi\l(\xi_4+\omega_\node^{\rho_2}\r)}}.
    \end{gathered}
\end{equation}
Again, if one of the sets $\Node^{>_{\rho_i}}(T_i)$ or $\Node^{<_{\rho_i}}(T_i)$ is empty, we omit the corresponding integral over $\R$ in \cref{all integrals to be calculated} and ${A_C^{\rho_1,\rho_2}}$ depends on $3$ or less $\xi_j$.

Given $\rho_i\in\M(\Node(T_i))$, we may define $\rho\in\M(\Node(C))$ by setting the restriction of $\rho$ on $\Node(T_i)$ to be $\rho_i$ and defining the nodes of the second tree to be always below the ones of the first tree.
For any $\rho\in\M(\Node(C))$ we can construct a total order relation $<''$ on $\l(\Node(C)\sqcup\Leaf(C)\r)\setminus\{\flower_0,\flower_1\}$ as required in \cref{change of variables} but this time $\Node_{<''}(C)$ will have $\abs{\Node_{<''}(C)}=2n(C)$ since $\flower_0$, $\flower_1$ and the corresponding edges adjacent to these flowers are excluded from the momentum graph and subsequent spanning tree construction. In particular, $\sum_{i=1}^4ip_i=2n(C)-1$ but still $\sum_{i=0}^4p_i=n(C)$. 

\begin{proposition}\label{estimates on the kernels}
Let $C=(T_0,T_1,\sigma)\in\tilde C_{m_1,n_1,m_2,n_2}^{\eta,\eta',\iota,\iota'}$. Then there exist $\Lambda,\mathcal K,\alpha>0$ such that for all $n\leq\abs{\ln\epsilon}$, 
\begin{equation}\label{what has to be calculated now}
    \sup_{0\leq t_\flower\leq t\leq\delta}\abs{{\mathcal G_C}\l(\epsilon^{-1}t,\epsilon^{-1}t_\flower,k,k_\flower\r)}\leq\Lambda\l(\Lambda\delta\r)^{n(C)}\epsilon^{\alpha\l(n(C)-n_{\mathrm{res}}(C)\r)}\frac{L^{\mathcal K}}{n(C)}
\end{equation}
for all $k,k_\flower\in\Z_L^d$ (note, $n(C)=m_1+m_2$).
\end{proposition}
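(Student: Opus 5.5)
The proof will follow the scheme of \cref{key proposition}, adapted to the split time structure of flower couples. I start from the decomposition \eqref{order decomposition} and the bound \eqref{all integrals to be calculated}. Since $\abs{\M(\Node(T_1))}\abs{\M(\Node(T_2))}\leq n(C)!$, it suffices to prove, for each fixed pair $(\rho_1,\rho_2)$, the analogue of \eqref{the first to be estimated}:
\begin{equation}
\sup_{0\leq t_\flower\leq t\leq\delta}\abs{\mathcal G_C^{\rho_1,\rho_2}}\leq\Lambda(\Lambda\delta)^{n}\epsilon^{\alpha(n-n_{\mathrm{res}})}\frac{L^{\mathcal K}}{n^{n+1}},
\end{equation}
with $n=n(C)$. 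The factorial count is then absorbed by $n!\leq n^n$, exactly as in the passage from \eqref{the first to be estimated} to \eqref{the second to be estimated}.

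For fixed $(\rho_1,\rho_2)$ I form the combined order $\rho\in\M(\Node(C))$ described before the statement, and extend it to $<''$ on $(\Node(C)\sqcup\Leaf(C))\setminus\{\flower_0,\flower_1\}$. Applying \cref{change of variables} with the flowers excluded gives a basis $\Node_{<''}(C)$ of cardinality $2n$. The flower momentum $k_\flower$ is fixed, and it plays the role that the root played before. The counting identities become $\sum_{i=0}^4p_i=n$ and $\sum_{i=1}^4ip_i=2n-1$. The normalisation $L^{-2d(n-1)}$ matches the $2n-1$ free lattice variables up to a factor $L^{d}$, which is harmless in the $L^{\mathcal K}$ allowance.

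On the region where all $\abs{\xi_j}\leq L^{\mathcal K}$, I apply Hölder in the decoration sum as in \eqref{moinsoni}. I then perform the change of variables $I$ and sum node by node along $\rho$ using \cref{the summation cases}. Each node now sits in one of the four blocks, $\Node^{\gtrless_{\rho_i}}(T_i)$, so its factor has the form $\gamma_\iota$ of \cref{main technical necessary theorem}. The parameters are $A=n^{\gtrless_{\rho_i}}(T_i)$ (at least $1$ when the block is nonempty), $B=\xi_j$, and $C$ the phases of the later nodes in the same block. \Cref{classifying resonant nodes} and \cref{all possibilities} still apply, since they are local to a node, so non-resonant nodes of degree at least $2$ and non-exceptional degree-one nodes each gain $\epsilon^{\alpha_0}$. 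The same bookkeeping as before gives an exponent at least $2(n-n_{\mathrm{res}})-1-(\tilde p_1-p_{\mathrm{res},1})\geq n-n_{\mathrm{res}}-1$. The loss of one is absorbed into the constant. The resulting factor is $\prod_{\text{blocks}}(\delta/n^{\gtrless})^{n^{\gtrless}}$, which differs from $(\delta/n)^n$ by at most $\Lambda^n$ after accounting for block sizes via $a^a b^b\geq ((a+b)/2)^{a+b}$. The four $\xi_j$-integrals over $\abs{\xi_j}\leq L^{\mathcal K}$ each cost $\log L$, and this is absorbed into $L^{\mathcal K}$. The prefactor $e^{(n(T_i)-1)t}$ in \eqref{resolvent identity applied two times} is handled by choosing $\nu=n^{\gtrless}\delta^{-1}$ in \cref{resolvent identity}, as in the corollary following it. This gives $e^{n t/\delta}\leq e^n$ for $t\leq\delta$, which goes into $\Lambda^n$.

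The tail regions, where some $\abs{\xi_j}>L^{\mathcal K}$, are treated as in \cref{key proposition}. There $\epsilon^{-1}\abs{\omega^\rho_\node}\lesssim L^{1/\beta}\log(L)^3$, so every factor in that block is bounded by $\Lambda n/(\delta\abs{\xi_j})$. This yields $L^{-\mathcal K n^{\gtrless}}$, while the other blocks are bounded trivially. A block with a single node gives only $\int_{\abs{\xi}>L^{\mathcal K}}\abs{\xi}^{-2}\,\rmd\xi$, which is still small.

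The main obstacle I expect is the degree-one exceptional nodes $\tilde{\mathcal P}_1$ adjacent to the stem. The phase factor $e^{\rmi\epsilon^{-1}\omega^\rho_{P(\flower_i)}t_\flower}$ is not integrated against anything, so the parents of the flowers have no resolvent and give no gain. I would first try to treat $P(\flower_0)$ and $P(\flower_1)$ as degree-zero nodes bounded by $\abs{Q_\node}$ and \cref{the assumption i have to use}. This costs at most two units of $\epsilon^{\alpha}$, which is absorbed by shrinking $\alpha$. Since the lost exponent depends on the flowers and not on $n$, the required form $\epsilon^{\alpha(n-n_{\mathrm{res}})}$ survives when $n-n_{\mathrm{res}}\geq 3$. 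The finitely many remaining cases follow from the trivial bound of \cref{trivial bound}.
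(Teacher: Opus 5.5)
Your route is the same as the paper's. You split over $(\rho_1,\rho_2)$, run the argument of \cref{key proposition} on the four resolvent integrals with a bulk/tail split in each $\xi_j$, use the $2n(C)$-element basis with $\sum_i ip_i=2n(C)-1$, and absorb the $n(C)!$ orderings.

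Your observation that $P(\flower_0)$ and $P(\flower_1)$ have their phases frozen at $t_\flower$, and so carry no resolvent, is correct. The paper's sketch does not address it. Count the loss by degree, not as ``two units'': it is $\epsilon^{-\alpha_0(\deg P(\flower_0)+\deg P(\flower_1))}$. Since it is bounded, it is a fixed power of $L^{1/\beta}$ and goes directly into $L^{\mathcal K}$. The case split at $n-n_{\mathrm{res}}\geq 3$ is therefore unnecessary. Also, the remaining cases are not finitely many couples, since $n$ is still unbounded there, although the trivial bound does handle them uniformly.

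The step that fails as written is the tail region. In \cref{key proposition} there is a single block, so $\abs{\xi}>L^{\mathcal K}$ yields $L^{-\mathcal K n}$, which dominates $\epsilon^{\alpha n}$. With four blocks you can be in a mixed region: say only $\abs{\xi_1}>L^{\mathcal K}$, its block has $n_1=1$ node, and $\xi_2,\xi_3,\xi_4$ lie in the bulk. Bounding the other blocks ``trivially'', as you propose, leaves a bound of order $L^{-\mathcal K}\cdot\Lambda^{n}\delta^{n-3}L^{d}(\log L)^3$ with no power of $\epsilon$.

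When $n-n_{\mathrm{res}}$ is of order $\abs{\ln\epsilon}$, the target factor $\epsilon^{\alpha(n-n_{\mathrm{res}})}=L^{-\alpha(n-n_{\mathrm{res}})/\beta}$ is smaller than any fixed power of $L$. So $L^{-\mathcal K}$ cannot pay for it. This gain is exactly what is needed downstream to beat the $(2(n-q)+2)!$ count from \cref{cardinality estimate on the number of couples with q resonant nodes}.

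The repair is to keep the node-by-node gain argument in the bulk blocks even in mixed regions, and treat only the tail blocks crudely.
\begin{itemize}
\item For a node $\node$ in a tail block, use $\abs{n_j\delta^{-1}-\rmi(\xi_j+\epsilon^{-1}\omega_\node)}\geq\abs{\xi_j}/2$. This holds because $\epsilon^{-1}\abs{\omega_\node}\lesssim L^{1/\beta}\log^3L\ll L^{\mathcal K}$.
\item Sum that node's children variables using only $\abs{Q_\node}$ and \cref{the assumption i have to use}.
\item After the $\xi_j$-integration, each tail-block node contributes a factor $\lesssim L^{-\mathcal K}$. In the bookkeeping it could have contributed at most $\epsilon^{4\alpha_0}=L^{-4\alpha_0/\beta}$.
\end{itemize}
Choosing $\mathcal K\beta\geq 4\alpha_0$, the bound $\epsilon^{\alpha(n-n_{\mathrm{res}}-c)}$ then survives in every mixed region, with the constant $c$ again absorbed into $L^{\mathcal K}$.
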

\begin{proof}
    The proof is similar to the one given in \cref{key proposition}. The main difference is the cardinality $\abs{\Node_{<''}(C)}=2n(C)$ (which is merely of a numerical nature) and the fact that one has to apply the proof strategy of \cref{key proposition} to all four integrals in \cref{all integrals to be calculated}. That is, one may proceed by a domain separation $\l\{\abs{\xi_i}\leq L^{\mathcal K}\r\}$ and $\l\{\abs{\xi_i}> L^{\mathcal K}\r\}$ for each variable $\xi_i$. In that way, one obtains \cref{the first to be estimated} for $\mathcal G_C^{\rho_1,\rho_2}$. Using the decomposition \eqref{order decomposition},
    \begin{equation}
        \abs{\M(\Node(T_0))}\abs{\M(\Node(T_1))}\leq n(C)!,
    \end{equation}
    and the assumption $n(C)\leq\abs{\ln\epsilon}$ gives the desired estimate. 
\end{proof}

\subsection{The time derivative of $\mathcal Y_{m,n}^{\eta,\iota}$}

Let $f_\node$ be a function of $t$.

\begin{lemma}
    Taking the time derivative leads to 
    \begin{equation}
        \p_t\l(\int_{I_T(t,t_\flower)}\prod_{\node\in\Node(T)\setminus\{P(\flower)\}}f_\node(t_\node)\rmd t_\node\r)=           f_\root(t)\int_{I_T'(t,t_\flower)}\prod\limits_{\node\in\Node(T)\setminus\{\root,P(\flower)\}}f_\node(t_\node)\rmd t_\node,
    \end{equation}
    where
    \begin{equation}
\begin{gathered}
    I_T'(t,t_\flower)\coloneqq\Big\{\l(t_\node\r)_{\node\in\Node(T)\setminus\{\root,P(\flower)\}}\in[0,t]^{n(T)-2}\bigm\vert\node<\node'\text{ implies }t_\node<t_{\node'},\ \node<P(\flower)\\\text{ implies }t_\node<t_\flower\text{ and }P(\flower)<\node\text{ implies }t_\flower<t_\node\Big\}.
\end{gathered}
\end{equation}
\end{lemma}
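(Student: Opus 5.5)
The plan is to reduce the claim to the analogous computation already carried out in \cref{derivative moving} for $I_A(t)$. The integration domain is split according to the compatible total orders, the derivative is taken of each simplex integral, and only the orders in which the root comes last survive. Here $t_\flower$ is a fixed parameter attached to $P(\flower)$, so the nodes still integrated over are $\Node(T)\setminus\{P(\flower)\}$. Note that if $\root=P(\flower)$ the statement degenerates (no $t$-dependence through $\root$), so I assume $\root\neq P(\flower)$, i.e. the stem has length at least $2$, as is implicit in the statement.

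First, I decompose $I_T(t,t_\flower)$ up to a null set $\mathcal Z$, exactly as in \cref{separation of integration domains}. I would use orders $\rho\in\M(\Node(T))$ and then remove $P(\flower)$, replacing its time by the fixed value $t_\flower$. This gives
\begin{equation*}
\int_{I_T(t,t_\flower)}\prod_{\node\neq P(\flower)}f_\node(t_\node)\rmd t_\node=\sum_{\rho}\int_{0\le t_{\rho(1)}<\cdots<t_\flower<\cdots<t_{\rho(n)}\le t}\prod_{i:\rho(i)\neq P(\flower)}f_{\rho(i)}(t_{\rho(i)})\rmd t_{\rho(i)}.
\end{equation*}
Each summand factorizes, as shown just before \cref{resolvent identity applied two times}, into an integral over $I^{<_\rho}(t_\flower)$ and one over $I^{>_\rho}(t,t_\flower)$. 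Only the second factor depends on $t$, and since $\root\neq P(\flower)$ is the maximal element for any compatible order, $\rho(n)=\root\in\Node^{>_\rho}(T)$.

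Second, I differentiate the ordered simplex $\{t_\flower\le t_{\rho(j+1)}<\cdots<t_{\rho(n)}\le t\}$ in $t$ by the fundamental theorem of calculus applied to the outermost variable $t_{\rho(n)}=t_\root$. This produces $f_\root(t)$ times the integral over the remaining ordered simplex with upper limit $t$, and no boundary terms appear at the lower limits because those do not depend on $t$. Differentiating term by term in the finite sum is justified because each term is an iterated integral of bounded continuous functions, given that $f_\node(t)=e^{\rmi\epsilon^{-1}\iota_\node\Omega_\node t}$ in applications; for general continuous $f_\node$ the same holds.

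Third, I resum over $\rho$. The restriction map $\rho\mapsto\rho|_{\Node(T)\setminus\{\root\}}$ is a bijection from $\M(\Node(T))$ onto the compatible total orders of $\Node(T)\setminus\{\root\}$, because $\root$ is forced to be last. The union of the resulting ordered simplices, again up to a null set, is precisely $I_T'(t,t_\flower)$: every constraint of $I_T(t,t_\flower)$ not involving $\root$ is retained, and the constraints $\node<\root\Rightarrow t_\node<t$ are automatic since all times lie in $[0,t]$. This yields the claimed identity.

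The main obstacle is bookkeeping rather than analysis. One has to check that the sets in the decomposition really partition $I_T(t,t_\flower)$ and $I'_T(t,t_\flower)$ up to measure zero, in particular that the placement of $t_\flower$ among the other times is consistent with the constraints $\node<P(\flower)\Rightarrow t_\node<t_\flower$. If that becomes messy, I would instead argue directly by Fubini: write the integral as $\int_{t_\flower}^t f_\root(s)\,\Phi(s)\,\rmd s$, where $\Phi(s)$ is the integral over $I'_T(s,t_\flower)$. By the monotonicity constraints, all other times are at most $t_\root=s$, so the upper limit $t$ enters only through $t_\root$, and differentiation gives $f_\root(t)\Phi(t)$.
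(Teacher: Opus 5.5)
Your proposal is correct and follows the paper's proof. Both decompose $I_T(t,t_\flower)$ into ordered simplices indexed by compatible total orders, factor each simplex at $t_\flower$, differentiate in the outermost variable $t_\root$, and resum over the restricted orders. Your standing assumption $\root\neq P(\flower)$ (stem of length at least $2$) is the same observation the paper makes at the end to pass from the orders with $\rho^{-1}(P(\flower))\le n(T)-1$ to all orders. Your Fubini fallback, writing the integral as $\int_{t_\flower}^t f_\root(s)\Phi(s)\,\rmd s$, is also valid and removes the order bookkeeping entirely.
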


\begin{proof}
First rewrite
\begin{equation}
\begin{gathered}
    \int_{I_T(t,t_\flower)}\prod_{\node\in\Node(T)\setminus\{P(\flower)\}}f_\node(t_\node)\rmd t_\node=\sum_{\substack{\rho\in\M(\Node(T))\\\rho^{-1}\l(P(\flower)\r)\leq n(T)-1}}\int_{I_T^{<_\rho}(t_\flower)}\prod_{\node\in\Node^{<_\rho}(T)}f_\node(t_\node)\rmd t_\node\\\cdot\int_{t_\flower\leq t_{\rho(\rho^{-1}(P(\flower))+1)}<\cdots<t_{\rho(n(T))}\leq t}\prod_{i=\rho(\rho^{-1}(P(\flower))+1)}^{n(T)}f_{\rho(i)}(t_{\rho(i)})\rmd t_{\rho(i)}\\
    +\sum_{\substack{\rho\in\M(\Node(T))\\\rho^{-1}(P(\flower))=n(T)}}\int_{0\leq t_{\rho(1)}<\cdots<t_{\rho(n(T)-1)}\leq t_\flower}\prod_{i=1}^{n(T)-1}f_{\rho(i)}(t_{\rho(i)})\rmd t_{\rho(i)}.
\end{gathered}
\end{equation}
since $I_T^\rho(t,t_\flower)=I_T^{<_\rho}(t_\flower)\times I_T^{>_\rho}(t,t_\flower)$.

\begin{equation}
    \begin{gathered}
    \p_t\l(\int_{I_T(t,t_\flower)}\prod_{\node\in\Node(T)\setminus\{P(\flower)\}}f_\node(t_\node)\rmd t_\node\r)=\sum_{\substack{\rho\in\M(\Node(T))\\\rho^{-1}\l(P(\flower)\r)\leq n(T)-1}}\int_{I_T^{<_\rho}(t_\flower)}\prod_{\node\in\Node^{<_\rho}(T)}f_\node(t_\node)\rmd t_\node \\\cdot f_{\root}(t)\int_{t_\flower\leq t_{\rho(\rho^{-1}(P(\flower))+1)}<\cdots<t_{\rho(n(T)-1)}\leq t}\prod_{\node\in\Node^{>_\rho}(T)\setminus\{\root\}}f_{\node}(t_\node)\rmd t_\node\\=f_\root(t)\sum_{\substack{\rho\in\M(\Node(T))\\\rho^{-1}(P(\flower))\leq n(T)-1}}\int_{\l(I_T^\rho\r)'(t,t_\flower)}\prod_{\node\in\Node(T)\setminus\{\root,P(\flower)\}}f_\node(t_\node)\rmd t_\node \\=f_\root(t)\sum_{\substack{\rho\in\M(\Node(T))}}\int_{\l(I_T^\rho\r)'(t,t_\flower)}\prod_{\node\in\Node(T)\setminus\{\root,P(\flower)\}}f_\node(t_\node)\rmd t_\node\\= f_\root(t)\int_{I_T'(t,t_\flower)}\prod_{\node\in\Node(T)\setminus\{\root,P(\flower)\}}f_\node(t_\node)\rmd t_\node,
\end{gathered}
\end{equation}
where
\begin{equation}
\begin{gathered}
    \l(I_T^\rho\r)'(t,t_\flower)\coloneqq \Big\{\l(t_\node\r)_{\node\in\Node(T)\setminus\{\root,P(\flower)\}}\in[0,t]^{n(T)-1},\ 0\leq t_{\rho(1)}<\cdots<t_{\rho(\rho^{-1}(P(\flower))-1)}\\\leq t_\flower\leq t_{\rho(\rho^{-1}(P(\flower))+1)}<\cdots<t_{\rho(n(T)-1)}\leq t\Big\}
\end{gathered}
\end{equation}
and we used $\Node^{<_\rho}(T)\sqcup\Node^{>_\rho}(T)=\Node(T)\setminus\{P(\flower)\}$. Note that if $\abs{\stem(T)}=1$, then $P(\flower)=\root$ and if $\abs{\stem(T)}\geq2$, then $P(\flower)<\root$ and in particular $\rho^{-1}(P(\flower))\leq n(T)-1$ for all $\rho\in\M(\Node(T))$.
\end{proof}

Using these formulae for $f_\node(t_\node)\coloneqq e^{\rmi\epsilon^{-1}\iota_\node\Omega_\node t_\node}$, we obtain 

\begin{gather}
    {\p_t\mathcal Y_{m,n}^{\eta,\iota}}  =\sum_{T\in\l(\T_{m,n}^{\iota,\eta}\r)^\flower}{\p_t\mathcal G_T}
\end{gather}
with 
\begin{equation}\label{the derivative on flower trees}
    \begin{gathered}
    {\p_t\mathcal G_T}\l(\epsilon^{-1}t,\epsilon^{-1}t_\flower,k,k_\flower\r) = \frac{(-\rmi)^{n(T)}}{L^{d(2n(T)-1)}}\prod_{\node\in\Node(T)}\iota_\node\sum_{\kappa\in\D_{k,k_\flower}(T)}e^{\rmi\epsilon^{-1}\iota_{P(\flower)}\Omega_{P(\flower)}t_\flower}e^{\rmi\epsilon^{-1}\iota\Omega_\root t}\\\cdot\prod_{\node\in\Node(T)}Q_\node^T(\kappa)\int_{I_T'(t,t_\flower)}\prod_{\node\in\Node(T)\setminus\{\root,P(\flower)\}}e^{\rmi\iota\epsilon^{-1}\Omega_\node t_\node}\rmd t_\node\prod_{\leaf\in\Leaf(T)\setminus\{\flower\}}\mu_{\kappa(\leaf)}^{\eta_\leaf,\iota_\leaf}
\end{gathered}
\end{equation}
One may repeat the analysis as before and obtain the following result. 
\begin{proposition}
We may decompose
\begin{equation}
\begin{gathered}
    \E\l({\p_t\mathcal Y_{m_1,n_1}^{\eta,\iota,\eta_\flower,\iota_\flower}\l(\epsilon^{-1}t,\epsilon^{-1}t_\flower,k,k_\flower\r)}{\p_t\mathcal Y_{m_2,n_2}^{\eta',\iota',\eta'_\flower,\iota'_\flower}\l(\epsilon^{-1}t,\epsilon^{-1}t_\flower,-k,-k_\flower\r)}\r)\\=\sum_{C\in\tilde{\mathcal C}^{\eta,\eta',\iota,\iota'}_{m_1,n_1,m_2,n_2}\l(\iota_\flower,\eta_\flower\r)}\mathcal G_C'\l(\epsilon^{-1}t,\epsilon^{-1}t_\flower,k,k_\flower\r),
\end{gathered}
\end{equation}
where 
\begin{equation}
\begin{gathered}
    \mathcal G_C'\l(\epsilon^{-1}t,\epsilon^{-1}t_\flower,k,k_\flower\r)\coloneqq\frac{(-\rmi)^{n(C)}}{L^{2d(n(C)-1)}}\prod_{\node\in\Node(C)}\iota_\node\sum_{\kappa\in\D_{k,k_\flower}(C)}e^{\rmi\epsilon^{-1}\l(\iota_{P(\flower_0)}\Omega_{P(\flower_0)}+\iota_{P(\flower_1)}\Omega_{P(\flower_1)}\r)t_\flower}\\\cdot e^{\rmi\epsilon^{-1}\l(\iota\Omega_\root+\iota'\Omega_{\root'}\r)t}\prod_{\node\in\Node(C)} Q_\node\int_{I_C'(t,t_\flower)}\prod_{\node\in\Node(C)\setminus\{\root,\root',P(\flower_0),P(\flower_1)\}}e^{\rmi\epsilon^{-1}\iota_\node\Omega_\node t_\node}\rmd t_\node\\\cdot\prod_{\leaf\in\Leaf(C)_+\setminus\{\flower_0,\flower_1\}}M^{\eta_\leaf,\eta_{\sigma(\leaf)}}(\kappa(\leaf))^{\iota_\leaf},
\end{gathered}
\end{equation}
where 
\begin{equation}
    I_C'(t,t_\flower)\coloneqq I_{T_0}'(t,t_\flower)\times I_{T_1}'(t,t_\flower)
\end{equation}
for $C=\l(T_0,T_1,\sigma\r)\in\tilde{\mathcal C}_{m_1,n_1,m_2,n_2}^{\eta,\eta',\iota,\iota'}$
\end{proposition}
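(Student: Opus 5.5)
The plan is to redo the proof of \cref{decomposition on flower trees} with $\mathcal G_T$ replaced by $\p_t\mathcal G_T$, using the explicit formula \eqref{the derivative on flower trees}. First, expand the expectation of the product bilinearly:
\begin{equation}
\E\l(\p_t\mathcal Y_{m_1,n_1}\,\p_t\mathcal Y_{m_2,n_2}\r)=\sum_{T_0,T_1}\E\l(\p_t\mathcal G_{T_0}\l(\cdot,k,k_\flower\r)\,\p_t\mathcal G_{T_1}\l(\cdot,-k,-k_\flower\r)\r).
\end{equation}
Here $T_0,T_1$ range over the flower trees in $\l(\T_{m_1,n_1}^{\iota,\eta,\iota_\flower,\eta_\flower}\r)^\flower$ and $\l(\T_{m_2,n_2}^{\iota',\eta',\iota'_\flower,\eta'_\flower}\r)^\flower$. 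Substituting \eqref{the derivative on flower trees} for each factor, every deterministic quantity can be pulled out of the expectation. These are the prefactors $(-\rmi)^{n(T_i)}L^{-d(2n(T_i)-1)}$, the sign products, the phases $e^{\rmi\epsilon^{-1}\iota_{P(\flower_i)}\Omega_{P(\flower_i)}t_\flower}$ and $e^{\rmi\epsilon^{-1}\iota_{\root_i}\Omega_{\root_i}t}$, the weights $Q_\node$, and the time integrals over $I'_{T_i}(t,t_\flower)$. All of these depend only on the decorations $\kappa_0\in\D_{k,k_\flower}(T_0)$ and $\kappa_1\in\D_{-k,-k_\flower}(T_1)$. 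What remains inside the expectation is $\prod_{\leaf\in(\Leaf(T_0)\sqcup\Leaf(T_1))\setminus\{\flower_0,\flower_1\}}\mu^{\eta_\leaf,\iota_\leaf}_{\kappa(\leaf)}$.

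Next, apply Isserlis' theorem (\cref{Isserlis}) to this Gaussian product, using the covariance structure \eqref{random initial data}, exactly as in the proof for ordinary couples. A pairing contributes only if every pair has opposite signs and opposite wave numbers, and then it produces $\prod_{\leaf\in\Leaf_+\setminus\{\flower_0,\flower_1\}}M^{\eta_\leaf,\eta_{\sigma(\leaf)}}(\kappa(\leaf))^{\iota_\leaf}$. The flowers carry no Gaussian variable. Setting $\sigma(\flower_0)=\flower_1$ is consistent with the constraints $\kappa(\flower_0)=k_\flower=-\kappa(\flower_1)$, and the signs are opposite by the remark on flower couples. So every admissible pairing of the non-flower leaves extends uniquely to a coupling map $\sigma$, and $(T_0,T_1,\sigma)$ ranges exactly over $\tilde{\mathcal C}^{\eta,\eta',\iota,\iota'}_{m_1,n_1,m_2,n_2}(\iota_\flower,\eta_\flower)$. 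The pairs $(\kappa_0,\kappa_1)$ satisfying $\kappa(\leaf)+\kappa(\sigma(\leaf))=0$ are identified with $\D_{k,k_\flower}(C)$ by the same canonical bijection as before.

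Finally, collect the factors. The two root phases combine into $e^{\rmi\epsilon^{-1}(\iota\Omega_\root+\iota'\Omega_{\root'})t}$ and the two flower-parent phases combine into the stated $t_\flower$-phase. The product of the integrals over $I'_{T_0}(t,t_\flower)$ and $I'_{T_1}(t,t_\flower)$ is, by Fubini, the integral over $I'_C(t,t_\flower)$ of the product over $\Node(C)\setminus\{\root,\root',P(\flower_0),P(\flower_1)\}$. For the powers of $L$, note that $L^{-d(2n(T_0)-1)}L^{-d(2n(T_1)-1)}=L^{-2d(n(C)-1)}$. This yields $\mathcal G'_C$ summed over flower couples.

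The only delicate point, and the step I expect to be the main obstacle, is bookkeeping in degenerate cases. If $\abs{\stem(T_i)}=1$, then $P(\flower_i)=\root_i$, so the same node would appear in both the $t$-phase and the $t_\flower$-phase, and it must not be removed twice from the integration set. I would handle this with the convention already used in the time-derivative lemma: the set $\{\root,P(\flower)\}$ is understood as a set, so in this case only one phase factor is kept, and one checks that the formula is then interpreted consistently. Apart from this, the argument is a direct repetition of the proof of \cref{decomposition on flower trees}.
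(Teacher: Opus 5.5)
Your main argument is the paper's own proof: expand bilinearly over pairs of flower trees, insert \eqref{the derivative on flower trees}, apply Isserlis to the non-flower leaves, adjoin $\sigma(\flower_0)=\flower_1$, identify $(\kappa_0,\kappa_1)$ with $\D_{k,k_\flower}(C)$, and collect phases, powers of $L$ and time domains. That part is fine.

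One small correction concerns the opposite flower signs. These do not follow from the remark on flower couples. They follow from the count $\sum_{\leaf\in\Leaf(T)}\iota_\leaf=\iota_{\root_T}$, which holds because each node of sign $\iota$ has children of signs $\iota,-\iota,-\iota,\iota,\iota$. For $\iota'=-\iota$, this count forces $\iota_{\flower_0}=-\iota_{\flower_1}$ whenever the non-flower leaves admit a sign-compatible pairing. Otherwise you must take $\iota'_\flower=-\iota_\flower$ as an implicit hypothesis.

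The genuine problem is the degenerate case that you single out, which you resolve incorrectly. Suppose $\abs{\stem(T_i)}=1$, so that $P(\flower_i)=\root_i$. Every other node of $T_i$ lies below the root, so the domain $I_{T_i}(t,t_\flower)$ forces $t_\node<t_\flower$ for all remaining nodes. For $t\geq t_\flower$ this domain does not depend on $t$ at all. Hence $\mathcal G_{T_i}$ is constant in $t$ and $\p_t\mathcal G_{T_i}=0$.

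In the proof of the time-derivative lemma, this is exactly the case where the sum over $\rho$ with $\rho^{-1}(P(\flower))\leq n(T)-1$ is empty. The final identity there, with $I_T'(t,t_\flower)$ of dimension $n(T)-2$ and the factor $f_\root(t)$, is only valid when $\abs{\stem(T)}\geq 2$, as the paper's closing remark in that proof indicates.

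So flower couples with a stem of length one in either tree contribute zero. For them $\mathcal G'_C$ must be read as $0$; equivalently, restrict the sum to couples whose two stems both have length at least $2$. Treating $\{\root,P(\flower)\}$ as a set and keeping ``only one phase factor'' would give a generically non-zero term. That would make the claimed identity false for these couples. With this correction, your argument goes through as in the paper.
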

\begin{proof}
    Using \cref{the derivative on flower trees}, one may repeat the proof of \cref{the derivative on flower trees} and obtain the result. 
\end{proof}
Finally, we find 
\begin{proposition}\label{estimate on the time derivative of the kernel}
Let $C\in\tilde C_{m_1,n_1,m_2,n_2}^{\eta,\eta',\iota,\iota'}\l(\iota_\flower,\eta_\flower\r)$. There exist $\Lambda,\mathcal K,\alpha>0$ such that for all $n(C)\leq\abs{\ln\epsilon}$, 
\begin{equation}\label{what has to be calculated now}
    \sup_{0\leq t_\flower\leq t\leq\delta}\abs{{\p_t\mathcal G_C}\l(\epsilon^{-1}t,\epsilon^{-1}t_\flower,k,k_\flower\r)}\leq\Lambda\l(\Lambda\delta\r)^{n(C)}\epsilon^{\alpha\l(n(C)-n_{\mathrm{res}}(C)\r)}\frac{L^{\mathcal K}}{n(C)}
\end{equation}
for all $k,k_\flower\in\Z_L^d$.
\end{proposition}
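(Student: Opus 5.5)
We mirror the proof of \cref{estimates on the kernels}, which in turn follows \cref{key proposition}. The aim is the same bound for $\p_t\mathcal G_C$; the root integrations are replaced by unimodular phase factors.

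\textbf{Step 1: order decomposition.} Starting from the representation of $\mathcal G_C'$ in the preceding proposition, we decompose $I_C'(t,t_\flower)=I_{T_0}'(t,t_\flower)\times I_{T_1}'(t,t_\flower)$ according to total orders $\rho_i\in\M(\Node(T_i))$. The roots are not integrated, so for each $\rho_i$ the domain splits into a part below $t_\flower$ (nodes in $\Node^{<_{\rho_i}}(T_i)$) and a part between $t_\flower$ and $t$ (nodes in $\Node^{>_{\rho_i}}(T_i)\setminus\{\root_i\}$). As in the derivation of \cref{resolvent identity applied two times}, we shift the upper block by $t_\flower$ and apply \cref{resolvent identity} once to each block, with $\nu$ equal to the respective block cardinality. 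This introduces at most four auxiliary frequencies $\xi_1,\dots,\xi_4$. The prefactors $e^{\rmi\epsilon^{-1}(\iota\Omega_\root+\iota'\Omega_{\root'})t}$ and $e^{\rmi\epsilon^{-1}(\iota_{P(\flower_0)}\Omega_{P(\flower_0)}+\iota_{P(\flower_1)}\Omega_{P(\flower_1)})t_\flower}$ have modulus one and are bounded trivially. We then arrive at an analogue of \cref{all integrals to be calculated}: a four-fold integral over $\xi$ of a sum over $\D_{k,k_\flower}(C)$ (supported in $B_R$) of products of factors $\abs{Q_\node}/\abs{n^{\gtrless}-\rmi(\xi_j+\epsilon^{-1}\omega^\rho_\node)}$. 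The two root nodes now carry only $\abs{Q_\root}$ and no resolvent factor.

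\textbf{Step 2: summation over decorations.} As in \cref{estimates on the kernels}, we construct $<''$ on $(\Node(C)\sqcup\Leaf(C))\setminus\{\flower_0,\flower_1\}$ and apply \cref{change of variables}, so that $\abs{\Node_{<''}(C)}=2n(C)$ and $\sum_i ip_i=2n(C)-1$. We use Hölder in the decoration sum as in \cref{moinsoni} and sum node by node in the $<''$ order using \cref{the summation cases}. Each nonresonant node of degree $\ge1$ (outside $\tilde{\mathcal P}_1$) yields $\epsilon^{\alpha_0}$ per summed variable. A node without a resolvent factor, namely a root or a degree-$0$ node, is handled like the resonant case, via \cref{the assumption i have to use}, which gives $L^{j d}$ with no gain. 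The counting argument of \cref{key proposition} still yields a total exponent of at least $(n-n_{\mathrm{res}})-\Lambda$. Here we lose at most the two roots, and this loss of a bounded number of $\epsilon^{\alpha_0}$ factors is absorbed by adjusting $\Lambda$. The powers of $L$ cancel against $L^{-2d(n(C)-1)}$, up to the $L^{d}$ excess that is absorbed into $L^{\mathcal K}$.

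\textbf{Step 3: $\xi$-integrals and counting.} For each $\xi_j$ we split into $\{\abs{\xi_j}\le L^{\mathcal K}\}$ and its complement. On the bounded part the $\xi$-integral contributes a factor $L^{\mathcal K}$ together with the factors $(\delta/n)^{n}$. On the tail we use $\epsilon^{-1}\abs{\omega_\node}\le\Lambda L^{1/\beta}\log(L)^3$ exactly as in \cref{key proposition}, which gives arbitrarily high negative powers of $L$. We then sum over $\rho_1,\rho_2$ using $\abs{\M(\Node(T_0))}\abs{\M(\Node(T_1))}\le n(C)!$ and cancel against $n^{-n}$, which gives the claimed bound $\Lambda(\Lambda\delta)^{n(C)}\epsilon^{\alpha(n(C)-n_{\mathrm{res}}(C))}L^{\mathcal K}/n(C)$.

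The main obstacle is Step 2. Removing the resolvent factors at the roots, together with removing the flower edges, changes the degree bookkeeping, and we must check that the gain $\epsilon^{\alpha(n-n_{\mathrm{res}})}$ survives. To control this, we choose the order $<''$ so that the roots come last among the nodes, so they only affect the final summations, and we verify that the root removal costs at most a constant number of $\epsilon^{\alpha_0}$ factors.
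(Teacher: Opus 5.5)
Your proposal takes essentially the paper's route: the paper proves this statement only by declaring it completely analogous to \cref{estimates on the kernels}, i.e.\ order decomposition, the resolvent identity on each time block, splitting each $\xi_j$ at $L^{\mathcal K}$, summation via \cref{change of variables} and \cref{the summation cases}, and $\abs{\M(\Node(T_0))}\abs{\M(\Node(T_1))}\leq n(C)!$. Your explicit bookkeeping for the nodes without a resolvent factor (the two roots, and equally $P(\flower_0),P(\flower_1)$, which you treat only implicitly) spells out what the paper leaves unsaid.

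Two small corrections are needed:
\begin{itemize}
\item A bounded loss $\epsilon^{-c\alpha_0}=L^{c\alpha_0/\beta}$ is not a constant, so it must be absorbed into $L^{\mathcal K}$ (or into a smaller $\alpha$ when $n(C)-n_{\mathrm{res}}(C)$ is large), not into $\Lambda$.
\item The $\delta$-factors you use in Step~3 require $\nu$ equal to the block cardinality divided by $\delta$, which is admissible since each block has time length at most $\delta$. Taking $\nu$ equal to the cardinality itself gives no $\delta$-factors.
\end{itemize}
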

\begin{proof}
    The proof is completely analogous to that of \cref{estimates on the kernels}.
\end{proof}

\subsection{A norm estimate on $\mathcal Y_{m,n}^{\eta,\iota}$}
We may now combine the results \cref{estimates on the kernels,estimate on the time derivative of the kernel} into

\begin{corollary}\label{norm estimate on y}
There exists $\Lambda,\mathcal K>0$ such that for any fixed $A>0$, with porbability $\geq1-L^{-A}$ and for all $n\leq\abs{\ln\epsilon}$, we have 

    \begin{equation}
    \begin{gathered}
        \norm{{\mathcal Y_{m,n}^{\eta,\iota,\eta_\flower,\iota_\flower}}}_{L_t^\infty L_{t_\flower}^1L_{k,k_\flower}^\infty}\leq \Lambda(\Lambda\delta)^{\frac{n(C)}{2}}{L^{{\mathcal K}+\frac{4}{\beta}+{d}+2A}}
    \end{gathered}
    \end{equation}
\end{corollary}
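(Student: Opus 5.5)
The plan is to imitate the proof of \cref{The estimate the whole world was looking for}: first bound second moments of the kernel using the flower-couple decomposition, then gain a pointwise-in-time bound from a Sobolev-in-time (time-derivative) estimate, pass from a bound at fixed $(k,k_\flower)$ to a uniform one using compact support, and finally apply Markov's inequality.

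\textbf{Step 1: second moments at fixed arguments.} Fix $t_\flower\le t\le\delta$ and $k,k_\flower\in\Z_L^d$. By \cref{decomposition on flower trees}, with $m_1=m_2=m$, $n_1=n_2=n$ and the second factor taken as the complex conjugate,
\begin{equation*}
\E\l(\abs{\mathcal Y_{m,n}^{\eta,\iota,\eta_\flower,\iota_\flower}\l(\epsilon^{-1}t,\epsilon^{-1}t_\flower,k,k_\flower\r)}^2\r)
\end{equation*}
is a sum of $\mathcal G_C$ over flower couples $C$. Each $\mathcal G_C$ is controlled by \cref{estimates on the kernels}. The number of such couples is handled as in \cref{cardinality estimate on the number of couples with q resonant nodes}: group them by $q=n_{\mathrm{res}}(C)$, so there are at most $\Lambda^{n(C)}(2(n(C)-q)+2)!$ of them. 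This factorial is absorbed by $\epsilon^{\alpha(n(C)-q)}$ exactly as at the end of the proof of \cref{key proposition}, using $(4m')^{4m'}\le\Lambda^{m'}\abs{\ln\epsilon}^{4m'}$. The outcome is
\begin{equation*}
\E\abs{\mathcal Y}^2\le\Lambda(\Lambda\delta)^{n(C)}L^{\mathcal K}.
\end{equation*}
The derivative $\p_t\mathcal Y$ satisfies the same bound, by \cref{estimate on the time derivative of the kernel} together with the decomposition of $\E(\p_t\mathcal Y\,\overline{\p_t\mathcal Y})$.

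\textbf{Step 2: from fixed arguments to the mixed norm.} The leaf momenta are supported in $B_R(0)$, so $\mathcal Y$ vanishes unless $k,k_\flower\in B_{(2n+1)R}(0)$. This replaces $\sup_{k,k_\flower}$ by a sum over at most $\Lambda n^{2d}L^{2d}$ lattice points; the polynomial loss in $n$ is absorbed into $\Lambda^{n}$. The $L^1_{t_\flower}$ norm over $[0,\epsilon^{-1}\delta]$ is bounded by Cauchy--Schwarz, at the cost of a factor $\epsilon^{-1}\delta$ in the squared norm. For the $L^\infty_t$ norm, I use the one-dimensional embedding $H^1([0,\epsilon^{-1}\delta])\hookrightarrow L^\infty$, as in \cref{The estimate the whole world was looking for}, which costs another factor $\epsilon^{-1}\delta$ (including the integration in $t$). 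Together these produce $L^{2/\beta}$ from each time variable, hence $L^{4/\beta}$ in the squared norm. Since $\mathcal Y$ depends on $t$ and $t_\flower$ separately, I apply the $H^1_t$ embedding pointwise in $t_\flower$ and then integrate in $t_\flower$. The resulting bound is
\begin{equation*}
\E\norm{\mathcal Y}^2_{L^\infty_tL^1_{t_\flower}L^\infty_{k,k_\flower}}\le\Lambda(\Lambda\delta)^{n(C)}L^{\mathcal K+\frac{4}{\beta}+2d}.
\end{equation*}
Enlarging $\mathcal K$ to absorb the extra $L^{d}$ gives the stated powers.

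\textbf{Step 3: probability.} Let $a=\Lambda(\Lambda\delta)^{n(C)}L^{2\mathcal K+\frac{8}{\beta}+2d+4A}$ be the threshold for the squared norm. Markov's inequality gives probability at most $L^{-2\mathcal K-\frac{4}{\beta}-d-4A}$ that the squared norm exceeds $a$, and this is certainly at most $L^{-A}$. Taking square roots yields the factor $(\Lambda\delta)^{n(C)/2}$ and the exponent $\mathcal K+\frac{4}{\beta}+d+2A$ claimed in \cref{norm estimate on y}. Uniformity over $n\le\abs{\ln\epsilon}$, the finitely many signs and colours, and $4m\le n\le 4m(4N+1)$ follows from a union bound. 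This only costs a polylogarithmic factor in $L$, absorbed by enlarging $A$ slightly.

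\textbf{Main obstacle.} The delicate part is Step 1. The summation over flower couples with a fixed flower pairing must still satisfy the counting of \cref{cardinality estimate on the number of couples with q resonant nodes}. Moreover, the spanning-tree basis now has only $2n(C)$ elements, since the flowers are excluded, and the $\epsilon^{\alpha(n-n_{\mathrm{res}})}$ gain must survive this. I would check that removing the paired flowers only fixes one positive leaf, so the factorial bound shifts by at most a constant factor per couple.
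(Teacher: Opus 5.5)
\textbf{Gap in Step 2.} Your Step 2 relies on a false support claim. The momentum $k_\flower$ is the argument of $\widehat{v}$, not of a Gaussian $\mu$. In $\mathcal G_C$ the product of $M$'s runs only over $\Leaf(C)_+\setminus\{\flower_0,\flower_1\}$, so nothing confines $k_\flower$, and $v$ has no reason to be Fourier-localised. Only $k-k_\flower$, a signed sum of the non-flower leaf momenta, lies in $B_{nR}(0)$. This is exactly the localisation the paper uses in the proof of \cref{norm estimates on operator}.

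It follows that $\mathcal Y(t,t_\flower,k,k_\flower)$ is generally nonzero for every $k_\flower\in\Z_L^d$. The supremum over $(k,k_\flower)$ therefore ranges over an infinite set. The second-moment bound from \cref{estimates on the kernels} is uniform in $k_\flower$ but does not decay: along the stem one child always carries momentum $k_\flower+O(nR)$. The hypotheses $Q^\eta\in W^{1,1}\cap W^{1,\infty}$ and \cref{the assumption i have to use} give no summable decay of $Q_\node$ in that variable. So replacing $\sup_{k,k_\flower}$ by a sum over $\Lambda n^{2d}L^{2d}$ lattice points is unjustified. The Markov/union-bound step then gives no high-probability bound that is uniform in $k_\flower$.

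\textbf{The paper does not close this.} Its proof makes the same move: it bounds $\sup_{k,k_\flower}$ by $\sum_{k,k_\flower\in\Z_L^d}$ and counts that sum as $(nL)^d$ terms, which accounts only for the localisation of $k-k_\flower$. So you cannot repair the step by following the paper. You need an additional input controlling the unlocalised variable. One option is a quantitative decay hypothesis on $Q^\eta$ in each argument. That would make $\E\abs{\mathcal Y}^2$ decay like $\langle k_\flower\rangle^{-2D}$ with $2D>d$, turning the union bound into a convergent weighted sum at the cost of an extra $L^{d}$.

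\textbf{Comparison with the paper's route.} Apart from this point, your route differs from the paper's only harmlessly. The paper lifts the second-moment bounds to $p$-th moments by hypercontractivity, uses H\"older from $L^1_{t_\flower}$ to $L^p_{t_\flower}$, and applies Gagliardo--Nirenberg in $t$. You stay at $p=2$ with $H^1_t\hookrightarrow L^\infty_t$, Cauchy--Schwarz and Chebyshev. Since the claimed bound already concedes a factor $L^{2A}$, second moments suffice here. Your grouping of flower couples by $n_{\mathrm{res}}$ in Step 1 also makes explicit a counting step the paper leaves implicit.
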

\begin{proof}
Combining \cref{estimates on the kernels,estimate on the time derivative of the kernel}, we first find 
that there exist $\Lambda,\mathcal K$ such that for all $n(C)\leq\abs{\ln\epsilon}$, we have  
    \begin{equation}
    \begin{gathered}
        \sup_{0\leq t_\flower\leq t\leq\lambda}\l(\E\l(\abs{{\mathcal Y_{m,n}^{\eta,\iota,\eta_\flower,\iota_\flower}}\l(\epsilon^{-1}t,\epsilon^{-1}t_\flower k,k_\flower\r)}^2\r)+\E\l(\abs{{\p_t\mathcal Y_{m,n}^{\eta,\iota,\eta_\flower,\iota_\flower}}\l(\epsilon^{-1}t,\epsilon^{-1}t_\flower k,k_\flower\r)}^2\r)\r)\\\leq\Lambda(\Lambda\delta)^{n(C)}\frac{L^{\mathcal K}}{n(C)}
    \end{gathered}
    \end{equation}
    for all $k_\flower,k\in\Z_L^d$, and with Gaussian hypercontractivity (\cref{hypercontractivity}) and any $p\geq2$ there exists $c(p)>0$ such that 
\begin{equation}
    \begin{gathered}
    \E\l(\abs{{\mathcal Y_{m,n}^{\eta,\iota,\eta_\flower,\iota_\flower}}\l(\epsilon^{-1}t,\epsilon^{-1}t_\flower k,k_\flower\r)}^p\r)+\E\l(\abs{{\p_t\mathcal Y_{m,n}^{\eta,\iota,\eta_\flower,\iota_\flower}}\l(\epsilon^{-1}t,\epsilon^{-1}t_\flower k,k_\flower\r)}^p\r)\\\leq c(p)^{pn}\l[\l(\E\l(\abs{{\mathcal Y_{m,n}^{\eta,\iota,\eta_\flower,\iota_\flower}}\l(\epsilon^{-1}t,\epsilon^{-1}t_\flower k,k_\flower\r)}^2\r)\r)^{p/2}+\l(\E\l(\abs{{\p_t\mathcal Y_{m,n}^{\eta,\iota,\eta_\flower,\iota_\flower}}\l(\epsilon^{-1}t,\epsilon^{-1}t_\flower k,k_\flower\r)}^2\r)\r)^{p/2}\r]\\\leq c(p)^{pn}\Lambda^{\frac{p}{2}}(\Lambda\delta)^{\frac{pn(C)}{2}}\frac{L^{\frac{p\mathcal K}{2}}}{n(C)^{\frac{p}{2}}}
\end{gathered}
\end{equation}
By integrating over the time interval $[0,\delta\epsilon^{-1}]$ we find

\begin{equation}
\begin{gathered}
    \E\l(\norm{{\mathcal Y_{m,n}^{\eta,\iota,\eta_\flower,\iota_\flower}}(\cdot,\cdot,k,k_\flower)}^p_{L_{t,t_\flower}^p}\r)+\E\l(\norm{{\p_t\mathcal Y_{m,n}^{\eta,\iota,\eta_\flower,\iota_\flower}}(\cdot,\cdot,k,k_\flower)}^p_{L_{t,t_\flower}^p}\r)\\\leq c(p)^{pn}\Lambda^{\frac{p}{2}}(\Lambda\delta)^{\frac{pn(C)}{2}}\delta^2\frac{L^{\frac{p\mathcal K}{2}+\frac{2}{\beta}}}{n(C)^{\frac{p}{2}}}
\end{gathered}
\end{equation}
We estimate

\begin{equation}
    \begin{gathered}
    \E\l(\sup_{t\in[0,\delta\epsilon^{-1}]}\sup_{k,k_\flower\in\Z_L^d}\norm{{\mathcal Y_{m,n}^{\eta,\iota,\eta_\flower,\iota_\flower}}(t,\cdot,k,k_\flower)}_{L_{t_\flower}^1}^p\r)\leq\Lambda\epsilon^{1-p}\E\l(\sup_{t\in[0,\delta\epsilon^{-1}]}\sup_{k,k_\flower\in\Z_L^d}\norm{{\mathcal Y_{m,n}^{\eta,\iota,\eta_\flower,\iota_\flower}}(t,\cdot,k,k_\flower)}_{L_{t_\flower}^p}^p\r)\\
    \leq\epsilon^{1-p}\sum_{k,k_\flower\in\Z_L^d}\E\l(\sup_{t\in\l[0,\delta\epsilon^{-1}\r]}\norm{{\mathcal Y_{m,n}^{\eta,\iota,\eta_\flower,\iota_\flower}}\l(t,\cdot,k,k_\flower\r)}_{L_{t_\flower}^p}^p\r)
\end{gathered}
\end{equation}
We now use the Gagliardo-Nirenberg inequality and Hölder's inequality to obtain 
\begin{equation}
    \begin{gathered}
        \E\l(\sup_{t\in[0,\delta\epsilon^{-1}]}\sup_{k,k_\flower\in\Z_L^d}\norm{{\mathcal Y_{m,n}^{\eta,\iota,\eta_\flower,\iota_\flower}}(t,\cdot,k,k_\flower)}_{L_{t_\flower}^1}^p\r)\\\leq\sum_{k,k_\flower\in\Z_L^d}\Bigg(\epsilon^{1-p}\E\l(\norm{{\p_t\mathcal Y_{m,n}^{\eta,\iota,\eta_\flower,\iota_\flower}}\l(\cdot,\cdot,k,k_\flower\r)}_{L_{t,t_\flower}^p}^p\r)^{1/p}\E\l(\norm{{\mathcal Y_{m,n}^{\eta,\iota,\eta_\flower,\iota_\flower}}\l(\cdot,\cdot,k,k_\flower\r)}_{L_{t,t_\flower}^p}^p\r)^{1-\frac{1}{p}}\\+\epsilon^{2-p}\E\l(\norm{{\mathcal Y_{m,n}^{\eta,\iota,\eta_\flower,\iota_\flower}}\l(\cdot,\cdot,k,k_\flower\r)}_{L_{t,t_\flower}^p}^p\r)\Bigg)\\\leq c(p)^{pn}\Lambda^{\frac{p}{2}}(\Lambda\delta)^{\frac{pn(C)}{2}}\delta^2\frac{L^{\frac{p\mathcal K}{2}+\frac{2}{\beta}}}{n(C)^{\frac{p}{2}}}(nL)^d\l(\epsilon^{-1-p}+\epsilon^{-p}\r)\\\leq c(p)^{pn}\Lambda^{\frac{p}{2}}(\Lambda\delta)^{\frac{pn(C)}{2}}\delta^2{L^{\frac{p\mathcal K}{2}+\frac{2}{\beta}+d+\frac{1+p}{\beta}+2pA}}L^{-pA}
    \end{gathered}
\end{equation}
where we used $\epsilon=L^{-\frac{1}{\beta}}$ and $n\leq\Lambda\abs{\ln\epsilon}$.
We now set 
\begin{equation}
    a_n\coloneqq c(p)^{n}\Lambda^{\frac{1}{2}}(\Lambda\delta)^{\frac{n(C)}{2}}\delta^{\frac{2}{p}}{L^{\frac{\mathcal K}{2}+\frac{3}{p\beta}+\frac{d}{p}+\frac{1}{\beta}+2A}}
\end{equation}
and obtain with Markov's inequality
\begin{equation}
\begin{gathered}
    \mathbb P\l(\sup_{t\in\l[0,\delta\epsilon^{-1}\r]}\sup_{k,k_\flower\in\Z_L^d}\norm{\mathcal Y_{m,n}^{\eta,\iota,\eta_\flower,\iota_\flower}\l(t,t_\flower,k,k_\flower\r)}_{L^1_{t_\flower}}> a_n\r)\\\leq a_n^{-1}\E\l(\sup_{t\in\l[0,\delta\epsilon^{-1}\r]}\sup_{k,k_\flower\in\Z_L^d}\norm{\mathcal Y_{m,n}^{\eta,\iota,\eta_\flower,\iota_\flower}\l(t,\cdot,k,k_\flower\r)}_{L_{t_\flower}^1}\r)\leq L^{-A}
\end{gathered}
\end{equation}
which completes the proof.
\end{proof}

\subsection{A norm estimate on $\Leaf^m$}

We are finally in the position to formulate a norm estimate on $\Leaf^m$. For that, we set $\mathcal X\coloneqq\mathcal C\l([0,\delta\epsilon^{-1}],H^s\l(\mathbb T_L^d\r)\r)$ and $I_m^N\coloneqq\llbracket4m,4m(4N+1)\rrbracket$.

\begin{remark}
        If $B$ is a Banach space with norm $\norm{\cdot}$, we define the norm $\norm{\cdot}_n$ on $B^n$ to be $\norm{(b_1,\ldots,b_n)}_n\coloneqq\max_{1\leq i\leq n}\norm{b_i}$.
    \end{remark}

\begin{theorem}\label{norm estimates on operator}
    We may estimate 
    \begin{equation}
        \norm{\Leaf^m}_{\mathcal X^2\to\mathcal X^2}\leq \Lambda\l(\Lambda\sqrt{\delta}\r)^mN(L)^{2(s+d)}L^{\frac{\mathcal K}{2}+\frac{1}{\beta}+\frac{d}{2}+A}
    \end{equation}
\end{theorem}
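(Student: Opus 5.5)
The bound rests on the kernel representation of $\Leaf^m$ obtained from \cref{tree decomposition for flower trees,distinction in stem}. For $v=(v^0,v^1)\in\mathcal X^2$, $\eta\in\{0,1\}$ and $\iota\in\{\pm\}$, we have
\begin{equation*}
\reallywidehat{\Leaf^m(v)^{\eta,\iota}}\l(\epsilon^{-1}t,k\r)=\sum_{\substack{\iota_\flower\in\{\pm\}\\\eta_\flower\in\{0,1\}}}\sum_{n\in I_m^N}\frac{1}{L^d}\sum_{k_\flower\in\Z_L^d}\int_0^t\widehat{v^{\eta_\flower,\iota_\flower}}\l(\epsilon^{-1}t_\flower,k_\flower\r)\mathcal Y_{m,n}^{\eta,\iota,\eta_\flower,\iota_\flower}\l(\epsilon^{-1}t,\epsilon^{-1}t_\flower,k,k_\flower\r)\rmd t_\flower .
\end{equation*}
So $\Leaf^m$ is a linear integral operator, and its kernel is controlled in $L_t^\infty L_{t_\flower}^1L_{k,k_\flower}^\infty$ by \cref{norm estimate on y}. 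We work on the event where \cref{norm estimate on y} holds for every admissible quadruple $(m,n,\eta_\flower,\iota_\flower)$ with $n\in I_m^N$. There are at most $\Lambda mN$ such quadruples, so after adjusting $A$ slightly the union bound still gives probability at least $1-L^{-A}$.

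The second step is to pass from the kernel bound to an $H^s$ bound. A decoration in $\D_{k,k_\flower}(T)$ of a flower tree sums at most $\abs{\Leaf(T)}\le 4m(4N+1)$ leaf momenta, all but the flower's supported in $B_R(0)$. Hence the kernel vanishes unless $\abs{k-\iota\iota_\flower k_\flower}\lesssim mNR$. Weighted by $\langle k\rangle^s$, this lets us use
\begin{equation*}
\langle k\rangle^s\lesssim(mN)^s\langle k_\flower\rangle^s ,
\end{equation*}
which accounts for the factor $N(L)^{s}$ (with $m\lesssim N$ absorbed). For fixed $k$ only $O((mN)^dL^d)$ values of $k_\flower$ contribute, so the sum over $k_\flower$ costs at most $(mN)^dL^d$ via Cauchy–Schwarz. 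Combining these, Young/Schur-type bounds give
\begin{equation*}
\norm{\Leaf^m(v)^\eta(t)}_{H^s}\lesssim \Lambda^m N^{s+d}L^{d/2}\sup_{k,k_\flower}\norm{\mathcal Y(t,\cdot,k,k_\flower)}_{L^1_{t_\flower}}\sup_{t_\flower}\norm{v(t_\flower)}_{H^s}.
\end{equation*}
Here we pair $L^{-d}\sum_{k_\flower}$ with an $\ell^2$ Cauchy–Schwarz in $k_\flower$, and we use Schur's test in the $(k,k_\flower)$ variables, exploiting that both the row and column supports have size $O((NmL)^d)$. This is where the factor $L^{d/2}$ and the second $N^{s+d}$ (squared overall, giving $N^{2(s+d)}$) come from.

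The third step collects the constants. Inserting \cref{norm estimate on y} with exponent $(\Lambda\delta)^{n(C)/2}$ and using $n(C)\ge m$ gives the factor $(\Lambda\sqrt\delta)^m$. The bound from \cref{norm estimate on y} is a bound on squared moments, and Markov's inequality was applied to that squared quantity, so taking square roots turns $L^{\mathcal K+\frac4\beta+d+2A}$ into $L^{\frac{\mathcal K}{2}+\frac{2}{\beta}+\frac d2+A}$. One factor $\epsilon^{-1/2}=L^{1/(2\beta)}$ is reabsorbed by the time rescaling $t\mapsto\epsilon^{-1}t$ in $\mathcal X$, which leaves the stated $L^{\frac1\beta}$ after renaming $\mathcal K$. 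Summing over the at most $\Lambda mN$ indices $n\in I_m^N$ and the four choices of flower sign and colour costs a factor $N\Lambda^m$, absorbed into $\Lambda^m$ and $N^{2(s+d)}$. Taking the maximum over $\eta$ gives the bound in the norm $\norm{\cdot}_2$ on $\mathcal X^2$.

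The main obstacle is the second step. A naive bound using $\sup_{k_\flower}$ loses a full $L^d$ from the $k_\flower$ sum against the $L^{-d}$ prefactor. The argument therefore needs the momentum-support localization $\abs{k-\pm k_\flower}\lesssim NR$, so that each row and each column of the kernel has only $O((NL)^d)$ nonzero entries, combined with Cauchy–Schwarz in $k_\flower$ to recover $\norm{v}_{H^s}$ at the cost of only $L^{d/2}$. Continuity in time, needed to land in $\mathcal C$ rather than $L^\infty$, should follow from the same argument applied to $\p_t\mathcal Y$, using \cref{estimate on the time derivative of the kernel}.
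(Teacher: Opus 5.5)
Your strategy is essentially the paper's. Both arguments use the kernel representation of $\Leaf^m$, localize its support to $\abs{k\mp k_\flower}\lesssim n\lesssim mN$, move the weight $\langle k\rangle^s$ onto $k_\flower$ at a cost $n^s$, and use a lattice-point count to cancel the normalization $L^{-d}$. They then insert \cref{norm estimate on y} and sum over $n\in I_m^N$. The paper expands $\norm{\Leaf^m(v)^\eta(t,\cdot)}_{H^s}^2$ into a double sum over two kernels and applies Cauchy--Schwarz in $k$; your Schur test on the single linear operator is equivalent. The problem is the exponent bookkeeping. As written, your steps give $L^{\frac{\mathcal K}{2}+\frac{3}{2\beta}+d+A}$, and you reach the stated power only by ``renaming $\mathcal K$''. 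That is not available, because $\mathcal K$ is the constant supplied by \cref{estimates on the kernels,norm estimate on y}, and the inequality you are asked to prove is stated with that constant.

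Three corrections close the gap.
\begin{itemize}
\item[(i)] The $H^s$ step costs no power of $L$. Schur's row and column sums are $L^{-d}\cdot O((mNL)^d)=O((mN)^d)$, so the operator bound is $(mN)^{s+d}$ times the kernel norm, with no $L^{\frac d2}$. This is the same cancellation as the paper's count $n^d(n')^dL^{2d}$ against the prefactor $L^{-2d}$. The $L^{\frac d2}$ in the statement comes entirely from the square-rooted kernel bound; with your extra $L^{\frac d2}$ you would end at $L^{d}$.
\item[(ii)] Lowering $L^{\frac2\beta}$ to $L^{\frac1\beta}$ needs a full factor $\epsilon=L^{-\frac1\beta}$ per kernel, not $\epsilon^{\frac12}$. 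In the paper this is the prefactor $\epsilon^2/L^{2d}$ of the expanded square. Your appeal to time rescaling points in the right direction, but the change of variables $t_\flower\mapsto\epsilon^{-1}t_\flower$ in one $L^1_{t_\flower}$ integral produces a whole power of $\epsilon$.
\item[(iii)] When summing over $n\in I_m^N$, keep the decay $(\Lambda\sqrt\delta)^{\frac n4-m}$, as the paper does. If you discard it, you pay a factor $mN$ that cannot be hidden in $N^{2(s+d)}$.
\end{itemize}

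Separately, your Step 2 inequality ends with $\sup_{t_\flower}\norm{v(t_\flower)}_{H^s}$. To get there you must apply Minkowski in $t_\flower$ first and then Schur at each fixed $t_\flower$. This requires $\sup_t\int\sup_{k,k_\flower}\abs{\mathcal Y}\,\rmd t_\flower$, with the supremum inside the integral, which is the $L^\infty_tL^1_{t_\flower}L^\infty_{k,k_\flower}$ norm in that order. With $\sup_{k,k_\flower}\norm{\mathcal Y(t,\cdot,k,k_\flower)}_{L^1_{t_\flower}}$ as you wrote it, you would need $\sum_k\langle k\rangle^{2s}\sup_{t_\flower}\abs{\widehat{v}(t_\flower,k)}^2\lesssim\sup_{t_\flower}\norm{v(t_\flower)}_{H^s}^2$, which fails in general. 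The paper's proof makes the same exchange.
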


\begin{proof}
    Recall that $Y_{m,n}^{\tilde\eta,\iota'}\l(\eta,\iota,\eta_\flower,\iota_\flower\r)=Y_{m,n}^{\tilde\eta}\l(\eta,\iota'\iota,\eta_\flower,\iota_\flower\r)$.
    We have 
    \begin{equation}
    \begin{gathered}
        \norm{\Leaf^m(v)(t,\cdot)}_{H^s\l(T_L^d\r)}^2 =\sum_{k\in\Z_L^d}\langle k\rangle^{2s}\abs{\widehat{\Leaf^m(v)^\eta}(t,k)}^2
        \\=\frac{\epsilon^2}{L^{2d}}\sum_{\substack{k,k_\flower,k_\flower'\in\Z_L^d\\n,n'\in I_m^N\\\iota_\flower\in\{\pm\}\\\eta_\flower\in\{0,1\}}}\langle k\rangle^{2s}\int_{[0,t]^2}\Big(\widehat{v^{\eta_\flower,\iota_\flower}}\l(t_\flower,k_\flower\r){\mathcal Y_{m,n}^{\eta,+,\eta_\flower,\iota_\flower}}\l(t,t_\flower,k,k_\flower\r)\\\cdot\widehat{v^{\eta_\flower',-\iota_\flower'}}\l(t_\flower',-k_\flower'\r){\mathcal Y_{m,n'}^{\eta,-,\eta_\flower,-\iota_\flower}}\l(t,t_\flower',-k,-k_\flower'\r)\Big)\rmd t_\flower\rmd t_\flower'. 
    \end{gathered}       
    \end{equation}
    It is easy to prove that $\langle k\rangle^{2s}\leq \Lambda\l\langle k-k_\flower\r\rangle^s\l\langle k_\flower\r\rangle^s\l\langle k-k_\flower'\r\rangle^s\l\langle k_\flower'\r\rangle^s$ for all $k,k_\flower,k_\flower'\in\Z_L^d$. It is now important to notice that $k-k_\flower$ and $k-k_\flower'$ are supported in a ball of radius $\mathcal O(n)$, respectively $\mathcal O(n')$ around the origin. Thence, if we perform the coordinate transformation $k_\flower\mapsto k_\flower-k$ and $k_\flower'\mapsto k_\flower'-k$, we obtain 
    \begin{equation}
        \begin{gathered}
            \norm{\Leaf^m(v)^\eta(t,\cdot)}_{H^s\l(T_L^d\r)}^2\leq \Lambda\frac{\epsilon^2}{L^{2d}}\sum_{\substack{n,n'\in I_m^N\\\iota_\flower\in\{\pm\}\\\eta_\flower\in\{0,1\}}}\sum_{k\in\Z_L^d}\sum_{\substack{\abs{k_\flower}\lesssim n\\\abs{k_\flower'}\lesssim n'}}\l\langle k-k_\flower\r\rangle^s\l\langle k_\flower\r\rangle^s\l\langle k-k_\flower'\r\rangle^s\l\langle k_\flower'\r\rangle^s\\\cdot\int_{[0,t]^2}\l(\abs{\widehat{v^{\eta_\flower,\iota_\flower}}\l(t_\flower,k_\flower-k\r)}\abs{{\mathcal Y_{m,n}^{\eta,+,\eta_\flower,\iota_\flower}}\l(t,t_\flower,k,k_\flower-k\r)}\abs{\widehat{v^{\eta_\flower',-\iota_\flower'}}\l(t_\flower',k-k_\flower'
            \r)}\abs{{\mathcal Y_{m,n'}^{\eta,-,\eta_\flower,-\iota_\flower}}\l(t,t_\flower',-k,k-k_\flower'\r)}\r)\rmd t_\flower\rmd t_\flower'.\\ \leq \Lambda\frac{\epsilon^2}{L^{2d}}\sum_{\substack{n,n'\in I_m^N\\\iota_\flower\in\{\pm\}\\\eta_\flower\in\{0,1\}}}n^s\l(n'\r)^s\norm{{Y_{m,n}^{\eta,+,\eta_\flower,\iota_\flower}}}_{{L^\infty_tL^1_{t_\flower}L^\infty_{k,k_\flower}}}\norm{{Y_{m,n'}^{\eta,-,\eta_\flower',-\iota_\flower'}}}_{L^\infty_tL^1_{t_\flower}L^\infty_{k,k_\flower}}\\\cdot\sum_{k\in\Z_L^d}\sum_{\substack{\abs{k_\flower}\lesssim n\\\abs{k_\flower'}\lesssim n'}}\l\langle k-k_\flower\r\rangle^s\l\langle k-k_\flower'\r\rangle^s            
            \norm{\widehat{v^{\eta_\flower,\iota_\flower}}(\cdot,k_\flower-k)}_{L^\infty_{t_\flower}}\norm{\widehat{v^{\eta_\flower',-\iota_\flower'}}(\cdot,k-k_\flower')}_{L^\infty_{t_\flower}}.
        \end{gathered}
    \end{equation}
Using the Cauchy-Schwarz inequality, 
\begin{equation}
    \begin{gathered}
        \sum_{k\in\Z_L^d}\sum_{\substack{\abs{k_\flower}\lesssim n\\\abs{k_\flower'}\lesssim n'}}\langle k-k_\flower\rangle^s\langle k-k_\flower'\rangle^s\norm{\widehat{v^{\eta_\flower,\iota_\flower}}\l(\cdot,k_\flower-k\r)}_{L^\infty_{t_\flower}}\norm{\widehat{v^{\eta_\flower',-\iota_\flower'}}\l(t,k-k_\flower'\r)}_{L^\infty_{t_\flower}}\\\leq\Lambda\sum_{\substack{\abs{k_\flower}\lesssim n\\\abs{k_\flower'}\lesssim n'}}\norm{v^{\eta_\flower}(t,\cdot)}_{H^s\l(T_L^d\r)}\norm{v^{\eta_\flower'}(t,\cdot)}_{H^s\l(T_L^d\r)}\\\leq\Lambda n^d \l(n'\r)^dL^{2d}\norm{v(t,\cdot)}_{H^s\l(T_L^d\r)^2}^2
    \end{gathered}
\end{equation}
and thus 
\begin{equation}
    \begin{gathered}
        \norm{\Leaf^m(v)^\eta(t,\cdot)}_{H^s\l(T_L^d\r)}^2\leq\Lambda\norm{v}^2_{\mathcal X^2}\epsilon^2\\\cdot\sum_{\substack{n,n'\in I_m^N\\\iota_\flower\in\{\pm\}\\\eta_\flower\in\{0,1\}}}n^{s+d}\l(n'\r)^{s+d}\norm{\mathcal Y_{m,n}^{\eta,+,\eta_\flower,\iota_\flower}}_{L_t^\infty L_{t_\flower}^1 L_{k,k_\flower}^\infty}\norm{\mathcal Y_{m,n}^{\eta,-,\eta_\flower',-\iota_\flower'}}_{L_t^\infty L_{t_\flower}^1 L_{k,k_\flower}^\infty}
    \end{gathered}
\end{equation}
\Cref{norm estimate on y} then delivers 
\begin{equation}
    \begin{gathered}
        \norm{\Leaf^m(v)^\eta(t,\cdot)}_{H^s\l(T_L^d\r)}^2\leq\Lambda\norm{v}^2_{\mathcal X^2}\epsilon^2\l(\sum_{n\in I_m^N}n^{s+d}\l(\Lambda\sqrt{\delta}\r)^{\frac{n}{4}-m}\r)^2\l(\Lambda\delta\r)^{m}{L^{{\mathcal K}+\frac{4}{\beta}+{d}+2A}}.
    \end{gathered}
\end{equation}
Now using that $n^{s+d}\leq\Lambda N^{2(s+d)}$ and choosing $\delta$ small enough so that $\Lambda\sqrt{\delta}<1$ 
\begin{equation}
    \sum_{n\in I_m^N}n^{s+d}\l(\Lambda\sqrt{\delta}\r)^{\frac{n}{4}-m}\leq\Lambda N^{2(s+d)}
\end{equation}
so that 
\begin{equation}
    \begin{gathered}
        \norm{\Leaf^m(v)}_{\mathcal X^2}\leq\Lambda\l(\Lambda\sqrt{\delta}\r)^mN(L)^{2(s+d)}L^{\frac{\mathcal K}{2}+\frac{1}{\beta}+\frac{d}{2}+A}\norm{v}_{\mathcal X^2}
    \end{gathered}
\end{equation}

\end{proof}

\subsection{The contraction argument}\label{the fixed point argument}

The following proposition establishes the existence and uniqueness of a solution $f^\eta$ to \eqref{system}.

\begin{proposition}
    There exists $\Lambda_0,\tilde\alpha>0$ such that the operator 
    \begin{equation}
        \mathcal X^2\ni v\mapsto\l(\Id-\Leaf\r)^{-1}\l(\mathcal W+\mathcal R^1(v)+\mathcal R^2(v)+\mathcal R^3(v)+\mathcal R^4(v)\r)\in\mathcal X^2,
    \end{equation}
    restricts to a contraction map on the closed ball
    \begin{equation}
        \mathcal Z\coloneqq\l\{\l(v_1,v_2\r)\in \mathcal X^2\mid\norm{\l(v_1,v_2\r)}_{\mathcal X^2}\leq\Lambda_0L^{-\tilde\alpha}\r\}.
    \end{equation}
\end{proposition}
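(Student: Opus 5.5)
We follow the standard Deng--Hani/Collot--Germain contraction scheme and work on the event $\mathcal E_{L,A}$ on which \cref{The estimate the whole world was looking for}, \cref{norm estimate on y} and \cref{norm estimates on operator} hold simultaneously for all $n\le N(L)$ and $m\le N(L)$. A union bound over the $O(\log L)$ indices costs only a logarithmic factor, so after enlarging $A$ this event has probability $\ge 1-L^{-A}$.

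\textbf{Step 1 (invertibility of $\Id-\Leaf$).} We expand $(\Id-\Leaf)^{-1}=\sum_{m\ge0}\Leaf^m$ as a Neumann series. For $m\le N(L)$, \cref{norm estimates on operator} gives
\[
\norm{\Leaf^m}_{\mathcal X^2\to\mathcal X^2}\le \Lambda(\Lambda\sqrt\delta)^m N^{2(s+d)}L^{\mathcal K/2+1/\beta+d/2+A}.
\]
For $m\ge N$ we bound $\Leaf^m=(\Leaf^N)^{\lfloor m/N\rfloor}\Leaf^{r}$ with $0\le r<N$. Since $N=\lfloor\log L\rfloor$, the factor $(\Lambda\sqrt\delta)^N$ equals $L^{-c\log(1/(\Lambda\sqrt\delta))}$. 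Choosing $\delta$ small beats the fixed polynomial loss $L^{\mathcal K/2+\cdots}$, so $\norm{\Leaf^N}\le 1/2$ and the series converges. This yields $\norm{(\Id-\Leaf)^{-1}}\le \Lambda L^{\mathcal K'}$ for some fixed $\mathcal K'$.

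\textbf{Step 2 (smallness of $\mathcal W$).} $\mathcal W$ is a sum of terms $C^+(F_{n_1},\dots,F_{n_5})$ with $N<\sum n_i\le 5N$, so it is a finite combination of Dyson-type tree terms of order $>N$. Applying the couple estimate of \cref{key proposition} together with \cref{cardinality estimate on the number of couples with q resonant nodes} and Markov's inequality, as in \cref{The estimate the whole world was looking for}, we get
\[
\norm{\mathcal W}_{\mathcal X^2}\le \Lambda(\Lambda\delta)^{N}L^{\mathcal K''}\le L^{-\mathcal M}
\]
for any prescribed $\mathcal M$, provided $\delta$ is small. A caveat: the trees of order between $N$ and $5N$ exceed $|\ln\epsilon|$ only by a constant factor. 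I would handle this by taking $N=\lfloor c\log L\rfloor$ with $c$ small, so that all relevant $n$ satisfy $n\le|\ln\epsilon|$.

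\textbf{Step 3 (nonlinear terms).} For $\mathcal R^j$, $j=1,\dots,4$, we use the algebra property of $H^s(\mathbb T_L^d)$ for $s>d/2$, in the normalization with explicit $L$-dependence, together with the crude bound on the time integral $\int_0^t$ of length $\delta\epsilon^{-1}=\delta L^{1/\beta}$. The $F_{\le N}$ factors are controlled by $\norm{F_{\le N}}_{\mathcal X}\le L^{\mathcal K'''}$ from \cref{The estimate the whole world was looking for}. This gives
\[
\norm{\mathcal R^j(v)}\le L^{\mathcal K_j}\norm{v}^{j+1},
\]
and analogous Lipschitz bounds $L^{\mathcal K_j}(\norm v+\norm w)^j\norm{v-w}$.

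\textbf{Step 4 (closing the contraction).} Choose $\tilde\alpha$ larger than all the polynomial losses $\mathcal K'+\mathcal K_j$, and then $\mathcal M>\tilde\alpha+\mathcal K'$. For $v\in\mathcal Z$ the image then has norm at most
\[
L^{\mathcal K'}\big(L^{-\mathcal M}+L^{\mathcal K_j}(\Lambda_0L^{-\tilde\alpha})^2\big)\le \Lambda_0L^{-\tilde\alpha},
\]
and the Lipschitz constant is $\le L^{\mathcal K'+\mathcal K_j-\tilde\alpha}<1/2$.

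\textbf{Main obstacle.} The main obstacle is Step 1 (and the analogous Step 2): the polynomial losses $L^{\mathcal K}$ can only be beaten by the geometric factor $(\Lambda\sqrt\delta)^N$. This requires that the exponent $\mathcal K$ in \cref{key proposition} and \cref{estimates on the kernels} be independent of $\delta$, so that shrinking $\delta$ wins. I would verify that $\mathcal K$ depends only on $d$ and $\beta$.
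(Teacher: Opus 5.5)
Your proposal is correct. Steps 1, 3 and 4 are the paper's argument. The paper inverts $\Id-\Leaf$ via $\norm{\Leaf^{N}}_{\mathcal X^2\to\mathcal X^2}<1$ and the identity $(\Id-\Leaf^{N})^{-1}\sum_{n<N}\Leaf^n=(\Id-\Leaf)^{-1}$, which is your blocked Neumann series. It bounds $\mathcal R^1,\dots,\mathcal R^4$ and their differences by the algebra property of $H^s$. It then fixes $\tilde\alpha$ in terms of $\mathcal K,d,\beta,A$ before shrinking $\delta$, which is your order of choices.

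The genuine difference is Step 2. The paper never leaves the range of iterates of order at most $N$. It writes $\mathcal W$ as $\int_0^t\sum C^+(\tau,F_{n_1},\dots,F_{n_5})\,\rmd\tau$ with all $n_i\le N$ and $\sum_i n_i\ge N$. It bounds each summand by $\epsilon\prod_i\norm{f_{n_i}}_{H^s}$ and inserts \cref{The estimate the whole world was looking for} factor by factor. The constraint $\sum_i n_i\ge N$ alone then yields $(\Lambda\delta)^{N}$, at the cost of a fivefold polynomial loss that shrinking $\delta$ absorbs.

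You instead re-expand $\mathcal W$ into couples with up to roughly $10N$ nodes and rerun \cref{key proposition}, \cref{cardinality estimate on the number of couples with q resonant nodes} and Markov. This costs only one factor $L^{\mathcal K}$. However, it needs the couple estimates well beyond order $N$, which is what forces your rescaling of $N$. The paper's route is more economical, since it reuses only bounds already proved for the Dyson iterates.

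Your caveat is still well placed and is not specific to your route. With $\epsilon=L^{-1/\beta}$ and $\beta>1$, one has $|\ln\epsilon|=\beta^{-1}\ln L$, so $N=\lfloor\log L\rfloor$ already exceeds $|\ln\epsilon|$ for large $L$. Hence the paper's use of \cref{The estimate the whole world was looking for} for all $n\le N$ also falls outside that corollary's stated range $n\le|\ln\epsilon|$. Taking $N=\lfloor c\log L\rfloor$ with $c$ small, and $\delta$ correspondingly smaller, repairs both versions. This works precisely because, as you observe, $\mathcal K$ does not depend on $\delta$, so $(\Lambda\delta)^{N}$ still beats $L^{\mathcal K}$.
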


\begin{proof}
According to \cref{norm estimate on y}, we have 
\begin{equation}\label{the norm estimate we wanted}
        \norm{\Leaf^m}_{\mathcal X^2\to\mathcal X^2}\leq \Lambda\l(\Lambda\sqrt{\delta}\r)^mN(L)^{2(s+d)}L^{\frac{\mathcal K}{2}+\frac{1}{\beta}+\frac{d}{2}+A}\\\leq L^{\frac{\log(\Lambda)}{\log(L)}+m\frac{\log(\Lambda)}{\log(L)}-\frac{m}{2}\frac{\log(\delta)}{\log(L)}+\frac{\mathcal K}{2}+\frac{1}{\beta}+\frac{d}{2}+2A},
    \end{equation}
where we set $N=N(L)\coloneqq\lfloor\log(L)\rfloor$. For $m=N(L)$, we obtain
\begin{equation}
    \norm{\Leaf^{N(L)}}_{\mathcal X^2\to\mathcal X^2}\leq L^{\frac{\log(\Lambda)}{\log(L)}+\log(\Lambda)-\frac{1}{2}\log(\delta)+\frac{\mathcal K}{2}+\frac{1}{\beta}+\frac{d}{2}+2A}.
\end{equation}
For $\delta$ small enough, we achieve 
\begin{equation}
    \frac{\log(\Lambda)}{\log(L)}+\log(\Lambda)-\frac{1}{2}\log(\delta)+\frac{\mathcal K}{2}+\frac{1}{\beta}+\frac{d}{2}+2A<0
\end{equation}
and for $L$ large enough, 
\begin{equation}
    \norm{\Leaf^{N(L)}}_{\mathcal X^2\to\mathcal X^2}<1.
\end{equation}
In this case, the operator $\Id-\Leaf^{N(L)}$ becomes invertible. We obtain the invertibility of $\Id-\Leaf$ since
\begin{equation}\label{operator inversion identity}
        \l(\Id-\Leaf^{N(L)}\r)^{-1}\sum_{n\leq N(L)-1}\Leaf^n=\l(\Id-\Leaf\r)^{-1}.
\end{equation}
Using 
\begin{equation}
        \norm{\l(\Id-\Leaf^{N(L)}\r)^{-1}}_{\mathcal X^2\to\mathcal X^2}\leq\frac{1}{1-\norm{\Leaf^{N(L)}}_{\mathcal X^2\to\mathcal X^2}}\leq\Lambda,
\end{equation}
and \cref{the norm estimate we wanted}, we get
\begin{equation}
\begin{gathered}
    \norm{\l(\Id-\Leaf\r)^{-1}}_{\mathcal X^2\to\mathcal X^2}\\\leq\norm{\l(\Id-\Leaf^{N(L)}\r)^{-1}}_{\mathcal X^2\to\mathcal X^2}\sum_{0\leq m\leq N(L)-1}\norm{\Leaf^m}_{\mathcal X^2\to\mathcal X^2}\leq\Lambda N(L)^{2(s+d)}L^{\frac{\mathcal K}{2}+\frac{1}{\beta}+\frac{d}{2}+A}\\\leq\Lambda L^{\frac{\mathcal K}{2}+\frac{1}{\beta}+\frac{d}{2}+2A}.
\end{gathered}
\end{equation}

 We start by estimating $\mathcal W^\eta$ and recall
        \begin{gather}
            \mathcal W^\eta=\int_0^t\sum_{\substack{0\leq\sum_{i=1}^5n_i\leq N(L)\\\sum_{i=1}^5n_i\geq N(L)}}C^{+}\l(\tau,F_{n_1}^\eta(\tau),\overline{F_{n_2}^\eta}(\tau),\overline{F_{n_3}^{\eta}}(\tau),{F_{n_4}^{\overline\eta}}(\tau),F_{n_5}^{\overline\eta}(\tau)\r)\rmd\tau.
        \end{gather}
        We have, since $s>\frac{d}{2}$, \begin{equation}
            \begin{gathered}
            \norm{C^{+}(t,f,g,h,u,v)}_{H^s\l(T_L^d\r)}^2\leq\frac{\epsilon^2}{L^{4d}}\sum_{k\in\Z_L^d}\langle k\rangle^{2s}\abs{\sum_{\sum_{i=1}^5k_i=k}\hat f(k_1)\hat g(k_2)\hat h(k_3)\hat u(k_4)\hat v(k_5)}^2\\
            =\epsilon^2\sum_{k\in\Z_L^d}\langle k\rangle^{2s}\abs{\widehat{fghuv}(k)}^2=\epsilon^2\norm{fghuv}_{H^s\l(T_L^d\r)}^2\lesssim {\epsilon^2}\norm{f}_{H^s\l(T_L^d\r)}^2\norm{g}_{H^s\l(T_L^d\r)}^2\norm{h}_{H^s\l(T_L^d\r)}^2\norm{u}_{H^s\l(T_L^d\r)}^2\norm{v}_{H^s\l(T_L^d\r)}^2.
        \end{gathered}
        \end{equation}
        It thus follows with \cref{The estimate the whole world was looking for}
        \begin{equation}\label{the constant term i have to estimate}
            \begin{gathered}
            \norm{\mathcal W^\eta}_{\mathcal X}\leq\int_0^t\sum_{\substack{0\leq n_1,n_2,n_3,n_4,n_5\leq N(L)\\n_1+n_2+n_3+n_4+n_5\geq N(L)}}\norm{C^{+}\l(\tau,F_{n_1}^\eta(\tau),\overline{F_{n_2}^\eta}(\tau),F_{n_3}^{\overline\eta}(\tau),\overline{F_{n_4}^{\overline\eta}}(\tau),F_{n_5}^{\overline\eta}(\tau)\r)}_{H^s\l(T_L^d\r)}\rmd\tau\\
            \lesssim\epsilon\int_0^t\sum_{\substack{0\leq n_1,n_2,n_3,n_4,n_5\leq N(L)\\n_1+n_2+n_3+n_4+n_5\geq N(L)}}\norm{f_{n_1}^\eta(\tau)}_{H^s\l(T_L^d\r)}\norm{f_{n_2}^\eta(\tau)}_{H^s\l(T_L^d\r)}\norm{f_{n_3}^{\overline{\eta}}(\tau)}_{H^s\l(T_L^d\r)}\norm{f_{n_4}^{\overline\eta}(\tau)}_{H^s\l(T_L^d\r)}\norm{f_{n_5}^{\overline\eta}(\tau)}_{H^s\l(T_L^d\r)}\\
            \lesssim\Lambda\delta^{10}L^{5\mathcal K+5d+\frac{9}{\beta}+10A}\sum_{\substack{0\leq n_1,n_2,n_3,n_4,n_5\leq N(L)\\n_1+n_2+n_3+n_4+n_5\geq N(L)}}\l(\Lambda\delta\r)^{2\l(n_1+n_2+n_3+n_4+n_5\r)}\\\leq\Lambda(\Lambda\delta)^{N(L)}\delta^{10}L^{5\mathcal K+5d+\frac{9}{\beta}+10A}\l(\sum_{n\geq0}(\Lambda\delta)^{2n}\r)^5\\\leq\Lambda(\Lambda\delta)^{N(L)}\delta^{10}L^{5\mathcal K+5d+\frac{9}{\beta}+10A}.
        \end{gathered}
        \end{equation}
Furthermore, 
\begin{equation}
\begin{aligned}
\norm{\l(\Id-\Leaf\r)^{-1}\mathcal W}_{\mathcal X^2}&\leq \Lambda\delta^{10}(\Lambda\delta)^{N(L)}\delta^{10} L^{\frac{\mathcal K}{2}+\frac{1}{\beta}+\frac{d}{2}+2A+5\mathcal K+5d+\frac{9}{\beta}+10A}\\&\leq\Lambda\delta^{10}L^{\log(\Lambda)+\log(\delta)+\frac{\mathcal K}{2}+\frac{1}{\beta}+\frac{d}{2}+2A+5\mathcal K+5d+\frac{9}{\beta}+10A}
\end{aligned}
\end{equation}

We denote $\mathcal R(v)^\eta\coloneqq\mathcal R^1(v)^\eta+\mathcal R^2(v)^\eta+\mathcal R^3(v)^\eta+\mathcal R^4(v)^\eta$ for all the terms that are at least quadratic in $v$. For them, we calculate (using the boundedness of the operator $\tilde C^{\eta,+}$)
        \begin{equation}
            \begin{gathered}
            \norm{\mathcal R(v)}_{\mathcal X^2}\leq\Lambda\epsilon\norm{v}_{\mathcal X}^2\Bigg(\sum_{n_1,n_2,n_3\leq N}\norm{f_{n_1}}_{\mathcal X^2}\norm{f_{n_2}}_{\mathcal X^2}\norm{f_{n_3}}_{\mathcal X^2}\\+\norm{v}_{\mathcal X^2}\sum_{n_1,n_2\leq N}\norm{f_{n_1}}_{\mathcal X^2}\norm{f_{n_2}}_{\mathcal X^2}+\norm{v}_{\mathcal X^2}^2\sum_{n\leq N}\norm{f_n}_{\mathcal{X}^2}+\norm{v}_{\mathcal{X}^2}^3\Bigg)\\\leq\Lambda \epsilon\norm{v}_{\mathcal X^2}^2\Bigg(L^{3\mathcal K+3d+\frac{6}{\beta}+6A}\l(\sum_{n\leq N(L)}(\Lambda\delta)^{2n}\r)^3+\norm{v}_{\mathcal{X}^2}L^{2\mathcal K+ 2d+\frac{4}{\beta}+4A}\l(\sum_{n\leq N(L)}(\Lambda\delta)^{2n}\r)^2\\+\norm{v}_{\mathcal{X}^2}^2L^{\mathcal K+d\frac{2}{\beta}+2A}\sum_{n\leq N(L)}(\Lambda\delta)^{2n}+\norm{v}_{\mathcal{X}^2}^3\Bigg)\\
            \leq\Lambda\norm{v}_{\mathcal X^2}^2\l(L^{3\mathcal K+3d+\frac{5}{\beta}+6A}+\norm{v}_{\mathcal X^2}L^{2\mathcal K+ 2d+\frac{3}{\beta}+4A}+\norm{v}_{\mathcal X^2}^2L^{\mathcal K+d\frac{1}{\beta}+2A}+\norm{v}_{\mathcal X^2}^3\r).
        \end{gathered}
        \end{equation}
This implies 
        \begin{equation}
            \begin{gathered}
                \norm{\l(\Id-\mathcal L\r)^{-1}\mathcal R(v)}_{\mathcal{X}^2}\\\leq\Lambda\norm{v}_{\mathcal X^2}^2\l(L^{\frac{7}{2}\mathcal K+\frac{7}{2}d+\frac{6}{\beta}+8A}+\norm{v}_{\mathcal X^2}L^{\frac{5}{2}\mathcal K+ \frac{5}{2}d+\frac{4}{\beta}+6A}+\norm{v}_{\mathcal X^2}^2L^{\frac{3}{2}\mathcal K+\frac{3}{2}d+\frac{2}{\beta}+4A}+\norm{v}_{\mathcal X^2}^3\r).
            \end{gathered}
        \end{equation}
        
We now assumed that $\norm{v}_{\mathcal{X}^2}\leq\Lambda_0L^{-\tilde\alpha}$ for some $\Lambda_0,\tilde\alpha>0$ to be determined in the following. Simultaneously, we would like the above expression to be bounded by $\Lambda_0L^{-\alpha}$. 
        We may choose
        \begin{gather}\label{our choice of alpha}
            \tilde\alpha\coloneqq2\l(\frac{7}{2}\mathcal K+\frac{7}{2}d+\frac{6}{\beta}+8A\r)
        \end{gather}       
        and first estimate

        \begin{equation}\label{what i am working on right now}
            \begin{gathered}
                \norm{\l(\Id-\Leaf\r)^{-1}\l(\W+\mathcal R(v)\r)}_{\mathcal{X}^2}\leq\norm{\l(\Id-\Leaf\r)^{-1}\mathcal W}_{\mathcal X^2}+\norm{\l(\Id-\Leaf\r)^{-1}\mathcal R(v)}_{\mathcal X^2}\\
                \leq \Lambda\delta^{10}L^{\log(\Lambda)+\log(\delta)+\frac{\mathcal K}{2}+\frac{1}{\beta}+\frac{d}{2}+2A+5\mathcal K+5d+\frac{9}{\beta}+10A} +4\Lambda\Lambda_0^2L^{-\frac{3}{2}\tilde\alpha}
            \end{gathered}
        \end{equation}
        Thence, we may choose $\Lambda_0\coloneqq\Lambda$ and $\delta$ small enough so that 
        \begin{equation}
            \log(\Lambda)+\log(\delta)+\frac{\mathcal K}{2}+\frac{1}{\beta}+\frac{d}{2}+2A+5\mathcal K+5d+\frac{9}{\beta}+10A\leq-\frac{3}{2}\tilde\alpha
        \end{equation}
        and if $L$ is large enough, we may bound \cref{what i am working on right now} by 
        \begin{equation}
            \norm{\l(\Id-\Leaf\r)^{-1}\l(\W+\mathcal R(v)\r)}_{\mathcal{X}^2}\leq\frac{\Lambda}{2}L^{-\tilde\alpha}+\frac{\Lambda}{2}L^{-\tilde\alpha}=\Lambda L^{-\tilde\alpha}.
        \end{equation}
        We have proven 

        \begin{equation}
            \l(\Id-\Leaf\r)^{-1}\l(\mathcal W+\mathcal R\r)\l(\mathcal Z\r)\subseteq\mathcal Z.
        \end{equation}
        Now we prove contractibility. It should be noted generally that we will be dealing with terms of the form
        \begin{equation}
            \begin{gathered}
            \abs{\sum_{\sum_{i=1}^5k_i=k}\l(\widehat{v_i^{\iota_1}}(k_1)\widehat{v_j^{\iota_2}}(k_2)-\widehat{w_i^{\iota_1}}(k_1)\widehat{w_j^{\iota_2}}(k_2)\r)\hat f(k_3)\hat g(k_4)\hat h(k_5)}\\
            =\abs{\l(\l(\widehat{v_i^{\iota_1}}*\widehat{v_j^{\iota_2}}-\widehat{w_i^{\iota_1}}*\widehat{w_j^{\iota_2}}\r)*\hat f*\hat g*\hat h\r)(k)}\\\leq\abs{\l(\reallywidehat{v_i^{\iota_1}}*\widehat{v_j^{\iota_2}-w_j^{\iota_2}}*\hat f*\hat g*\hat h\r)(k)}+\abs{\l(\widehat{v_i^{\iota_1}-w_i^{\iota_1}}*\widehat{w_j^{\iota_2}}*\hat f*\hat g*\hat h\r)(k)}\\
            =L^{2d}\abs{\reallywidehat{v_i^{\iota_1}\l(v_j^{\iota_2}-w_j^{\iota_2}\r)fgh}(k)}+L^{2d}\abs{\reallywidehat{\l(v_i^{\iota_1}-w_i^{\iota_1}\r)w_j^{\iota_2}fgh}(k)},
        \end{gathered}
        \end{equation}
        where $i,j\in\{1,2\}$ and $\iota_1,\iota_2\in\{\pm\}$. The following estimate follows. 
        \begin{equation}
            \begin{gathered}
            \norm{\mathcal R^k(v_1,v_2)^\eta-\mathcal R^k(w_1,w_2)^\eta}_{\mathcal X}^2\\\leq\frac{\epsilon^2}{L^{4d}}\sum_{k\in\Z_L^d}\langle k\rangle^{2s}\abs{\sum_{\sum_{i=1}^5k_i=k}\l(\widehat{v_i^{\iota_1}}(k_1)\widehat{v_j^{\iota_2}}(k_2)-\widehat{w_i^{\iota_1}}(k_1)\widehat{w_j^{\iota_2}}(k_2)\r)\hat f(k_3)\hat g(k_4)\hat h(k_5)}^2\\\lesssim 
            \epsilon^2\norm{v_i^{\iota_1}\l(v_j^{\iota_2}-w_j^{\iota_2}\r)fgh}_{\mathcal X}^2+\epsilon^2\norm{\l(v_i^{\iota_1}-w_i^{\iota_1}\r)w_j^{\iota_2}fgh}_{\mathcal X}^2\\\lesssim \epsilon^2\norm{f}_{\mathcal X}^2\norm{g}_{\mathcal X}^2\norm{h}_{\mathcal X}^2\l(\norm{v_i^{\iota_1}}_{\mathcal X}^2\norm{v_j^{\iota_2}-w_j^{\iota_2}}_{\mathcal X}^2+\norm{w_j^{\iota_2}}_{\mathcal X}^2\norm{v_i^{\iota_1}-w_i^{\iota_1}}_{\mathcal X}^2\r)\\
            \lesssim\epsilon^2L^{-2\tilde\alpha}\norm{f}_{\mathcal X}^2\norm{g}_{\mathcal X}^2\norm{h}_{\mathcal X}^2\l(\norm{v_j^{\iota_2}-w_j^{\iota_2}}_{\mathcal X}+\norm{v_i^{\iota_1}-w_i^{\iota_1}}_{\mathcal X}\r)^2\\\lesssim\epsilon^2L^{-2\tilde\alpha}\norm{f}_{\mathcal X}^2\norm{g}_{\mathcal X}^2\norm{h}_{\mathcal X}^2\norm{\l(v_1-w_1,v_2-w_2\r)}_{\mathcal X^2}^2,
        \end{gathered}
        \end{equation}
        where $f,g,h\in\bigsqcup_{\substack{\iota_i\in\{\pm\}\\1\leq i\leq6}}\l\{v_1^{\iota_1},v_2^{\iota_2},w_1^{\iota_3},w_2^{\iota_4},F_{\leq N}^{\eta,\iota_5},F_{\leq N}^{\overline\eta,\iota_6}\r\}$. More precisely, $k-1$ functions among $f$, $g$ and $h$ are of type $v_i^{\iota}$ or $w_j^{\iota}$ and the remaining ones are of type $F_{\leq N}^{\eta,\iota}$ or $F_{\leq N}^{\overline\eta,\iota}$.
        Finally,

        \begin{equation}
            \norm{\mathcal R^k(v_1,v_2)^\eta-\mathcal R^k(w_1,w_2)^\eta}_{\mathcal X}\lesssim\epsilon L^{-\tilde\alpha}\norm{f}_{\mathcal X}\norm{g}_{\mathcal X}\norm{h}_{\mathcal X}\norm{\l(v_1-w_1,v_2-w_2\r)}_{\mathcal X^2}.
        \end{equation}

        We estimate further

\begin{equation}
    \begin{gathered}
            \norm{\l(\Id-\Leaf\r)^{-1}\l(\mathcal W+\mathcal R(v)\r)-\l(\Id-\Leaf\r)^{-1}\l(\mathcal W+\mathcal R(w)\r)}_{\mathcal{X}^2}=\norm{\l(\Id-\Leaf\r)^{-1}\l(\mathcal R(v)-\mathcal R(w)\r)}_{\mathcal{X}^2}\\\leq\norm{\l(\Id-\Leaf\r)^{-1}}_{\mathcal X^2\to\mathcal X^2}\norm{\mathcal R(v)-\mathcal R(w)}_{\mathcal{X}^2}\\
            \lesssim  L^{\frac{\mathcal K}{2}+\frac{1}{\beta}+\frac{d}{2}+2A}\sum_{i=1}^4\norm{\mathcal R^i(v)-\mathcal R^i(w)}_{\mathcal{X}^2}\\      
            \lesssim \epsilon L^{\frac{\mathcal K}{2}+\frac{1}{\beta}+\frac{d}{2}+2A-\tilde\alpha}\sum_{i=0}^3\l(L^{\mathcal K+d+\frac{2}{\beta}+2A}\sum_{n\leq N(L)}(\Lambda\delta)^{2n}\r)^i\l(\Lambda L^{-\tilde\alpha}\r)^{3-i}\norm{v-w}_{\mathcal X^2}\\  
            \leq L^{-\frac{\tilde\alpha}{2}}\sum_{i=0}^3\l(\Lambda L^{-\l(\tilde\alpha + \mathcal K+d+\frac{2}{\beta}+2A\r)}\r)^i\norm{v-w}_{\mathcal X^2}
        \end{gathered}
\end{equation}   
and as we can see, if $L$ is large enough, we obtain that 
\begin{equation}\label{whose fixed points we are after}
    \mathcal Z\ni v\mapsto\l(\Id-\Leaf\r)^{-1}\l(\mathcal W+\mathcal R(v)\r)\in\mathcal Z
\end{equation}
is indeed a contraction. Finally, applying the Banach fixed point theorem to the map in \cref{whose fixed points we are after}, finishes the proof.
\end{proof}

\section{Second part of the proof of \cref{this is the main theorem}}\label{fourth section}
\begin{definition}
    For convenience reasons, we define 
    \begin{equation}
        \begin{gathered}
            \tilde\zeta_{1,\iota}^\eta(\xi_1,\xi_2,\xi)\coloneqq Q_{\iota}^\eta\l(\xi,\xi_1,\xi_2,-\xi_1,-\xi_2\r),\\
            \tilde\zeta_{2,\iota}^\eta(\xi_1,\xi_2,\xi)\coloneqq Q_{\iota}^\eta\l(\xi,\xi_1,\xi_2,-\xi_2,-\xi_1\r),\\
            \tilde\zeta_{3,\iota}^\eta(\xi_1,\xi_2,\xi)\coloneqq Q_{\iota}^\eta\l(\xi_1,-\xi_1,\xi_2,\xi,-\xi_2\r),\\
            \tilde\zeta_{4,\iota}^\eta(\xi_1,\xi_2,\xi)\coloneqq Q_{\iota}^\eta\l(\xi_1,\xi_2,-\xi_1,\xi,-\xi_2\r),\\
            \tilde\zeta_{5,\iota}^\eta(\xi_1,\xi_2,\xi)\coloneqq Q_{\iota}^\eta\l(\xi_1,-\xi_1,\xi_2,-\xi_2,\xi\r),\\
            \tilde\zeta_{6,\iota}^\eta(\xi_1,\xi_2,\xi)\coloneqq Q_{\iota}^\eta\l(\xi_1,\xi_2,-\xi_1,-\xi_2,\xi\r).
        \end{gathered}
    \end{equation}
\end{definition}

We define the following sequence of maps 
\begin{equation}
\begin{gathered}
    \rho_0^\eta\coloneqq M^{\eta,\eta},\ \rho_0^\times\coloneqq M^{0,1},\\
    \rho^\eta_n(t,\xi)\coloneqq2\sum_{n_1+n_2+n_3=n-1}\Bigg[\sum_{l=1}^2\int_0^t\rho_{n_3}^\eta(s,\xi)\int_{\R^d}\int_{\R^d}\Im\l(\rho_{n_1}^{\times,\overline\eta}(s,-\xi_1)\tilde\zeta_{l,+}^\eta\l(\xi_1,\xi_2,\xi\r)\rho_{n_2}^{\times,\overline\eta}(s,-\xi_2)\r)\rmd\xi_1\rmd\xi_1\rmd s\\+\sum_{l=3}^6\int_0^t\int_{\R^d}\int_{\R^d}\Im\l(\rho_{n_3}^{\times,\overline\eta}(s,\xi)\tilde\zeta_{l,+}^\eta\l(\xi_1,\xi_2,\xi\r)\rho_{n_2}^{\times,\overline\eta}(s,-\xi_2)\r)\rho_{n_1}^\eta(s,\xi_1)\rmd\xi_1\rmd\xi_2\rmd s\Bigg].
\end{gathered}
\end{equation}
and 

\begin{equation}
    \begin{gathered}
       \rho_n^\times(t,\xi) =-\rmi\tau\sum_{n_1+n_2+n_3=n-1}\\\Bigg[\sum_{l=1}^2\int_0^t\rho_{n_3}^\times(s,\xi)\int_{\R^d}\int_{\R^d}\l(\tilde\zeta_{l,+}^0\l(\xi_1,\xi_2,\xi\r)-\overline{\tilde\zeta_{l,+}^1\l(\xi_1,\xi_2,\xi\r)}\r)\overline{\rho_{n_1}^\times(s,-\xi_1)}\overline{\rho_{n_2}^\times(s,-\xi_2)}\rmd\xi_1\rmd\xi_2\rmd s\\
    +\sum_{l=3}^6\int_0^t\int_{\R^d}\int_{\R^d}\l(\rho_{n_3}^1(s,\xi)\tilde\zeta_{l,+}^0\l(\xi_1,\xi_2,\xi\r)\rho_{n_1}^0(s,\xi_1)-\rho_{n_3}^0(s,\xi)\overline{\tilde\zeta_{l,+}^1\l(\xi_1,\xi_2,\xi\r)}\rho_{n_1}^1(s,\xi_1)\r)\overline{\rho_{n_2}^\times(s,-\xi_2)}\rmd\xi_1\rmd\xi_2\rmd s\Bigg].
    \end{gathered}
\end{equation}
Since $M^{\eta,\eta}$ is real-values, $\rho^\eta$ is real-valued as well. This is shown by induction.

\subsection{The effective system}\label{where local well-posedness is proven}
    We would like to converge to a solution of \cref{dee} that should look like 
    \begin{equation}\label{the effective system I want to converge to}
        \begin{gathered}    \rho^\eta(t,\xi)=M^{\eta,\eta}(\xi)+2\sum_{l=1}^2\int_0^t\rho^\eta(s,\xi)\int_{\R^d}\int_{\R^d}\Im\l(\tilde\zeta_{l,\iota}^\eta\l(\xi_1,\xi_2,\xi\r)\rho^{\times,\overline\eta}(s,-\xi_1)\rho^{\times,\overline\eta}(s,-\xi_2)\r)\rmd\xi_1\rmd\xi_1\rmd s\\+2\sum_{l=3}^6\int_0^t\int_{\R^d}\int_{\R^d}\Im\l(\rho^{\times,\overline\eta}(s,\xi)\tilde\zeta_{l,\iota}^\eta\l(\xi_1,\xi_2,\xi\r)\rho^{\times,\overline\eta}(s,-\xi_2)\r)\rho^\eta(s,\xi_1)\rmd\xi_1\rmd\xi_2\rmd s
        \end{gathered}
    \end{equation}
    and
    \begin{equation}
    \begin{gathered}  
        \rho^\times(t,\xi)=M^{0,1}(\xi)-\rmi\sum_{l=1}^2\int_0^t\rho^\times(s,\xi)\int_{\R^d}\int_{\R^d}\l(\tilde\zeta_{l,+}^0\l(\xi_1,\xi_2,\xi\r)-\overline{\tilde\zeta_{l,+}^1\l(\xi_1,\xi_2,\xi\r)}\r)\overline{\rho^\times(s,-\xi_1)}\overline{\rho^\times(s,-\xi_2)}\rmd\xi_1\rmd\xi_2\rmd s\\-\rmi\sum_{l=3}^6\int_0^t\int_{\R^d}\int_{\R^d}\l(\rho^1(s,\xi)\tilde\zeta_{l,+}^0\l(\xi_1,\xi_2,\xi\r)\rho^0(s,\xi_1)-\rho^0(s,\xi)\overline{\tilde\zeta_{l,+}^1\l(\xi_1,\xi_2,\xi\r)}\rho^1(s,\xi_1)\r)\overline{\rho^\times(s,-\xi_2)}\rmd\xi_1\rmd\xi_2\rmd s.
        \end{gathered}
    \end{equation}
If we define the vector field $V\colon\l(W^{1,\infty}\l(\R^d\r)\cap W^{1,1}\l(\R^d\r)\r)^3\to\l(W^{1,\infty}\l(\R^d\r)\cap W^{1,1}\l(\R^d\r)\r)^3$ so that $V(\rho^0,\rho^1,\rho^\times)$ is the right-hand side of \cref{dee} (ignoring the time-dependency), one can quite quickly see that $V$ is locally Lipschitz continuous. 
For this reason, we have local well-posedness of the system \cref{the effective system I want to converge to} on a finite time interval $[0,\delta]$ for some $\delta>0$ sufficiently small.

The components of $V$ are each estimated to be 

\begin{equation}
    \begin{gathered}
        \norm{V\l(\rho^0,\rho^1,\rho^\times\r)^\eta}_{W^{1,\infty}\cap W^{1,1}}\leq4\norm{\rho^\eta}_{W^{1,\infty}\cap W^{1,1}}\norm{Q_+^\eta}_{W^{1,\infty}\cap W^{1,1}}\norm{\rho^\times}_{L^1}^2\\+8\norm{\rho^\times}_{W^{1,\infty}\cap W^{1,1}}\norm{Q_+^\eta}_{W^{1,\infty}\cap W^{1,1}}\norm{\rho^\times}_{L^1}\norm{\rho^\eta}_{L^1}
    \end{gathered}
\end{equation}
and 
\begin{equation}
    \begin{gathered}
        \norm{V\l(\rho^0,\rho^1,\rho^\times\r)^\times}_{W^{1,\infty}\cap W^{1,1}}\leq 4\norm{\rho^\times}_{W^{1,\infty}\cap W^{1,1}}\max_{\eta}\norm{Q_+^\eta}_{W^{1,\infty}\cap W^{1,1}}\norm{\rho^\times}_{L^1}^2\\+4\norm{\rho^1}_{W^{1,\infty}\cap W^{1,1}}\max_{\eta}\norm{Q_+^\eta}_{W^{1,\infty}\cap W^{1,1}}\norm{\rho^0}_{L^1}\norm{\rho^\times}_{L^1}\\
        +4\norm{\rho^0}_{W^{1,\infty}\cap W^{1,1}}\max_{\eta}\norm{Q_+^\eta}_{W^{1,\infty}\cap W^{1,1}}\norm{\rho^1}_{L^1}\norm{\rho^\times}_{L^1}
    \end{gathered}
\end{equation}

\begin{definition}
    We define Catalan $3$-fold convolution recursively by $c_0\coloneqq1$ and for all $n>0$,
    \begin{equation}
        c_n\coloneqq\sum_{n_1+n_2+n_3=n-1}c_{n_1}c_{n_2}c_{n_3}=\frac{3}{2n+3}\binom{2n+3}{n}\leq(5e)^n.
    \end{equation}
\end{definition}

\begin{remark}
    For higher non-linearities of degree $2k+1$, one should use a $k$-Catalan convolution.
\end{remark}
\begin{proposition}\label{another trivial bound}
    It holds
    \begin{equation}\label{convergence estimate}
        \max_\star\norm{\rho_n^\star}_{L^\infty\l(W^{1,\infty}\cap W^{1,1}\r)}\leq \Lambda(\Lambda\delta)^nc_n.
    \end{equation}
    In particular, the series $\sum_{n\geq0}\rho_n^\star$ converges 
    absolutely in $L^\infty\l(W^{1,\infty}\cap W^{1,1}\r)$.
\end{proposition}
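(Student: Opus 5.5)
The bound follows by strong induction on $n$, directly from the recursive definition of $\rho_n^\eta$ and $\rho_n^\times$. We work with the norm $\norm{\cdot}\coloneqq\norm{\cdot}_{L^\infty([0,\delta],W^{1,\infty}\cap W^{1,1})}$. The inductive claim is
\begin{equation}
\max_\star\norm{\rho_m^\star}\leq\Lambda(\Lambda\delta)^m c_m\quad\text{for all } m<n.
\end{equation}
For $n=0$ this is the assumption $M^{\eta,\eta'}\in W^{1,\infty}\cap W^{1,1}$, with $\Lambda\geq\max\norm{M^{\eta,\eta'}}_{W^{1,\infty}\cap W^{1,1}}$.

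For the inductive step, each summand in the definition of $\rho_n^\star$ is a time integral over $[0,t]\subseteq[0,\delta]$ of a trilinear expression in $\rho_{n_1},\rho_{n_2},\rho_{n_3}$ with $n_1+n_2+n_3=n-1$. I would prove one trilinear estimate for the building blocks. Take the form
\begin{equation}
B(f,g,h)(\xi)=f(\xi)\int\int\tilde\zeta_l(\xi_1,\xi_2,\xi)g(-\xi_1)h(-\xi_2)\rmd\xi_1\rmd\xi_2
\end{equation}
for $l=1,2$, and its analogue with $f(\xi)$ and the $\xi_1$-factor swapped for $l=3,\dots,6$. The estimate is
\begin{equation}
\norm{B(f,g,h)}_{W^{1,\infty}\cap W^{1,1}}\leq C_Q\norm{f}_{W^{1,\infty}\cap W^{1,1}}\norm{g}_{W^{1,\infty}\cap W^{1,1}}\norm{h}_{W^{1,\infty}\cap W^{1,1}}.
\end{equation}
This is the same computation as the bounds on the components of $V$ displayed just before the statement. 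The inner integral is bounded in $L^\infty_\xi$ by $\norm{Q}_{L^\infty}\norm{g}_{L^1}\norm{h}_{L^1}$. Its $\xi$-derivative falls only on $\tilde\zeta_l$, which is controlled since $Q\in W^{1,\infty}$. The outer factor $f$ then supplies the $L^1$ and $L^\infty$ control in $\xi$, and the product rule handles $\nabla_\xi$.

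The same applies to the $\Im(\cdot)$ and complex-conjugate variants and to the $\rho^\times$ equation, which has the differences $\tilde\zeta^0-\overline{\tilde\zeta^1}$ and the products $\rho^1\rho^0$. There are finitely many terms, at most twelve, so with an absolute $C_Q$ we get
\begin{equation}
\norm{\rho_n^\star}\leq 12C_Q\,\delta\sum_{n_1+n_2+n_3=n-1}\max_\star\norm{\rho_{n_1}^\star}\max_\star\norm{\rho_{n_2}^\star}\max_\star\norm{\rho_{n_3}^\star}.
\end{equation}
The factor $\delta$ comes from $\int_0^t\rmd s$. Inserting the induction hypothesis bounds the right-hand side by
\begin{equation}
12C_Q\,\delta\,\Lambda^3(\Lambda\delta)^{n-1}\sum_{n_1+n_2+n_3=n-1}c_{n_1}c_{n_2}c_{n_3}=12C_Q\Lambda^2\delta\,(\Lambda\delta)^{n-1}c_n.
\end{equation}
Here the Catalan $3$-fold recursion closes the sum exactly. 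This is at most $\Lambda(\Lambda\delta)^nc_n$ provided $12C_Q\Lambda\leq\Lambda$. That cannot hold for the same $\Lambda$, so I rescale the constants instead. I would prove the bound with two constants, $\Lambda_1(\Lambda_2\delta)^nc_n$, where $\Lambda_1$ bounds the initial data and $\Lambda_2\coloneqq12C_Q\Lambda_1^2$. The step then reads $12C_Q\Lambda_1^3\delta(\Lambda_2\delta)^{n-1}c_n=\Lambda_1(\Lambda_2\delta)^nc_n$. Taking $\Lambda=\max(\Lambda_1,\Lambda_2)$ gives the stated form.

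Absolute convergence then follows from $c_n\leq(5e)^n$:
\begin{equation}
\sum_n\Lambda(5e\Lambda\delta)^n<\infty\quad\text{once }\delta<(5e\Lambda)^{-1}.
\end{equation}
Completeness of $L^\infty(W^{1,\infty}\cap W^{1,1})$ then gives a limit. The only genuinely delicate step is the trilinear estimate, specifically placing the derivative and the $L^1$ norm on the correct factor for the $l=3,\dots,6$ terms. There $\xi$ appears both in the outer factor $\rho^{\times}(\xi)$ and inside $\tilde\zeta_l$. The $\xi$-dependence of $\tilde\zeta_l$ needs uniform-in-$(\xi_1,\xi_2)$ bounds on $Q$ and $\nabla Q$, which is exactly the $W^{1,\infty}$ hypothesis. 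The $L^1_\xi$ norm must always be taken on the outer factor, never on $\tilde\zeta_l$.
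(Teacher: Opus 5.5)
Your proposal is correct and follows the paper's proof: induction on $n$ using the same trilinear bounds that underlie the estimates on the components of $V$, closing the sum with the ternary Catalan recursion, and using $c_n\le(5e)^n$ for absolute convergence. Your two-constant bookkeeping ($\Lambda_1$ for the data, $\Lambda_2=12C_Q\Lambda_1^2$) is what actually makes the induction close; the paper glosses over this when it writes $\Lambda^2$ in place of the $C_Q\Lambda^3$ the hypothesis produces, and your discarded single-constant line also drops a power, since it should read $12C_Q\Lambda^3\delta(\Lambda\delta)^{n-1}c_n$.
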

\begin{proof}
We prove \cref{convergence estimate} by induction, and the start $n=0$ is obviously true because $M^\star$ is bounded. Now assume $n>0$ and that the statement holds for all $0\leq m\leq n-1$. We find,

\begin{equation}
    \begin{gathered}
        \norm{\rho_n^\eta}_{L^\infty(W^{1,\infty}\cap W^{1,1})}\leq\delta\sum_{n_1+n_2+n_3=n-1}\Bigg[4\norm{\rho^\eta_{n_3}}_{L^\infty(W^{1,\infty}\cap W^{1,1})}\norm{Q_+^\eta}_{W^{1,\infty}\cap W^{1,1}}\norm{\rho_{n_1}^\times}_{L^\infty L^1}\norm{\rho_{n_2}^\times}_{L^\infty L^1} \\+ 4\norm{\rho^\times_{n_3}}_{L^\infty\l(W^{1,\infty}\cap W^{1,1}\r)}\norm{Q_+^\eta}_{W^{1,\infty}\cap W^{1,1}}\norm{\rho^\times_{n_2}}_{L^\infty L^1}\norm{\rho^\eta_{n_1}}_{L^\infty L^1}\\+ 4\norm{\rho^\times_{n_3}}_{L^\infty\l(W^{1,\infty}\cap W^{1,1}\r)}\norm{Q_+^\eta}_{W^{1,\infty}\cap W^{1,1}}\norm{\rho^\times_{n_2}}_{L^\infty L^1}\norm{\rho^\eta_{n_1}}_{L^\infty L^1}\Bigg]\\\leq\Lambda^2\delta\l(\Lambda\delta\r)^{n-1}\sum_{n_1+n_2+n_3=n-1}c_{n_1}c_{n_2}c_{n_3}\\=\Lambda(\Lambda\delta)^nc_n,
    \end{gathered}
\end{equation}
where we used the boundedness of $Q^\eta_\iota$ and its gradient. Similarly
\begin{equation}
    \begin{gathered}
        \norm{\rho^\times_n}_{L^\infty\l(W^{1,\infty}\cap W^{1,1}\r)}\leq4\max_\eta\norm{Q_+^\eta}_{W^{1,\infty}\cap W^{1,1}}\delta\sum_{n_1+n_2+n_3=n-1}\Bigg[\norm{\rho^\times_{n_3}}_{L^\infty\l(W^{1,\infty}\cap W^{1,1}\r)}\norm{\rho^\times_{n_1}}_{L^\infty L^1}\norm{\rho^\times_{n_2}}_{L^\infty L^1}\\+\norm{\rho^1_{n_3}}_{L^\infty\l(W^{1,\infty}\cap W^{1,1}\r)}\norm{\rho_{n_1}^0}_{L^\infty L^1}\norm{\rho_{n_2}^\times}_{L^\infty L^1}+\norm{\rho_{n_3}^0}_{L^\infty\l(W^{1,\infty}\cap W^{1,1}\r)}\norm{\rho_{n_1}^1}_{L^\infty L^1}\norm{\rho_{n_2}^\times}_{L^\infty L^1}\Bigg]\\\leq\Lambda(\Lambda\delta)^nc_n
    \end{gathered}
\end{equation}
    Since $c_n\leq(5e)^n$, we have the cruder estimate $\max_\star\norm{\rho_n^\star}_{L^\infty\l(W^{1,\infty}\cap W^{1,1}\r)}\leq \Lambda(\Lambda\delta)^n$ by increasing $\Lambda$. Thence, 
    \begin{equation}
        \sum_{n\geq0}\norm{\rho_n^\star}_{L^\infty\l(W^{1,\infty}\cap W^{1,1}\r)}\leq\Lambda\sum_{n\geq0}(\Lambda\delta)^n<\infty
    \end{equation}
    which becomes the geometric series upon decreasing $\delta$ so that $\Lambda\delta<1$.
\end{proof}

\subsection{Solution to the effective system as a fixed point}
We recall $N=N(L)=\lfloor\log(L)\rfloor$ and define 
\begin{equation}
    v^\star\coloneqq\rho^\star-\rho_{\leq N}^\star,
\end{equation}
where 
\begin{equation}
    \rho_{\leq N}^\star\coloneqq\sum_{n\leq N}\rho_n^\star.
\end{equation}
Then obviously, 

\begin{equation}
    v^\star = \Sigma^\star + \Leaf(v)^\star+ Q(v,v)^\star+R(v,v,v)^\star,
\end{equation}
where 
\begin{equation}
    \begin{aligned}
        \Sigma^\eta&\coloneqq-\rho_{\leq N}^\eta + M^{\eta,\eta}+2\sum_{l=1}^2\int_0^t\rho_{\leq N}^\eta(s,\xi)\int_{\R^d}\int_{\R^d}\Im\l(\tilde\zeta_{l,+}^\eta(\xi_1,\xi_2,\xi)\rho_{\leq N}^{\times,\overline\eta}(s,-\xi_1)\rho_{\leq N}^{\times,\overline\eta}(s,-\xi_2)\r)\rmd\xi_1\rmd\xi_2\rmd s\\&+2\sum_{l=3}^6\int_0^t\int_{\R^d}\int_{\R^d}\Im\l(\rho_{\leq N}^{\times,\overline\eta}(s,\xi)\tilde\zeta_{l,+}^\eta(\xi_1,\xi_2,\xi)\rho_{\leq N}^{\times,\overline\eta}(s,-\xi_2)\r)\rho_{\leq N}^\eta(s,\xi_1)\rmd\xi_1\rmd\xi_2\rmd s\\
        &=2\sum_{\substack{0\leq n_1,n_2,n_3\leq N\\N\leq n_1+n_2+n_3\leq3N}}\Bigg[\sum_{l=1}^2\int_0^t\rho_{n_3}^\eta(s,\xi)\int_{\R^d}\int_{\R^d}\Im\l(\tilde\zeta_{l,+}^\eta(\xi_1,\xi_2,\xi)\rho_{n_1}^{\times,\overline\eta}(s,-\xi_1)\rho_{n_2}^{\times,\overline\eta}(s,-\xi_2)\r)\rmd\xi_1\rmd\xi_2\rmd s\\&+\sum_{l=3}^6\int_0^t\int_{\R^d}\int_{\R^d}\Im\l(\rho_{n_3}^{\times,\overline\eta}(s,\xi)\tilde\zeta_{l,+}^\eta(\xi_1,\xi_2,\xi)\rho_{n_1}^{\times,\overline\eta}(s,-\xi_2)\r)\rho_{n_2}^\eta(s,\xi_1)\rmd\xi_1\rmd\xi_2\rmd s\Bigg],
        \end{aligned}
        \end{equation}
\begin{equation}
\begin{gathered}    
        \Sigma^\times\coloneqq-\rho_{\leq N}^\times+M^{0,1}-\rmi\sum_{l=1}^2\int_0^t\rho_{\leq N}^\times(s,\xi)\int_{\R^d}\int_{\R^d}\l(\tilde\zeta_{l,+}^0(\xi_1,\xi_2,\xi)-\overline{\tilde\zeta_{l,+}^1(\xi_1,\xi_2,\xi)}\r)\overline{\rho_{\leq N}^\times(s,-\xi_1)}\overline{\rho_{\leq N}^\times(s,-\xi_2)}\rmd\xi_1\rmd\xi_2\rmd s\\
        -\rmi\sum_{l=3}^6\int_0^t\int_{\R^d}\int_{\R^d}\l(\rho_{\leq N}^1(s,\xi)\tilde\zeta_{l,+}^0(\xi_1,\xi_2,\xi)\rho_{\leq N}^0(s,\xi_1)-\rho_{\leq N}^0(s,\xi)\overline{\tilde\zeta_{l,+}^1(\xi_1,\xi_2,\xi)}\rho_{\leq N}^1(s,\xi_1)\r)\overline{\rho_{\leq N}^\times(s,-\xi_2)}\rmd\xi_1\rmd\xi_2\rmd s\\
        =-\rmi\sum_{\substack{0\leq n_1,n_2,n_3\leq N\\N\leq n_1+n_2+n_3\leq3N}}\Bigg[\sum_{l=1}^2\int_0^t\rho_{n_3}^\times(s,\xi)\int_{\R^d}\int_{\R^d}\l(\tilde\zeta_{l,+}^0(\xi_1,\xi_2,\xi)-\overline{\tilde\zeta_{l,+}^1(\xi_1,\xi_2,\xi)}\r)\overline{\rho_{n_1}^\times(s,-\xi_1)}\overline{\rho_{n_2}^\times(s,-\xi_2)}\rmd\xi_1\rmd\xi_2\rmd s\\
        -\rmi\sum_{l=3}^6\int_0^t\int_{\R^d}\int_{\R^d}\l(\rho_{n_3}^1(s,\xi)\tilde\zeta_{l,+}^0(\xi_1,\xi_2,\xi)\rho_{n_1}^0(s,\xi_1)-\rho_{n_3}^0(s,\xi)\overline{\tilde\zeta_{l,+}^1(\xi_1,\xi_2,\xi)}\rho_{n_1}^1(s,\xi_1)\r)\overline{\rho_{n_2}^\times(s,-\xi_2)}\rmd\xi_1\rmd\xi_2\rmd s\Bigg],
    \end{gathered}
\end{equation}

\begin{equation}
    \begin{gathered}
        \Leaf^\eta(v)\coloneqq 2\sum_{l=1}^2\sum_{i=1}^3\sum_{\substack{f_i=v\\f_j=\rho_{\leq N}\forall j\neq i}}\int_0^tf_1^\eta(s,\xi)\int_{\R^d}\int_{\R^d}\Im\l(\tilde\zeta_{l,+}^\eta(\xi_1,\xi_2,\xi)f_2^{\times,\overline\eta}(s,-\xi_1)f_3^{\times,\overline\eta}(s,-\xi_2)\r)\rmd\xi_1\rmd\xi_2\rmd s\\
        +2\sum_{l=3}^6\sum_{i=1}^3\sum_{\substack{f_i=v\\f_j=\rho_{\leq N}\forall j\neq i}}\int_0^t\int_{\R^d}\int_{\R^d}\Im\l(f_1^{\times,\overline\eta}(s,\xi)\tilde\zeta_{l,+}^\eta(\xi_1,\xi_2,\xi)f_2^{\times,\overline\eta}(s,-\xi_2)\r)f_3^\eta(s,\xi_1)\rmd\xi_1\rmd\xi_2\rmd s,
    \end{gathered}
\end{equation}

\begin{equation}
    \begin{gathered}
        \Leaf^\times(v)\coloneqq-\rmi\sum_{l=1}^2\sum_{i=1}^3\sum_{\substack{f_i=v\\f_j=\rho_{\leq N}\forall j\neq i}}\int_0^tf_1^\times(s,\xi)\int_{\R^d}\int_{\R^d}\l(\tilde\zeta_{l,+}^0(\xi_1,\xi_2,\xi)-\overline{\tilde\zeta_{l,+}^1(\xi_1,\xi_2,\xi)}\r)\overline{f_2^\times(s,-\xi_1)}\overline{f_3^\times(s,-\xi_2)}\rmd\xi_1\rmd\xi_2\rmd s\\
        -\rmi\sum_{l=3}^6\sum_{i=1}^3\sum_{\substack{f_i=v\\f_j=\rho_{\leq N}\forall j\neq i}}\int_0^t\int_{\R^d}\int_{\R^d}\l(f_1^1(s,\xi)\tilde\zeta_{l,+}^0(\xi_1,\xi_2,\xi)f_2^0(s,\xi_1)-f_1^0(s,\xi)\tilde\zeta_{l,+}^1(\xi_1,\xi_2,\xi)f_2^1(s,\xi_1)\r)\overline{f_3^\times(s,-\xi_2)}\rmd\xi_1\rmd\xi_2\rmd s,
    \end{gathered}
\end{equation}

\begin{equation}
    \begin{gathered}
        Q^\eta(v)\coloneqq 2\sum_{l=1}^2\sum_{i=1}^3\sum_{\substack{f_i=\rho\\f_j=v\forall j\neq i}}\int_0^tf_1^\eta(s,\xi)\int_{\R^d}\int_{\R^d}\Im\l(\tilde\zeta_{l,+}^\eta(\xi_1,\xi_2,\xi)f_2^{\times,\overline\eta}(s,-\xi_1)f_3^{\times,\overline\eta}(s,-\xi_2)\r)\rmd\xi_1\rmd\xi_2\rmd s\\
        +2\sum_{l=3}^6\sum_{i=1}^3\sum_{\substack{f_i=\rho\\f_j=v\forall j\neq i}}\int_0^t\int_{\R^d}\int_{\R^d}\Im\l(f_1^{\times,\overline\eta}(s,\xi)\tilde\zeta_{l,+}^\eta(\xi_1,\xi_2,\xi)f_2^{\times,\overline\eta}(s,-\xi_2)\r)f_3^\eta(s,\xi_1)\rmd\xi_1\rmd\xi_2\rmd s,
    \end{gathered}
\end{equation}

\begin{equation}
    \begin{gathered}
        Q^\times(v)\coloneqq-\rmi\sum_{l=1}^2\sum_{i=1}^3\sum_{\substack{f_i=\rho\\f_j=v\forall j\neq i}}\int_0^tf_1^\times(s,\xi)\int_{\R^d}\int_{\R^d}\l(\tilde\zeta_{l,+}^0(\xi_1,\xi_2,\xi)-\overline{\tilde\zeta_{l,+}^1(\xi_1,\xi_2,\xi)}\r)\overline{f_2^\times(s,-\xi_1)}\overline{f_3^\times(s,-\xi_2)}\rmd\xi_1\rmd\xi_2\rmd s\\
        -\rmi\sum_{l=3}^6\sum_{i=1}^3\sum_{\substack{f_i=\rho\\f_j=v\forall j\neq i}}\int_0^t\int_{\R^d}\int_{\R^d}\l(f_1^1(s,\xi)\tilde\zeta_{l,+}^0(\xi_1,\xi_2,\xi)f_2^0(s,\xi_1)-f_1^0(s,\xi)\tilde\zeta_{l,+}^1(\xi_1,\xi_2,\xi)f_2^1(s,\xi_1)\r)\overline{f_3^\times(s,-\xi_2)}\rmd\xi_1\rmd\xi_2\rmd s,
    \end{gathered}
\end{equation}

\begin{equation}
    \begin{gathered}
        R^\eta(v)\coloneqq 2\sum_{l=1}^2\int_0^tv^\eta(s,\xi)\int_{\R^d}\int_{\R^d}\Im\l(\tilde\zeta_{l,+}^\eta(\xi_1,\xi_2,\xi)v^{\times,\overline\eta}(s,-\xi_1)v^{\times,\overline\eta}(s,-\xi_2)\r)\rmd\xi_1\rmd\xi_2\rmd s\\+2\sum_{l=3}^6\int_0^t\int_{\R^d}\int_{\R^d}\Im\l(v^{\times,\overline\eta}(s,\xi)\tilde\zeta_{l,+}^\eta(\xi_1,\xi_2,\xi)v{\times,\overline\eta}(s,-\xi_2)\r)v^\eta(s,\xi_1)\rmd\xi_1\rmd\xi_2\rmd s
    \end{gathered}
\end{equation}
and
\begin{equation}
    \begin{gathered}
        R^\times(v)\coloneqq-\rmi\sum_{l=1}^2\int_0^tv^\times(s,\xi)\int_{\R^d}\int_{\R^d}\l(\tilde\zeta_{l,+}^0(\xi_1,\xi_2,\xi)-\overline{\tilde\zeta_{l,+}^1(\xi_1,\xi_2,\xi)}\r)\overline{v^\times(s,-\xi_1)}\overline{v^\times(s,-\xi_2)}\rmd\xi_1\rmd\xi_2\rmd s\\
        -\rmi\sum_{l=3}^6\int_0^t\int_{\R^d}\int_{\R^d}\l(v^1(s,\xi)\tilde\zeta_{l,+}^0(\xi_1,\xi_2,\xi)v^0(s,\xi_1)-v^0(s,\xi)\overline{\tilde\zeta_{l,+}^1(\xi_1,\xi_2,\xi)}v^1(s,\xi_1)\r)\overline{v^\times(s,-\xi_2)}\rmd\xi_1\rmd\xi_2\rmd s.
    \end{gathered}
\end{equation}
Using \cref{another trivial bound},

\begin{equation}\label{1. relevant estimate}
    \begin{gathered}
        \norm{\Sigma^\eta}_{L^\infty\l(W^{1,\infty}\cap W^{1,1}\r)}+\norm{\Sigma^\times}_{L^\infty\l(W^{1,\infty}\cap W^{1,1}\r)}\\\leq \delta\max_\eta\norm{Q_+^\eta}_{W^{1,\infty}\cap W^{1,1}}\sum_{\substack{0\leq n_1,n_2,n_3\leq N\\ N\leq n_1+n_2+n_3\leq3N}}\Big[2\norm{\rho^\eta_{n_3}}_{L^{\infty}\l(W^{1,\infty}\cap W^{1,1}\r)}\norm{\rho_{n_1}^\times}_{L^1}\norm{\rho_{n_2}^\times}_{L^1}\\+4\norm{\rho^\times_{n_3}}_{L^\infty\l(W^{1,\infty}\cap W^{1,1}\r)}\norm{\rho^\times_{n_1}}_{L^1}\norm{\rho^\eta_{n_2}}_{L^1}+4\norm{\rho^\times_{n_3}}_{L^\infty\l(W^{1,\infty}\cap W^{1,1}\r)}\norm{\rho^\times_{n_1}}_{L^1}\norm{\rho^\times_{n_2}}_{L^1}\\+4\norm{\rho^1_{n_3}}_{L^\infty\l(W^{1,\infty}\cap W^{1,1}\r)}\norm{\rho^0_{n_1}}_{L^1}\norm{\rho^\times_{n_2}}_{L^1}+4\norm{\rho^0_{n_3}}_{L^\infty\l(W^{1,\infty}\cap W^{1,1}\r)}\norm{\rho^1_{n_1}}_{L^1}\norm{\rho^\times_{n_2}}_{L^1}\Big]\\\leq 14\Lambda\delta\max_\eta\norm{Q_+^\eta}_{W^{1,\infty}\cap W^{1,1}}\sum_{\substack{0\leq n_1,n_2,n_3\leq N\\ N\leq n_1+n_2+n_3\leq 3N}}\l(\Lambda\delta\r)^{n_1+n_2+n_3}\\\leq\Lambda\delta(\Lambda\delta)^N\sum_{\substack{0\leq n_1,n_2,n_3\leq N\\0\leq n_1+n_2+n_3\leq 2N}}(\Lambda\delta)^{n_1+n_2+n_3}\\\leq(\Lambda\delta)^{N+1}\l(\sum_{n\geq0}(\Lambda\delta)^n\r)^3\leq(\Lambda\delta)^{N+1}
    \end{gathered}
\end{equation}
by increasing $\Lambda$ and simultaneously decreasing $\delta$ in every step necessary. 
Similarly, we obtain 

\begin{equation}\label{invertibility bound}
    \begin{gathered}
        \norm{\Leaf^\eta(v)}_{L^\infty\l(W^{1,\infty}\cap W^{1,1}\r)}+\norm{\Leaf^\times(v)}_{L^\infty\l(W^{1,\infty}\cap W^{1,1}\r)}\\\leq 72\max_\eta\norm{Q_+^\eta}_{W^{1,\infty}\cap W^{1,1}}\Lambda\delta\l(\sum_{n=0}^N(\Lambda\delta)^{n}\r)^2\max_\star\norm{v^\star}_{L^\infty\l(W^{1,\infty}\cap W^{1,1}\r)}\\\leq\Lambda\delta\norm{v}_{\l(L^\infty\l(W^{1,\infty}\cap W^{1,1}\r)\r)^3}.
    \end{gathered}
\end{equation}
The quadratic term is estimated similarly by 

\begin{equation}\label{thrid relevant estimate}
    \begin{gathered}
        \norm{Q(v,v)^\eta}_{L^\infty\l(W^{1,\infty}\cap W^{1,1}\r)}+\norm{Q(v,v)^\times}_{L^\infty\l(W^{1,\infty}\cap W^{1,1}\r)}\\
        72\max_\eta\norm{Q_+^\eta}_{W^{1,\infty}\cap W^{1,1}}\Lambda\delta\l[\sum_{n=0}(\Lambda\delta)^n\r]\norm{v}_{\l(L^\infty\l(W^{1,\infty}\cap W^{1,1}\r)\r)^3}^2
    \end{gathered}
\end{equation}
and finally, the cubic terms deliver 

\begin{equation}\label{fourth relevant estimate}
    \begin{gathered}
        \norm{R^\eta(v,v,v)}_{L^\infty\l(W^{1,\infty}\cap W^{1,1}\r)}+\norm{R^\times(v,v,v)}_{L^\infty\l(W^{1,\infty}\cap W^{1,1}\r)}\\
        \leq24\max_\eta\norm{Q_+^\eta}_{W^{1,\infty}\cap W^{1,1}}\delta\norm{v}_{\l(L^\infty\l(W^{1,\infty}\cap W^{1,1}\r)\r)^3}^3.
    \end{gathered}
\end{equation}
Upon choosing $\delta$ small enough, we find with \cref{invertibility bound} that 
\begin{equation}
    \norm{\Leaf}_{\l(L^\infty\l(W^{1,\infty}\cap W^{1,1}\r)\r)^3\to\l(L^\infty\l(W^{1,\infty}\cap W^{1,1}\r)\r)^3}\leq\frac{1}{3}
\end{equation}
so that $\Id-\Leaf$ is invertible using the Neumann series of inversion, and so that 
\begin{equation}\label{fifth relevant estimate}
    \norm{\l(\Id-\Leaf\r)^{-1}}_{\mathrm{op}}\leq\frac{1}{1-\norm{\Leaf}_{\mathrm{op}}}\leq\frac{3}{2}\leq\Lambda. 
\end{equation}
We just proved that the following reformulation of the fixed-point equation 

\begin{equation}
    v = \l(\Id-\Leaf\r)^{-1}\l(\Sigma+Q(v,v)+R(v,v,v)\r)
\end{equation}
is well-formulated.
\begin{proposition}\label{having a solution to the resonant system as a fixed point solution}
    The operator
    \begin{equation}
        \l(L^\infty\l(W^{1,\infty}\cap W^{1,1}\r)\r)^3\ni v\mapsto\l(\Id-\Leaf\r)^{-1}\l(\Sigma+Q(v,v)+R(v,v,v)\r)\in\l(L^\infty\l(W^{1,\infty}\cap W^{1,1}\r)\r)^3
    \end{equation}
    restricts to a contraction on the closed ball
    \begin{equation}
        \l\{v\in\l(L^\infty\l(W^{1,\infty}\cap W^{1,1}\r)\r)^3\bigm\vert\norm{v}_{\l(L^\infty\l(W^{1,\infty}\cap W^{1,1}\r)\r)^3}\leq(\Lambda\delta)^N\r\}.
    \end{equation}
\end{proposition}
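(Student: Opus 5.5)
The plan is the standard Banach fixed-point argument: collect the bounds \cref{1. relevant estimate,invertibility bound,thrid relevant estimate,fourth relevant estimate,fifth relevant estimate} and verify the two hypotheses, invariance of the ball and contraction on it.

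Write $\mathcal Y\coloneqq\l(L^\infty\l(W^{1,\infty}\cap W^{1,1}\r)\r)^3$, $\Phi(v)\coloneqq\l(\Id-\Leaf\r)^{-1}\l(\Sigma+Q(v,v)+R(v,v,v)\r)$, and $B\coloneqq\{\norm{v}_{\mathcal Y}\leq(\Lambda\delta)^N\}$. Fix $\delta$ small enough that $\Lambda\delta<1$ and \cref{fifth relevant estimate} holds. Then $\mathcal Y$ is a Banach space and $B$ is closed, so once the two properties below are established, Banach's theorem gives a unique fixed point in $B$.

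\textbf{Invariance.} Take $v\in B$. By \cref{fifth relevant estimate},
\[
\norm{\Phi(v)}_{\mathcal Y}\leq\tfrac32\l(\norm{\Sigma}_{\mathcal Y}+\norm{Q(v,v)}_{\mathcal Y}+\norm{R(v,v,v)}_{\mathcal Y}\r).
\]
\Cref{1. relevant estimate} gives $\norm{\Sigma}_{\mathcal Y}\leq(\Lambda\delta)^{N+1}$. By \cref{thrid relevant estimate}, together with the geometric series in $\Lambda\delta$,
\[
\norm{Q(v,v)}_{\mathcal Y}\leq\Lambda\delta\norm{v}_{\mathcal Y}^2\leq\Lambda\delta(\Lambda\delta)^{2N}.
\]
By \cref{fourth relevant estimate}, $\norm{R(v,v,v)}_{\mathcal Y}\leq\Lambda\delta(\Lambda\delta)^{3N}$. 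Each term is therefore $(\Lambda\delta)^N$ times an extra factor at most $\Lambda\delta$. Shrinking $\delta$ (with $\Lambda$ fixed, independent of $N$) so that $\tfrac32\cdot3\Lambda\delta\leq1$ gives $\Phi(B)\subseteq B$.

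\textbf{Contraction.} $\Sigma$ cancels in $\Phi(v)-\Phi(w)$. Since $Q$ and $R$ are multilinear (quadratic and cubic, with the coefficients $\tilde\zeta$ fixed and the remaining slot $\rho$ bounded by \cref{another trivial bound}), telescope
\[
Q(v,v)-Q(w,w)=Q(v-w,v)+Q(w,v-w),
\]
and treat $R$ similarly with three terms. The same Hölder/Young estimates used for \cref{thrid relevant estimate,fourth relevant estimate} bound each piece by one factor of $\norm{v-w}_{\mathcal Y}$ times the norms of the other arguments: an $L^1$ norm in the integrated variables and a $W^{1,\infty}\cap W^{1,1}$ norm in the free variable $\xi$. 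This gives
\[
\norm{\Phi(v)-\Phi(w)}_{\mathcal Y}\leq\tfrac32\,\Lambda\delta\l(2(\Lambda\delta)^N+3(\Lambda\delta)^{2N}\r)\norm{v-w}_{\mathcal Y}\leq\tfrac12\norm{v-w}_{\mathcal Y}
\]
for $\delta$ small.

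\textbf{Main obstacle.} The only genuinely delicate point is that the constants must be uniform in $N$, since $N=\lfloor\log L\rfloor$ grows. This holds because every sum over $n\leq N$ is dominated by a convergent geometric series once $\Lambda\delta<1$. A second point needs care: the $\Im$ and complex-conjugate structure in $Q^\eta$ and $Q^\times$ is real-linear, not complex-linear, so the telescoping must be done in the real-multilinear sense. This does not affect the norm bounds. Finally, $v$ is required to be the genuine difference $\rho-\rho_{\leq N}$. This follows from uniqueness in $B$ combined with \cref{another trivial bound}, which gives $\norm{\rho-\rho_{\leq N}}_{\mathcal Y}\leq\Lambda\sum_{n>N}(\Lambda\delta)^n\lesssim(\Lambda\delta)^{N+1}$, so the true remainder lies in $B$ after adjusting $\Lambda$.
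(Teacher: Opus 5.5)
Your proposal is correct and follows the paper's argument. Invariance of the ball comes from the bounds on $\Sigma$, $Q$, $R$ and $(\Id-\Leaf)^{-1}$, where you are more careful than the paper in explicitly using $\norm{\Sigma}\leq(\Lambda\delta)^{N+1}$. Contraction comes from telescoping the multilinear terms, giving a Lipschitz constant of order $\Lambda\delta\bigl((\Lambda\delta)^N+(\Lambda\delta)^{2N}\bigr)$, uniformly in $N$.

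Your closing remark identifying the fixed point with $\rho-\rho_{\leq N}$ lies outside the statement. It also tacitly assumes $\rho=\sum_n\rho_n$, which would already give the smallness directly. The cleaner route is that $\rho_{\leq N}+v^\ast$ solves the macroscopic system, hence equals $\rho$ by uniqueness of its local solution.
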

\begin{proof}
    The fact that this operator maps the ball onto itself is easily proven from the above estimates \eqref{invertibility bound}, \eqref{thrid relevant estimate}, \eqref{fourth relevant estimate}, and \eqref{fifth relevant estimate}. The contractibility property is proven by simply calculating 
    \begin{equation}
        \begin{gathered}
        \norm{\l(\Id-\Leaf\r)^{-1}\l(\Sigma+Q(v,v)+R(v,v,v)\r)+\l(\Id-\Leaf\r)^{-1}\l(\Sigma+Q(w,w)+R(w,w,w)\r)}_{\l(L^\infty\l(W^{1,\infty}\cap W^{1,1}\r)\r)^3}\\\leq\Lambda\l(\norm{Q(v,v)-Q(w,w)}_{\l(L^\infty\l(W^{1,\infty}\cap W^{1,1}\r)\r)^3}+\norm{R(v,v,v)-R(w,w,w)}_{\l(L^\infty\l(W^{1,\infty}\cap W^{1,1}\r)\r)^3}\r).
    \end{gathered}
    \end{equation}
    One may bound 
    \begin{equation}
        \begin{gathered}
            \norm{Q(v,v)-Q(w,w)}_{\l(L^\infty\l(W^{1,\infty}\cap W^{1,1}\r)\r)^3}\leq\Lambda\delta\sum_{n=0}^N(\Lambda\delta)^n(\Lambda\delta)^N\norm{v-w}_{\l(L^\infty\l(W^{1,\infty}\cap W^{1,1}\r)\r)^3}\\\leq(\Lambda\delta)^{N+1}\norm{v-w}_{\l(L^\infty\l(W^{1,\infty}\cap W^{1,1}\r)\r)^3}
        \end{gathered}
    \end{equation}
    and 
    \begin{equation}
        \begin{gathered}
            \norm{R(v,v,v)-R(w,w,w)}_{\l(L^\infty\l(W^{1,\infty}\cap W^{1,1}\r)\r)^3}\leq\Lambda\delta(\Lambda\delta)^{2N}\norm{v-w}_{\l(L^\infty\l(W^{1,\infty}\cap W^{1,1}\r)\r)^3}\\(\Lambda\delta)^{2N+1}\norm{v-w}_{\l(L^\infty\l(W^{1,\infty}\cap W^{1,1}\r)\r)^3},
        \end{gathered}
    \end{equation}
    so that
    \begin{equation}
        \begin{gathered}
        \norm{\l(\Id-\Leaf\r)^{-1}\l(\Sigma+Q(v,v)+R(v,v,v)\r)+\l(\Id-\Leaf\r)^{-1}\l(\Sigma+Q(w,w)+R(w,w,w)\r)}_{\l(L^\infty\l(W^{1,\infty}\cap W^{1,1}\r)\r)^3}\\\leq\Lambda(\Lambda\delta)^{N+1}\l(1+(\Lambda\delta)^N\r)\norm{v-w}_{\l(L^\infty\l(W^{1,\infty}\cap W^{1,1}\r)\r)^3}
    \end{gathered}
    \end{equation}
    and for $\delta$ small enough and $L$ large enough, we find $\Lambda(\Lambda\delta)^{N(L)+1}\l(1+(\Lambda\delta)^{N(L)}\r)<1$ and the contraction property follows. 
\end{proof}

\section{Third part of the proof of \cref{this is the main theorem}}\label{fifth section}

\subsection{On resonant nodes}

\begin{lemma}\label{properties of resonant nodes lemma}
    If $\node\in\Node(C)$ is resonant, then If $\node\in\Node(C)$ is resonant, then it can only be $1,4$- or $5$-resonant. If furthermore $\node\in\Node(C)$ is such that $\l\{\node'\leq\node\mid\node'\in\Node(C)\r\}$ only contains resonant nodes and if we denote by $C(\node)=\l\{\node_1,\ldots,\node_5\r\}$ the children of $\node$ (as in \cref{definition of being a resonant node}), then there exists $\Leaf(C)\ni\leaf<\node$ such that $K_C(\node)=K_C\l(\node_5\r)=K_C(\leaf)$, and $\l(K_C(\node_i),K_C(\node_5)\r)$ are linearly independent for all $i\in\llbracket1,4\rrbracket$.
  
\end{lemma}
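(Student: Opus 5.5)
The plan has two parts: a sign-counting argument for the first claim, and an induction along the parentality order for the second.

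\textbf{Which positions can carry the root momentum.} Recall the sign rule for the children of a node $\node$ with sign $\iota$: positions $1,4,5$ carry sign $\iota$ and positions $2,3$ carry sign $-\iota$. By \cref{definition of being a resonant node}, resonance asks for two pairs of children with opposite signs and cancelling momenta, plus a fifth child $\node_5$ with $K_C(\node_5)=K_C(\node)$. I would enumerate the possible pairings. A pair needs opposite signs, so each pair consists of one child from $\{1,4,5\}$ and one from $\{2,3\}$. Two disjoint such pairs exhaust $\{2,3\}$ and use two of the positions $\{1,4,5\}$. The remaining child therefore sits at position $1$, $4$ or $5$. This gives exactly the six configurations listed in the heuristic section, and in particular $\node$ is $1$-, $4$- or $5$-resonant. (Degenerate coincidences such as $K_C(\node_i)=K_C(\node)$ at a position $2,3$ can occur only if some subsum vanishes, which a sign count excludes.) One also checks directly that in each configuration $\Omega_\node\equiv 0$.

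\textbf{Induction for the second claim.} I would argue by induction on the height of $\node$ among nodes whose whole subtree $\{\node'\le\node\}$ is resonant. Let $\node_5$ be the child with $K_C(\node_5)=K_C(\node)$. There are two cases.
\begin{itemize}
\item If $\node_5$ is a leaf, take $\leaf=\node_5$.
\item If $\node_5$ is a node, its subtree is again fully resonant. By the induction hypothesis there is a leaf $\leaf<\node_5<\node$ with $K_C(\node_5)=K_C(\leaf)$.
\end{itemize}
In both cases $K_C(\node)=K_C(\node_5)=K_C(\leaf)$. This descent is the ``stem'' of resonant nodes, and it exhibits the ternary-tree structure that the paper uses later.

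\textbf{Linear independence.} Write $\leaf$ for the terminal leaf of the $\node_5$-stem. Following the stem down, $K_C(\node_5)$ equals $K_C(\leaf)$, which in the basis $\l(K_C(\leaf'))_{\leaf'\in\Leaf(C)_+}$ of $V(C)$ is a signed basis vector $\pm e_{\leaf}$ (or $\mp e_{\sigma(\leaf)}$ if $\leaf$ is negative). Each other child $\node_i$, $i\le4$, has $K_C(\node_i)=\sum_{\leaf'\le\node_i}K_C(\leaf')$, a sum over leaves disjoint from the $\node_5$-subtree.

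First, suppose for contradiction that $K_C(\node_i)$ and $K_C(\node_5)$ are dependent. Then $K_C(\node_i)=c\,K_C(\leaf)$ for some $c$. Since $K_C(\node_i)$ is a signed combination of basis vectors with coefficients in $\{-1,0,1\}$, this means $K_C(\node_i)\in\{0,\pm K_C(\leaf)\}$.
\begin{itemize}
\item If $K_C(\node_i)=\pm K_C(\leaf)$, the subtree of $\node_i$ must contain $\leaf$ or $\sigma(\leaf)$. It cannot contain $\leaf$, which lies under $\node_5$. So it contains $\sigma(\leaf)$, and the remaining leaves of $\node_i$ must be self-coupled.
\item If $K_C(\node_i)=0$, the leaves under $\node_i$ are self-coupled.
\end{itemize}
The step I expect to be the main obstacle is excluding these degenerate couplings. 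I would first try a parity argument: a $5$-ary subtree has $4m+1$ leaves, an odd number, so its leaf set cannot be self-coupled. That settles $K_C(\node_i)=0$, and in the other case it leaves exactly $\sigma(\leaf)$ as the unpaired leaf. Second, I would use the resonance of $\node$. The partner $\node_j$ of $\node_i$ satisfies $K_C(\node_j)=-K_C(\node_i)=\mp K_C(\leaf)$, and by the same parity argument its subtree contains exactly one externally coupled leaf, which must be $\leaf$ or $\sigma(\leaf)$. Both are already used (one under $\node_5$, one under $\node_i$), which is a contradiction.

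If instead one reads ``linearly independent'' as linear independence of the coefficient vectors in $V(C)$, I would simply apply the dimension-$d$ statement coordinatewise, since $V(C)$ is a free module over the leaf basis.
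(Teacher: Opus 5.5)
Your proof is correct. For the first claim (the sign rule puts the free child at position $1$, $4$ or $5$) and for the existence of $\leaf$ (induction down the $\node_5$-chain), it is the paper's argument.

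The linear independence is where you take a different route. The paper applies the existence statement to \emph{all five} children, obtaining leaves $\leaf_1,\dots,\leaf_5$ with $K_C(\node_i)=K_C(\leaf_i)$. Resonance then forces $\sigma(\leaf_1)=\leaf_2$ and $\sigma(\leaf_3)=\leaf_4$. So $\sigma(\leaf_5)$ cannot be any $\leaf_i$ with $i\le 4$, and everything reduces to distinct signed basis vectors.

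You use a representative leaf only for $\node_5$ and treat $K_C(\node_i)$ directly. Your ingredients are its coefficients in $\{-1,0,1\}$ in the leaf basis, the odd leaf count $4m+1$ of a subtree (so no subtree is self-coupled), and the resonance partner $\node_j$. The momentum of $\node_j$ would have to be carried by $\leaf$ or $\sigma(\leaf)$, and both already sit in other subtrees.

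The paper's version is shorter, but it genuinely uses that the subtrees of $\node_1,\dots,\node_4$ are fully resonant. Yours needs resonance only of $\node$ and of the chain below $\node_5$. Combined with the fact behind your parenthetical remark, the same partner argument gives independence for every resonant node, with no hypothesis on the subtrees at all. That fact is that two distinct children with equal momenta must both have momentum zero, which the leaf count forbids.

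Your closing remark about two readings of ``linearly independent'' is unnecessary. The coordinate maps $\kappa\mapsto\kappa(\leaf)$, $\leaf\in\Leaf(C)_+$, are linearly independent. So independence of the $K_C$'s as linear maps is the same as independence of their coefficient vectors, which your argument already treats.
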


\begin{proof}
    The first statement is clear from the nonlinearity of the quintic Schrödinger equation: If node $\node$ has sign and colour $(\iota,\eta)$, then its children have sign and colour $(\iota,\eta)$, $(-\iota,\eta)$, $(-\iota,\eta)$, $(\iota,\overline\eta)$ and $(\iota,\overline\eta)$.

    We prove the first assertion first by induction on the height of $\node$. For $h(\node)=1$, the statement is clear as the children of $\node$ are all leaves. Now suppose $h(\node)>1$. Now, if $\node_5$ was a leaf, we are done and if not, then it is resonant, and the induction assumption implies that there exists $\Leaf(C)\ni\leaf<\node_5$ such that $K_C(\node_5)=K_C(\leaf)$.

    We may now use the first statement to prove the second. There exist $\leaf_i\in\Leaf(C)$ such that $K_C(\node_i)=K_C(\leaf_i)$ and thus, $K_C(\leaf_1)+K_C(\leaf_2)=0$, $K_C(\leaf_3)+K_C(\leaf_4)=0$ and $K_C(\node)=K_C(\leaf_5)$. This implies $\sigma(\leaf_1)=\leaf_2$ and $\sigma(\leaf_3)=\leaf_4$. Assuming the contrary of being linearly independent is equivalent to assuming $\sigma(\leaf_5)\in\{\leaf_1,\leaf_2\}\sqcup\{\leaf_3,\leaf_4\}$. In that case, we would be led to the contradictory statement $K_C(\leaf_i)=0$ for some $i\in\llbracket1,4\rrbracket$.
\end{proof}

\begin{lemma} 

    Let $C$ be a couple and suppose $\node\in\Node(C)$ is such that all elements of $\Node_\node(C)\coloneqq\{\node'\leq\node\mid\node'\in\Node(C)\}$ are resonant nodes. Then, $\Omega_{\node'}=0$ for all $\node'\in\Node_\node(C)$ and furthermore 
    \begin{equation}
        \int_{I_\node(t)}\prod_{\node'\in\Node_\node(C)}e^{\rmi\epsilon^{-1}\iota_{\node'}\Omega_{\node'}t_{\node'}}\rmd t_{\node'} = \abs{\M(\Node_\node(C))}\frac{t^{\abs{\Node_\node(C)}}}{\abs{\Node_\node(C)}!},
    \end{equation}
    where 
    \begin{equation}
        I_\node(t)\coloneqq\l\{\l(t_{\Node(C)\ni\node'\leq\node}\r)\in[0,t]^{\abs{\Node_\node(C)}}\bigm\vert\node_1<\node_2\Rightarrow t_{\node_1}<t_{\node_2}\r\}.
    \end{equation}
\end{lemma}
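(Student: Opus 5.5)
The statement has two parts: first $\Omega_{\node'}=0$ on $\Node_\node(C)$, and then the time integral reduces to the volume of $I_\node(t)$. I would prove the first part pointwise in the decoration $\kappa$, one node at a time, and then compute the integral as a volume.

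\emph{Step 1: $\Omega_{\node'}=0$ for every resonant $\node'$.} Fix $\node'\in\Node_\node(C)$. By \cref{properties of resonant nodes lemma} it is $1$-, $4$- or $5$-resonant. Its children carry signs $\iota,-\iota,-\iota,\iota,\iota$, where $\iota=\iota_{\node'}$, in left-to-right order. By \cref{definition of being a resonant node} the children split into two pairs with opposite signs and opposite wave numbers, together with one remaining child $\node'_5$ satisfying $K_C(\node'_5)=K_C(\node')$. The sign rule then forces $\iota_{\node'_5}=\iota_{\node'}$; this is precisely why only positions $1,4,5$ can carry the resonant child. Inserting this into
\[
\Omega_{\node'}=\iota_{\node'}\abs{K_C(\node')}^2-\sum_{\node''\in C(\node')}\iota_{\node''}\abs{K_C(\node'')}^2,
\]
each opposite-sign pair contributes $\iota\abs{K}^2-\iota\abs{-K}^2=0$. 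The remaining term $\iota_{\node'_5}\abs{K_C(\node'_5)}^2$ cancels $\iota_{\node'}\abs{K_C(\node')}^2$. Hence $\Omega_{\node'}\equiv0$ as a function of $\kappa$. This argument uses only the resonance of each individual node, not of the whole subtree; the subtree hypothesis is needed only so that every node in $\Node_\node(C)$ qualifies.

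\emph{Step 2: the time integral.} By Step 1 the integrand is identically $1$, so the integral equals the Lebesgue measure of $I_\node(t)$. Following the decomposition used in the Corollary after \cref{resolvent identity} and in \cref{separation of integration domains}, I would write $I_\node(t)$, up to a null set, as the disjoint union over $<'\in\M(\Node_\node(C))$ of the simplices $\{0\le t_{f_{<'}(1)}<\cdots<t_{f_{<'}(m)}\le t\}$, where $m=\abs{\Node_\node(C)}$. Each simplex has volume $t^m/m!$. Summing over the compatible total orders gives $\abs{\M(\Node_\node(C))}\,t^m/m!$, as claimed.

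\emph{Main obstacle.} The only delicate point is the sign bookkeeping in Step 1. One must check that each pair produced by the resonance definition indeed has opposite signs, so that it cancels with a minus sign rather than adding, and that the surviving child has the same sign as its parent. Both facts are encoded in \cref{definition of being a resonant node} together with the sign rule for children, so I expect the cancellation to go through directly. A secondary point is that $\mathcal Z_A$ has measure zero, which was already used earlier in the paper.
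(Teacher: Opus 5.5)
Your proof is correct and follows the paper's argument. The paper also deduces $\Omega_{\node'}=0$ directly from the definition of a resonant node; you simply spell out the sign bookkeeping that it leaves implicit. It then evaluates the integral, as you do, by splitting $I_\node(t)$ up to a null set into the simplices indexed by $\M(\Node_\node(C))$, each of volume $t^{m}/m!$.
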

\begin{proof}
    The assertion $\Omega_{\node'}=0$ for all $\node'\in\Node_\node(C)$ follows simply from the definition of being a resonant node and thus 
    \begin{equation}
        \int_{I_\node(t)}\prod_{\node'\in\Node_\node(C)}e^{\rmi\iota\epsilon^{-1}\Omega_{\node'}t_{\node'}}\rmd t_{\node'} =\sum_{\rho\in\M(\Node_\node(C))}\int_{0\leq t_{\rho(1)}<\cdots<t_{\rho(n_\node)}\leq t}\rmd\boldsymbol{t} = \abs{\M(\Node_\node(C))}\frac{t^{n_\node}}{n_\node!},
    \end{equation}
    where we simply denoted $n_\node\coloneqq\abs{\Node_\node(C)}$.
\end{proof}

\begin{lemma}\label{ein kekiges lemma}
    Let $C$ be a couple and $\node\in\Node(C)$ be such that $\Node(C)\ni\node'<\node$ implies $\node'$ is resonant. We have 
    \begin{gather}
        \abs{\int_{I_\node(t)}\prod_{\node'\leq\node}e^{\rmi\epsilon^{-1}\Omega_{\node'}t_{\node'}}\rmd t_{\node'}}\leq\begin{cases}
            \frac{t^{n_\node}}{n_\node}&\text{if }\abs{\Omega_\node}\leq3\epsilon\delta^{-1},\\
            3t^{n_\node-1}\frac{\epsilon}{\abs{\Omega_\node}}&\text{otherwise.}
        \end{cases}
    \end{gather}
\end{lemma}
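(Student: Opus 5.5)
The plan is to reduce the integral to a single oscillatory integral in the root time $t_\node$. Every node strictly below $\node$ is resonant, so the lemma just proven gives $\Omega_{\node'}=0$ for all $\node'<\node$. The integrand over $I_\node(t)$ therefore collapses to $e^{\rmi\epsilon^{-1}\Omega_\node t_\node}$. I would then split the domain as in \cref{trivial remark i should use somehow}, with $\tilde\node=\node$. Since $\node$ is the maximal element of $\Node_\node(C)$, the constraints on the other times are exactly $t_{\node'}<t_\node$ together with the parentality order among the descendants. This gives
\begin{equation}
\int_{I_\node(t)}\prod_{\node'\leq\node}e^{\rmi\epsilon^{-1}\Omega_{\node'}t_{\node'}}\rmd t_{\node'}=\int_0^t e^{\rmi\epsilon^{-1}\Omega_\node s}\,\phi(s)\,\rmd s,
\end{equation}
where $\phi(s)$ is the volume of the set of admissible times for the descendants below level $s$. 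By homogeneity, $\phi(s)=c\,s^{n_\node-1}$ with $c=\abs{\M(\Node_\node(C)\setminus\{\node\})}/(n_\node-1)!\leq1$, since there are at most $(n_\node-1)!$ compatible orders.

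\textbf{Case $\abs{\Omega_\node}\leq3\epsilon\delta^{-1}$.} Here I simply bound the phase by $1$. This gives $\int_0^t c\,s^{n_\node-1}\rmd s=c\,t^{n_\node}/n_\node\leq t^{n_\node}/n_\node$, which is the first case of the claim.

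\textbf{Case $\abs{\Omega_\node}>3\epsilon\delta^{-1}$.} Here I integrate by parts in $s$, writing $e^{\rmi\epsilon^{-1}\Omega_\node s}=\frac{\epsilon}{\rmi\Omega_\node}\p_s e^{\rmi\epsilon^{-1}\Omega_\node s}$. The boundary terms are $\frac{\epsilon}{\abs{\Omega_\node}}\phi(t)\leq\frac{\epsilon}{\abs{\Omega_\node}}t^{n_\node-1}$ and zero at $s=0$ when $n_\node\geq2$. The remaining term is $\frac{\epsilon}{\abs{\Omega_\node}}\int_0^t\abs{\phi'(s)}\rmd s$. Because $\phi$ is monotone, this equals $\frac{\epsilon}{\abs{\Omega_\node}}\phi(t)\leq\frac{\epsilon}{\abs{\Omega_\node}}t^{n_\node-1}$. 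Together the bound is at most $2t^{n_\node-1}\epsilon/\abs{\Omega_\node}\leq 3t^{n_\node-1}\epsilon/\abs{\Omega_\node}$.

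For the degenerate case $n_\node=1$, the integral is explicitly $\frac{\epsilon}{\rmi\Omega_\node}\bigl(e^{\rmi\epsilon^{-1}\Omega_\node t}-1\bigr)$. Its modulus is at most $2\epsilon/\abs{\Omega_\node}$, which again fits under the constant $3$.

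The only step needing care is the identification of $\phi$ as a monotone power of $s$. It rests on the fact that every descendant time is constrained only from above by $t_\node$, with no lower constraint other than $0$. This holds because $\node$ is the unique maximal element of $\Node_\node(C)$. The threshold $3\epsilon\delta^{-1}$ plays no essential role; it only decides which of the two bounds is stated.
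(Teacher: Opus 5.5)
Your proof is correct and follows essentially the route the paper imports by citing Lemma 5.3 of the Klein--Gordon reference. The resonant descendants carry zero phase, so the integral reduces to $\int_0^t e^{\rmi\epsilon^{-1}\Omega_\node s}\,c\,s^{n_\node-1}\,\rmd s$ with $c\le 1$, which you bound trivially in the first case and by one integration by parts in the second (your constant $2$ is slightly better than the stated $3$).
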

\begin{proof}
See Lemma 5.3 in \cite{desuzzoni2025waveturbulencesemilinearkleingordon}.    
\end{proof}

\begin{lemma}\label{The lemma that I gotta apply now}
    Let $C\in\mathcal C_{n_1,n_2}$ be such that there exists $\node\in\Node(C)$ with the property that if $\Node(C)\ni\node'<\node$, then $\node'$ is resonant. Then we have for all $t\in[0,\delta]$,
    \begin{equation}
        \abs{\int_{I_C(t)}\prod_{\node'\in\Node(C)}e^{\rmi\epsilon^{-1}\iota_{\node'}\Omega_{\node'}t_{\node'}\rmd t_{\node'}}}\leq\begin{cases}
            {t^{n(C)}}&\text{if }\abs{\Omega_\node}\leq3\epsilon\delta^{-1},\\
            3t^{n(C)-1}\frac{\epsilon}{\abs{\Omega_\node}}&\text{otherwise.}
        \end{cases}
    \end{equation}
\end{lemma}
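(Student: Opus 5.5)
The plan is to factor the integral over $I_C(t)$ into the piece attached to the subtree rooted at $\node$, where \cref{ein kekiges lemma} applies, and a remainder piece bounded trivially by volume. Write $\Node_\node(C)=\{\node'\le\node\}$ and $\Node^c\coloneqq\Node(C)\setminus\Node_\node(C)$. The constraints defining $I_C(t)$ only involve the parentality order, and within $\Node_\node(C)$ this order is self-contained except for the constraint $t_\node\le t_{P(\node)}$ (if $\node$ is not a root). I will use the same decomposition as in \cref{trivial remark i should use somehow}: fix the times $(t_{\node''})_{\node''\in\Node^c}$, which range over a set of the form $I_{C}^c(t)$ whose volume is at most $t^{\abs{\Node^c}}$. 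Given these, the times of $\Node_\node(C)$ range over $I_\node(s)$ with $s\coloneqq t_{P(\node)}\le t$, or $s=t$ if $\node$ is a root. Fubini then gives
\begin{equation*}
\abs{\int_{I_C(t)}\prod_{\node'}e^{\rmi\epsilon^{-1}\iota_{\node'}\Omega_{\node'}t_{\node'}}\rmd t_{\node'}}\le\int_{I_C^c(t)}\abs{\int_{I_\node(s)}\prod_{\node'\le\node}e^{\rmi\epsilon^{-1}\iota_{\node'}\Omega_{\node'}t_{\node'}}\rmd t_{\node'}}\prod_{\node''\in\Node^c}\rmd t_{\node''}.
\end{equation*}

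For the inner integral I invoke \cref{ein kekiges lemma} at time $s\in[0,t]$. The signs $\iota_{\node'}$ only flip the sign of the phases, and that lemma's argument is insensitive to this (alternatively, conjugate). The inner integral is then bounded either by $s^{n_\node}/n_\node\le t^{n_\node}$ when $\abs{\Omega_\node}\le3\epsilon\delta^{-1}$, or by $3s^{n_\node-1}\epsilon/\abs{\Omega_\node}\le 3t^{n_\node-1}\epsilon/\abs{\Omega_\node}$ otherwise. Both bounds are uniform in $s\le t$. Integrating over the outer variables multiplies by at most $t^{\abs{\Node^c}}$, which gives $t^{n(C)}$ in the first case and $3t^{n(C)-1}\epsilon/\abs{\Omega_\node}$ in the second. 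This uses $n_\node+\abs{\Node^c}=n(C)$, and in the second case it uses the monotonicity $s^{n_\node-1}\le t^{n_\node-1}$ for $t\le\delta$.

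The main obstacle is making the Fubini step honest. I need the outer domain to have volume at most $t^{\abs{\Node^c}}$ (true, since it sits inside $[0,t]^{\abs{\Node^c}}$). I also need that, after fixing the outer times, the inner domain is exactly $I_\node(t_{P(\node)})$. This holds because the nodes below $\node$ have no order relations with nodes outside $\Node_\node(C)$ other than through the ancestors of $\node$, and those relations are all implied by $t_\node\le t_{P(\node)}$ via transitivity. When $C$ consists of two trees, $I_C=I_{T_1}\times I_{T_2}$, so the tree not containing $\node$ simply contributes its volume $\le t^{n(T_j)}$.
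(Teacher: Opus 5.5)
Your argument is correct and is essentially the proof the paper has in mind; the paper simply defers to Lemma~5.4 of the Klein--Gordon reference. You use Fubini to split $I_C(t)$ into two groups of times: those of $\Node_\node(C)$, which range over $I_\node(t_{P(\node)})$ with $t_{P(\node)}\le t$ and are controlled by \cref{ein kekiges lemma}, and the remaining times, bounded by the trivial volume $t^{n(C)-\abs{\Node_\node(C)}}$. Your observation that the only cross-constraint is $t_\node\le t_{P(\node)}$, with all others following by transitivity, is exactly what makes this factorization legitimate.
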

\begin{proof}
    This is a minor adaptation of Lemma 5.4 in \cite{desuzzoni2025waveturbulencesemilinearkleingordon}.
\end{proof}

\subsection{On non-resonant nodes}

\begin{theorem} \label{theorem: every non-resonant couple contributes with decay in box size}

    There exist $\Lambda,\alpha>0$ such that for all couples $C$ that contain at least one non-resonant node, we have 
    \begin{equation}
        \sup_{t\in[0,\lambda]}\abs{{\mathcal J_C}\l(\epsilon^{-1}t,k\r)}\leq\Lambda^{n(C)+1}\epsilon^\alpha
    \end{equation}
    for all $k\in\Z_L^d$.
\end{theorem}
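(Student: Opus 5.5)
The plan is to follow the strategy of the analogous statement in \cite{desuzzoni2025waveturbulencesemilinearkleingordon} (their Section 5): pick a minimal non-resonant node and exploit the time oscillation there, rather than re-running the full machinery of \cref{key proposition}. That proposition already gives $\epsilon^{\alpha(n-n_{\mathrm{res}})}$, but only at the cost of $L^{\mathcal K}$ and only for $n\le\abs{\ln\epsilon}$, so it cannot be used directly. Since $C$ contains a non-resonant node, I choose $\node\in\Node(C)\setminus\Node_{\mathrm{res}}(C)$ that is minimal for the parentality order, meaning every $\node'<\node$ is resonant. This is exactly the hypothesis of \cref{The lemma that I gotta apply now}. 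That lemma bounds the time integral in \eqref{crucial object} by $t^{n(C)}$ whenever $\abs{\Omega_\node}\le 3\epsilon\delta^{-1}$, and by $3t^{n(C)-1}\epsilon/\abs{\Omega_\node}$ otherwise, pointwise in the decoration $\kappa$.

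The next step is to insert this bound into \eqref{crucial object} and split the sum over $\kappa\in\D_k(C)$ into a near-resonant part, where $\abs{\Omega_\node}\le\epsilon^{1-\theta}$ for a small $\theta>0$ still to be fixed, and its complement.
\begin{itemize}
\item On the complement, the time factor gains $\epsilon/\abs{\Omega_\node}\le\epsilon^{\theta}$. The remaining sum is then bounded exactly as in \cref{trivial bound}: $Q_\node$ and $M$ are bounded, the $M$'s are supported in $B_R(0)$, and there are about $(L^dR^d)^{2n}$ decorations against the prefactor $L^{-2dn}$. This gives $\Lambda^{n+1}\epsilon^{\theta}$.
\item On the near-resonant set, the time factor is only bounded by $t^{n}$. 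Here I use the change of variables of \cref{change of variables} to isolate the wave numbers of the children of $\node$.
\end{itemize}
Because $\node$ is non-resonant, the children's momenta that are free coordinates do not satisfy the pairing identities of \cref{definition of being a resonant node}. The quantity $\Omega_\node$ is therefore a nondegenerate quadratic (or, in the degenerate sign case, linear with a nonzero coefficient vector $P$) function of at least one free variable $x\in\Z_L^d$. The relevant counting is \cref{main technical necessary theorem} in its discrete form via \cref{sums into integrals}. It should show that
\[
\#\{x\in B^{\Z_L^d}_{(2n+1)R}:\abs{\Omega_\node}\le\epsilon^{1-\theta}\}\lesssim L^{d}\,\epsilon^{1-\theta}n^{d-1}+L^{d-1}n^{d},
\]
and the error term is $O(L^{-1})=O(\epsilon^{\beta})$ compared with $L^d$. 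All other coordinates are summed trivially, and \cref{the assumption i have to use} is used to control the $Q$-weights. The outcome is $\Lambda^{n+1}(\epsilon^{1-\theta}+\epsilon^{\beta})$.

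The main obstacle is the degenerate situation. This occurs when $\node$ is of degree $1$, so that $\Omega_\node$ is linear in the free variable, and the vector $P$ is tiny, i.e. $\abs{P}<\epsilon^{\gamma}$ (the set $\tilde{\mathcal P}_1$ treated in \cref{the summation cases}). Near-exact resonance can then persist on a large set. Case 1.) of \cref{main technical necessary theorem} only gives $\epsilon^{1-\gamma}$ when $\abs{P}\ge\epsilon^\gamma$. So I would split once more, according to whether $\abs{P}\ge\epsilon^{\gamma}$. If instead $\abs{P}<\epsilon^\gamma$, then, since $P$ is a signed combination of lattice vectors in $\Z_L^d$, either $P=0$ or $\abs{P}\ge L^{-1}$; choosing $\gamma<\beta$ forces $P=0$. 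When $P=0$, I would argue from non-resonance of $\node$ that some other free coordinate enters $\Omega_\node$ quadratically, or that $P=0$ forces an extra pairing constraint. Such a constraint costs a factor $L^{-d}$ in the decoration count, since one free summation in $\Z_L^d$ is lost. I am least sure of this step.

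Collecting both regimes gives $\Lambda^{n(C)+1}\epsilon^{\alpha}$ with $\alpha=\min(\theta,1-\theta-\gamma,\beta)$, uniformly in $k$ and in $t\le\delta$.
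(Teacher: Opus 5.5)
Your architecture is the paper's: choose a non-resonant $\node$ all of whose descendants are resonant, bound the time integral by \cref{The lemma that I gotta apply now}, and count the children's momenta on which $\Omega_\node$ is small. Your level-set split is an equivalent substitute for the paper's weight $1/\abs{3\delta^{-1}+\rmi\epsilon^{-1}\Omega_\node}$. The gap is the degenerate case, which genuinely occurs.

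First, your dichotomy runs the wrong way. Since $\epsilon^{\gamma}=L^{-\gamma/\beta}$, taking $\gamma<\beta$ gives $\epsilon^\gamma\gg L^{-1}$; this choice is automatic anyway, because case 1.) needs $\gamma<1<\beta$. So $\abs{P}<\epsilon^\gamma$ still allows every lattice value $L^{-1}\le\abs{P}<\epsilon^\gamma$. Forcing $P=0$ would need $\gamma\ge\beta>1$, which turns the factor $\epsilon^{1-\gamma}$ of case 1.) into a loss. No choice of $\gamma$ closes a one-variable argument.

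Second, your rescue for $P=0$, namely that another free momentum enters quadratically, can fail. Take $\node$ with five leaf children $\leaf_1,\dots,\leaf_5$, write $y_i=K_C(\leaf_i)$, and impose:
\begin{itemize}
\item $\sigma(\leaf_1)=\leaf_2$;
\item $\leaf_3$ is the only leaf of its tree coupled to the other tree, so the root constraint reads $y_3=k$;
\item $\leaf_4,\leaf_5$ are coupled to leaves of the same tree outside the subtree of $\node$.
\end{itemize}
This is realized with $n(C)=2$ by taking $A'$ trivial and $\node$ the second child of $\root_A$. Then $\node$ is non-resonant, $y_4,y_5$ are free, and
\[
\Omega_\node=\iota_\node\bigl(\abs{k+y_4+y_5}^2+\abs{k}^2-\abs{y_4}^2-\abs{y_5}^2\bigr)=2\iota_\node\,(y_4+k)\cdot(y_5+k).
\]
No free momentum enters quadratically. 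The coefficient $P=2\iota_\node(y_5+k)$ of $y_4$ vanishes at $y_5=-k$, where $\Omega_\node\equiv0$ for every $y_4$.

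The missing idea is that $P$ is not an external parameter: it is a linear function of another summed momentum. You therefore have to count in two children's momenta jointly. This is what the paper does. It completes $(K_C(\node_1),K_C(\node_2))$ into a basis of $V(C)$, sums over both momenta, and appeals to case 2.) of \cref{main technical necessary theorem}. There the substitution $x_2'=x_2+x_1$ is designed to handle exactly a mixed term $x_1\cdot x_2$ like the one above.

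In your framework the fix reads as follows. The set $\{\abs{y_5+k}<\epsilon^\gamma\}$ has relative size $\lesssim\epsilon^{\gamma d}+L^{-d}$ in the $y_5$-sum. On its complement, the $y_4$-count of $\{\abs{\Omega_\node}\le\epsilon^{1-\theta}\}$ is a slab count, $\lesssim L^d\bigl(\epsilon^{1-\theta-\gamma}+L^{-1}\bigr)$, with $L^{-1}=\epsilon^\beta$. With this joint count replacing the $P$-dichotomy, the rest of your argument goes through.
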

\begin{proof}
    We suppose that $\node\in\Node(C)$ is non-resonant and suppose without loss of generality that $\node\in\Node(A)$ and that for all $\Node(A)\ni\node'<\node$, we have that $\node'$ is resonant.
    We apply \cref{The lemma that I gotta apply now} and also use $\min\l(1,\frac{a}{b}\r)\leq\frac{\sqrt{2}a}{\abs{a+\rmi b}}$ for all $a\geq0$ and $b>0$ applied to $a\coloneqq3\delta^{-1}$ and $b\coloneqq \epsilon^{-1}\abs{\Omega_\node}$ so that 

    \begin{gather}
        \abs{\int_{I_C(t)}\prod_{\node'\in\Node(C)}e^{\rmi\iota\epsilon^{-1}\Omega_{\node'}t_{\node'}}\rmd t_{\node'}}\leq\frac{3\sqrt{2}\delta^{n(C)-1}}{\abs{3\delta^{-1}+\rmi\epsilon^{-1}\Omega_\node}}.
    \end{gather}
    With the usual bounds, we thus get 
    \begin{equation}
    \begin{gathered}
        \abs{{\mathcal J_C}\l(\epsilon^{-1}t,k\r)}\leq\Lambda^{2n(C)+1}\delta^{n(C)-1}L^{-2n(C)d}\sum_{\substack{\kappa\in\D_k(C)\\\kappa\l(\Leaf(C)_+\r)\subseteq B_R(0)}}\frac{1}{\abs{3\delta^{-1}+\rmi\epsilon^{-1}\Omega_\node}},
    \end{gathered}
    \end{equation}
    where we have used the boundedness of $\abs{Q_{\node'}}$.
    Denote $C(\node)=\l\{\node_1,\ldots,\node_5\r\}$ and recall that the set $C(\node)\cap\Node(C)$ contains only resonant nodes or leaves. \Cref{properties of resonant nodes lemma} implies there exist $\leaf_1,\ldots,\leaf_5\in\Leaf(C)$ such that $K_C(\node_i)=K_C(\leaf_i)$ for all $i$. Since $\node$ is not resonant, we may assume that $K_C(\node_1)+K_C(\node_2)$ does not vanish identically. We may complete $\l(K_C(\node_1),K_C(\node_2)\r)$ into a basis of $V(C)$ and obtain 
        \begin{equation}
            \abs{{\mathcal J_C}\l(\epsilon^{-1}t,k\r)}\leq\Lambda\Lambda^{n(C)}\delta^{n(C)-1}L^{-2d}\sum_{k_1,k_2\in B_R^{\Z_L^d}(0)}\frac{1}{\abs{3\delta^{-1}+\rmi\epsilon^{-1}\l(\abs{\sum_{i=1}^2k_i}^2-\sum_{i=1}^2\iota_{\node_i}\abs{k_i}^2\r)}}.
        \end{equation}
        Using \cref{main technical necessary theorem} and $n(C)\leq\abs{\ln\epsilon}$, we find $\alpha>0$ such that 
        \begin{equation}
            \sum_{k_1,k_2,k_3\in B_R^{\Z_L^d}(0)}\frac{1}{\abs{3\delta^{-1}+\rmi\epsilon^{-1}\l(\abs{\sum_{i=1}^3k_i}^2-\sum_{i=1}^3\iota_{\node_i}\abs{k_i}^2\r)}}\leq\Lambda L^{3d}\epsilon^\alpha
        \end{equation}
        so that indeed 
        \begin{equation}
            \abs{{\mathcal J_C}\l(\epsilon^{-1}t,k\r)}\leq\Lambda\Lambda^{n(C)}\delta^{n(C)-1}\epsilon^\alpha
        \end{equation}
        and this gives the desired estimate by increasing $\Lambda$ and decreasing $\delta$.
\end{proof}

\subsection{Types of resonant couples}
We denote $\res_{n_1+n_2}(\iota,\iota',\eta,\eta')\subseteq\mathcal C_{n_1,n_2}^{\iota,\iota',\eta,\eta'}$ as the subset of signed and coloured couples of scale $n_1+n_2$ whose nodes are all resonant.

Using that 
\begin{equation}
\abs{I_C(t)}=\abs{\M(\Node(C))}\frac{t^n}{n!},
\end{equation}
we find for any $C\in\res_n(\iota,\iota',\eta,\eta')$,

\begin{equation}
    \J_C\l(\epsilon^{-1}t,k\r) = \abs{\M(\Node(C))}\frac{t^n}{n!}(-\rmi)^nL^{-2dn}\sum_{\kappa\in\D_k(C)}\prod_{\node\in\Node(C)}\iota_\node Q_\node\prod_{\leaf\in\Leaf(C)_+}M^{\eta_\leaf,\eta_{\sigma(\leaf)}}(\kappa(\leaf)).
\end{equation}

\begin{lemma}\label{all types of resonant nodes}
    Let $n>0$ and $C\in\res_n\l(\iota,\iota',\eta,\eta'\r)$. There exist couples  $C_i\in\res_{n_i}\l(\iota_i,\iota_i',\eta_i,\eta_i'\r)$, $i=1,2,3$, such that if we write $C_i\eqqcolon\l(A_i,A_i',\sigma_i\r)$ and define $\sigma\colon\Leaf(C_1)\sqcup\Leaf(C_2)\sqcup\Leaf(C_3)\to\Leaf(C_1)\sqcup\Leaf(C_2)\sqcup\Leaf(C_3)$ by $\left.\sigma\right|_{\Leaf(C_i)}\coloneqq\sigma_i$ then there can only be the following configurations (see \cref{all possible resonant roots} for an illustration of all possible resonant roots).

    \begin{equation}\label{need to code this}
        \begin{aligned}
        C&=\l(\bullet\l(((A_3,\iota,\eta),(A_1',-\iota,\eta),(A_2',-\iota,\eta),(A_1,\iota,\overline\eta),(A_2,\iota,\overline\eta)),\iota,\eta\r),(A_3',-\iota,\eta'),\sigma\r),\\
        C&=\l(\bullet\l(((A_3,\iota,\eta),(A_2',-\iota,\eta),(A_1',-\iota,\eta),(A_1,\iota,\overline\eta),(A_2,\iota,\overline\eta)),\iota,\eta\r),(A_3',-\iota,\eta'),\sigma\r),\\
        C&=\l(\bullet\l(((A_1,\iota,\eta),(A_1',-\iota,\eta),(A_2',-\iota,\eta),(A_3,\iota,\overline\eta),(A_2,\iota,\overline\eta)),\iota,\eta\r),(A_3',-\iota,\eta'),\sigma\r),\\
        C&=\l(\bullet\l(((A_1,\iota,\eta),(A_2',-\iota,\eta),(A_1',-\iota,\eta),(A_3,\iota,\overline\eta),(A_2,\iota,\overline\eta)),\iota,\eta\r),(A_3',-\iota,\eta'),\sigma\r),\\
        C&=\l(\bullet\l(((A_1,\iota,\eta),(A_1',-\iota,\eta),(A_2',-\iota,\eta),(A_2,\iota,\overline\eta),(A_3,\iota,\overline\eta)),\iota,\eta\r),(A_3',-\iota,\eta'),\sigma\r),\\
        C&=\l(\bullet\l(((A_1,\iota,\eta),(A_2',-\iota,\eta),(A_1',-\iota,\eta),(A_2,\iota,\overline\eta),(A_3,\iota,\overline\eta)),\iota,\eta\r),(A_3',-\iota,\eta'),\sigma\r),\\
        C&=\l((A_3',\iota,\eta),\bullet\l(((A_3,-\iota,\eta'),(A_1',\iota,\eta'),(A_2',\iota,\eta'),(A_1,-\iota,\overline\eta'),(A_2,-\iota,\overline\eta')),-\iota,\eta'\r),\sigma\r),\\
        C&=\l((A_3',\iota,\eta),\bullet\l(((A_3,-\iota,\eta'),(A_2',\iota,\eta'),(A_1',\iota,\eta'),(A_1,-\iota,\overline\eta'),(A_2,-\iota,\overline\eta')),-\iota,\eta'\r),\sigma\r),\\
        C&=\l((A_3',\iota,\eta),\bullet\l(((A_1,-\iota,\eta'),(A_1',\iota,\eta'),(A_2',\iota,\eta'),(A_3,-\iota,\overline\eta'),(A_2,-\iota,\overline\eta')),-\iota,\eta'\r),\sigma\r),\\
        C&=\l((A_3',\iota,\eta),\bullet\l(((A_1,-\iota,\eta'),(A_2',\iota,\eta'),(A_1',\iota,\eta'),(A_3,-\iota,\overline\eta'),(A_2,-\iota,\overline\eta')),-\iota,\eta'\r),\sigma\r),\\
        C&=\l((A_3',\iota,\eta),\bullet\l(((A_1,-\iota,\eta'),(A_1',\iota,\eta'),(A_2',\iota,\eta'),(A_2,-\iota,\overline\eta'),(A_3,-\iota,\overline\eta')),-\iota,\eta'\r),\sigma\r),\\
        C&=\l((A_3',\iota,\eta),\bullet\l(((A_1,-\iota,\eta'),(A_2',\iota,\eta'),(A_1',\iota,\eta'),(A_2,-\iota,\overline\eta'),(A_3,-\iota,\overline\eta')),-\iota,\eta'\r),\sigma\r)
    \end{aligned}
    \end{equation}
    If we are in any of the first six cases, we say that the \textnormal{resonant branching} occurs at the left tree of the couple $C$; otherwise we say that the resonant branching occurs at the right tree. 
    In the notation, we understand $\l(A_i,A_i',\sigma_i\r)$ as sub-couples with $\sigma_i\coloneqq\left.\sigma\right|_{\Leaf\l(A_i\r)\sqcup\Leaf\l(A_i'\r)}$. Furthermore, $\res_n(\iota,\iota,\eta,\eta')=\emptyset$. 
\end{lemma}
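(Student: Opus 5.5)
The plan is to find, in any fully resonant couple of positive scale, a ``last-born'' resonant root. I then peel it off and recover three smaller resonant couples.

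Let $C=(T,T',\sigma)\in\res_n(\iota,\iota',\eta,\eta')$ with $n>0$. At least one of the two trees is nontrivial. I first treat the case where the left tree $T$ is nontrivial and its root $\root$ is a branching node; the case where only the right tree branches at its root is symmetric and yields the last six configurations. Since $\root$ is resonant, \cref{properties of resonant nodes lemma} says that $\root$ can only be $1$-, $4$- or $5$-resonant. The sign and colour rule of \cref{fig:placeholder} gives the children of $\root$ the labels $(\iota,\eta),(-\iota,\eta),(-\iota,\eta),(\iota,\overline\eta),(\iota,\overline\eta)$. By \cref{definition of being a resonant node} the five children split into two self-coupled pairs and one child carrying the wave number $K_C(\root)$. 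A pair has zero total wave number, so by \cref{equivalent definition of being resonant} its offspring set is self-coupled, and the sign condition $\iota_{\node_1}+\iota_{\node_2}=0$ forces each pair to contain one child of sign $-\iota$ (child 2 or 3) and one of sign $\iota$ (child 1, 4 or 5). Consequently:
\begin{itemize}
\item if the distinguished child is child 1, the pairs are $\{2,4\},\{3,5\}$ or $\{2,5\},\{3,4\}$;
\item if it is child 4, the pairs are $\{1,2\},\{3,5\}$ or $\{1,3\},\{2,5\}$;
\item if it is child 5, the pairs are $\{1,2\},\{3,4\}$ or $\{1,3\},\{2,4\}$.
\end{itemize}
These are exactly the six left configurations in \eqref{need to code this}, once each pair is labelled $(A_j,A_j')$ with $A_j'$ the $-\iota$ child. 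The distinguished child is labelled $A_3$.

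Next I build the subcouples. Each pair $\{A_j,A_j'\}$ has a self-coupled offspring set, so $\sigma_j:=\sigma|_{\Leaf(A_j)\sqcup\Leaf(A_j')}$ is an involution exchanging positive and negative leaves, and $(A_j,A_j',\sigma_j)$ is a couple. The remaining leaves, $\Leaf(A_3)\sqcup\Leaf(T')$, form the complement of a self-coupled set, so they are self-coupled too, and $C_3:=(A_3,A_3',\sigma_3)$ with $A_3':=T'$ is a couple. Every node of $C_j$ is a node of $C$, so it stays resonant. Resonance is a condition on $K_C$ and on self-couplings of offspring sets, and neither is changed by the restriction, so $C_j\in\res_{n_j}$ with $n_1+n_2+n_3=n-1$.

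The remaining claim is $\res_n(\iota,\iota,\eta,\eta')=\emptyset$. I would argue by counting leaf signs in each tree. A resonant tree with root sign $\iota$ has, by induction, exactly one more leaf of sign $\iota$ than of sign $-\iota$. Indeed, the root's contribution is $\iota$ from $A_3$, while each pair contributes zero net sign. Alternatively, I would use the identity $K_C(\root)+K_C(\root')=0$ together with the existence of leaves $\leaf,\leaf'$ with $K_C(\root)=K_C(\leaf)$, which \cref{properties of resonant nodes lemma} provides; this forces $\sigma(\leaf)=\leaf'$ and hence opposite signs. For two trees whose roots both have sign $\iota$, the total number of $\iota$ leaves exceeds the number of $-\iota$ leaves by $2$, which contradicts the existence of a pairing.

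The step I expect to be the main obstacle is the case where both trees are trivial or only one root branches, and showing that the decomposition never needs a branching at a node deeper than the root. I would handle this by choosing the tree whose root is a node, which exists since $n>0$; the six-case analysis then applies only at that root.
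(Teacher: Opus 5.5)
Your proof is correct, and for the decomposition it follows the paper's argument. You take a root that is a branching node, which exists since $n>0$. The sign pattern $(\iota,-\iota,-\iota,\iota,\iota)$ of its children forces that root to be $1$-, $4$- or $5$-resonant, with exactly two admissible pairings in each case. Restricting $\sigma$ to the three self-coupled leaf blocks then yields $C_1,C_2,C_3$, and resonance is inherited because it is a self-coupling condition on offspring sets contained in those blocks. Your enumeration of the six pairings matches the six left configurations line by line.

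The one genuine difference is the final claim $\res_n(\iota,\iota,\eta,\eta')=\emptyset$.
\begin{itemize}
\item The paper proves it by induction on $n$, passing the sign pair $(\iota,\iota')$ down to the smaller couple $C_3$.
\item Your leaf count is direct: in every tree, the leaf signs sum to the root sign, because the children's signs sum to $\iota$. Since a couple requires equally many positive and negative leaves, this forces $\iota'=-\iota$.
\end{itemize}
Your argument uses neither the decomposition nor resonance, so it in fact shows $\mathcal C_{n_1,n_2}^{\eta,\eta',\iota,\iota}=\emptyset$ for every couple. It is also exactly what justifies writing the unbranched tree with sign $-\iota$ in the displayed configurations, so it is cleanest to prove it first.

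Your closing worry is not an obstacle. When both roots branch, either one may be used, since the lemma is only existential. The paper later resolves this ambiguity in $\tilde\Upsilon$ by choosing according to which root carries the largest value of $\rho$.
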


\begin{proof}
Since $n>0$, $\mathcal R(C)\cap\Node(C)\neq\emptyset$. According to \cref{properties of resonant nodes lemma}, each node can only be a $1,4$ or $5$-resonant node, and in each case, a sub-tree of the left or right tree may couple to the right or left tree of $C$. That is, if an element of $\mathcal R(C)\cap\Node(C)$ is an $i$-resonant node (for $i\in\{1,4,5\}$), then there are four possibilities. The resonant branching might happen at the left or right tree of $C$, and wherever it happens, the remaining sup-couples have two ways to couple with each other (see \cref{all possible resonant roots}). The first six cases of \cref{need to code this} are all the possibilities in the situation when the resonant branching happens at the left tree of $C$ and the remaining ones when the resonant branching happens at the right tree of $C$. The first, second, seventh, and eighth line of \cref{need to code this} describe $1$-resonant roots. The third, fourth, ninth, and tenth lines describe $4$-resonant roots, and the remaining four lines describe $5$-resonant roots. 

Finally suppose $\res_n(\iota,\iota',\eta,\eta')\neq\emptyset$. If $n=0$ then obviously $\iota=-\iota'$. Now assume $n>0$ and $C\in\res_n(\iota,\iota',\eta,\eta')$. The previous claim implies that $C_1\in\res_{n_1+n_1'}(\iota,-\iota,\overline\eta,\eta)$, $C_2\in\res_{n_2+n_2'}(\iota,-\iota,\overline\eta,\eta)$ and $C_3\in\res_{n_3+n_3'}(\iota,\iota',\eta,\eta')$ and since $n_3+n_3'<n$, we have $\iota'=-\iota$ which completes the induction step. 
\end{proof}

\begin{figure}[H]
    \centering
    \includegraphics[width=0.8\linewidth]{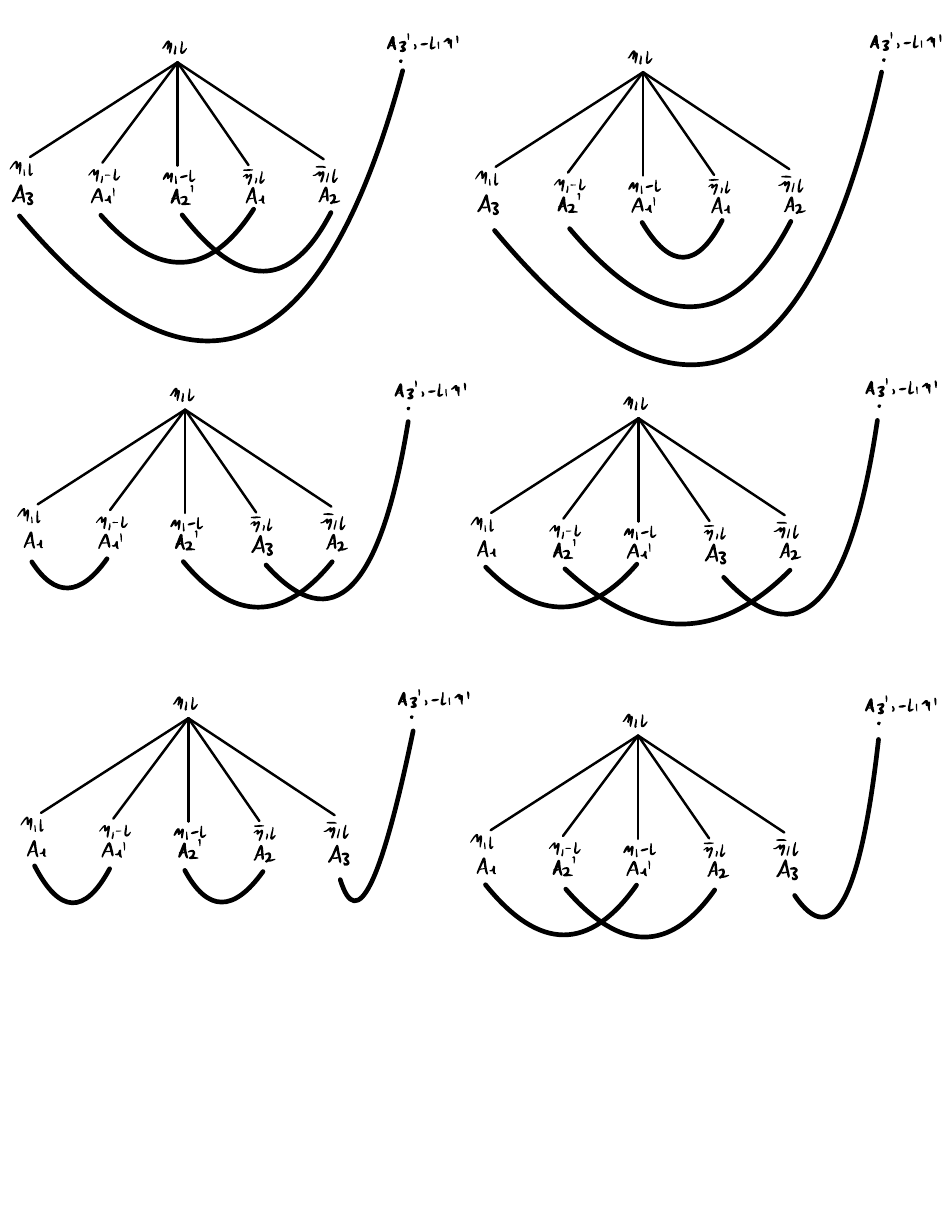}
    \vspace{-2cm}
    \caption{These are the first six cases of \eqref{need to code this}, where the resonant branching occurs for the left tree. The remaining six cases can be drawn by switching the left with the right tree and exchanging the sign $\iota\leftrightarrow-\iota$ and the colour $\eta\leftrightarrow\eta'$.}
    \label{all possible resonant roots}
\end{figure}
From now on, we write $\res_n(\iota,\eta,\eta')=\res_n(\iota,-\iota,\eta,\eta')$
We label each case of \cref{need to code this} with a code by looking at the indices of the subtrees. The first six couples are encoded by $\l(3,1',2',1,2\r)$, $\l(3,2',1',1,2\r)$, $\l(1,1',2',3,2\r)$, $\l(1,2',1',3,2\r)$, $\l(1,1',2',2,3\r)$ and $\l(1,2',1',2,3\r)$. Using the index set $\{l,r\}$ to differentiate whether the resonant branching happens in the left ($l$) or right ($r$) tree, one can uniquely name all possibilities in \cref{need to code this}.

\begin{lemma}
    For any resonant couple (see \cref{all types of resonant nodes} for all the cases), there exists a canonical bijection 
    \begin{equation}
        \D_k(C)\cong\l\{\l(k_1,k_2,\kappa_1,\kappa_2,\kappa_3\r)\mid(k_1,k_2)\in\Z_L^d\times\Z_L^d,\ (\kappa_1,\kappa_2,\kappa_3)\in\D_{-k_1}(C_1)\times\D_{-k_2}(C_2)\times\D_k(C_3)\r\}\eqqcolon\tilde\D_k(C).
    \end{equation}
    Furthermore, 
    \begin{equation}
        \J_C\l(\epsilon^{-1}t,k\r)=\abs{\M(\Node(C))}\frac{t^n}{n!}\tilde\J_C(k),
    \end{equation}
    where \begin{equation}\label{the recursive definition}
        \tilde\J_C(k)\coloneqq (-\rmi)^nL^{-2dn(C)}\sum_{\kappa\in\D_k(C)}\prod_{\node\in\Node(C)}\iota_\node Q_\node\prod_{\leaf\in\Leaf(C)_+}M^{\eta_\leaf,\eta_{\sigma(\leaf)}}(\kappa(\leaf))
    \end{equation}
    can be recursively written as 
    \begin{equation}
        \tilde\J_C(k)=-\rmi\iota_{\root_A} L^{-2d}\l[\sum_{k_1,k_2\in\Z_L^d}Q_{\root_A}\tilde\J_{C_1}(-k_1)\tilde\J_{C_2}(-k_2)\r]\tilde\J_{C_3}(k).
    \end{equation}
\end{lemma}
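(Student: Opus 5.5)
The proof has three steps: build the bijection from the resonant structure at the root, factor out the time integral, and then split the product defining $\tilde\J_C$.

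\textbf{Step 1: the bijection.} By \cref{all types of resonant nodes}, $C$ decomposes at its resonant root $\root_A$ into sub-couples $C_1,C_2,C_3$. The coupling $\sigma$ restricts to each $\Leaf(C_i)$, so $\Leaf(C)_+=\bigsqcup_i\Leaf(C_i)_+$. A decoration $\kappa\in\D_k(C)$ therefore splits into three restrictions $\kappa_i\coloneqq\kappa|_{\Leaf(C_i)_+}$. Set $k_i\coloneqq K_C$ evaluated at the root of the subtree $A_i$ (the one carrying sign $\iota$ inside $\root_A$), for $i=1,2$.

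\begin{itemize}
\item Since $C_1$ and $C_2$ are self-coupled, the two roots of each $C_i$ carry opposite wave numbers, by the remark after \cref{Definition of decoration of coupling}. The decoration conventions then give $\kappa_i\in\D_{-k_1}(C_1)$, respectively $\D_{-k_2}(C_2)$, after fixing which root of $C_i$ is declared the left one.
\item Resonance forces $K_C(A_3\text{-root})=K_C(\root_A)=k$, so $\kappa_3\in\D_k(C_3)$.
\item Conversely, any $(k_1,k_2,\kappa_1,\kappa_2,\kappa_3)\in\tilde\D_k(C)$ glues to a decoration of $C$. The wave number at $\root_A$ is then $k_1-k_1+k_2-k_2+k=k$, because the sum over $C_1$ and over $C_2$ vanishes. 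The map is inverse to restriction, so it is the claimed bijection.
\end{itemize}

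The roles of $A_1$, $A_1'$, etc.\ change with the twelve configurations of \eqref{need to code this}, but each case is handled identically.

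\textbf{Step 2: the time integral.} Since every node is resonant, $\Omega_\node=0$ for all $\node$, by the lemma preceding \cref{ein kekiges lemma}. The time integral in \eqref{crucial object} is then exactly $|I_C(t)|=\abs{\M(\Node(C))}t^n/n!$. Pulling this factor out of \eqref{crucial object} gives $\J_C=\abs{\M(\Node(C))}\frac{t^n}{n!}\tilde\J_C(k)$, with $\tilde\J_C$ as in \eqref{the recursive definition}.

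\textbf{Step 3: the recursion.} Split the objects in \eqref{the recursive definition} according to the decomposition:
\begin{itemize}
\item $\Node(C)=\{\root_A\}\sqcup\bigsqcup_i\Node(C_i)$, so $n=1+n_1+n_2+n_3$;
\item $\prod_{\Node(C)}\iota_\node Q_\node=\iota_{\root_A}Q_{\root_A}\prod_i\prod_{\Node(C_i)}$;
\item the $M$-product splits over $\Leaf(C_i)_+$.
\end{itemize}

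Two locality facts are needed for the factorisation. Each $Q_\node$ with $\node\in\Node(C_i)$ depends only on $\kappa_i$, since wave numbers of nodes in $C_i$ are sums of leaves in $C_i$. The root factor $Q_{\root_A}$ depends only on $(k_1,k_2,k)$, because its five arguments are $\pm k_1$, $\pm k_2$ and $k$.

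Using the bijection of Step 1, the sum over $\D_k(C)$ becomes $\sum_{k_1,k_2}\sum_{\kappa_1,\kappa_2,\kappa_3}$. The inner sums factor into $\tilde\J_{C_1}(-k_1)\,\tilde\J_{C_2}(-k_2)\,\tilde\J_{C_3}(k)$. The prefactors also match: $(-\rmi)^n=(-\rmi)\prod_i(-\rmi)^{n_i}$ and $L^{-2dn}=L^{-2d}\prod_iL^{-2dn_i}$.

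\textbf{Main obstacle.} I expect the delicate point to be matching arguments and signs rather than any estimate. Specifically, one must check that $Q_{\root_A}$ evaluated at the glued decoration equals $Q_{\iota_{\root_A}}^{\eta}$ at the resonant configuration $(k,\pm k_1,\ldots)$ listed in \eqref{need to code this}. One must also check that the orientation convention for $C_1$ and $C_2$ (which tree counts as the left one) produces $-k_i$ rather than $k_i$. I would verify this on the first configuration $(3,1',2',1,2)$ and then argue that the remaining eleven cases follow by relabelling.
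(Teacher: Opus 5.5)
Your proposal is correct and follows the paper's proof essentially step for step. The paper uses the same three ingredients: the restriction/gluing bijection $\psi$ with $K_C(\root_A)=k_1-k_1+k_2-k_2+k$; the fact that all $\Omega_\node$ vanish, so the time integral equals $\abs{\M(\Node(C))}t^n/n!$; and the factorisation of the decoration sum over $\Leaf(C)_+=\bigsqcup_i\Leaf(C_i)_+$, with $Q_{\root_A}$ depending only on $(k_1,k_2,k)$. The orientation point you flag is real but harmless. The paper's own $\psi$ uses $K_C(r_{A_1})$ while keeping $C_1=(A_1,A_1',\sigma_1)$, so it literally lands in $\D_{k_1}(C_1)$. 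This is fixed by defining $k_i$ as $K_C(r_{A_i'})$ instead, or by substituting $k_i\mapsto -k_i$ in the sum over $\Z_L^d$.
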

\begin{proof}
       We define the map $\psi\colon\D_k(C)\to\tilde\D_k(C)$ by 
       \begin{equation}
           \psi(\kappa)\coloneqq\l(K_C(r_{A_1})(\kappa),K_C(r_{A_2})(\kappa),\left.\kappa\right|_{\Leaf(C_1)_+},\left.\kappa\right|_{\Leaf(C_2)_+},\left.\kappa\right|_{\Leaf(C_3)_+}\r).
       \end{equation}
       Since $\Leaf(C_1)_+\sqcup\Leaf(C_2)_+\sqcup\Leaf(C_3)_+=\Leaf(C)_+$, the map $\psi$ is a well-defined injective map. Now take $(k_1,k_2,\kappa_1,\kappa_2,\kappa_3)\in\tilde\D_k(C)$ and define $\kappa\in\l(\R^d\r)^{\Leaf(C)_+}$ by 
       \begin{equation}\label{eqn that makes the bijection canonical}
           \kappa(\leaf)\coloneqq\begin{cases}
               \kappa_1(\leaf)&\text{if }\leaf\in\Leaf(C_1),\\
               \kappa_2(\leaf)&\text{if }\leaf\in\Leaf(C_2),\\
               \kappa_3(\leaf)&\text{if }\leaf\in\Leaf(C_3).
           \end{cases}
       \end{equation}
       Thus, by definition, $\Im\kappa\subseteq\Z_L^d$ and $K_C(\root_A)(\kappa)=k_1-k_1+k_2-k_2+k=k$. This implies that indeed $\kappa\in\D_K(C)$ and $\psi(\kappa)=(k_1,k_2,\kappa_1,\kappa_2,\kappa_3)$ by construction of $\kappa$. \Cref{eqn that makes the bijection canonical} is the reason that makes $\psi$ canonical. 
       We can now use the bijection $\psi$ to rewrite 

       \begin{equation}\label{what concludes the proof}
           \begin{gathered}
           \sum_{\kappa\in\D_k(C)}\prod_{\node\in\Node(C)}\iota_\node Q_\node\prod_{\leaf\in\Leaf(C)_+}M^{\eta_\leaf,\eta_{\sigma(\leaf)}}(\kappa(\leaf)) \\= \iota_{\root_A}\sum_{k_1,k_2\in\Z_L^d}Q_{\root_A}\sum_{\kappa_1\in\D_{-k_1}(C_1)}\prod_{\node\in\Node(C_1)}\iota_\node Q_\node\prod_{\leaf\in\Leaf(C_1)_+}M^{\eta_\leaf,\eta_{\sigma(\leaf)}}(\kappa_1(\leaf))\\\cdot\sum_{\kappa_2\in\D_{-k_2}(C_2)}\prod_{\node\in\Node(C_2)}\iota_\node Q_\node\prod_{\leaf\in\Leaf(C_2)_+}M^{\eta_\leaf,\eta_{\sigma(\leaf)}}(\kappa_2(\leaf))\\\cdot\sum_{\kappa_3\in\D_{k}(C_3)}\prod_{\node\in\Node(C_3)}\iota_\node Q_\node\prod_{\leaf\in\Leaf(C_3)_+}M^{\eta_\leaf,\eta_{\sigma(\leaf)}}(\kappa_3(\leaf)).
       \end{gathered}
       \end{equation}
       Plugging \cref{what concludes the proof} into the definition \cref{the recursive definition} concludes the proof.
\end{proof}

\subsection{From resonant couples to ternary trees}\label{recursive structure for subcouples}

    We denote 
    \begin{equation}
        \begin{aligned}
        \res_n^{\mathrm{map}}(\iota,\eta,\eta')&\coloneqq\l\{(C,\rho)\mid C\in\res_n(\iota,\eta,\eta')\text{ and }\rho\colon\Node(C)\to\N^*\text{ strictly increasing and injective}\r\},\\
        \res_{n}^{\mathrm{map}}(\iota,\eta,\eta',M)&\coloneqq\{(C,\rho)\in\res_n^{\mathrm{map}}(\iota,\eta,\eta')\mid\max\Im\rho=M\},\\
        \res_n^{\mathrm{ord}}(\iota,\eta,\eta')&\coloneqq\l\{(C,\rho)\mid C\in\res_n(\iota,\eta,\eta')\text{ and }\rho\in\M(\Node(C))\r\},\\
        \res_{n}^{\mathrm{ord}}(\iota,\eta,\eta',M)&\coloneqq\{(C,\rho)\in\res_n^{\mathrm{ord}}(\iota,\eta,\eta')\mid\max\Im\rho=M\}.
    \end{aligned}
    \end{equation}

\begin{definition}

    We define our ternary trees. 
    \begin{gather}
        \tilde\G_0\coloneqq\{\perp\},\\
        \tilde \G_{n+1}\coloneqq\bigsqcup_{\substack{n_1+n_2+n_3=n\\i,j\in\{1,\ldots,6\}}}\l\{\bullet_{il}(G_1,G_2,G_3),\bullet_{jl}(G_1,G_2,G_3)\mid(G_1,G_2,G_3)\in\tilde\G_{n_1}\times\tilde\G_{n_2}\times\tilde\G_{n_3}\r\},
    \end{gather}
    where the nodes $\bullet_{il}$ indicate the first six possibilities in \cref{all types of resonant nodes} and $\bullet_{ir}$ indicate the remaining six. The subscripts $l$ and $r$ stand for left and right, respectively and whether the left or the right tree in a resonant couple branches into subtrees in the sense of \cref{all types of resonant nodes}. Denote
\begin{equation}
    \begin{aligned}
        \G_n^{\mathrm{map}}(\iota,(\eta,\eta'))&\coloneqq\l\{(G,\rho)\mid G\in\G_n(\iota,(\eta,\eta'))\text{ and }\rho\mid\Node(C)\to\N^*\text{ strictly increasing and injective}\r\},\\
        \G_n^{\mathrm{ord}}(\iota,(\eta,\eta'))&\coloneqq\l\{(G,\rho)\mid G\in\G_n(\iota,(\eta,\eta'))\text{ and }\rho\in\M(\Node(G))\r\},\\
        \G_{n}^{\mathrm{map}}(\iota,(\eta,\eta'),M)&\coloneqq\{(B,\rho)\in\G_n^{\mathrm{map}}(\iota,(\eta,\eta'))\mid\max\Im\rho=M\}.
    \end{aligned}
\end{equation}
\end{definition}
    
\begin{remark}
    Observe, $\res_n^{\mathrm{ord}}(\iota,\eta,\eta')=\res_n^{\mathrm{map}}(\iota,\eta,\eta',n)$ and $\G_n^{\mathrm{ord}}(\iota,(\eta,\eta'))=\G_n^{\mathrm{map}}(\iota,(\eta,\eta'),n)$.
\end{remark}

\begin{definition}\label{from resonant couples to ternary trees}
    Define the map $\Upsilon\colon\G_n^{\mathrm{map}}(\iota,(\eta,\eta'),M)\to\res_n^{\mathrm{map}}\l(\iota,\eta,\eta',M\r)$ recursively. If $n=0$ then $G\in\G_0^{\mathrm{map}}\l(\iota,\l(\eta,\eta'\r),M\r)$ takes the form $G=(\perp,\iota,(\eta,\eta'))$ and we set $\Upsilon(G)\coloneqq\l((\perp,\iota,\eta),\l(\perp,-\iota,\eta'\r),\sigma\r)$, where $\sigma$ is the obvious pairing. If $n>0$ then any $(G,\rho)\in\G_n^{\mathrm{map}}\l(\iota,\l(\eta,\eta'\r),M\r)$ can be uniquely decomposed as $G=(\bullet_\star(G_1,G_2,G_3),\iota,(\eta,\eta'))$, where $\l(G_i,\rho_i\r)\in\G_{n_i}^{\mathrm{map}}\l(\iota_i,\l(\eta_i,\eta'_i\r)\r)$ with $n_1+n_2+n_3=n-1$, $\star\in\{1l,\ldots,6l,1r,\ldots,6r\}$ and $\rho_i\coloneqq\left.\rho\right|_{\Node(G_i)}$. The sign and colour rules for our ternary trees are 
    \begin{align}
        \iota_i=\begin{cases}
            \iota&\text{if }\star\in\{1l,\ldots,6l\},\\
            -\iota&\text{if }\star\in\{1r,\ldots,6r\},
        \end{cases}\ 
        (\eta_i,\eta_i')=\begin{cases}
            (\overline\eta,\eta)&\text{if }i=1,2,\ \star\in\{1l,2l\},\\
            (\eta,\eta')&\text{if }i=3,\ \star\in\{1l,2l\},\\
            (\eta,\eta)&\text{if }i=1,\ \star\in\{3l,\ldots,6l\},\\
            (\overline\eta,\eta)&\text{if }i=2,\ \star\in\{3l,\ldots,6l\},\\
            (\overline\eta,\eta')&\text{if }i=3,\ \star\in\{3l,\ldots,6l\},\\
            (\overline\eta',\eta')&\text{if }i=1,2,\ \star\in\{1r,2r\},\\
            (\eta',\eta)&\text{if }i=3,\ \star\in\{1r,2r\},\\
            (\eta',\eta')&\text{if }i=1,\ \star\in\{3r,\ldots,6r\},\\
            (\overline\eta',\eta')&\text{if }i=2,\ \star\in\{3r,\ldots,6r\},\\
            (\overline\eta',\eta)&\text{if }i=3,\ \star\in\{3r,\ldots,6r\}
        \end{cases}\label{all sign and colour possibilities}
    \end{align}
    Setting $\rho_i\coloneqq\left.\rho\right|_{\Node(G_i)}$, we may assume that $(C_i,\tilde\rho_i)\coloneqq\Upsilon\l(G_i,\rho_i\r)\in\res_{n_i}\l(\iota_i,\eta_i,\eta'_i\r)$ (from left to right) are already defined and set $C_i=\l(A_i,A_i',\sigma_i\r)$ so that we may define
    \begin{equation}
        \Upsilon(G,\rho)\coloneqq\begin{cases}
            \l(\l(\bullet\l(((A_3,\iota,\eta),(A_1',-\iota,\eta),(A_2',-\iota,\eta),(A_1,\iota,\overline\eta),(A_2,\iota,\overline\eta)),\iota,\eta\r),(A_3',-\iota,\eta'),\sigma\r),\tilde\rho\r)&\text{if }\star=1l,\label{the first set}\\
        \l(\l(\bullet\l(((A_3,\iota,\eta),(A_2',-\iota,\eta),(A_1',-\iota,\eta),(A_1,\iota,\overline\eta),(A_2,\iota,\overline\eta)),\iota,\eta\r),(A_3',-\iota,\eta'),\sigma\r),\tilde\rho\r)&\text{if }\star=2l,\\
        \l(\l(\bullet\l(((A_1,\iota,\eta),(A_1',-\iota,\eta),(A_2',-\iota,\eta),(A_3,\iota,\overline\eta),(A_2,\iota,\overline\eta)),\iota,\eta\r),(A_3',-\iota,\eta'),\sigma\r),\tilde\rho\r)&\text{if }\star=3l,\\
        \l(\l(\bullet\l(((A_1,\iota,\eta),(A_2',-\iota,\eta),(A_1',-\iota,\eta),(A_3,\iota,\overline\eta),(A_2,\iota,\overline\eta)),\iota,\eta\r),(A_3',-\iota,\eta'),\sigma\r),\tilde\rho\r)&\text{if }\star=4l,\\
        \l(\l(\bullet\l(((A_1,\iota,\eta),(A_1',-\iota,\eta),(A_2',-\iota,\eta),(A_2,\iota,\overline\eta),(A_3,\iota,\overline\eta)),\iota,\eta\r),(A_3',-\iota,\eta'),\sigma\r),\tilde\rho\r)&\text{if }\star=5l,\\
        \l(\l(\bullet\l(((A_1,\iota,\eta),(A_2',-\iota,\eta),(A_1',-\iota,\eta),(A_2,\iota,\overline\eta),(A_3,\iota,\overline\eta)),\iota,\eta\r),(A_3',-\iota,\eta'),\sigma\r),\tilde\rho\r)&\text{if }\star=6l,\\
        \l(\l((A_3',\iota,\eta),\bullet\l(((A_3,-\iota,\eta'),(A_1',\iota,\eta'),(A_2',\iota,\eta'),(A_1,-\iota,\overline\eta'),(A_2,-\iota,\overline\eta')),-\iota,\eta'\r),\sigma\r),\tilde\rho\r)&\text{if }\star=1r,\\
        \l(\l((A_3',\iota,\eta),\bullet\l(((A_3,-\iota,\eta'),(A_2',\iota,\eta'),(A_1',\iota,\eta'),(A_1,-\iota,\overline\eta'),(A_2,-\iota,\overline\eta')),-\iota,\eta'\r),\sigma\r),\tilde\rho\r)&\text{if }\star=2r,\\
        \l(\l((A_3',\iota,\eta),\bullet\l(((A_1,-\iota,\eta'),(A_1',\iota,\eta'),(A_2',\iota,\eta'),(A_3,-\iota,\overline\eta'),(A_2,-\iota,\overline\eta')),-\iota,\eta'\r),\sigma\r),\tilde\rho\r)&\text{if }\star=3r,\\
        \l(\l((A_3',\iota,\eta),\bullet\l(((A_1,-\iota,\eta'),(A_2',\iota,\eta'),(A_1',\iota,\eta'),(A_3,-\iota,\overline\eta'),(A_2,-\iota,\overline\eta')),-\iota,\eta'\r),\sigma\r),\tilde\rho\r)&\text{if }\star=4r,\\
        \l(\l((A_3',\iota,\eta),\bullet\l(((A_1,-\iota,\eta'),(A_1',\iota,\eta'),(A_2',\iota,\eta'),(A_2,-\iota,\overline\eta'),(A_3,-\iota,\overline\eta')),-\iota,\eta'\r),\sigma\r),\tilde\rho\r)&\text{if }\star=5r,\\
        \l(\l((A_3',\iota,\eta),\bullet\l(((A_1,-\iota,\eta'),(A_2',\iota,\eta'),(A_1',\iota,\eta'),(A_2,-\iota,\overline\eta'),(A_3,-\iota,\overline\eta')),-\iota,\eta'\r),\sigma\r),\tilde\rho\r)&\text{if }\star=6r,
        \end{cases}
    \end{equation}
    where $\left.\tilde\rho\right|_{\Node(C_i)}\coloneqq\tilde\rho_i$ and $\tilde\rho(\root_A)\coloneqq M$. The map $\Upsilon$ is indeed well-defined.
\end{definition}

\begin{definition}
    We now define a map $\tilde\Upsilon\colon\res_n^{\mathrm{map}}(\iota,\eta,\eta',M)\to\G_n^{\mathrm{map}}(\iota,(\eta,\eta'),M)$ recursively. If $n=0$ then $C\in\res_n^{\mathrm{map}}(\iota,\eta,\eta',M)$ will have the only possible form $C=((\perp,\iota,\eta),(\perp,-\iota,\eta'),\sigma)$, where $\sigma$ is the obvious pairing. We set in this case $\tilde\Upsilon(C)\coloneqq(\perp,\iota,(\eta,\eta'))$. If $n>0$ and $(C,\rho)\in\res_n^{\mathrm{map}}(\iota,\eta,\eta')$ then we may write $C=(A,A',\sigma)$ and extract $C_1$, $C_2$ and $C_3$ as written in \cref{all types of resonant nodes} and denote $\rho_i\coloneqq\left.\rho\right|_{\Node(C_i)}$. More precisely,

    \begin{align}
        C_1&=\begin{cases}
            ((A_1,\iota,\overline\eta),(A_1',-\iota,\eta),\sigma_1)&\text{if }\texttt{line}=1,2\ \&\ \max\Im\rho=\rho(\root_A),\\
            ((A_1,\iota,\eta),(A_1',-\iota,\eta),\sigma_1)&\text{if }\texttt{line}=3,4,5,6\ \&\ \max\Im\rho=\rho(\root_A),\\
            ((A_1,-\iota,\overline\eta'),(A_1',\iota,\eta'),\sigma_1)&\text{if }\texttt{line}=7,8\ \&\ \max\Im\rho=\rho(\root_{A'}),\\
            ((A_1,-\iota,\eta'),(A_1',\iota,\eta'),\sigma_1)&\text{if }\texttt{line}=9,10,11,12\ \&\ \max\Im\rho=\rho(\root_{A'}).
        \end{cases}\\
        C_2&=\begin{cases}
            ((A_2,\iota,\overline\eta),(A_2',-\iota,\eta),\sigma_2)&\text{if }\texttt{line}=1,2\ \&\ \max\Im\rho=\rho(\root_A),\\
            ((A_2,\iota,\overline\eta),(A_2',-\iota,\eta),\sigma_2)&\text{if }\texttt{line}=3,4,5,6\ \&\ \max\Im\rho=\rho(\root_A),\\
            ((A_2,-\iota,\overline\eta'),(A_2',\iota,\eta'),\sigma_2)&\text{if }\texttt{line}=7,8\ \&\ \max\Im\rho=\rho(\root_{A'}),\\
            ((A_2,-\iota,\overline\eta'),(A_2',\iota,\eta'),\sigma_2)&\text{if }\texttt{line}=9,10,11,12\ \&\ \max\Im\rho=\rho(\root_{A'}).
        \end{cases}\\
        C_3&=\begin{cases}
            ((A_3,\iota,\eta),(A_3',-\iota,\eta'),\sigma_3)&\text{if }\texttt{line}=1,2\ \&\ \max\Im\rho=\rho(\root_A),\\
            ((A_3,\iota,\overline\eta),(A_3',-\iota,\eta'),\sigma_3)&\text{if }\texttt{line}=9,10,11,12\ \&\ \max\Im\rho=\rho(\root_A),\\
            ((A_3,-\iota,\eta'),(A_3',\iota,\eta),\sigma_3)&\text{if }\texttt{line}=1,2\ \&\ \max\Im\rho=\rho(\root_{A'}),\\
            ((A_3,-\iota,\overline\eta'),(A_3',\iota,\eta),\sigma_3)&\text{if }\texttt{line}=9,10,11,12\ \&\ \max\Im\rho=\rho(\root_{A'})
        \end{cases}
    \end{align}
    where \texttt{line} stands for the lines in \cref{need to code this} and $\sigma_i\coloneqq\left.\sigma\right|_{\Leaf(C_i)}$.   
    We may assume $(G_i,\tilde\rho_i)\coloneqq\tilde\Upsilon(C_i,\rho_i)$ have been defined and set 
    \begin{equation}
        \tilde\Upsilon(C,\rho)\coloneqq\begin{cases}
            \l(\l(\bullet_{1l}\l(G_1,G_2,G_3\r),\iota,(\eta,\eta')\r),\tilde\rho\r)&\text{if }\max\Im\rho=\rho(\root_A)\ \&\ \mathrm{code}=(31'2'12),\\
            \l(\l(\bullet_{2l}\l(G_1,G_2,G_3\r),\iota,(\eta,\eta')\r),\tilde\rho\r)&\text{if }\max\Im\rho=\rho(\root_A)\ \&\ \mathrm{code}=(32'1'12),\\
            \l(\l(\bullet_{3l}\l(G_1,G_2,G_3\r),\iota,(\eta,\eta')\r),\tilde\rho\r)&\text{if }\max\Im\rho=\rho(\root_A)\ \&\ \mathrm{code}=(11'2'32),\\
            \l(\l(\bullet_{4l}\l(G_1,G_2,G_3\r),\iota,(\eta,\eta')\r),\tilde\rho\r)&\text{if }\max\Im\rho=\rho(\root_A)\ \&\ \mathrm{code}=(12'1'32),\\
            \l(\l(\bullet_{5l}\l(G_1,G_2,G_3\r),\iota,(\eta,\eta')\r),\tilde\rho\r)&\text{if }\max\Im\rho=\rho(\root_A)\ \&\ \mathrm{code}=(11'2'23),\\
            \l(\l(\bullet_{6l}\l(G_1,G_2,G_3\r),\iota,(\eta,\eta')\r),\tilde\rho\r)&\text{if }\max\Im\rho=\rho(\root_A)\ \&\ \mathrm{code}=(12'1'23),\\
            \l(\l(\bullet_{1r}\l(G_1,G_2,G_3\r),\iota,(\eta,\eta')\r),\tilde\rho\r)&\text{if }\max\Im\rho=\rho(\root_{A'})\ \&\ \mathrm{code}=(31'2'12),\\
            \l(\l(\bullet_{2r}\l(G_1,G_2,G_3\r),\iota,(\eta,\eta')\r),\tilde\rho\r)&\text{if }\max\Im\rho=\rho(\root_{A'})\ \&\ \mathrm{code}=(32'1'12),\\
            \l(\l(\bullet_{3r}\l(G_1,G_2,G_3\r),\iota,(\eta,\eta')\r),\tilde\rho\r)&\text{if }\max\Im\rho=\rho(\root_{A'})\ \&\ \mathrm{code}=(11'2'32),\\
            \l(\l(\bullet_{4r}\l(G_1,G_2,G_3\r),\iota,(\eta,\eta')\r),\tilde\rho\r)&\text{if }\max\Im\rho=\rho(\root_{A'})\ \&\ \mathrm{code}=(12'1'32),\\
            \l(\l(\bullet_{5r}\l(G_1,G_2,G_3\r),\iota,(\eta,\eta')\r),\tilde\rho\r)&\text{if }\max\Im\rho=\rho(\root_{A'})\ \&\ \mathrm{code}=(11'2'23),\\
            \l(\l(\bullet_{6r}\l(G_1,G_2,G_3\r),\iota,(\eta,\eta')\r),\tilde\rho\r)&\text{if }\max\Im\rho=\rho(\root_{A'})\ \&\ \mathrm{code}=(12'1'23),
        \end{cases}
    \end{equation}
    where $\left.\tilde\rho\right|_{\Node(G_i)}\coloneqq\tilde\rho_i$ and $\tilde\rho(\root_G)\coloneqq M$.
\end{definition}

\begin{proposition}
    The maps $\Upsilon$ and $\tilde\Upsilon$ are actually bijections and $\Upsilon^{-1}=\tilde\Upsilon$.
\end{proposition}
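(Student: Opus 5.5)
We prove the statement by strong induction on the scale $n$, showing $\tilde\Upsilon\circ\Upsilon=\Id$ on $\G_n^{\mathrm{map}}(\iota,(\eta,\eta'),M)$ and $\Upsilon\circ\tilde\Upsilon=\Id$ on $\res_n^{\mathrm{map}}(\iota,\eta,\eta',M)$ for all signs, colours and $M$. Since both maps are built by the same one-step recursion, this gives bijectivity together with $\Upsilon^{-1}=\tilde\Upsilon$. For $n=0$ there is nothing to check: both sets are singletons, and the two maps exchange $(\perp,\iota,(\eta,\eta'))$ with the trivial couple $((\perp,\iota,\eta),(\perp,-\iota,\eta'),\sigma)$.

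For $n>0$, start from $(G,\rho)$ with $G=\bullet_\star(G_1,G_2,G_3)$. Write $(C,\tilde\rho)=\Upsilon(G,\rho)$. By construction, the root that branches is $\root_A$ if $\star\in\{1l,\ldots,6l\}$ and $\root_{A'}$ otherwise, and $\tilde\rho$ assigns $M$ to that root. Since $\tilde\rho$ is strictly increasing, $M=\max\Im\tilde\rho$, and $\tilde\rho$ on the other root is at most $M-1$. So the criterion $\max\Im\rho=\rho(\root_A)$ versus $\rho(\root_{A'})$ used in $\tilde\Upsilon$ recovers the letter $l$ or $r$.

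Next we read off the code, i.e.\ which children of the branching root carry $A_3$, $A_1'$, $A_2'$. By \cref{properties of resonant nodes lemma}, the root is $1$-, $4$- or $5$-resonant, and the self-coupled pairs $\{\mathrm{Off}(\node_i)\sqcup\mathrm{Off}(\node_j)\}$ identify $C_1,C_2,C_3$ uniquely. The data here are the position of the child with $K_C=K_C(\root)$ and which of positions $2,3$ pairs with position $4$. This recovers the index $1,\ldots,6$. The extracted sub-couples, together with $\rho_i=\tilde\rho|_{\Node(C_i)}$, are exactly $\Upsilon(G_i,\rho_i)$. By the induction hypothesis $\tilde\Upsilon(\Upsilon(G_i,\rho_i))=(G_i,\rho_i)$, and reassembling with $\tilde\rho(\root_G)=M$ returns $(G,\rho)$.

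The reverse composition uses the same argument. Take $(C,\rho)$ in $\res_n^{\mathrm{map}}$. By \cref{all types of resonant nodes}, $C$ has one of the twelve forms of \eqref{need to code this}. The decomposition is unique once we know which root carries $\max\Im\rho$ (strict monotonicity forces it to be a root) and we know the resonance pattern. Then $\Upsilon$ applied to $\tilde\Upsilon(C,\rho)$ rebuilds the same five-child root, with the same pairing $\sigma=\sqcup\sigma_i$ and the same labelling.

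The main obstacle is well-definedness: the sign and colour bookkeeping must be checked against \eqref{all sign and colour possibilities}. We have to verify that each $C_i$ produced by $\tilde\Upsilon$ lies in $\res_{n_i}(\iota_i,\eta_i,\eta_i')$ with exactly the prescribed $(\iota_i,\eta_i,\eta_i')$, and that $\Upsilon$'s output respects the quintic sign/colour rule of \cref{fig:placeholder} at the new root. We do this case by case over the twelve $\star$, using $\res_n(\iota,\iota,\eta,\eta')=\emptyset$ to fix the opposite signs. Along the way we also correct the evident index typos in the displayed definition of $C_3$, which should run over lines $3$–$6$ and $7,8$. A second subtle point is ambiguity when several children have $K_C=K_C(\root)$. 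This is excluded by the linear independence part of \cref{properties of resonant nodes lemma}, which guarantees that the code is determined by the couple.
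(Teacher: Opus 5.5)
Your proposal is correct and follows the paper's proof: induction on $n$ through the shared one-step recursion, showing $\Upsilon\circ\tilde\Upsilon=\Id$ and $\tilde\Upsilon\circ\Upsilon=\Id$. You also make explicit the well-definedness points the paper leaves implicit, namely that the branching root is the one carrying $\max\Im\rho$ and that the code is forced by the uniqueness of the special child and of the pairing. One small slip: when the special child sits at position $4$ (cases $3l,4l$), the code is read off from which of positions $2,3$ pairs with position $5$, not with position $4$.
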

\begin{proof}
Due to the recursive structure in all the definitions, we may show $\Upsilon\circ\tilde\Upsilon=\Id_{\res_n^{\mathrm{map}}(\iota,\eta,\eta', M)}$ by induction. Proving $\tilde\Upsilon\circ\Upsilon=\Id_{\G_n^{\mathrm{map}}(\iota,(\eta,\eta'),M)}$ is completely analogous. For $n=0$, the statement is clear. Now assume $n>0$ and suppose $(C,\rho)\in\res_n^{\mathrm{map}}(\iota,\eta,\eta',M)$ with $C=(A,A',\sigma)$ and assume first $\rho(\root_A)=\max\Im\rho=M$. This takes us into the first six cases listed in \cref{all types of resonant nodes}. We assume further, to be in the first case, that is 
\begin{equation}
    C=\l(\bullet\l(((A_3,\iota,\eta),(A_1',-\iota,\eta),(A_2',-\iota,\eta),(A_1,\iota,\overline\eta),(A_2,\iota,\overline\eta)),\iota,\eta\r),(A_3',-\iota,\eta'),\sigma\r).
\end{equation}
Of course, the construction of $\tilde\Upsilon$ tells us to set $C_i\coloneqq(A_i,A_i',\sigma_i)$, $\sigma_i\coloneqq\left.\sigma\right|_{\Leaf(A_i)\sqcup\Leaf(A_i')}$ and $\rho_i\coloneqq\left.\rho\right|_{\Node(C_i)}$ so that $(C_1,\rho_1)\in\res_{n_1}^{\mathrm{map}}(\iota,\overline\eta,\eta,\max\Im\rho_1)$, $(C_2,\rho_2)\in\res_{n_2}^{\mathrm{map}}(\iota,\overline\eta,\eta,\max\Im\rho_2)$ and $(C_3,\rho_3)\in\res_{n_3}^{\mathrm{map}}(\iota,\eta,\eta',\max\Im\rho_3)$ where $n_1+n_2+n_3=n-1$. The construction further instructs us to set $(B_1,\tilde\rho_1)\coloneqq\tilde\Upsilon(C_1,\rho_1)\in\G_{n_1}(\iota,(\overline\eta,\eta),\max\Im\rho_1)$, $(B_2,\tilde\rho_2)\coloneqq\tilde\Upsilon(C_2,\rho_2)\in\G_{n_2}(\iota,(\overline\eta,\eta),\max\Im\rho_2)$ and $(B_3,\tilde\rho_3)\coloneqq\tilde\Upsilon(C_3,\rho_3)\in\G_{n_3}(\iota,(\eta,\eta'),\max\Im\rho_1)$ so that $\tilde\Upsilon(C,\rho)=\l(\l(\bullet_{1l}(G_1,G_2,G_3),\iota,(\eta,\eta'),M\r),\tilde\rho\r)$, where $\left.\tilde\rho\right|_{\Node(G_i)}\coloneqq\tilde\rho_i$ and $\tilde\rho(\root_G)\coloneqq M$. Due to the induction hypothesis, we have $\Upsilon\l(G_i,\tilde\rho_i\r)=\Upsilon\circ\tilde\Upsilon\l(C_i,\rho_i\r)=\l(C_i,\rho_i\r)$. This implies precisely $\Upsilon\circ\tilde\Upsilon(C,\rho)=(C,\rho)$. This inductive argument of course works also for the remaining $11$ cases. 
\end{proof}

\subsection{Ternary trees recapture the recursive structure}

\begin{definition}
    We define for $\l(C,\rho_C\r)\in\res_n^{\mathrm{map}}(\iota,\eta,\eta')$, 
    \begin{equation}
        \J_{\l(C,\rho_C\r)}(t,k)\coloneqq(-\rmi)^nL^{-2dn}\sum_{\kappa\in\D_k(C)}\prod_{\node\in\Node(C)}\iota_\node Q_\node\prod_{\leaf\in\Leaf(C)_+}M^{\eta_\leaf,\eta_{\sigma(\leaf)}}(\kappa(\leaf))\frac{t^n}{n!}.
    \end{equation}
    For any $(G,\rho_G)\in\G_n^{\mathrm{map}}(\iota,(\eta,\eta'))$, we set $(C,\rho_C)=\Upsilon\l(G,\rho_G\r)$ and $\mathcal H_G^{\rho_G}\coloneqq\J_{\Upsilon\l(G,\rho_G\r)}$ and set for any $G\in\G_n(\iota,(\eta,\eta'))$,
    \begin{equation}
        \mathcal H_G\coloneqq\sum_{\rho_G\in\M(\Node(G))}\mathcal H_G^{\rho_G}.
    \end{equation}
\end{definition}

\begin{remark}\label{coka remark}
    Note that $\J_C(\epsilon^{-1}t,k)=\abs{\M(\Node(C))}\J_{\l(C,\rho_C\r)}(t,k)$ for all $\rho_C\in\M(\Node(C))$ such that \begin{equation}
        \J_C(\epsilon^{-1}t,k)=\sum_{\rho_C\in\M(\Node(C))}\J_{\l(C,\rho_C\r)}(t,k).
    \end{equation}
    Furthermore, if $G\in\G_n(\iota,(\eta,\eta'))$ and $\rho_1,\rho_2\in\M(\Node(G))$ and if we set $(C_i,\tilde\rho_i)\coloneqq\Upsilon(G,\rho_i)$, we actually have $C_1=C_2$. The analogous statement for $\tilde\Upsilon$ is actually untrue due to the fact that we may have the {resonant branching} (in the sense of \cref{all types of resonant nodes}) at the left or right tree of a resonant couple.
\end{remark}

\begin{lemma}\label{multinomial lemma}
    Suppose $n>0$ and $G\in\G_n(\iota,(\eta,\eta'))$ writes as $G=\bullet_\star(G_1,G_2,G_3)$, where $G_i$ have scale $n_i$ respectively so that $n_1+n_2+n_3=n-1$. Then 
    \begin{equation}
        \abs{\M(\Node(G))} = \frac{(n_1+n_2+n_3)!}{n_1!n_2!n_3!}\abs{\M(\Node(G_1))}\abs{\M(\Node(G_2))}\abs{\M(\Node(G_3))}
    \end{equation}
\end{lemma}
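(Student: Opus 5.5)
The identity is a count of linear extensions of the parentality order on $\Node(G)$. The plan is to build an explicit bijection between $\M(\Node(G))$ and the data consisting of an interleaving pattern together with one linear extension on each subtree.

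First I will record the structure of the poset. Since $G=\bullet_\star(G_1,G_2,G_3)$, we have the decomposition
\[
\Node(G)=\{\root_G\}\sqcup\Node(G_1)\sqcup\Node(G_2)\sqcup\Node(G_3).
\]
In the parentality order, $\root_G$ lies strictly above every other node, because every node of $G_i$ has a finite ascending chain of parents ending at $\root_G$. Two nodes in different $G_i$ are incomparable. Within each $G_i$, the restricted order is exactly the parentality order of $G_i$. Consequently any $\rho\in\M(\Node(G))$, viewed as a bijection $\llbracket1,n\rrbracket\to\Node(G)$, must satisfy $\rho(n)=\root_G$.

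Second, I define the map
\[
\Phi\colon\rho\longmapsto\bigl(S_1,S_2,S_3,\rho_1,\rho_2,\rho_3\bigr).
\]
Here $S_i\coloneqq\rho^{-1}(\Node(G_i))\subseteq\llbracket1,n-1\rrbracket$, so $(S_1,S_2,S_3)$ is an ordered partition of $\llbracket1,n-1\rrbracket$ into blocks of sizes $n_1,n_2,n_3$. The map $\rho_i$ is the total order induced by $\rho$ on $\Node(G_i)$, that is, $\rho|_{S_i}$ composed with the increasing enumeration of $S_i$. Each $\rho_i$ is compatible with the parentality order of $G_i$, because $\rho$ is compatible with parentality and the order on $G_i$ is the restriction. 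Hence $\rho_i\in\M(\Node(G_i))$.

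Third, I construct the inverse. Given any ordered partition $(S_1,S_2,S_3)$ with the prescribed block sizes and any $\rho_i\in\M(\Node(G_i))$, define $\rho$ on $S_i$ by transporting $\rho_i$ along the increasing enumeration of $S_i$, and set $\rho(n)\coloneqq\root_G$. To see that $\rho$ is compatible with the parentality order, it suffices to check comparable pairs. There are only two kinds: pairs inside a single $G_i$, which are handled by $\rho_i$ and the monotonicity of the enumeration, and pairs involving $\root_G$, which are handled because $\root_G$ is placed last. The two constructions are mutually inverse by construction.

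Finally, counting through the bijection gives the claim. The number of ordered partitions of $\llbracket1,n-1\rrbracket$ into blocks of sizes $n_1,n_2,n_3$ is the multinomial coefficient $\frac{(n_1+n_2+n_3)!}{n_1!\,n_2!\,n_3!}$, and the choices of $\rho_1,\rho_2,\rho_3$ contribute the product $\abs{\M(\Node(G_1))}\abs{\M(\Node(G_2))}\abs{\M(\Node(G_3))}$.

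The only step needing care is the claim that nodes of distinct subtrees $G_i$ are incomparable, so that no constraint links them. This follows directly from \cref{parentality order}: the parent chain of any node of $G_i$ stays inside $G_i$ until it reaches $\root_G$, so a node of $G_i$ is never below a node of $G_j$ for $j\neq i$. The rest of the argument is routine bookkeeping.
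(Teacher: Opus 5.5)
Your proposal is correct and follows essentially the same argument as the paper. The paper encodes the interleaving by strictly increasing injections $\mu_1,\mu_2,\mu_3$ with disjoint images, composed with linear extensions $\rho_i$ of the subtrees, and counts $\binom{n_1+n_2+n_3}{n_1}\binom{n_2+n_3}{n_2}$ choices; this is exactly your ordered partition $(S_1,S_2,S_3)$. Your version additionally makes explicit the root-last placement, the incomparability of nodes in distinct subtrees, and the verification that the correspondence is a bijection, all of which the paper leaves implicit.
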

\begin{proof}
    If $\l(\rho_1,\rho_2,\rho_3\r)\in\M(\Node(G_1))\times\M(\Node(G_2))\times\M(\Node(G_3))$ then we choose some strictly increasing injective function $\mu_1\colon\llbracket1,n_1\rrbracket\to\llbracket1,n_1+n_2+n_3\rrbracket$ and some other strictly increasing injective function $\mu_2\colon\llbracket1,n_2\rrbracket\to\llbracket1,n_1+n_2+n_3\rrbracket$ such that $\Im\mu_1\cap\Im\mu_2=\emptyset$. Then we let $\mu_3\colon\llbracket1,n_3\rrbracket\to\llbracket1,n_1+n_2+n_3\rrbracket$ be the unique strictly increasing injective function such that $\Im\mu_3\cap\Im\mu_i=\emptyset$ for both $i=1,2$. Any order $\rho\in\M(\Node(G))$ may now be represented in the form 
    \begin{equation}
        \rho(\node)\coloneqq\begin{cases}
            \mu_1(\rho_1(\node))&\text{if }\node\in\Node(G_1),\\
            \mu_3(\rho_2(\node))&\text{if }\node\in\Node(G_2),\\
            \mu_3(\rho_3(\node))&\text{if }\node\in\Node(G_3). 
        \end{cases}
    \end{equation}
    It is thus clear that 
    \begin{equation}
        \abs{\M(\Node(G))}=c\abs{\M(\Node(G_1))}\abs{\M(\Node(G_2))}\abs{\M(\Node(G_3))},
    \end{equation}
    where $c$ encodes the different possibilities, $\mu_1$ and $\mu_2$ can be defined. Initially counting the number of choices to choose $\mu_1$ is $(n_1+n_2+n_3)\cdots(n_2+n_3+1)=\frac{(n_1+n_2+n_3)!}{(n_2+n_3)!}$ but this quantity includes internal permutations of the image of $\mu_1$ which is wrong since $\mu_1$ is supposed to be strictly increasing. Thus, we must divide this fraction by $n_1!$ to obtain that the number of choices for $\mu_1$ is $\binom{n_1+n_2+n_3}{n_1}$. After $\mu_1$ has been chosen, we choose $\mu_2$ and start counting the possibilities again naively $(n_2+n_3)\cdots(n_3+1)=\frac{(n_2+n_3)!}{n_3!}$ and this number again includes all possible permutations of the image points of $\mu_2$ so they have to be divided out which leads us to $\binom{n_2+n_3}{n_2}$. Thence, 
    \begin{equation}
        c=\binom{n_1+n_2+n_3}{n_1}\binom{n_2+n_3}{n_2}=\frac{(n_1+n_2+n_3)!}{n_1!n_2!n_3!}=\binom{n_1+n_2+n_3}{n_1,n_2,n_3}.
    \end{equation}
\end{proof}

\begin{proposition}\label{insertment proposition}
    Let $G\in\G_n(\iota,(\eta,\eta'))$. Then if $n=0$, $\Q_G(t,k)=M^{\eta,\eta',\iota}(k)$ and if $n>0$, we may decompose uniquely $G=(\bullet_\star(G_1,G_2,G_3),\iota,(\eta,\eta'))$, where $G_i\in\G_{n_i}(\iota_i,(\eta_i,\eta_i'))$ with $n_1+n_2+n_3=n-1$ and $\iota_i,\eta_i,\eta_i'$ depend on the nature of $\star\in\{1l,\ldots,6l,1r,\ldots,6r\}$. All possibilities have been written in \cref{all sign and colour possibilities}. We then have the recursive structure 
    \begin{align}
        \Q_G(t,k)=\begin{cases}
            -\rmi\iota\int_0^t\Q_{G_3}(s,k)L^{-2d}\sum\limits_{k_1,k_2}\tilde\zeta_{j,\iota}^\eta(k_1,k_2,k)\Q_{G_1}(s,-k_1)\Q_{G_2}(s,-k_2)\rmd s&\text{if }\star\in\{1l,2l\}\\
            -\rmi\iota\int_0^t\Q_{G_3}(s,k)L^{-2d}\sum\limits_{k_1,k_2}\tilde\zeta_{3,\iota}^{\eta}(k_1,k_2,k)\Q_{G_1}(s,k_1)\Q_{G_2}(s,-k_2)\rmd s&\text{if }\star=\in\{3l,\ldots,6l\}\\
            \rmi\iota\int_0^t\Q_{G_3}(s,-k)L^{-2d}\sum\limits_{k_1,k_2}\tilde\zeta_{j,-\iota}^{\eta'}(k_1,k_2,-k)\Q_{G_1}(s,-k_1)\Q_{G_2}(s,-k_2)\rmd s&\text{if }\star\in\{1r,2r\},\\
            \rmi\iota\int_0^t\Q_{G_3}(s,-k)L^{-2d}\sum\limits_{k_1,k_2}\tilde\zeta_{j,-\iota}^{\eta'}(k_1,k_2,-k)\Q_{G_1}(s,k_1)\Q_{G_2}(s,-k_2)\rmd s&\text{if }\star\in\{3r,\ldots,6r\}.
        \end{cases}
    \end{align}
\end{proposition}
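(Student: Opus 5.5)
We unfold the definition $\Q_G=\sum_{\rho_G\in\M(\Node(G))}\J_{\Upsilon(G,\rho_G)}$ and use the recursive factorisation of $\tilde\J_C$ from the lemma preceding \cref{recursive structure for subcouples}. The case $n=0$ is immediate: $\Upsilon(G)$ is the couple of two trivial trees with the obvious pairing. Its only positive leaf carries $M^{\eta,\eta'}(k)^\iota$, there are no nodes, and the time integral is conventionally $1$. Hence $\Q_G(t,k)=M^{\eta,\eta',\iota}(k)$.

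For $n>0$, write $G=\bullet_\star(G_1,G_2,G_3)$. By \cref{coka remark}, the couple $C=\Upsilon(G,\rho_G)$ does not depend on $\rho_G$, so
\begin{equation}
\Q_G(t,k)=\abs{\M(\Node(G))}\frac{t^n}{n!}\tilde\J_C(k).
\end{equation}
The recursive lemma then gives
\begin{equation}
\tilde\J_C(k)=-\rmi\iota_{\root}L^{-2d}\sum_{k_1,k_2}Q_{\root}\,\tilde\J_{C_1}(\mp k_1)\tilde\J_{C_2}(-k_2)\tilde\J_{C_3}(\pm k),
\end{equation}
where $C_i=\Upsilon(G_i,\cdot)$ and $\root$ is the branching root, i.e.\ $\root_A$ for $\star\in\{1l,\dots,6l\}$ and $\root_{A'}$ for $\star\in\{1r,\dots,6r\}$.

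Next we identify $Q_{\root}$ case by case with one of the $\tilde\zeta$'s, by reading off the wave numbers of the five children in each line of \eqref{need to code this}. In lines $1l,2l$ the first child carries $k$ (from $A_3$), the subtrees $A_1',A_2'$ carry $k_1,k_2$ (or their swap), and $A_1,A_2$ carry $-k_1,-k_2$. This gives $\tilde\zeta^\eta_{1,\iota}$ or $\tilde\zeta^\eta_{2,\iota}$ evaluated at $(k_1,k_2,k)$, while $C_1,C_2$ are evaluated at $-k_1,-k_2$. Lines $3l$–$6l$ give $\tilde\zeta_{3,\iota}^\eta,\dots,\tilde\zeta_{6,\iota}^\eta$; there $C_1=(A_1,A_1')$ is self-paired at the root level with wave number $k_1$, so it enters as $\Q_{G_1}(\cdot,k_1)$. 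In the right cases the root has sign $-\iota$ and colour $\eta'$ and the roots of $C_3$ are swapped relative to $C$. The root wave number is therefore $-k$, giving $\tilde\zeta^{\eta'}_{j,-\iota}(k_1,k_2,-k)$, $\Q_{G_3}(\cdot,-k)$, and the prefactor $-\rmi\cdot(-\iota)=\rmi\iota$. (The displayed statement writes $\tilde\zeta_3$ uniformly in the $3l$–$6l$ case; the argument yields $\tilde\zeta_j$ with $j$ determined by $\star$, which is what we prove.)

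It remains to recombine the time factors. \Cref{multinomial lemma} gives
\begin{equation}
\abs{\M(\Node(G))}\frac{t^n}{n!}=\frac{(n-1)!}{n!}\,t^{n}\prod_{i=1}^3\frac{\abs{\M(\Node(G_i))}}{n_i!}.
\end{equation}
Combining this with the identity $\int_0^t s^{n-1}\,\rmd s=t^n/n$ yields
\begin{equation}
\abs{\M(\Node(G))}\frac{t^n}{n!}=\int_0^t\prod_{i=1}^3\abs{\M(\Node(G_i))}\frac{s^{n_i}}{n_i!}\,\rmd s.
\end{equation}
Each factor $\abs{\M(\Node(G_i))}\frac{s^{n_i}}{n_i!}\tilde\J_{C_i}$ is exactly $\Q_{G_i}(s,\cdot)$, again using the $\rho$-independence from \cref{coka remark}, which yields the claimed recursion.

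The main obstacle is the bookkeeping in the previous step: checking for each of the twelve codes that the signs $\iota_i$, colours $(\eta_i,\eta_i')$, the argument $\pm k_i$ and the $\tilde\zeta$ index all match \eqref{all sign and colour possibilities}. This is especially delicate in the right-branching cases, where conjugation, the reversal of left and right trees in $C_3$, and the sign flip of $k$ must all be tracked consistently. I would verify the cases $1l$ and $3l$ carefully and obtain the rest by the symmetry of swapping $A_1'\leftrightarrow A_2'$ and by exchanging left and right trees.
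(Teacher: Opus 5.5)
Your proposal is correct and follows essentially the same route as the paper's proof. You use the $\rho$-independence of $\Upsilon(G,\rho)$ to write $\Q_G=\abs{\M(\Node(G))}\frac{t^n}{n!}\tilde\J_C$, apply the recursive factorisation of $\tilde\J_C$ with $Q$ at the branching root identified with the appropriate $\tilde\zeta$, and recombine the time factors via \cref{multinomial lemma} and $\int_0^t s^{n-1}\rmd s=t^n/n$. Your observation that the index $j$ of $\tilde\zeta_j$ is fixed by $\star$, rather than being $3$ throughout the $3l$--$6l$ cases, correctly reflects what the paper's ``analogous'' remaining cases actually yield.
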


\begin{proof}
The case $n=0$ follows by definition. Now suppose $n>0$. If $\rho\in\M(\Node(G))$ and $(C,\rho_C)=\Upsilon(G,\rho)$. Of course, 
\begin{equation}
    \Q_G=\abs{\M(\Node(G))}\Q_G^\rho=\abs{\M(\Node(G))}\J_C^{\rho_C} = \abs{\M(\Node(G))}\frac{t^n}{n!}\tilde\J_C\eqqcolon\abs{\M(\Node(G))}\frac{t^n}{n!}\tilde\Q_G
\end{equation}
 so that $\tilde\Q_G\coloneqq\tilde\J_C$ and this definition is well-defined because if $(C_i,\rho_i)=\Upsilon(G,\tilde\rho_i)$ for $\tilde\rho_i\in\M(\Node(G))$, then $C_1=C_2$ due to the construction of $\Upsilon$, see \cref{coka remark}. 

 Let $\star=1l$. In this case, $G_i\in\G_{n_i}(\iota,(\overline\eta,\eta))$ for $i=1,2$ and $G_3\in\G_{n_3}(\iota,(\eta,\eta'))$. Furthermore, \cref{the first set} implies 
        \begin{equation}
            \begin{aligned}
                \tilde\Q_G(k)=\tilde\J_C(k)&=-\rmi\iota_{\root_A}\tilde\J_{C_3}(k)L^{-2d}\sum_{k_1,k_2\in\Z_L^d}Q_{\root_A}\tilde\J_{C_1}(-k_1)\tilde\J_{C_2}(-k_2)\\&=-\rmi\iota_{\root_A}\tilde\J_{C_3}(k)L^{-2d}\sum_{k_1,k_2\in\Z_L^d}\tilde\zeta_{1,\iota}^\eta(k_1,k_2,k)\tilde\J_{C_1}(-k_1)\tilde\J_{C_2}(-k_2)\\&=-\rmi\iota_{\root_A}\tilde\Q_{G_3}(k)L^{-2d}\sum_{k_1,k_2\in\Z_L^d}\tilde\zeta_{1,\iota}^\eta(k_1,k_2,k)\tilde\Q_{G_1}(-k_1)\tilde\Q_{G_2}(-k_2).
            \end{aligned}
        \end{equation}
        One can now prove via induction that 
        \begin{equation}
            \Q_G(t,k)=-\rmi\iota_{\root_A}\int_0^t\Q_{G_3}(s,k)L^{-2d}\sum_{k_1,k_2}\tilde\zeta_{1,\iota}^\eta(k_1,k_2,k)\Q_{G_1}(s,-k_1)\Q_{G_2}(s,-k_2)\rmd s.
        \end{equation}
        The induction goes as follows. If $n=0$, then there is nothing to prove. If $n>0$, we find with \cref{multinomial lemma},
        \begin{equation}
            \begin{gathered}
                -\rmi\iota_{\root_A}\int_0^t\Q_{G_3}(s,k)L^{-2d}\sum_{k_2,k_3}\tilde\zeta_{1,\iota}^\eta(k_1,k_2,k)\Q_{G_1}(s,-k_1)\Q_{G_2}(s,-k_2)\rmd s\\
                =\tilde\Q_G(k)\int_0^t\frac{s^{n_1+n_2+n_3}}{n_1!n_2!n_3!}\rmd s\abs{\M(\Node(G_1))}\abs{\M(\Node(G_2))}\abs{\M(\Node(G_3))} \\= \tilde\Q_G(k)\frac{t^n}{nn_1!n_2!n_3!}\abs{\M(\Node(G_1))}\abs{\M(\Node(G_2))}\abs{\M(\Node(G_3))}\\=\abs{\M(\Node(G))}\tilde\Q_G(k)\frac{t^n}{n!}=\Q_G(t,k).
            \end{gathered}
        \end{equation}
        Proving the remaining $11$ cases works analogously. 
\end{proof}

We now set 
\begin{equation}
    \begin{gathered}
    \rho_{L,n}^{(\eta,\eta'),\iota}(t)\coloneqq\sum_{C\in\res_n(\iota,\eta,\eta')}\J_C\l(\epsilon^{-1}t\r)=\sum_{(C,\rho)\in\res_n^{\mathrm{ord}}(\iota,\eta,\eta')}\J_{(C,\rho)}(t) = \sum_{(G,\rho)\in\G_n^{\mathrm{ord}}(\iota,(\eta,\eta'))}\Q_G^\rho(t)\\
    \sum_{G\in\G_n(\iota,(\eta,\eta'))}\sum_{\rho\in\M(\Node(G))}\Q_G^\rho(t) = \sum_{G\in\G_n(\iota,(\eta,\eta'))}\Q_G(t).
\end{gathered}
\end{equation}
\begin{remark}\label{for general non-linearity and resonant structure}
    If \eqref{system} had a non-linearity of degree $2k+1$, then, for every resonant couple, $k+1$ sub-couples would form. One may analogously construct a bijection $\Upsilon$ between the set of resonant couples and $(k+1)$-ary trees. 
\end{remark}

We may use the fact that if $n>0$, we can decompose $G=(\bullet_\star(G_1,G_2,G_3),\iota,(\eta,\eta'))$ for some 
\begin{gather}
    G_1\in\begin{cases}
        \G_{n_1}(\iota,(\overline\eta,\eta))&\text{if }\star=1l,2l,\\
        \G_{n_1}(\iota,(\eta,\eta))&\text{if }\star=3l,\ldots,6l,\\
        \G_{n_1}(-\iota,(\overline\eta',\eta'))&\text{if }\star=1r,2r,\\
        \G_{n_1}(-\iota,(\eta',\eta'))&\text{if }\star=3r,\ldots,6r,
    \end{cases}\ 
    G_2\in\begin{cases}
        \G_{n_2}(\iota,(\overline\eta,\eta))&\text{if }\star=1l,\ldots,6l,\\
        \G_{n_2}(-\iota,(\overline\eta',\eta'))&\text{if }\star=1r,\ldots,6r,
    \end{cases}\\ 
    G_3\in\begin{cases}
        \G_{n_3}(\iota,(\eta,\eta'))&\text{if }\star=1l,2l,\\
        \G_{n_3}(\iota,(\overline\eta,\eta'))&\text{if }\star=3l,\ldots,6l,\\
        \G_{n_3}(-\iota,(\eta',\eta))&\text{if }\star=1r,2r,\\
        \G_{n_3}(-\iota,(\overline\eta',\eta))&\text{if }\star=3r,\ldots,6r,
    \end{cases}
\end{gather}
where $n_1+n_2+n_3=n-1$.

We introduce the notation 
\begin{equation}
    \iota^\dagger\coloneqq\begin{cases}
        \iota&\text{if }\dagger=r,\\
        -\iota&\text{if }\dagger=l.
    \end{cases},\ \eta^\dagger\coloneqq\begin{cases}
        \eta&\text{if }\dagger=r,\\
        \eta'&\text{if }\dagger=l
    \end{cases},\ \overline\dagger\coloneqq\begin{cases}
            l&\text{if }\dagger=r,\\
            r&\text{if }\dagger=l.
        \end{cases}\text{ and }\tilde\iota^\dagger\coloneqq\begin{cases}
            +&\text{if }\dagger=r,\\
            -&\text{if }\dagger=l.
        \end{cases}
\end{equation}
Let us decompose 
\begin{equation}
    \begin{gathered}
    \rho_{L,n}^{(\eta,\eta'),\iota}=\sum_{n_1+n_2+n_3=n-1}\sum_{\dagger\in\{l,r\}}\Bigg(\sum_{\substack{G_1\in\G_{n_1}(\iota^\dagger,(\overline{\eta^\dagger},\eta^\dagger))\\G_2\in\G_{n_2}(\iota^\dagger,(\overline{\eta^\dagger},\eta^\dagger))\\G_3\in\G_{n_3}(\iota^\dagger,(\eta^\dagger,\eta^{\overline\dagger}))}}\l(\Q_{\bullet_{1\dagger}(G_1,G_2,G_3)}+\Q_{\bullet_{2\dagger}(G_1,G_2,G_3)}\r)\\+\sum_{\substack{G_1\in\G_{n_1}(\iota^\dagger,(\eta^\dagger,\eta^\dagger))\\G_2\in\G_{n_2}(\iota^\dagger,(\overline{\eta^\dagger},\eta^\dagger))\\G_3\in\G_{n_3}(\iota^\dagger,(\overline{\eta^\dagger},\eta^{\overline\dagger}))}}\l(\Q_{\bullet_{3\dagger}(G_1,G_2,G_3)}+\Q_{\bullet_{4\dagger}(G_1,G_2,G_3)}+\Q_{\bullet_{5\dagger}(G_1,G_2,G_3)}+\Q_{\bullet_{6\dagger}(G_1,G_2,G_3)}\r)\Bigg)
\end{gathered}
\end{equation}
Using \cref{insertment proposition}, we find 

\begin{equation}
    \begin{gathered}
        \rho_{L,n}^{(\eta,\eta'),\iota}(t,k)=\sum_{G\in\G_n(\iota,(\eta,\eta'))}\Q_G=-\rmi\sum_{n_1+n_2+n_3=n-1}\sum_{\dagger\in\{l,r\}}\iota^\dagger\\\Bigg[\sum_{\substack{\substack{G_1\in\G_{n_1}(\iota^\dagger,(\overline{\eta^\dagger},\eta^\dagger))\\G_2\in\G_{n_2}(\iota^\dagger,(\overline{\eta^\dagger},\eta^\dagger))\\G_3\in\G_{n_3}(\iota^\dagger,(\eta^\dagger,\eta^{\overline\dagger}))}}}\sum_{l=1}^2\int_0^t\Q_{G_3}(s,\tilde\iota^\dagger k)L^{-2d}\sum_{k_1,k_2}\tilde\zeta_{l,\iota^\dagger}^{\eta^\dagger}(k_1,k_2,\tilde\iota^\dagger k)\Q_{G_1}(s,-k_1)\Q_{G_2}(s,-k_2)\rmd s\\\sum_{\substack{G_1\in\G_{n_1}(\iota^\dagger,(\eta^\dagger,\eta^\dagger))\\G_2\in\G_{n_2}(\iota^\dagger,(\overline{\eta^\dagger},\eta^\dagger))\\G_3\in\G_{n_3}(\iota^\dagger,(\overline{\eta^\dagger},\eta^{\overline\dagger}))}}\sum_{l=3}^6\int_0^t\Q_{G_3}(s,\tilde\iota^\dagger k)L^{-2d}\sum_{k_1,k_2}\tilde\zeta_{l,\iota^\dagger}^{\eta^\dagger}(k_1,k_2,\tilde\iota^\dagger k)\Q_{G_1}(s,k_1)\Q_{G_2}(s,-k_2)\rmd s\Bigg]
    \end{gathered}
\end{equation}

\begin{equation}\label{here I have to carry out the sum over dagger}
\begin{gathered}
    =-\rmi\sum_{n_1+n_2+n_3=n-1}\sum_{\dagger\in\{l,r\}}\iota^\dagger
    \Bigg[\sum_{l=1}^2\int_0^t\rho_{L,n_3}^{(\eta^\dagger,\eta^{\overline\dagger}),\iota^\dagger}(s,\tilde\iota^\dagger k)L^{-2d}\sum_{k_1,k_2}\tilde\zeta_{l,\iota^\dagger}^{\eta^\dagger}(k_1,k_2,\tilde\iota^\dagger k)\rho_{L,n_1}^{(\overline{\eta^\dagger},\eta^\dagger),\iota^\dagger}(s,-k_1)\rho_{L,n_2}^{(\overline{\eta^\dagger},\eta^\dagger),\iota^\dagger}(s,-k_2)\rmd s\\+\sum_{l=3}^6\int_0^t\rho_{L,n_3}^{(\overline{\eta^\dagger},\eta^{\overline\dagger}),\iota^\dagger}(s,\tilde\iota^\dagger k)L^{-2d}\sum_{k_1,k_2}\tilde\zeta_{l,\iota^\dagger}^{\eta^\dagger}(k_1,k_2,\tilde\iota^\dagger k)\rho_{L,n_1}^{(\eta^\dagger,\eta^\dagger),\iota^\dagger}(s,k_1)\rho_{L,n_2}^{(\overline{\eta^\dagger},\eta^\dagger),\iota^\dagger}(s,-k_2)\rmd s
    \Bigg]
\end{gathered}
\end{equation}

\begin{lemma}
    It holds that $\rho_{L,n}^{(\eta,\eta'),-\iota}(t,k)=\overline{\rho_{L,n}^{(\eta,\eta'),\iota}(t,-k)}$.
\end{lemma}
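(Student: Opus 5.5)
The plan is to prove the identity at the level of individual couples and then sum. By definition, $\rho_{L,n}^{(\eta,\eta'),\iota}(t,k)=\sum_{C\in\res_n(\iota,\eta,\eta')}\J_C(\epsilon^{-1}t,k)$. I will construct a sign-flipping involution on couples and show it maps $\res_n(\iota,\eta,\eta')$ bijectively onto $\res_n(-\iota,\eta,\eta')$. I will also check that the weight of the flipped couple at $k$ equals the complex conjugate of the weight of the original couple at $-k$.

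\textbf{The involution.} Given $C=(T,T',\sigma)$, let $\overline{C}$ be the couple with the same underlying 5-ary trees, the same colours and the same pairing $\sigma$, but with every sign reversed. This is exactly the relabelling $(T,\iota,\eta)\mapsto(T,-\iota,\eta)$ from the lemma on $\J^{\iota'}_{(T,\iota,\eta)}$, so the sign rule of Figure \ref{fig:placeholder} is preserved. Since $\Leaf(\overline C)_\pm=\Leaf(C)_\mp$, $\sigma$ remains an admissible coupling map.

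\textbf{Decorations.} Take $\kappa\in\D_k(C)$ and set $\overline\kappa(\leaf)\coloneqq\kappa(\sigma(\leaf))=-\kappa(\leaf)$ for the new positive leaves $\leaf\in\Leaf(C)_-$. Then $K_{\overline C}=-K_C$ on every vertex, so this gives a bijection $\D_k(C)\cong\D_{-k}(\overline C)$. Because $K$ is only negated, the defining relations in Definition \ref{definition of being a resonant node} are preserved, and resonance of every node is unchanged. Hence $\overline C\in\res_n(-\iota,\eta,\eta')$.

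\textbf{Matching the weights factor by factor.} I compare $\J_{\overline C}(\epsilon^{-1}t,-k)$ with $\overline{\J_C(\epsilon^{-1}t,k)}$, using that all nodes are resonant, so the time integral equals $\abs{\M(\Node(C))}t^n/n!$ and is real.

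First, the prefactor $(-\rmi)^n\prod\iota_\node$ becomes $(-\rmi)^n(-1)^n\prod\iota_\node=\overline{(-\rmi)^n}\prod\iota_\node$, which is the conjugate of the original prefactor.

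Second, the leaf factor: the new positive leaf is $\sigma(\leaf)$, with colour pair swapped and exponent $-\iota_\leaf$. This gives $M^{\eta_{\sigma(\leaf)},\eta_\leaf}(\kappa(\leaf))^{-\iota_\leaf}=\overline{M^{\eta_\leaf,\eta_{\sigma(\leaf)}}(\kappa(\leaf))^{\iota_\leaf}}$, by the relation $\overline{M^{\eta,\eta'}}=M^{\eta',\eta}$. Here I assume the $\pm$ exponent on $M$ means conjugation, consistent with $z^-=\overline z$.

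Third, the node weight: it becomes $Q^{\eta_\node}_{-\iota_\node}(-K(\node_1),\dots,-K(\node_5))$. This must equal $\overline{Q^{\eta_\node}_{\iota_\node}(K(\node_1),\dots)}$, which needs $Q_-(x)=\overline{Q_+(-x)}$. This convention is implicit in \eqref{useless property}: with $\hat{\overline f}(k)=\overline{\hat f(-k)}$, conjugating $C^+$ produces precisely $\overline{Q_+(-\cdot)}$. I will make this convention explicit.

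Summing over $\kappa$ and over $C$ through the bijection then gives the claim.

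\textbf{Main obstacle.} The delicate point is the sign and convention bookkeeping: pinning down $Q_-$ and $M^{\cdot,\cdot,-}$ so that the factors match exactly, and checking that the prefactor $(-\rmi)^n(-1)^n$ really is the conjugate of $(-\rmi)^n$. As a fallback, I would instead argue by induction on $n$ through the recursion in Proposition \ref{insertment proposition}. In that argument, conjugating the right-hand side swaps the roles of $l$ and $r$, sends $\iota\mapsto-\iota$ and $k\mapsto -k$, and the resulting $\tilde\zeta$ factors are handled by the same identity $Q_-(x)=\overline{Q_+(-x)}$.
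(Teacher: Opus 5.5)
Your argument is correct, but it is organised differently from the paper's.

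\textbf{The paper's route.} The paper proves the identity by induction on $n$ through the ternary-tree recursion of Proposition~\ref{insertment proposition}, i.e.\ \eqref{here I have to carry out the sum over dagger}. The base case is $\rho_{L,0}^{(\eta,\eta'),\iota}=M^{\eta,\eta',\iota}$ with $M^{\eta,\eta',-\iota}(k)=\overline{M^{\eta,\eta',\iota}(-k)}$. The inductive step conjugates the recursion, using the hypothesis for $n_1,n_2,n_3<n$ and a conjugation symmetry of $Q$. This is essentially your fallback.

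\textbf{Your route.} You use a single involution $C\mapsto\overline C$ on resonant couples, with $K_{\overline C}=-K_C$ and $\D_k(C)\cong\D_{-k}(\overline C)$. You then match $\J_{\overline C}(\epsilon^{-1}t,-k)$ with $\overline{\J_C(\epsilon^{-1}t,k)}$ factor by factor. This has three advantages:
\begin{itemize}
\item it bypasses the bijection $\Upsilon$ and the twelve-case left/right bookkeeping;
\item it handles $n=0$ with no separate convention for $M^{\eta,\eta',-}$, since only positive leaves occur in $\J_C$;
\item it isolates exactly which symmetry of $Q$ is used.
\end{itemize}
The paper's induction is shorter once the recursion is available, and the same template is reused for the next lemma.

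\textbf{On the symmetry of $Q$.} Your reflected form $Q_{-\iota}(x)=\overline{Q_\iota(-x)}$, read off from \eqref{useless property}, is the one that is really needed. The paper's proof quotes $Q^\eta_{-\iota}(k_1,\dots,k_5)=\overline{Q^\eta_\iota(k_1,\dots,k_5)}$ without reflecting the arguments. Since the comparison is between $k$ and $-k$, and the summation momenta get reflected, that unreflected version would close the inductive step only for even $Q$.

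\textbf{Two small corrections.}
\begin{itemize}
\item In the leaf factor, the product in $\J_{\overline C}$ runs over $\Leaf(\overline C)_+$. So the new positive leaf $\sigma(\ell)$ carries exponent $+$, not $-\iota_\ell$. The conjugation comes entirely from the colour swap, $M^{\eta_{\sigma(\ell)},\eta_\ell}(\kappa(\ell))=\overline{M^{\eta_\ell,\eta_{\sigma(\ell)}}(\kappa(\ell))}$. If you also conjugate through the exponent, as your displayed identity does under your stated convention, you conjugate twice. The identity then fails unless $M^{\eta_\ell,\eta_{\sigma(\ell)}}$ is real.
\item In the fallback, the flip keeps the branching on the same tree: the left tree stays left and the colours are unchanged. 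So $l$ and $r$ are not swapped; only the sign changes.
\end{itemize}
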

\begin{proof}
    The assertion holds for $n=0$ because in this case $\rho_{L,0}^{(\eta,\eta'),\iota}=M^{\eta,\eta',\iota}$ and $M^{\eta,\eta',-\iota}(k)=\overline{M^{\eta,\eta',\iota}(-k)}$. Now suppose $n>0$ and the statement holds for all $0\leq m<n$. The induction hypothesis and the fact that $Q_{-\iota}^\eta\l(k_1,k_2,k_3,k_4,k_5\r)=\overline{Q_{\iota}^\eta(k_1,k_2,k_3,k_4,k_5)}$ comlete the proof by induction.
\end{proof}

\begin{lemma}
    It further holds,
    \begin{equation}
        \rho_{L,n}^{(\eta',\eta),\iota} = \overline{\rho_{L,n}^{(\eta,\eta'),\iota}}.
    \end{equation}
    In particular, $\rho_{L,n}^{(\eta,\eta),\iota}$ is real-valued. 
\end{lemma}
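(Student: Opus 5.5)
We argue by induction on $n$, in the same way as the preceding lemma. Two identities will be used throughout. The first is the previous lemma, $\rho_{L,n}^{(\eta,\eta'),-\iota}(t,k)=\overline{\rho_{L,n}^{(\eta,\eta'),\iota}(t,-k)}$. The second is the structural symmetry $\overline{M^{\eta,\eta'}}=M^{\eta',\eta}$ of the initial covariance, which gives $M^{\eta',\eta,\iota}=\overline{M^{\eta,\eta',\iota}}$.

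For $n=0$ we have $\rho_{L,0}^{(\eta,\eta'),\iota}=M^{\eta,\eta',\iota}$, so the claim is exactly this covariance symmetry.

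For the induction step, rather than work with the recursion \eqref{here I have to carry out the sum over dagger}, it is cleaner to use the couple representation $\rho_{L,n}^{(\eta,\eta'),\iota}(t,k)=\sum_{C\in\res_n(\iota,\eta,\eta')}\J_C(\epsilon^{-1}t,k)$. Swapping the two trees of a couple defines a bijection
\[
C=(A,A',\sigma)\mapsto C^{\mathrm{sw}}=(A',A,\sigma)
\]
from $\res_n(\iota,\eta,\eta')$ onto $\res_n(-\iota,\eta',\eta)$. It preserves resonance, because the resonance conditions in \cref{definition of being a resonant node} only involve sums of wave numbers of children.

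For resonant couples, $\J_C$ is given explicitly by the factor $\abs{\M(\Node(C))}t^n/n!$ times a lattice sum. Under the swap:
\begin{itemize}
\item the root decoration changes from $k$ to $-k$, since $K_C(\root)+K_C(\root')=0$;
\item the node factors $\iota_\node Q_\node$ are unchanged;
\item the leaf factors $M^{\eta_\leaf,\eta_{\sigma(\leaf)}}(\kappa(\leaf))^{\iota_\leaf}$ are reindexed over $\Leaf(C)_+$, which becomes the set of former partners.
\end{itemize}
This yields $\rho_{L,n}^{(\eta,\eta'),\iota}(t,k)=\rho_{L,n}^{(\eta',\eta),-\iota}(t,-k)$. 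Combining this with the previous lemma gives
\[
\rho_{L,n}^{(\eta',\eta),\iota}(t,k)=\overline{\rho_{L,n}^{(\eta,\eta'),\iota}(t,k)}.
\]

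To handle the leaf factors, we pair the pushed-forward leaves and use $M^{\eta,\eta'}(x)^{-}=\overline{M^{\eta,\eta'}(x)}=M^{\eta',\eta}(x)$. With the convention $M(\kappa)^{\iota}$ for a negative leaf evaluated at the opposite momentum, the products agree factor by factor.

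Alternatively, if the sign convention on leaves causes trouble, we can verify the same symmetry through the recursion \eqref{here I have to carry out the sum over dagger}. Swapping $\eta\leftrightarrow\eta'$ exchanges the roles $l\leftrightarrow r$. Conjugating the kernels uses $Q^\eta_{-\iota}=\overline{Q^\eta_\iota}$, and the induction hypothesis is applied to the lower-order factors.

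The main obstacle is this sign and colour bookkeeping. Under the swap, the first six configurations of \cref{all types of resonant nodes} map to the last six with $\iota\to-\iota$ and $\eta\leftrightarrow\eta'$, and we must check that the resulting $\tilde\zeta$ and $M$ factors match exactly, including complex conjugation.

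Finally, real-valuedness of $\rho_{L,n}^{(\eta,\eta),\iota}$ follows by taking $\eta'=\eta$ in the identity, since the function then equals its own conjugate.
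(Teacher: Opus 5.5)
Your argument is correct, but it takes a different route from the paper's. The paper inducts directly on the recursion \eqref{here I have to carry out the sum over dagger}. It carries out the sum over $\dagger\in\{l,r\}$ and uses the induction hypothesis, together with the previous lemma, to see that the $\dagger=l$ and $\dagger=r$ contributions are complex conjugates of each other. For $\eta'=\eta$ this gives an explicit expression of the form $2\iota\sum\Im(\cdots)$, and for $\eta'=\overline\eta$ one visibly equal to $\overline{\rho_{L,n}^{(\overline\eta,\eta),\iota}}$.

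Your swap $C\mapsto C^{\mathrm{sw}}$ is the couple-level, all-orders version of that $l\leftrightarrow r$ pairing: it exchanges the configurations $\bullet_{il}$ and $\bullet_{ir}$ of \cref{all types of resonant nodes}. It also makes the step essentially tautological. Signs and colours of nodes and leaves are intrinsic to the two trees, so $\Leaf(C)_+$, $\sigma$, every factor $\iota_\node Q_\node$, every $\Omega_\node$ and $I_C(t)$ are literally unchanged. The only effect is that $\D_k$ is now read off at the other root, whence $\D_k(C)=\D_{-k}(C^{\mathrm{sw}})$ and $\J_C(\epsilon^{-1}t,k)=\J_{C^{\mathrm{sw}}}(\epsilon^{-1}t,-k)$ couple by couple, even for non-resonant couples.

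Consequently no induction is needed beyond the one inside the previous lemma, and the ``main obstacle'' you anticipate does not arise. Your bullet saying that $\Leaf(C)_+$ ``becomes the set of former partners'' is inaccurate. Positive leaves remain positive, so the leaf products agree factor by factor with no conjugation identity at all; all conjugation is carried by the previous lemma.

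Your route gives a clean structural explanation free of $\tilde\zeta$ and colour bookkeeping. The paper's route instead produces the explicit real/imaginary-part form of the recursions for $\rho_{L,n}^\eta$ and $\rho_{L,n}^\times$. That form is what is later compared with the iterates $\rho_n$ of \eqref{dee} in \cref{abc lemma}.
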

\begin{proof}
    We prove the statement via induction and the induction start $n=0$ holds by the properties of $M^{\eta,\eta'}$ because $M^{\eta',\eta}=\overline{M^{\eta,\eta'}}$. Now assume $n>0$. Let $\eta'=\eta$. Indeed, $M^{\eta,\eta}$ is real-valued, and since $n>0$, we find, using the induction hypothesis and carrying out the sum over $\dagger\in\{l,r\}$ in \cref{here I have to carry out the sum over dagger}:
    \begin{equation}
        \begin{gathered}
            \rho_{L,n}^{(\eta,\eta),\iota}(t,k)=2\iota\sum_{n_1+n_2+n_3=n-1}\Bigg[\sum_{l=1}^2\int_0^t\rho_{L,n_3}^{(\eta,\eta),\iota}(s,k)\frac{1}{L^{2d}}\sum_{k_1,k_2}\Im\l(\rho_{L,n_1}^{(\overline\eta,\eta),\iota}(s,-k_1)\tilde\zeta_{l,\iota}^\eta(k_1,k_2,k)\rho_{L,n_2}^{(\overline\eta,\eta),\iota}(s,-k_2)\r)\rmd s\\+\sum_{l=3}^6\int_0^tL^{-2d}\sum_{k_1,k_2}\Im\l(\rho_{L,n_3}^{(\overline\eta,\eta),\iota}(s,k)\tilde\zeta_{l,\iota}^\eta(k_1,k_2,k)\rho_{L,n_2}^{(\overline\eta,\eta),\iota}(s,-k_2)\r)\rho_{L,n_1}^{(\eta,\eta),\iota}(s,k_1)\rmd s\Bigg]
        \end{gathered}
    \end{equation}
    which is a real-valued expression. 
    In the case $\eta'=\overline\eta$, we find 

    \begin{equation}
        \begin{gathered}
            \rho_{L,n}^{(\eta,\overline\eta),\iota}(t,k)=-\iota\rmi\sum_{n_1+n_2+n_3=n-1}\\\Bigg[\sum_{l=1}^2\int_0^t\rho_{L,n_3}^{(\eta,\overline\eta),\iota}(s,k)L^{-2d}\sum_{k_1,k_2}\l(\tilde\zeta_{l,\iota}^\eta(k_1,k_2,k)-\overline{\tilde\zeta_{l,\iota}^{\overline\eta}(k_1,k_2,k)}\r)\rho_{L,n_1}^{(\overline\eta,\eta),\iota}(s,-k_1)\rho_{L,n_2}^{(\overline\eta,\eta),\iota}(s,-k_2)\rmd s\\+\sum_{l=3}^6\int_0^tL^{-2d}\sum_{k_1,k_2}\bigg(\rho_{L,n_3}^{(\overline\eta,\overline\eta),\iota}(s,k)\tilde\zeta_{l,\iota}^\eta(k_1,k_2,k)\rho_{L,n_1}^{(\eta,\eta),\iota}(s,k_1)\\-\rho_{L,n_3}^{(\eta,\eta),\iota}(s,k)\overline{\tilde\zeta_{l,\iota}^{\overline\eta}(k_1,k_2,k)}\rho_{L,n_1}^{(\overline\eta,\overline\eta),\iota}(s,k_1)\bigg)\rho_{L,n_2}^{(\overline\eta,\eta),\iota}(s,-k_2)\rmd s\Bigg]\\=\overline{\rho_{L,n}^{(\overline\eta,\eta),\iota}(t,k)},
        \end{gathered}
    \end{equation}
    where in the last equality sign we used the induction hypothesis. 
\end{proof}
The previous claim implies that the only relevant quantities are 
\begin{equation}
    \rho_{L,n}^\eta\coloneqq \rho_{L,n}^{(\eta,\eta),+}\text{ and }\rho_{L,n}^\times\coloneqq \rho_{L,n}^{(0,1),+}
\end{equation}
and all other quantities can be deduced from these three quantities. 

\subsection{The large-box limit}\label{the large box limit section}

For $A,L>0$, we denote by $\mathcal E_{L,A}$ the probability set of measure $\mathbb P\l(\mathcal E_{L,A}\r)\geq1-L^{-A}$ on which \cref{The estimate the whole world was looking for} holds. Restricting ourselves to this set, the solution $f^\eta$ exists uniquely on the time interval $[0,\delta\epsilon^{-1}]$.

We may now compare 
\begin{equation}
    \E\l(\vmathbb{1}_{\mathcal E_{L,A}}\abs{\widehat{f^\eta}\l(\epsilon^{-1}t,k\r)}^2\r)\text{ and }\E\l(\vmathbb{1}_{\mathcal E_{L,A}}{\widehat{f^0}\l(\epsilon^{-1}t,k\r)\overline{\widehat{f^1}\l(\epsilon^{-1}t,k\r)}}\r)
\end{equation}
to \begin{equation}
    \rho^{\eta}(t,k)\text{ and }\rho^\times(t,k).
\end{equation}
and denote \begin{equation}
    [\eta,\eta']\coloneqq\begin{cases}
        \eta&\text{if }\eta=\eta',\\
        \times&\text{if }(\eta,\eta')=(0,1).
    \end{cases}
\end{equation}
so that

\begin{gather}
    \E\l(\vmathbb{1}_{\mathcal E_{L,A}}{\widehat{f^\eta}\l(\epsilon^{-1}t,k\r)}\overline{\widehat{f^{\eta'}}\l(\epsilon^{-1}t,k\r)}\r)-\rho^{[\eta,\eta']}(t,k)\\\label{first line}=\E\l(\vmathbb{1}_{\mathcal E_{L,A}}{\widehat{f^\eta}\l(\epsilon^{-1}t,k\r)}\overline{\widehat{f^{\eta'}}\l(\epsilon^{-1}t,k\r)}\r)-\E\l(\vmathbb{1}_{\mathcal E_{L,A}}{\widehat{f^\eta_{\leq N(L)}}\l(\epsilon^{-1}t,k\r)}\overline{\widehat{f^{\eta'}_{\leq N(L)}}\l(\epsilon^{-1}t,k\r)}\r)\\\label{second line}+\E\l(\vmathbb{1}_{\mathcal E_{L,A}}{\widehat{f^\eta_{\leq N(L)}}\l(\epsilon^{-1}t,k\r)}\overline{\widehat{f^{\eta'}_{\leq N(L)}}\l(\epsilon^{-1}t,k\r)}\r)-\E\l({\widehat{f^\eta_{\leq N(L)}}\l(\epsilon^{-1}t,k\r)}\overline{\widehat{f^{\eta'}}\l(\epsilon^{-1}t,k\r)}\r)\\\label{third line}+\E\l({\widehat{f^\eta_{\leq N(L)}}\l(\epsilon^{-1}t,k\r)}\overline{\widehat{f^{\eta'}_{\leq N(L)}}\l(\epsilon^{-1}t,k\r)}\r)-\sum_{n_1,n_2\leq N(L)}\rho_{L,n_1+n_2}^{[\eta,\eta']}(t,k)\\\label{fourth line}+\sum_{n_1,n_2\leq N(L)}\rho_{L,n_1+n_2}^{[\eta,\eta']}(t,k)-\sum_{n_1,n_2\leq N(L)}\rho_{n_1+n_2}^{[\eta,\eta']}(t,k)\\\label{fifth line}+\sum_{n_1,n_2\leq N(L)}\rho_{n_1+n_2}^{[\eta,\eta']}(t,k)-\rho^{[\eta,\eta']}(t,k).
\end{gather}

Inserting $F^\eta=F_{\leq N}^\eta+v^\eta$ into \cref{first line}, we obtain that (with the fact that $H^s\l(T_L^d\r)$ is a Banach algebra)

\begin{equation}
\begin{gathered}
    \abs{\cref{first line}}\\\leq\E\l(\vmathbb{1}_{\mathcal E_{L,A}}\abs{\widehat{f_{\leq N(L)}^\eta}\l(\epsilon^{-1}t,k\r)}\abs{\widehat{v^{\eta'}}\l(\epsilon^{-1}t,k\r)}\r)+\E\l(\vmathbb{1}_{\mathcal{E}_{L,A}}\abs{\widehat{v^\eta}\l(\epsilon^{-1}t,k\r)}\abs{\widehat{f^{\eta'}_{\leq N(L)}}\l(\epsilon^{-1}t,k\r)}\r)\\+\E\l(\vmathbb{1}_{\mathcal E_{L,A}}\abs{\widehat{v^\eta}\l(\epsilon^{-1}t,k\r)}\abs{\widehat{v^{\eta'}}\l(\epsilon^{-1}t,k\r)}\r)\\\leq\E\l(\vmathbb{1}_{\mathcal E_{L,A}}\norm{f_{\leq N(L)}^\eta v^{\eta'}}_{L^\infty H^s\l(\mathbb T_L^d\r)}\r) + \E\l(\vmathbb{1}_{\mathcal E_{L,A}}\norm{v^\eta f^{\eta'}_{\leq N(L)}}_{L^\infty H^s\l(\mathbb T_L^d\r)}\r)+\E\l(\vmathbb{1}_{\mathcal E_{L,A}}\norm{v^\eta v^{\eta'}}_{L^\infty H^s\l(H^s\l(\mathbb T_L^d\r)\r)}\r)\\\leq\sum_{n=0}^{N(L)}\E\l(\vmathbb{1}_{\mathcal E_{L,A}}\norm{f_{n}^\eta}_{L^\infty H^s\l(T_L^d\r)}\norm{v^{\eta'}}_{L^\infty H^s\l(\mathbb T_L^d\r)}\r) + \sum_{n=0}^N\E\l(\vmathbb{1}_{\mathcal E_{L,A}}\norm{f^{\eta'}_{n}}_{L^\infty H^s\l(\mathbb T_L^d\r)}\norm{v^\eta}_{L^\infty H^s\l(\mathbb T_L^d\r)}\r)\\+\E\l(\vmathbb{1}_{\mathcal E_{L,A}}\norm{v^\eta}_{L^\infty H^s\l(H^s\l(\mathbb T_L^d\r)\r)}\norm{v^{\eta'}}_{L^\infty H^s\l(H^s\l(\mathbb T_L^d\r)\r)}\r)\\\leq2\l(1-L^{-A}\r)\Lambda L^{2\l(\mathcal K+d+\frac{2}{\beta}+2A-\tilde\alpha\r)}\sum_{n\leq N(L)}(\Lambda\delta)^{2n}\\+\l(1-L^{-A}\r)\Lambda^2L^{-2\tilde\alpha}\xlongrightarrow{L\longrightarrow\infty}0.
\end{gathered}
\end{equation}

For \cref{second line}, we will use Hölder inequality two times and Gaussian hypercontractivity to get 

\begin{equation}
    \begin{gathered}
        \abs{\cref{second line}}\\\leq\sum_{0\leq n_1,n_2\leq N(L)}\E\l(\vmathbb{1}_{\mathcal E_{L,A}^c}\abs{\widehat{f^\eta_{n_1}}\l(\epsilon^{-1}t,k\r)}\abs{\widehat{f_{n_2}^{\eta'}}\l(\epsilon^{-1}t,k\r)}\r)\\\leq\E\l(\vmathbb{1}_{\mathcal E_{L,A}^c}\r)\sum_{0\leq n_1,n_2\leq N(L)}\E\l(\abs{\widehat{f^\eta_{n_1}}\l(\epsilon^{-1}t,k\r)}^2\abs{\widehat{f_{n_2}^{\eta'}}\l(\epsilon^{-1}t,k\r)}^2\r)^{1/2}\\\leq L^{-A/2}\sum_{0\leq n_1,n_2\leq N(L)}\E\l(\abs{\widehat{f^\eta_{n_1}}\l(\epsilon^{-1}t,k\r)}^4\r)^{1/4}\E\l(\abs{\widehat{f^{\eta'}_{n_2}}\l(\epsilon^{-1}t,k\r)}^4\r)^{1/4}\\\leq\Lambda L^{-\frac{A}{2}}\sum_{n_1,n_2=0}^{N(L)}c^{n_1}c^{n_2}\E\l(\abs{\widehat{f^\eta_{n_1}}\l(\epsilon^{-1}t,k\r)}^2\r)^{1/2}\E\l(\abs{\widehat{f^{\eta'}_{n_2}}\l(\epsilon^{-1}t,k\r)}^2\r)^{1/2}\\\Lambda L^{-\frac{A}{2}+\mathcal K}\l(\sum_{n\leq N(L)}\l(\Lambda\sqrt{\delta}\r)^{2n}\r)^2\\\leq\Lambda L^{-\frac{A}{2}+\mathcal K}\xlongrightarrow{L\longrightarrow\infty}0,
    \end{gathered}
\end{equation}
for all $A>A_0\geq2\mathcal K$, where we used \cref{key proposition}.

We now rewrite \cref{third line} as 
\begin{equation}
    \begin{gathered}
        {\cref{third line}}\\=\sum_{n_1,n_2\leq N(L)}\sum_{C\in\mathcal C_{n_1,n_2}^{\eta,\eta',+,-}}{\J_C}\l(\epsilon^{-1}t,k\r)-\sum_{n_1,n_2\leq N(L)}\sum_{C\in\res_{n_1+n_2}(+,\eta,\eta')}{\J_C}\l(\epsilon^{-1}t,k\r)\\
        =\sum_{n_1,n_2\leq N(L)}\sum_{\substack{C\in\mathcal C_{n_1,n_2}^{\eta,\eta',+,-}\\n_{\mathrm{res}}(C)\geq1}}{\J_C}\l(\epsilon^{-1}t,k\r)
    \end{gathered}
\end{equation}
Let $\mathcal W>0$ to be determined. We may now estimate, using \cref{key proposition,theorem: every non-resonant couple contributes with decay in box size},

\begin{equation}
\begin{gathered}
    \abs{\sum_{n_1,n_2\leq N(L)}\sum_{\substack{C\in\mathcal C_{n_1,n_2}^{\eta,\eta',-,+}\\n_{\mathrm{res}}(C)\geq1}}{\J_C}\l(\epsilon^{-1}t,k\r)}\\\leq\abs{\sum_{\substack{n_1,n_2\leq N(L)\\n_1+n_2\leq\mathcal W}}\sum_{\substack{C\in\mathcal C_{n_1,n_2}^{\eta,\eta',-,+}\\n_{\mathrm{res}}(C)\geq1}}{\J_C}\l(\epsilon^{-1}t,k\r)}\\+\abs{\sum_{\substack{n_1,n_2\leq N(L)\\\mathcal W<n_1+n_2\leq\mathcal N(L)}}\sum_{0<q\leq n_1+n_2-\mathcal W}\sum_{\substack{C\in\mathcal C_{n_1,n_2}^{\eta,\eta',-,+}\\n_{\mathrm{res}}(C)=q\geq1}}{\J_C}\l(\epsilon^{-1}t,k\r)}\\+\abs{\sum_{\substack{n_1,n_2\leq N(L)\\\mathcal W<n_1+n_2\leq\mathcal N(L)}}\sum_{n_1+n_2-\mathcal W<q\leq n_1+n_2}\sum_{\substack{C\in\mathcal C_{n_1,n_2}^{\eta,\eta',-,+}\\n_{\mathrm{res}(C)}=q\geq1}}{\J_C}\l(\epsilon^{-1}t,k\r)}\\+\abs{\sum_{\substack{n_1,n_2\leq N(L)\\n_1+n_2>N(L)}}\sum_{\substack{C\in\mathcal C_{n_1,n_2}^{\eta,\eta',-,+}\\n_{\mathrm{res}}(C)\geq1}}{\J_C}\l(\epsilon^{-1}t,k\r)}\\
    \leq\Lambda(\Lambda\delta)^{\mathcal W}L^{-\frac{\alpha}{\beta}}\\+\Lambda L^{K-\frac{\alpha'}{\beta}\mathcal W}\sum_{\mathcal W<n_1+n_2\leq N(L)}(\Lambda\delta)^{2(n_1+n_2)}\sum_{0<q\leq n_1+n_2-\mathcal W}\epsilon^{\alpha'\l(n_1+n_2-q-\mathcal W\r)}\\+\Lambda L^{-\frac{\alpha}{\beta}}\sum_{\substack{n_1,n_2\leq N(L)\\\mathcal W<n_1+n_2\leq N(L)}}(\Lambda\delta)^{2\l(n_1+n_2\r)}\\+\Lambda(\Lambda\delta)^{2N(L)}L^{\mathcal K}\sum_{\substack{n_1,n_2\leq N(L)\\n_1+n_2>N(L)}}(\Lambda\delta)^{2\l(n_1+n_2-N(L)\r)}\xlongrightarrow{L\longrightarrow\infty}0,
\end{gathered}
\end{equation}
where we may freely choose $\alpha'\in\l(0,\frac{\alpha}{4}\r)$ and see that we should require \begin{equation}
    \mathcal W>\frac{\beta}{\alpha'}\mathcal K.
\end{equation}
Of course, the observation $(\Lambda\delta)^{2N(L)}L^{\mathcal K}\xlongrightarrow{L\to\infty}0$ requires $\delta$ to be small enough, depending only on $\Lambda$ and $\mathcal K$. 

\begin{remark}\label{critical remark}
    We also have for all $n\in\N$ and $L>0$
\begin{equation}
    \sup_{t\in[0,\delta]}\sup_{k\in\Z_L^d}\abs{\rho_{L,n}^{[\eta,\eta']}(t,k)}\leq\Lambda(\Lambda\delta)^nc_n
\end{equation}
and the proof is again of an inductive nature as done in \cref{another trivial bound}. The main difference is that this time the compact support of $\rho_{L,n}^{[\eta,\eta']}(t,\cdot)$ is used. 
\end{remark}

\begin{lemma}\label{abc lemma}
    The maps $\xi\mapsto\rho_{L,n}^{[\eta,\eta']}(t,\xi)$ and $\xi\mapsto\rho_n^{[\eta,\eta']}(t,\xi)$ are compactly supported in $B_R(0)$. What is more, there exist $\Lambda,\alpha>0$ such that for all $n\leq N$:
    \begin{equation}
        \sup_{t\in[0,\delta]}\sup_{k\in\Z_L^d}\abs{\rho_{L,n}^{[\eta,\eta']}(t,k)-\rho_n^{[\eta,\eta']}(t,k)}\leq\Lambda L^{-\alpha}c_n(\Lambda\delta)^n
    \end{equation}
\end{lemma}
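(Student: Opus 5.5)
We argue by induction on $n$, comparing the discrete recursion for $\rho_{L,n}^{[\eta,\eta']}$ with the continuous recursion defining $\rho_n^{[\eta,\eta']}$. Both recursions have identical algebraic structure, with the same $\tilde\zeta_{l,+}^\eta$ and the same pattern of indices $n_1+n_2+n_3=n-1$. The discrete one is obtained from \eqref{here I have to carry out the sum over dagger} after summing over $\dagger$, exactly as in the proof of real-valuedness. They differ only in replacing $L^{-2d}\sum_{k_1,k_2\in\Z_L^d}$ by $\int_{\R^d}\int_{\R^d}\rmd\xi_1\rmd\xi_2$.

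\emph{Support.} For the support statement, we induct as well. At $n=0$ both objects equal $M^{\eta,\eta'}$, which is supported in $B_R(0)$. In each recursive term the outer variable $k$ (or $\xi$) enters only through a factor $\rho_{n_3}(s,k)$, so the support of the $n$-th iterate is contained in that of the $n_3$-th iterate. The wave numbers $k_1,k_2$ are integrated against factors that are themselves supported in $B_R(0)$, and the $\tilde\zeta$ factors do not affect the support in $k$.

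\emph{Error decomposition.} Set $e_n\coloneqq\sup_{t\in[0,\delta]}\sup_k|\rho_{L,n}-\rho_n|$. Subtracting the two recursions, each term with indices $(n_1,n_2,n_3)$ splits into two kinds of error.

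The first is a \emph{replacement error}: substitute $\rho_{L,n_i}$ by $\rho_{n_i}$ one factor at a time, by telescoping. A factor $\rho_{n_3}(s,k)$ multiplying the sum is bounded via \cref{critical remark}, which controls the $L^{-2d}\sum$ part by $\Lambda(\Lambda\delta)^{n_1+n_2}c_{n_1}c_{n_2}$. Here we use boundedness of $Q$ and the fact that the summation region is confined to $B_R(0)^2$, so $L^{-2d}\sum 1\lesssim R^{2d}$. Factors inside the sum are handled in the same way. This gives a contribution of at most $\delta\Lambda\sum_{n_1+n_2+n_3=n-1}e_{n_i}\prod_{j\ne i}\Lambda(\Lambda\delta)^{n_j}c_{n_j}$.

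The second is a \emph{Riemann-sum error}: the difference between $L^{-2d}\sum_{k_1,k_2}$ and $\int\int$ applied to the continuous integrand $\tilde\zeta\,\rho_{n_1}\rho_{n_2}$. We apply the second estimate of \cref{sums into integrals} with $i=2$. This bounds the error by $\Lambda L^{-1}\cdot L^{-2d}\sum\sup|\nabla(\cdots)|$, which is at most $\Lambda L^{-1}\|Q\|_{W^{1,\infty}}\prod\|\rho_{n_j}\|_{W^{1,\infty}}$ over the compact support. Using \cref{another trivial bound}, this is $\le\Lambda L^{-1}(\Lambda\delta)^{n_1+n_2}c_{n_1}c_{n_2}$, multiplied by $\|\rho_{n_3}\|_{L^\infty}\le\Lambda(\Lambda\delta)^{n_3}c_{n_3}$ and by $\delta$ from the time integral.

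\emph{Closing the induction.} The inductive ansatz is $e_m\le\Lambda' L^{-\alpha}c_m(\Lambda'\delta)^m$ for $m<n$, with $\alpha=1$. Summing over $n_1+n_2+n_3=n-1$ gives $e_n\le\delta\,C\,(L^{-1}+\Lambda'L^{-\alpha})(\Lambda\delta)^{n-1}\sum c_{n_1}c_{n_2}c_{n_3}$. Since $\sum_{n_1+n_2+n_3=n-1}c_{n_1}c_{n_2}c_{n_3}=c_n$ by the Catalan convolution, this is $\le\Lambda'L^{-\alpha}c_n(\Lambda'\delta)^n$, provided $\Lambda'\ge 2C\Lambda$ is chosen once and for all. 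Note that the base case is exact: $e_0=0$.

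\emph{Main obstacle.} The delicate point is the Riemann-sum step. It needs the gradient bound on $\rho_n$ in $W^{1,\infty}$ to come with a constant growing only like $\Lambda^n c_n$, which \cref{another trivial bound} supplies. It also needs the integrand in $(\xi_1,\xi_2)$ to be $\mathcal C^1$, or at least Lipschitz; with only $W^{1,\infty}$ regularity we would instead use the Lipschitz version of the same cube-by-cube estimate. A further subtlety is that the $\xi_1$-integrand in the $l\ge3$ terms involves $\rho_{n_1}(s,\xi_1)$ without a sign flip; this does not matter, since only support and Lipschitz bounds are used.
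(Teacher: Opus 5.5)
Your proof is correct and follows essentially the same route as the paper. Both argue by induction with $e_0=0$, replace $\rho_{L,n_i}$ by $\rho_{n_i}$ one factor at a time (controlled by \cref{critical remark}, \cref{another trivial bound} and the $B_R(0)$ support), bound the remaining Riemann-sum error by $O(L^{-1})$ via \cref{sums into integrals}, and close with $\sum_{n_1+n_2+n_3=n-1}c_{n_1}c_{n_2}c_{n_3}=c_n$.

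One small bookkeeping slip: the inductive factor $e_{n_i}$ brings $(\Lambda'\delta)^{n_i}$, so with $\Lambda'\ge\Lambda$ you get $e_n\le C\delta(1+\Lambda')L^{-1}(\Lambda'\delta)^{n-1}c_n$ rather than a bound with $(\Lambda\delta)^{n-1}$. Since the target equals $\Lambda'^2\delta L^{-1}(\Lambda'\delta)^{n-1}c_n$, the induction still closes once $C(1+\Lambda')\le\Lambda'^2$.
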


\begin{proof}
    We prove this via induction on $n$. If $n=0$, the left-hand side is actually $0$ and if $n>0$ this is simply due to the recursive structure of these functions.
We recall
\begin{equation}
\begin{gathered}
    \rho^{[\eta,\eta']}_{n}(t,k)\coloneqq2\sum_{n_1+n_2+n_3=n-1}\int_0^t\int_{\R^d}\int_{\R^d}\sum_{l=1}^2\Big[\Im\l(\rho_{n_3}^{[\eta,\eta']}(s,k)\tilde\zeta_{l,+}^\eta(\xi_1,\xi_2,\xi)\rho_{n_1}^{[\overline\eta,\eta]}(s,-\xi_1)\rho_{n_2}^{[\overline\eta,\eta]}(s,-\xi_2)\r)\\+
    \sum_{l=3}^6\Im\l(\rho_{n_3}^{[\overline\eta,\eta']}(s,k)\tilde\zeta_{l,+}^\eta(\xi_1,\xi_2,\xi)\rho_{n_2}^{[\overline\eta,\eta]}(s,-\xi_2)\r)\rho_{n_1}^{[\eta,\eta]}(s,\xi_1)\Big]\rmd\xi_1\rmd\xi_2\rmd s
\end{gathered}
\end{equation}
and estimate 
\begin{equation}
    \begin{gathered}
        \abs{\rho_{L,n}^{[\eta,\eta']}(t,k)-\rho_n^{[\eta,\eta']}(t,k)} \\=\Bigg|\sum_{n_1+n_2+n_3=n-1}\Bigg[\int_0^tL^{-2d}\sum_{k_1,k_2}\Bigg[\sum_{l=1}^2\Big[\Im\l(\l(\rho_{L,n_3}^{[\eta,\eta']}(s,k)-\rho_{n_3}^{[\eta,\eta']}(s,k)\r)\tilde\zeta_{l,+}^\eta(k_1,k_2,k)\rho_{L,n_1}^{[\overline\eta,\eta]}(s,-k_1)\rho_{L,n_2}^{[\overline\eta,\eta]}(s,-k_2)\r)\\+\Im\l(\rho_{n_3}^{[\eta,\eta']}(s,k)\tilde\zeta_{l,+}^\eta(k_1,k_2,k)\l(\rho_{L,n_1}^{[\overline\eta,\eta]}(s,-k_1)-\rho_{n_1}^{[\overline\eta,\eta]}(s,-k_1)\r)\rho_{L,n_2}^{[\overline\eta,\eta]}(s,-k_2)\r)\\+\Im\l(\rho_{n_3}^{[\eta,\eta']}(s,k)\tilde\zeta_{l,+}^\eta(k_1,k_2,k)\rho_{n_1}^{[\overline\eta,\eta]}(s,-k_1)\l(\rho_{L,n_2}^{[\overline\eta,\eta]}(s,-k_2)-\rho_{n_2}^{[\overline\eta,\eta]}(s,-k_2)\r)\r)\Big]\\+\sum_{l=3}^6\Big[\Im\l(\l(\rho_{L,n_3}^{[\overline\eta,\eta']}(s,k)-\rho_{n_3}^{[\overline\eta,\eta']}(s,k)\r)\tilde\zeta_{l,+}^\eta(k_1,k_2,k)\rho_{L,n_2}^{[\overline\eta,\eta]}(s,-k_2)\r)\rho_{L,n_1}^{[\eta,\eta]}(s,k_1)\\+\Im\l(\rho_{n_3}^{[\overline\eta,\eta']}(s,k)\tilde\zeta_{l,+}^\eta(k_1,k_2,k)\l(\rho_{L,n_2}^{[\overline\eta,\eta]}(s,-k_2)-\rho_{n_2}^{[\overline\eta,\eta]}(s,-k_2)\r)\r)\rho_{L,n_1}^{[\eta,\eta]}(s,k_1)\\+\Im\l(\rho_{n_3}^{[\overline\eta,\eta']}(s,k)\tilde\zeta_{l,+}^\eta(k_1,k_2,k)\rho_{n_2}^{[\overline\eta,\eta]}(s,-k_2)\r)\l(\rho_{L,n_1}^{[\eta,\eta]}(s,k_1)-\rho_{n_1}^{[\eta,\eta]}(s,k_1)\r)\Big]\\+\sum_{l=1}^2\Bigg[\int_0^tL^{-2d}\sum_{k_1,k_2}\Im\l(\rho_{n_3}^{[\eta,\eta']}(s,k)\tilde\zeta_{l,+}^\eta(k_1,k_2,k)\rho_{n_1}^{[\overline\eta,\eta]}(s,-k_1)\rho_{n_2}^{[\overline\eta,\eta]}(s,-k_2)\r)\\-\int_0^t\int_{\R^d}\int_{\R^d}\Im\l(\rho_{n_3}^{[\eta,\eta']}(s,k)\tilde\zeta_{l,+}^\eta(\xi_1,\xi_2,k)\rho_{n_1}^{[\overline\eta,\eta]}(s,-\xi_1)\rho_{n_2}^{[\overline\eta,\eta]}(s,-\xi_2)\r)\Bigg]\\+\sum_{l=3}^6\Bigg[\int_0^tL^{-2d}\sum_{k_1,k_2}\Im\l(\rho_{n_3}^{[\overline\eta,\eta']}(s,k)\tilde\zeta_{l,+}^\eta(k_1,k_2,k)\rho_{n_2}^{[\overline\eta,\eta]}(s,-k_2)\r)\rho_{n_1}^{[\eta,\eta]}(s,k_1)\\-\int_0^t\int_{\R^d}\int_{\R^d}\Im\l(\rho_{n_3}^{[\overline\eta,\eta']}(s,k)\tilde\zeta_{l,+}^\eta(\xi_1,\xi_2,k)\rho_{n_2}^{[\overline\eta,\eta]}(s,-\xi_2)\r)\rho_{n_1}^{[\eta,\eta]}(s,\xi_1)\rmd\xi_1\rmd\xi_2\rmd s\Bigg]\Bigg]\Bigg|.   
    \end{gathered}
\end{equation}
Terms like 

\begin{equation}\label{another thing i am referring to}
    \begin{gathered}
        \abs{\sum_{n_1+n_2+n_3=n-1}\int_0^tL^{-2d}\sum_{k_1,k_2}\Im\l(\l(\rho_{L,n_3}^{[\eta,\eta']}(s,k)-\rho_{n_3}^{[\eta,\eta']}(s,k)\r)\tilde\zeta_{l,+}^\eta(k_1,k_2,k)\rho_{L,n_1}^{[\overline\eta,\eta]}(s,-k_1)\rho_{L,n_2}^{[\overline\eta,\eta]}(s,-k_2)\r)}\\\leq\sum_{n_1+n_2+n_3=n-1}\int_0^tL^{-2d}\sum_{k_1,k_2}\abs{\rho_{L,n_3}^{[\eta,\eta']}(s,k)-\rho_{n_3}^{[\eta,\eta']}(s,k)}\abs{\tilde\zeta_{l,+}^\eta(k_1,k_2,k)}\abs{\rho_{L,n_1}^{[\overline\eta,\eta]}(s,-k_1)}\abs{\rho_{L,n_2}^{[\overline\eta,\eta]}(s,-k_2)}
    \end{gathered}
\end{equation}
are estimated by using the induction hypothesis 
\begin{equation}
    \abs{\rho_{L,n_3}^{[\eta,\eta']}(s,k)-\rho_{n_3}^{[\eta,\eta']}(s,k)}\leq\Lambda L^{-\alpha}c_{n_3}(\Lambda\delta)^{n_3}
\end{equation}
and \cref{critical remark}. The remaining term 
\begin{equation}
    \begin{gathered}
        L^{-2d}\sum_{k_1,k_2}\abs{\tilde\zeta_{l,+}^\eta(k_1,k_2,k)}\abs{\rho_{L,n_1}^{[\overline\eta,\eta]}(s,-k_1)}\abs{\rho_{L,n_2}^{[\overline\eta,\eta]}(s,-k_2)}\\\leq\Lambda(\Lambda\delta)^{n_1+n_2}c_{n_1}c_{n_2} L^{-2d}\sum_{k_1,k_2\in B_R^{\Z_L^d}(0)}1\leq\Lambda(\Lambda\delta)^{n_1+n_2}c_{n_1}c_{n_2}
    \end{gathered}
\end{equation}
is estimated by using \cref{another trivial bound} and the compact support of the functions $\rho_{L,n}^\star$. Finally,

\begin{equation}
    \abs{\cref{another thing i am referring to}}\leq(\Lambda\delta)^{n}L^{-\alpha}\sum_{n_1+n_2+n_3=n-1}c_{n_1}c_{n_2}c_{n_3}=(\Lambda\delta)^{n}L^{-\alpha}c_n.
\end{equation}
Terms of the form 

\begin{equation}\label{What I am estimating as of now}
    \begin{gathered}
        \Bigg|\sum_{n_1+n_2+n_3=n-1}\Bigg[\int_0^tL^{-2d}\sum_{k_1,k_2}\Im\l(\rho_{n_3}^{[\eta,\eta']}(s,k)\tilde\zeta_{l,+}^\eta(k_1,k_2,k)\rho_{n_1}^{[\overline\eta,\eta]}(s,-k_1)\rho_{n_2}^{[\overline\eta,\eta]}(s,-k_2)\r)\\-\int_0^t\int_{\R^d}\int_{\R^d}\Im\l(\rho_{n_3}^{[\eta,\eta']}(s,k)\tilde\zeta_{l,+}^\eta(\xi_1,\xi_2,k)\rho_{n_1}^{[\overline\eta,\eta]}(s,-\xi_1)\rho_{n_2}^{[\overline\eta,\eta]}(s,-\xi_2)\r)\Bigg]\Bigg|\\=\Bigg|\sum_{n_1+n_2+n_3=n-1}\Bigg[\int_0^t\Im\Bigg[L^{-2d}\sum_{k_1,k_2}\rho_{n_3}^{[\eta,\eta']}(s,k)\tilde\zeta_{l,+}^\eta(k_1,k_2,k)\rho_{n_1}^{[\overline\eta,\eta]}(s,-k_1)\rho_{n_2}^{[\overline\eta,\eta]}(s,-k_2)\\-\int_{\R^d}\int_{\R^d}\rho_{n_3}^{[\eta,\eta']}(s,k)\tilde\zeta_{l,+}^\eta(\xi_1,\xi_2,k)\rho_{n_1}^{[\overline\eta,\eta]}(s,-\xi_1)\rho_{n_2}^{[\overline\eta,\eta]}(s,-\xi_2)\Bigg]\rmd s\Bigg]\Bigg|\\\leq\sum_{n_1+n_2+n_3=n-1}\Bigg|\int_0^t\Im\Bigg[L^{-2d}\sum_{k_1,k_2}\rho_{n_3}^{[\eta,\eta']}(s,k)\tilde\zeta_{l,+}^\eta(k_1,k_2,k)\rho_{n_1}^{[\overline\eta,\eta]}(s,-k_1)\rho_{n_2}^{[\overline\eta,\eta]}(s,-k_2)\\-\int_{\R^d}\int_{\R^d}\rho_{n_3}^{[\eta,\eta']}(s,k)\tilde\zeta_{l,+}^\eta(\xi_1,\xi_2,k)\rho_{n_1}^{[\overline\eta,\eta]}(s,-\xi_1)\rho_{n_2}^{[\overline\eta,\eta]}(s,-\xi_2)\Bigg]\rmd s\Bigg|\\\leq\sum_{n_1+n_2+n_3=n-1}\int_0^t\abs{\rho_{n_3}^{[\eta,\eta']}(s,k)}\Bigg|L^{-2d}\sum_{k_1,k_2}\tilde\zeta_{l,+}^\eta(k_1,k_2,k)\rho_{n_1}^{[\overline\eta,\eta]}(s,-k_1)\rho_{n_2}^{[\overline\eta,\eta]}(s,-k_2)\\-\int_{\R^d}\int_{\R^d}\tilde\zeta_{l,+}^\eta(k,\xi_1,\xi_2)\rho_{n_1}^{[\overline\eta,\eta]}(s,-\xi_1)\rho_{n_2}^{[\overline\eta,\eta]}(s,-\xi_2)\Bigg|\rmd s.
    \end{gathered}
\end{equation}
Now using \cref{sums into integrals} and the fact that the functions $\rho_n^\star$ have compact support, we obtain that 

\begin{equation}
    \begin{gathered}
        \Bigg|L^{-2d}\sum_{k_1,k_2}\tilde\zeta_{l,+}^\eta(k_1,k_2,k)\rho_{n_1}^{[\overline\eta,\eta]}(s,-k_1)\rho_{n_2}^{[\overline\eta,\eta]}(s,-k_2)\\-\int_{\R^d}\int_{\R^d}\tilde\zeta_{l,+}^\eta(\xi_1,\xi_2,k)\rho_{n_1}^{[\overline\eta,\eta]}(s,-\xi_1)\rho_{n_2}^{[\overline\eta,\eta]}(s,-\xi_2)\Bigg|\\\leq\frac{\Lambda}{L^{2d+1}}\sum_{k_1,k_2\in B_R^{\Z_L^d}(0)}\sup_{Q_{k_1,\frac{1}{2L}}\times Q_{k_2,\frac{1}{2L}}}\abs{\nabla_{k_1,k_2}\l(\tilde\zeta_{l,+}^\eta(k_1,k_2,k)\rho_{n_1}^{[\overline\eta,\eta]}(s,-k_1)\rho_{n_2}^{[\overline\eta,\eta]}(s,-k_2)\r)}\\\leq\frac{\Lambda}{L^{2d+1}}\norm{Q_+^\eta}_{\l(W^{1,\infty}\r)^5}(\Lambda\delta)^{n_1+n_2}c_{n_1}c_{n_2}\Lambda L^{2d}\leq(\Lambda\delta)^{n_1+n_2}c_{n_1}c_{n_2}L^{-1}
    \end{gathered}
\end{equation}
by increasing $\Lambda$ and decreasing $\delta$. Finally,

\begin{equation}
    \begin{gathered}
        \abs{\cref{What I am estimating as of now}}\leq\Lambda\delta\sum_{n_1+n_2+n_3=n-1}(\Lambda\delta)^{n_1+n_2+n_3}c_{n_1}c_{n_2}c_{n_3}L^{-1} = (\Lambda\delta)^nL^{-1}c_n
    \end{gathered}
\end{equation}
and this completes the proof.
\end{proof}
Now, \cref{abc lemma} leads to 
\begin{equation}
        \abs{\cref{fourth line}}\leq\sum_{n\leq 2N(L)}\abs{\rho_{L,n}^{[\eta,\eta']}(t,k)-\rho_n^{[\eta,\eta']}(t,k)}\leq\Lambda L^{-\alpha}\sum_{n=0}^{2N(L)}(\Lambda\delta)^nc_n\xlongrightarrow{L\longrightarrow\infty}0.
    \end{equation}

For the final line \eqref{fifth line}, we use \cref{having a solution to the resonant system as a fixed point solution} to observe 
\begin{equation}
    \abs{\cref{fifth line}}\leq\l(\Lambda\delta\r)^{N(L)}\xlongrightarrow{L\longrightarrow\infty}0.
\end{equation}

\appendix
\section{Preliminary lemmas}

\begin{lemma}[Isserlis' Theorem]\label{Isserlis}
Let $k_j\in\Z_L^d$, $\iota_j\in\{\pm\}$ for $1\leq j\leq n$, then 
\begin{equation}
    \E\l[\prod_{j=1}^n\mu_{k_j}^{\eta_j,\iota_j}\r] = \sum_{\mathcal P}\prod_{\{j,j'\}\in\mathcal P}\delta_{\iota_j+\iota_{j'}}\delta_{k_j-k_{j'}}M^{\eta_j,\eta_{j'}}(k_j)^{\iota_j}
\end{equation}
\end{lemma}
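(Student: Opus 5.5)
The approach is to reduce the statement to the one-dimensional Gaussian structure of the initial data and then invoke the classical Isserlis (Wick) formula for jointly Gaussian centered complex variables. By \eqref{random initial data}, the family $\l(\mu_k^0,\mu_k^1\r)_{k\in\Z_L^d}$ consists of independent (over $k$) centered Gaussian vectors in $\C^2$. Hence the collection $\l\{\mu_{k_j}^{\eta_j,\iota_j}\r\}_{j=1}^n$, viewed through real and imaginary parts, is a centered jointly Gaussian real vector: each $\mu_{k_j}^{\eta_j,\iota_j}$ is a fixed $\C$-linear combination of the real coordinates $\Re\mu_{k_j}^{\eta_j},\Im\mu_{k_j}^{\eta_j}$. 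The first step is therefore to apply the real Isserlis theorem, $\E\l[\prod_j X_j\r]=\sum_{\mathcal P}\prod_{\{j,j'\}\in\mathcal P}\E(X_jX_{j'})$ over all pair partitions $\mathcal P$ of $\llbracket1,n\rrbracket$ (zero if $n$ is odd), and then extend it to complex-valued linear functionals of the real Gaussian vector by multilinearity of both sides.

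The second step is to evaluate the pair covariances. For $k_j\neq k_{j'}$ independence gives $\E\l(\mu_{k_j}^{\eta_j,\iota_j}\mu_{k_{j'}}^{\eta_{j'},\iota_{j'}}\r)=\E(\mu_{k_j}^{\eta_j,\iota_j})\E(\mu_{k_{j'}}^{\eta_{j'},\iota_{j'}})=0$, which produces the factor $\delta_{k_j-k_{j'}}$. For $k_j=k_{j'}$, \eqref{random initial data} gives directly $\delta_{\iota_j+\iota_{j'}}M^{\eta_j,\eta_{j'}}(k_j)^{\iota_j}$; in particular the pseudo-covariances $\E(\mu\mu)$ with equal signs vanish. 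This requires that the assumption on the initial law encodes this vanishing (circular symmetry), and I read \eqref{random initial data} as including it, since its $\delta_{\iota+\iota'}$ factor kills the same-sign moments. Substituting these covariances into the pair-partition sum yields the stated formula.

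The step that requires genuine care is the passage from real to complex variables. Here one must make sure the covariance in the Wick formula is the bilinear $\E(XY)$ rather than the Hermitian $\E(X\overline Y)$. Because the real Isserlis identity is multilinear in the $X_j$, it extends verbatim to complex linear combinations with the bilinear pairing. This is exactly what $\E\l(\mu^{\eta,\iota}\mu^{\eta',\iota'}\r)$ denotes, so no conjugation issues arise. The remaining bookkeeping — the convention $M^{\eta,\eta'}(k)^{-}=\overline{M^{\eta,\eta'}(k)}$, and consistency via $\overline{M^{\eta,\eta'}}=M^{\eta',\eta}$ when the pair is ordered in the other direction — is routine and follows from the remark after \eqref{random initial data}.
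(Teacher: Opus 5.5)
Your proposal is correct. The paper gives no argument of its own beyond citing Lemma A.2 of Deng--Hani as a ``minor adaptation'', and your reduction to the real Isserlis formula via real and imaginary parts, extended to the bilinear complex pairing by multilinearity, is exactly the standard argument behind that citation. Your added value is the explicit check of the adaptation: the pair covariances, including the vanishing same-sign moments and the cross-colour terms $M^{\eta,\eta'}$, are read off directly from the assumption on the initial data, and the pairing-order consistency follows from $\overline{M^{\eta,\eta'}}=M^{\eta',\eta}$.
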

\begin{proof}
    This is a minor adaptation of \cite[Lemma A.2]{Deng2021FullDO}.
\end{proof}

\begin{lemma}[Gaussian Hypercontractivity]\label{hypercontractivity}
    Let $\{g_k\}$ be i.i.d. Gaussians or random phases, $\iota_j\in\{\pm\}$ and X be the random variable 
    \begin{equation}
        X(\omega)\coloneqq \sum_{k_1,\ldots,k_n}a_{k_1,\ldots,k_n}\prod_{j=1}^ng_{k_j}^{\zeta_j}(\omega).
    \end{equation}
    Then for $p\geq2$,
    \begin{equation}
        \E\abs{X}^p\leq(p-1)^{\frac{np}{2}}\E\l(\abs{X}^2\r)^{\frac{p}{2}}.
    \end{equation}
\end{lemma}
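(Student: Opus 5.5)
The plan is to reduce the statement to the classical hypercontractivity of the Ornstein--Uhlenbeck semigroup on Wiener chaos. First I treat the Gaussian case. Each complex Gaussian $g_k$ is written as $(x_k+\rmi y_k)/\sqrt2$ with $x_k,y_k$ independent real standard Gaussians. Then $g_k^{\pm}$ is a linear form in $(x_k,y_k)$, so $X$ is a polynomial of degree at most $n$ in the countable family $(x_k,y_k)$, with complex coefficients. Consequently $X$ lies in $\bigoplus_{m\le n}\mathcal H_m$, the direct sum of the first $n$ Wiener chaoses of the Gaussian Hilbert space spanned by these variables. I assume, as usual, that the coefficient array $a$ is finitely supported or square-summable so that $X\in L^2$; the general case follows by truncation and Fatou.

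Next I invoke Nelson's hypercontractivity theorem for the Ornstein--Uhlenbeck semigroup $P_t$: $\norm{P_tF}_{L^p}\le\norm{F}_{L^2}$ whenever $e^{2t}\ge p-1$. This holds for complex-valued $F$ as well, for instance by applying it to real and imaginary parts, or directly since $P_t$ is a positivity-preserving Markov operator. Write $X=\sum_{m\le n}X_m$ with $X_m\in\mathcal H_m$, and set $F\coloneqq\sum_m e^{mt}X_m$, so that $P_tF=X$. By orthogonality of the chaoses, $\norm{F}_{L^2}^2=\sum_m e^{2mt}\norm{X_m}_{L^2}^2\le e^{2nt}\norm{X}_{L^2}^2$. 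Choosing $e^{2t}=p-1$ gives $\norm{X}_{L^p}\le(p-1)^{n/2}\norm{X}_{L^2}$. Raising both sides to the power $p$ yields exactly the claimed bound $\E\abs{X}^p\le(p-1)^{np/2}(\E\abs{X}^2)^{p/2}$.

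For random phases $g_k=e^{\rmi\theta_k}$ I would not redo the semigroup argument. Instead I compare with Gaussians through the radial decomposition: if $\gamma_k$ is a standard complex Gaussian, then $\gamma_k=\abs{\gamma_k}e^{\rmi\theta_k}$ with $\abs{\gamma_k}$ independent of the phase. One option is a conditional Jensen or contraction argument, using that $\E\abs{\gamma_k}^{a}$ behaves appropriately, but I expect that to be messy. The route I would try first is the one in the reference the paper cites, \cite{Deng2021FullDO}: either restrict to multilinear $X$ (distinct $k_j$ up to pairings), or use the Bonami--Beckner inequality for the circle group. That inequality gives the same $(p-1)^{n/2}$ constant, because each monomial $e^{\pm\rmi\theta_k}$ has degree one and the noise operator $T_\rho$ with $\rho=(p-1)^{-1/2}$ is $L^2\to L^p$ contractive on $\mathbb T^\infty$ (Weissler's theorem).

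The main obstacle is this random-phase case together with non-multilinear terms such as $g_k g_k^{-}=\abs{g_k}^2$. For phases this product equals $1$, which lowers the degree but does not hurt the estimate. The chaos bound requires only degree at most $n$, and I would check that this is preserved. The Gaussian case is routine once $X$ is identified as a chaos of order at most $n$. Since the paper cites this as a known lemma, in practice I would present the Gaussian argument and cite \cite{Deng2021FullDO} for the phase case.
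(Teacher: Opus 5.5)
Your proposal is correct. The paper gives no argument beyond citing Lemma~2.6 of \cite{hyper}, and your route is essentially the standard proof behind that citation: for Gaussians, $X\in\bigoplus_{m\le n}\mathcal H_m$ and Nelson's $L^2\to L^p$ bound for the Ornstein--Uhlenbeck semigroup at $e^{2t}=p-1$ gives $\|X\|_{L^p}\le(p-1)^{n/2}\|X\|_{L^2}$; for random phases, Weissler's hypercontractivity of the Poisson semigroup on the circle, tensorized to $\mathbb T^\infty$ with degree measured by $\sum_k\abs{m_k}\le n$, gives the same bound, and non-multilinear products such as $g_kg_k^-$ are harmless in both settings.

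A bonus of writing the Gaussian case via chaos is that it never uses the i.i.d.\ hypothesis, since any jointly Gaussian family reduces to i.i.d.\ standard ones by a degree-preserving linear change of variables. That is what the paper actually needs when it applies the lemma to the colour-correlated, non-identically distributed $\mu_k^\eta$.
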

\begin{proof}
    See Lemma 2.6 of \cite{hyper}.
\end{proof}

 

\paragraph{Conflict of interest statement.}

The author has no conflicts of interest to declare.

\paragraph{Data availability statement.} 

Data sharing is not applicable to this article as no new data were created or analyzed in this study.
\printbibliography

\raggedright \textsc{Centre de math\'ematiques Laurent-Schwartz, \'Ecole Polytechnique, 91120 Palaiseau, France}\\
\textit{Email address:} \texttt{shayan.zahedi@polytechnique.edu}
\end{document}